\documentclass{amsart}
\usepackage{euscript,amsmath, amssymb, amsfonts}
\usepackage{mathrsfs}
\usepackage{mathabx}
\usepackage{stmaryrd}
\usepackage{graphicx}
\usepackage{upgreek}
\usepackage[unicode]{hyperref}

\numberwithin{equation}{section}

\newtheorem{theorem}{Theorem}[section]
\newtheorem{lemma}[theorem]{Lemma}
\newtheorem{proposition}[theorem]{Proposition}
\newtheorem{corollary}[theorem]{Corollary}

\theoremstyle{definition}
\newtheorem{definition}[theorem]{Definition}
\newtheorem{example}[theorem]{Example}

\theoremstyle{remark}
\newtheorem{remark}[theorem]{Remark}

\newcommand{\fm}[2]{#1\!:\!#2}
\newcommand{\R}{{\mathbb R}}
\newcommand{\N}{{\mathbb N}}

\newcommand{\C}{{\mathbb C}}
\newcommand{\K}{{\mathbb K}}
\newcommand{\as}[2][]{\llbracket #2\rrbracket_{#1}}
\newcommand{\q}[1]{\mathscr H[#1]}
\newcommand{\dotleq}{\buildrel \textstyle .\over \leq} 
\newcommand{\mb}[1]{\mathbf{#1}}
\newcommand{\mf}[1]{\mathfrak{#1}}
\newcommand{\ms}[1]{\mathscr{#1}}
\mathchardef\mhyphen="2D
\newcommand{\sett}[3]{\left\{#1\,{\left|\,\vphantom{{#1}{#3}}#2\right.}\right.\\ \left.\vphantom{{#1}{#2}}#3\right\}}
\newcommand{\set}[2]{\left\{#1\,{\left|\,\vphantom{#1}#2\right.}\right\}}
\newcommand{\sm}[3]{({#1})\mhyphen\mathrm{sum}_{#2}{#3}}
\newcommand{\smm}[2]{({#1})\mhyphen\mathrm{sum}\,{#2}}
\newcommand{\ex}[3]{{#2}\mhyphen\mathrm{ex}_{#1}\,{#3}}
\newcommand{\card}[1]{\#\!{#1}}
\newcommand{\im}{\mathop{\mathrm{Im}}}
\newcommand{\bigS}{\mathrm S}

\makeatletter
\DeclareRobustCommand\bigop[2][1]{%
  \mathop{\vphantom{\sum}\mathpalette\bigop@{{#1}{#2}}}\slimits@
}
\newcommand{\bigop@}[2]{\bigop@@#1#2}
\newcommand{\bigop@@}[3]{%
  \vcenter{%
    \sbox\z@{$#1\sum$}%
    \hbox{\resizebox{\ifx#1\displaystyle#2\fi\dimexpr\ht\z@+\dp\z@}{!}{$\m@th#3$}}%
  }%
}
\makeatother

\newcommand{\finsum}{\DOTSB\bigop[.9]{\Upsigma}}

\makeatletter
\newcommand{\customitem}[1]{\item[#1]\phantomsection\def\@currentlabel{#1}}
\makeatother

\begin{document}

\title{Measure theory without infinities}

\author{A.G.~Smirnov}
\address{I.~E.~Tamm Theory Department, P.~N.~Lebedev
	Physical Institute, Leninsky prospect 53, Moscow 119991, Russia}
\email{smirnov@lpi.ru}	

\author{M.S.~Smirnov}
\address{Lomonosov MSU, Faculty of Computational Mathematics and Cybernetics, Moscow, Russia, 119991}
\email{matsmir98@gmail.com}	

\keywords{Group-valued measure, vector measure, positive measure, spaces of measures, extension of contents}
\subjclass[2020]{28A12,28A33, 28B05,28B10}

\begin{abstract}
  The aim of this paper is to develop a framework for measure theory that avoids infinities and allows for the uniform treatment of positive and vector measures. Our approach is based on a modification of the notion of measure, which supplements the usual $\sigma$-additivity requirement with a suitable maximality condition. To each Hausdorff topological vector space $\mf A$ and $\sigma$-ring $\mathcal{Q}$, we associate a vector space $\ms M(\mathcal{Q},\mf A)$ of `infinite' $\mf A$-valued measures corresponding to $\mathcal{Q}$. In particular, the positive elements of $\ms M(\mathcal{Q},\R)$ are naturally identified with the $\sigma$-finite positive measures defined on $\mathcal{Q}$, thus placing positive and signed measures within the same setting. Finally, extension results for group-valued contents due to Sion and Weber are reformulated and refined within this new framework.
\end{abstract}

\maketitle

\section{Introduction}
\label{s_intro}

In measure theory, positive measures are traditionally allowed to take infinite values. Since infinities cannot be subtracted, this practice, which dates back to Carath\'eodory's analysis of outer measures~\cite{Caratheodory1914}, gives rise to a kind of divide between positive measures, on the one hand, and vector measures, on the other. A clear manifestation of this dichotomy is the common tendency to consider positive and vector measures separately. In the present paper, we propose a unifying setting that avoids infinite values and treats positive and vector measures uniformly. 

There are essentially two approaches to constructing spaces of scalar measures. The first is simply to consider the sets $\ms M_0(\mathcal{Q},\R)$ and $\ms M_0(\mathcal{Q},\C)$ of $\R$- and $\C$-valued measures\footnote{For now, if $\mf A$ is a Hausdorff topological Abelian monoid, by an $\mf A$-valued measure we mean a $\sigma$-additive $\mf A$-valued function whose domain is a $\delta$-ring. This definition will be modified later.} on a $\delta$-ring $\mathcal{Q}$. Endowed with pointwise addition and scalar multiplication, these sets become real and complex vector spaces respectively. For example, if $X$ is a Hausdorff locally compact space and $\mathcal{Q}$ is the $\delta$-ring generated by all compact $G_\delta$-subsets of $X$, then $\ms M_0(\mathcal{Q},\R)$ and $\ms M_0(\mathcal{Q},\C)$ can be identified with the spaces of real and complex Baire (or equivalently, Radon) measures on $X$. The second approach is to put infinities into play and consider the set $\ms M_0(\mathcal{Q},\mf R)$, where $\mf R$ is the monoid $[0,\infty]$ equipped with its usual topology. This set, endowed with pointwise addition, is a monoid but not a vector space. It is easy to see that every $\mf R$-valued measure $\mu$ can be uniquely extended to an $\mf R$-valued measure on the $\sigma$-ring $\sigma(D_\mu)$ generated by the domain $D_\mu$ of $\mu$. For this reason, it suffices to consider the spaces $\ms M_0(\mathcal{Q},\mf R)$ when $\mathcal{Q}$ is a $\sigma$-ring. For the same reason, the submonoid of positive elements of $\ms M_0(\mathcal{K},\R)$ is embedded in $\ms M_0(\mathcal{Q},\mf R)$ whenever $\mathcal{K}$ is a $\delta$-ring such that $\sigma(\mathcal{K}) = \mathcal{Q}$.  

Following Halmos~\cite[p.~31]{Halmos}, we say that a map $\mu$ taking values in the extended real line is $\sigma$-finite
if every set in $D_\mu$ can be covered by a countable family of elements of $D^f_\mu = \set{A}{A\in D_\mu\mbox{ and }\mu(A)\in\R}$.  The submonoid of $\ms M_0(\mathcal{Q},\mf R)$ consisting of all its $\sigma$-finite elements will be denoted by $\ms M_\sigma(\mathcal{Q},\mf R)$. A typical example of a non-$\sigma$-finite element of $\ms M_0(\mathcal{Q},\mf R)$ is the map $\mu$ on $\mathcal{Q}$ such that $\mu(\varnothing) = 0$ and $\mu(A) = \infty$ for all nonempty $A\in\mathcal{Q}$. It is intuitively clear that measures of this kind are excessively infinite and are essentially unrelated to $\mathcal{Q}$.\footnote{Non-$\sigma$-finite measures typically arise in frameworks based on $\sigma$-algebras rather than general $\sigma$-rings. For example, the counting measure on an uncountable set $X$ is non-$\sigma$-finite when considered on the power set of $X$ and is $\sigma$-finite when restricted to the $\sigma$-ring of all countable subsets of $X$. Thus, non-$\sigma$-finite measures become unnecessary if we do not insist on working with $\sigma$-algebras and instead allow general $\sigma$-rings, as in~\cite{Halmos}.} Therefore, the elements of $\ms M_\sigma(\mathcal{Q},\mf R)$ can be viewed as the true positive measures on $\mathcal{Q}$. In what follows, we focus primarily on the $\sigma$-finite case.  

The first of the approaches described above was used, notably, in~\cite{BourbakiIntegration}, where the framework of Radon measures was adopted from the outset. Its advantage is the vector space structure, which allows for a transparent treatment of signed and complex measures. On the other hand, the space $\ms M_\sigma(\mathcal{Q},\mf R)$ generally contains more positive measures than any of the spaces $\ms M_0(\mathcal{K},\R)$ with $\sigma(\mathcal{K}) = \mathcal{Q}$ (for example, there are positive $\sigma$-finite Borel measures on $\R$ that are not Radon measures). The second approach is therefore preferable if one is mostly interested in positive measures. In particular, it was adopted in the classical expositions by Saks~\cite{Saks1937} and Halmos~\cite{Halmos}, and in many subsequent textbooks (see, e.g., \cite{Bauer,Bogachev,Cohn,FremlinV1}) that focus primarily on positive measures.\footnote{These authors rarely speak of spaces of measures but rather use expressions such as `a measure $\mu$ on a measurable space $(X,\mathcal{Q})$' or `a measure space $(X,\mathcal{Q},\mu)$'. This difference in language is inessential: in all cases, $\mf R$-valued measures on a fixed $\sigma$-ring $\mathcal{Q}$ are considered. The key point is that infinities are allowed.} In this setting, however, the treatment of signed measures is either restricted to finite measures (i.e., the elements of $\ms M_0(\mathcal{Q},\R)$) or tends to be awkward because of infinities (see the discussion of Halmos's definition of signed measures below). The treatment of complex measures is ordinarily restricted to $\ms M_0(\mathcal{Q},\C)$.

Let $\mathcal{Q}$ be a $\sigma$-ring. In this paper, we construct an ordered vector space $\ms M(\mathcal{Q},\R)$ and a complex vector space $\ms M(\mathcal{Q},\C)$ such that the following conditions are satisfied:
\begin{enumerate}
\item [(1)] $\ms M(\mathcal{Q},\R)$ and $\ms M(\mathcal{Q},\C)$ consist of $\R$- and $\C$-valued measures respectively,
\customitem{(2)}\label{item:2} $\ms M(\mathcal{Q},\R)$ is a real subspace of $\ms M(\mathcal{Q},\C)$ and every $\mu\in \ms M(\mathcal{Q},\C)$ has a unique representation $\mu = \mu'+i\mu''$, where $\mu',\mu''\in \ms M(\mathcal{Q},\R)$,
\customitem{(3)} \label{item:3} $\ms M_\sigma(\mathcal{Q},\mf R)$ is naturally identified with the submonoid of positive elements of $\ms M(\mathcal{Q},\R)$,
\customitem{(4)} \label{item:4} $\ms M_0(\mathcal{K},\R)$ and $\ms M_0(\mathcal{K},\C)$ are naturally embedded in $\ms M(\mathcal{Q},\R)$ and $\ms M(\mathcal{Q},\C)$, respectively, whenever $\mathcal{K}$ is a $\delta$-ring and $\sigma(\mathcal{K}) = \mathcal{Q}$. 
\end{enumerate}
As a result, we unify the two approaches discussed above while avoiding infinite values in the treatment of positive measures. The elements of $\ms M(\mathcal{Q},\R)$ and $\ms M(\mathcal{Q},\C)$ can be viewed, respectively, as `infinite' signed and complex measures corresponding to $\mathcal{Q}$.

In the 1970s and 1980s, a number of works (see, e.g., \cite{FoxMorales1983,Lipecki1983,Sion1973,Weber1976}) studied additive set functions taking values in uniform monoids.\footnote{The authors of these works used the term `semigroup' instead of `monoid'. We prefer the latter because it implies the existence of a neutral element, which is always assumed in measure theory.} One apparent motivation for considering monoids, rather than restricting attention to groups, was to provide a unified treatment of group-valued measures (including vector measures) and $\mf R$-valued measures. Our perspective is different: in the framework developed here, there is no need to consider $\mf R$-valued measures at all. 

The spaces $\ms M(\mathcal{Q},\R)$ and $\ms M(\mathcal{Q},\C)$ will be obtained as special cases of more general spaces $\ms M(\mathcal{Q},\mf A)$, which can be defined for an arbitrary Hausdorff topological Abelian monoid (HTM for short) $\mf A$. We now briefly outline the corresponding construction.

A map $\mu$ is said to be \emph{hosted by} $\mathcal{Q}$ if $\mathcal{Q} = \sigma(D_\mu)$. In this case, $\mathcal{Q}$ is called the \emph{host $\sigma$-ring} of $\mu$. The host $\sigma$-ring plays the role of the fixed $\sigma$-ring of measurable sets in the classical theory, whereas $D_\mu$ records the sets on which $\mu$ is actually defined.
Maps $\mu$ and $\nu$ are called \emph{similar} (denoted by $\mu\asymp\nu$) if they have the same host $\sigma$-ring and coincide on $D_\mu\cap D_\nu$. We now modify the definition of a measure as follows. Given an HTM $\mf A$, we say that $\mu$ is an \emph{$\mf A$-valued premeasure} if it is an $\mf A$-valued measure in the sense used above, i.e., an $\mf A$-valued $\sigma$-additive function defined on a $\delta$-ring. Similarity is an equivalence relation on the class of all $\mf A$-valued premeasures. An $\mf A$-valued premeasure $\mu$ is called an \emph{$\mf A$-valued measure} if it satisfies the following maximality condition:
\begin{enumerate}
\customitem{(M)} \label{(M)} $\mu$ is an extension of every $\mf A$-valued premeasure $\nu$ such that $\nu\asymp\mu$.
\end{enumerate}
It turns out that every $\mf A$-valued premeasure is similar to a unique $\mf A$-valued measure (this is one of the main results of this paper). In other words, every equivalence class with respect to $\asymp$ contains exactly one measure. Given an $\mf A$-valued premeasure $\mu$, the $\mf A$-valued measure $\nu$ such that $\nu\asymp\mu$ is called the \emph{$\mf A$-valued measure associated with $\mu$} and is denoted by $\as[\mf A]\mu$.

For every $\sigma$-ring $\mathcal{Q}$, we define $\ms M(\mathcal{Q},\mf A)$ to be the set of all $\mf A$-valued measures hosted by $\mathcal{Q}$. We endow this set with addition by putting $\mu + \nu = \as[\mf A]{\mu\dotplus\nu}$ for every $\mu,\nu\in\ms M(\mathcal{Q},\mf A)$, where $\mu\dotplus\nu$ is the pointwise sum of $\mu$ and $\nu$ defined on $D_\mu\cap D_\nu$ (it can be shown that $\mu\dotplus\nu$ is hosted by $\mathcal{Q}$ and, hence, $\mu+\nu\in\ms M(\mathcal{Q},\mf A)$). This makes $\ms M(\mathcal{Q},\mf A)$ into an Abelian monoid. If $\mathcal{K}$ is a $\delta$-ring with $\sigma(\mathcal{K}) = \mathcal{Q}$, then the space $\ms M_0(\mathcal{K},\mf A)$ embeds naturally in $\ms M(\mathcal{Q},\mf A)$ via the map $\mu\mapsto \as[\mf A]\mu$.

The spaces $\ms M(\mathcal{Q},\mf A)$ thus constructed have the following key property: if the HTM $\mf A$ is a topological group, then $\ms M(\mathcal{Q},\mf A)$ is a group. It is here that condition~\ref{(M)} comes into play: if we were only interested in a monoid structure, it would suffice to define $\ms M(\mathcal{Q},\mf A)$ as the set of all premeasures hosted by $\mathcal{Q}$, endowed with pointwise addition. Let $\K = \R$ or $\K = \C$ and suppose $\mf A$ is a topological vector space over $\K$. Then we endow $\ms M(\mathcal{Q},\mf A)$ with scalar multiplication by putting $k\mu = \as[\mf A]{k\cdot\mu}$ for every $k\in\K$ and $\mu\in \ms M(\mathcal{Q},\mf A)$, where $k\cdot\mu$ denotes the pointwise scalar multiple of $\mu$ by $k$, defined on $D_\mu$. This makes $\ms M(\mathcal{Q},\mf A)$ into a vector space over $\K$. In particular, $\ms M(\mathcal{Q},\R)$ and $\ms M(\mathcal{Q},\C)$ are real and complex vector spaces respectively.

We endow $\ms M(\mathcal{Q},\R)$ with the order defined by setting $\mu\leq \nu$ if and only if $\mu(A)\leq\nu(A)$ for all $A\in D_\mu\cap D_\nu$. This makes $\ms M(\mathcal{Q},\R)$ into an ordered vector space. If $\mu\in \ms M_\sigma(\mathcal{Q},\mf R)$, then $D^f_\mu$ is a $\delta$-ring hosted by $\mathcal{Q}$ and the restriction $\mf F\mu$ of $\mu$ to $D^f_\mu$ belongs to $\ms M(\mathcal{Q},\R)$ (infinities of $\mu$ enforce condition~\ref{(M)} for $\mf F\mu$). The correspondence $\mu\mapsto\mf F\mu$ induces a natural isomorphism of ordered monoids between $\ms M_\sigma(\mathcal{Q},\mf R)$ and the submonoid of positive elements of $\ms M(\mathcal{Q},\R)$.

Let $\mf R_\uparrow$ and $\mf R_\downarrow$ be, respectively, the monoids $(-\infty,\infty]$ and $[-\infty,\infty)$ endowed with their natural topologies. Halmos~\cite[p.~118]{Halmos} and some other authors (see, e.g., \cite[pp.~121--122]{Cohn}) define signed measures on a $\sigma$-ring $\mathcal{Q}$ as the elements of the set $\ms M_0(\mathcal{Q},\mf R_\uparrow)\cup \ms M_0(\mathcal{Q},\mf R_\downarrow)$, which is not even a monoid. As in the case of $\mf R$, the submonoids of $\ms M_0(\mathcal{Q},\mf R_\uparrow)$ and $\ms M_0(\mathcal{Q},\mf R_\downarrow)$ consisting of their respective $\sigma$-finite elements embed naturally in $\ms M(\mathcal{Q},\R)$ via the map $\mu\mapsto \mf F\mu$. The images of these embeddings are merely submonoids of $\ms M(\mathcal{Q},\R)$ with no particularly nice properties. Halmos's definition can therefore be viewed as an artifact of using infinities.  

A different approach to signed measures in the presence of infinities was proposed by K\"onig~\cite{Konig1999}, whose signed contents and measures are equivalence classes of pairs of positive extended-valued contents and measures. It can be shown that, when $\mathcal{Q}$ is a $\sigma$-algebra, signed measures in König's sense representable by a pair of $\sigma$-finite measures can be naturally identified with elements of $\ms M(\mathcal{Q},\R)$. An advantage of our construction is that it involves ordinary set functions rather than equivalence classes. Moreover, it is by no means restricted to the real case.    

The notion of measure proposed in the present paper is useful not only for constructing spaces of measures, but also as a natural framework for certain results concerning extensions of group-valued $\sigma$-additive contents ($\sigma$-contents for short). In fact, combining the associated-measure operation with a variant of an extension result due to Weber~\cite{Weber1976} immediately yields the existence part of the following statement: if $\mf A$ is a sequentially complete group, then every weakly exhaustive $\mf A$-valued $\sigma$-content admits a unique extension to an $\mf A$-valued measure hosted by the same $\sigma$-ring. Moreover, by employing the technique used to construct associated measures, we obtain a partial characterization of the domain of this extension. We thus arrive at a theorem that covers both Weber's result and extensions to $\sigma$-rings described by Sion~\cite{Sion1969}.  

The present paper develops the framework briefly sketched in Appendix~A of~\cite{Smirnov2015}. Condition~(a) in the 
definition of a measure adopted there essentially corresponds to Proposition~\ref{p_seqcomplSigma} of the present paper. Accordingly, the existence of the associated measure was stated in~\cite[Lemma~A.3]{Smirnov2015} when $\mf A$ is a sequentially complete topological group, with a proof given only in the positive case. The advantage of condition~\ref{(M)} introduced above is that it allows us to prove the existence of the associated measure without any restrictions on $\mf A$.    

The associated-measure operation also provides the basic extension step in our approach to integration theory and the Radon–Nikodym theorem. In the scalar case, given a measure $\mu$ and a measurable function $f$, integration of $f$ over those $A\in D_\mu$ on which $f$ is bounded defines a premeasure; the measure associated with this premeasure is the multiple $f(s)\,d\mu(s)$, whose domain determines the sets over which $f$ is integrable. For example, if $\mathcal B$ is the Borel $\sigma$-ring of $\mathbb R$, then measures in $\ms M(\mathcal B,\mathbb C)$ can be multiplied by \emph{arbitrary} Borel functions, whereas, for Radon measures, the multiplier must be locally integrable with respect to the total variation of the measure. An analogous construction applies to vector measures, with boundedness replaced by relative compactness of $f(A)$. In this case, it naturally leads to an unconditional bilinear integral, in contrast to Dinculeanu's theory~\cite{Dinculeanu}, which is based on absolute integrability. These topics will be considered in a subsequent paper. Other problems that we plan to address include the Jordan decomposition in the space $\ms M(\mathcal{Q},\R)$, the order structure of this space (which is actually a conditionally complete lattice), and the relationship between the associated-measure operation and various completion procedures described in~\cite{SmirnovCompl} (they are expected to essentially commute). 

The paper is organized as follows. In Section~\ref{s2}, we fix our notation and collect necessary facts concerning $\delta$- and $\sigma$-rings of sets. Section~\ref{s_add} provides precise definitions of contents, $\sigma$-contents, and premeasures, alongside their basic properties. The subsequent section is also preliminary in nature and deals with extension results for semi-rings. The latter are used to prove the existence of associated measures in Section~\ref{s_meas}, which is concerned with the notion of measure and is the central part of the present paper. In Section~\ref{s_ass}, we provide an alternative description of the associated measure for \emph{regular} HTMs, which is often more useful in practice than the definition given in Section~\ref{s_meas}. The spaces $\ms M(\mathcal{Q},\mf A)$ are defined and studied in Section~\ref{s_spaces}. In particular, conditions~\ref{item:2} and~\ref{item:4} for the spaces of scalar measures formulated above are established there, whereas the verification of condition~\ref{item:3} is postponed to Section~\ref{s_pos}, which is devoted to positive measures. The last two sections of this paper treat extensions of group-valued $\sigma$-additive functions. Specifically, Section~\ref{s_unique} addresses the uniqueness of such extensions, and Section~\ref{s_ext} establishes the extension theorem for $\sigma$-contents that was outlined above. In Appendix~\ref{app_B}, we give the necessary background material on summation in monoids. In particular, it is used in Appendix~\ref{app_C} to prove Proposition~\ref{l_semi} for arbitrary, not necessarily regular, HTMs. Appendix~\ref{app_counter} contains two counterexamples showing that Theorems~\ref{ll16} and~\ref{t_ext} cannot be strengthened in certain natural directions.    

\section{Preliminaries}
\label{s2}

We let $\N$, $\mathbb Q$, $\R$ and $\C$  denote the set of strictly positive integer numbers, the set of rational numbers,  the set of real numbers, and the set of complex numbers respectively. A set $A$ is called countable if there is a bijection between $A$ and a subset of $\N$ (thus, finite sets are countable). Given a map $f$, we let $D_f$ and $\im f$ denote the domain of definition and the range of $f$ respectively. We write $\fm{f}{I}$ if $f$ is a map and $D_f = I$. A map $f$ is called $A$-valued if $\im f\subset A$. If $A\subset D_f$, then $f|_A$ denotes the restriction of $f$ to $A$. Maps $f$ and $g$ are said to coincide on $A$ if $A\subset D_f\cap D_g$ and $f|_A = g|_A$.

The words `map' and `family' will be used interchangeably. A family $\fm{f}{I}$ is called disjoint if $f(i)\cap f(j) = \varnothing$ for every $i,j\in I$ such that $i\neq j$. We say that a family $\fm{f}{I}$ is a partition of a set $A$ if it is disjoint and $A = \bigcup_{i\in I}f(i)$. If $f$ is a partition of $A$ and $\im f\subset
\mathcal{Q}$, then we say that $f$ is a partition of $A$ into elements of $\mathcal{Q}$. A family $\fm{f}{I}$ is called finite (countable) if $I$ is finite (resp., countable). A family $\fm{f}{I}$ is called empty if $I = \varnothing$ (and, hence, $f = \varnothing$). Otherwise it is called nonempty.

If $\mf f$ is an expression in which the variable $i$ may occur and which is well defined for every $i\in I$, then $\{\mf f\}_{i\in I}$ denotes the map $f$ such that $D_f = I$ and $f(i) = \mf f$ for every $i\in I$. Alternatively, it will be called the map $i\mapsto\mf f$ on $I$.   

A set $\mathcal Q$ is said to be closed under finite (countable) unions if $\bigcup_{i\in I}\mb A(i)\in \mathcal Q$ for every finite (resp., countable) family $\mb A\colon I\to \mathcal Q$. Since $\varnothing$ is the union of the empty family, we have $\varnothing\in\mathcal Q$ whenever $\mathcal Q$ is closed under finite unions. It follows by induction that $\mathcal Q$ is closed under finite unions if and only if $\varnothing\in\mathcal Q$ and $A\cup B\in\mathcal Q$ for every $A,B\in\mathcal Q$.

A set $\mathcal Q$ is said to be closed under finite (countable) intersections if $\bigcap_{i\in I}\mb A(i)\in \mathcal Q$ for every nonempty finite (resp., nonempty countable) family $\mb A\colon I\to \mathcal Q$. We say that $\mathcal Q$ is $\cap$-closed if $A\cap B\in\mathcal Q$ for every $A,B\in\mathcal Q$. It follows by induction that $\mathcal Q$ is $\cap$-closed if and only if $\mathcal Q$ is closed under finite intersections.

A nonempty set $\mathcal Q$ is called a ring if $A\cup B\in\mathcal Q$ and $A\setminus B\in\mathcal Q$ for every $A,B\in\mathcal Q$. If $\mathcal Q$ is a ring, then $\mathcal Q$ is $\cap$-closed and $\varnothing\in\mathcal Q$; hence, every ring is closed under finite unions and intersections.
A ring $\mathcal Q$ is called a $\delta$-ring ($\sigma$-ring) if it is closed under countable intersections (resp., unions).
Every $\sigma$-ring is a $\delta$-ring. For every set $\mathcal K$, there is a unique ring ($\sigma$-ring, $\delta$-ring) $\mathcal Q\supset \mathcal K$ such that $\mathcal Q\subset \mathcal Q'$ for every ring (resp., $\sigma$-ring, $\delta$-ring) $\mathcal Q'\supset\mathcal K$. It is called the ring (resp., $\sigma$-ring, $\delta$-ring) generated by $\mathcal K$ and will be denoted by $\kappa(\mathcal K)$ (resp., $\sigma(\mathcal K)$, $\delta(\mathcal K)$).

In what follows, the word `ring' without qualifiers will always be understood in the sense specified above. Rings in the algebraic sense (i.e., Abelian groups endowed with an associative and distributive multiplication with a unit element) will be referred to as algebraic rings.  

Given a set $\mathcal K$, we let $\sigma_0(\mathcal K)$ denote the set of all countable unions of its elements.

The proof of the next elementary statement can be found in Sec.~I.1.3 of~\cite{Dinculeanu}.

\begin{lemma}\label{ll1}
  Let $\mathcal Q$ be a $\delta$-ring. Then the following statements hold:
  \begin{itemize}
  \item [(i)] Let $B\in\mathcal Q$ and $\mb A\colon I\to \mathcal Q$ be a countable family such that $\mb A(i)\subset B$ for all $i\in I$. Then $\bigcup_{i\in I} \mb A(i) \in \mathcal Q$.
  \item [(ii)] $\sigma(\mathcal Q) = \sigma_0(\mathcal{Q})$.
  \item [(iii)] If $A\in \sigma(\mathcal Q)$ and $A\subset B$ for some $B\in \mathcal Q$, then $A\in \mathcal Q$.
  \end{itemize}
\end{lemma}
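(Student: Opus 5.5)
For (i), I will rewrite the union as a complement inside $B$ and use closure under countable intersections. If $I=\varnothing$, the union is $\varnothing\in\mathcal Q$, so assume $I\neq\varnothing$. Since $\mathcal Q$ is a ring, $B\setminus\mb A(i)\in\mathcal Q$ for every $i\in I$. Because $\mathcal Q$ is a $\delta$-ring, $C=\bigcap_{i\in I}(B\setminus\mb A(i))$ lies in $\mathcal Q$. As each $\mb A(i)\subset B$, we have $\bigcup_{i\in I}\mb A(i)=B\setminus C$, and this set is in $\mathcal Q$.

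For (ii), we have $\sigma_0(\mathcal Q)\subset\sigma(\mathcal Q)$ trivially, so the task is to show that $\sigma_0(\mathcal Q)$ is a $\sigma$-ring containing $\mathcal Q$. It contains $\mathcal Q$ because it contains singleton unions, and it is nonempty. A countable union of countable unions of elements of $\mathcal Q$ is again such a union, since a countable union of countable index sets is countable; hence $\sigma_0(\mathcal Q)$ is closed under countable unions.

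The only real step in (ii) is closure under differences. Take $A=\bigcup_i A_i$ and $B=\bigcup_j B_j$ with $A_i,B_j\in\mathcal Q$. Then
\[
A\setminus B=\bigcup_i\Bigl(A_i\setminus\bigcup_j B_j\Bigr)=\bigcup_i\bigcap_j (A_i\setminus B_j).
\]
If the family $\{B_j\}$ is empty, each term is $A_i$. Otherwise $A_i\setminus B_j\in\mathcal Q$ for every $j$, and the countable intersection $\bigcap_j(A_i\setminus B_j)$ lies in $\mathcal Q$ because $\mathcal Q$ is a $\delta$-ring. In both cases $A\setminus B\in\sigma_0(\mathcal Q)$. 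So $\sigma_0(\mathcal Q)$ is a $\sigma$-ring containing $\mathcal Q$, and minimality of $\sigma(\mathcal Q)$ gives equality.

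For (iii), part (ii) lets us write $A=\bigcup_{i\in I}\mb A(i)$ with $I$ countable and each $\mb A(i)\in\mathcal Q$. Since $A\subset B$, every $\mb A(i)\subset B$, and part (i) gives $A\in\mathcal Q$.
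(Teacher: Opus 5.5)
Your proof is correct and complete: (i) by complementation inside $B$, (ii) by showing directly that $\sigma_0(\mathcal Q)$ is a $\sigma$-ring (the empty-family cases are handled), and (iii) by combining the two. The paper gives no proof of its own for this lemma and simply cites Dinculeanu (Sec.~I.1.3); your argument is the standard one.
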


\begin{lemma}\label{l_sigma0}
  Let $\mathcal{K}$ be a set and $A\in \sigma(\mathcal{K})$. Then $A\subset B$ for some $B\in \sigma_0(\mathcal{K})$.
\end{lemma}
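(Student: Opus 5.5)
The natural approach is a "good sets" argument. Let $\mathcal S$ be the collection of all sets $A\in\sigma(\mathcal K)$ such that $A\subset B$ for some $B\in\sigma_0(\mathcal K)$. It suffices to show that $\mathcal S$ is a $\sigma$-ring containing $\mathcal K$. Then minimality of the generated $\sigma$-ring gives $\sigma(\mathcal K)\subset\mathcal S$, which is exactly the claim.

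First, every $A\in\mathcal K$ lies in $\mathcal S$, since $A$ is the union of the one-element family $\{A\}$ and hence $A\in\sigma_0(\mathcal K)$; we take $B=A$. The set $\mathcal S$ is also nonempty, because $\varnothing$ is the union of the empty family, so $\varnothing\in\sigma_0(\mathcal K)$ and $\varnothing\in\mathcal S$. This covers the case $\mathcal K=\varnothing$ as well.

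Next we check the closure properties. If $A_1,A_2\in\mathcal S$ with $A_1\subset B$ and $B\in\sigma_0(\mathcal K)$, then $A_1\setminus A_2\in\sigma(\mathcal K)$ and $A_1\setminus A_2\subset B$, so $A_1\setminus A_2\in\mathcal S$. For a countable family $\mb A\colon I\to\mathcal S$, use the axiom of countable choice to pick $\mb B(i)\in\sigma_0(\mathcal K)$ with $\mb A(i)\subset\mb B(i)$ for each $i\in I$. The union $\bigcup_{i\in I}\mb B(i)$ is a countable union of countable unions of elements of $\mathcal K$. Reindexing over a countable disjoint union of countable index sets shows it is again a countable union of elements of $\mathcal K$, so it lies in $\sigma_0(\mathcal K)$. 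It contains $\bigcup_i\mb A(i)$, which belongs to $\sigma(\mathcal K)$. Hence $\mathcal S$ is closed under countable unions, in particular under finite unions, so it is a ring closed under countable unions, i.e.\ a $\sigma$-ring.

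No step is genuinely hard. The only point needing care is the reindexing showing that $\sigma_0(\mathcal K)$ is closed under countable unions, i.e.\ that a countable union of countable sets is countable. Note that $\sigma_0(\mathcal K)$ itself need not be a $\sigma$-ring, since it is not closed under differences. That is why we work with the hereditary class $\mathcal S$ rather than trying to show $\sigma(\mathcal K)=\sigma_0(\mathcal K)$.
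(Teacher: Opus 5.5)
Your proof is correct and is essentially the paper's argument: the paper also takes the class of all elements of $\sigma(\mathcal{K})$ contained in some element of $\sigma_0(\mathcal{K})$, notes that it is a $\sigma$-ring containing $\mathcal{K}$, and concludes by minimality. You simply spell out the closure verifications, including the countable-choice reindexing, which the paper dismisses as clear.
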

\begin{proof}
  Let $\mathcal{Q} = \set{A\in \sigma(\mathcal{K})}{A\subset B\mbox{ for some }B\in\sigma_0(\mathcal{K})}$. Clearly, $\mathcal{Q}$ is a $\sigma$-ring and $\mathcal{K}\subset\mathcal{Q}$. Hence, $\mathcal{Q} = \sigma(\mathcal{K})$.
\end{proof}

\begin{lemma}\label{l_AcapB}
  Let $\mathcal{K}$, $\mathcal{Q}$, and $\mathcal{R}$ be such that $A\cap B\in \mathcal{R}$ for every $A\in \mathcal{K}$ and $B\in \mathcal{Q}$.
  \begin{enumerate}
  \item [(i)] $A\cap B\in \sigma(\mathcal{R})$ for every $A\in \sigma(\mathcal{K})$ and $B\in \sigma(\mathcal{Q})$.
  \item [(ii)] Let $A\in \sigma(\mathcal{K})$, $B\in \sigma(\mathcal{Q})$, and $A\subset B$. Then $A\in \sigma(\mathcal{R})$.
  \item [(iii)] $\sigma(\mathcal{K})\cap \sigma(\mathcal{Q})\subset\sigma(\mathcal{R})$.
  \end{enumerate}
\end{lemma}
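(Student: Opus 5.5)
We prove (i) by the standard "good sets" argument, applied twice, in the same spirit as the proof of Lemma~\ref{l_sigma0}.

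\emph{First pass.} Fix $B\in\mathcal{Q}$ and consider
\[
\mathcal{G}_B=\set{A\in\sigma(\mathcal{K})}{A\cap B\in\sigma(\mathcal{R})}.
\]
By hypothesis $\mathcal{K}\subset\mathcal{G}_B$. Next we check that $\mathcal{G}_B$ is a $\sigma$-ring.
\begin{itemize}
\item It is nonempty, since $\varnothing\in\mathcal{G}_B$ because $\varnothing\in\sigma(\mathcal{R})$.
\item If $A_i\in\mathcal{G}_B$ for countably many $i$, then $\bigl(\bigcup_i A_i\bigr)\cap B=\bigcup_i(A_i\cap B)\in\sigma(\mathcal{R})$.
\item If $A,A'\in\mathcal{G}_B$, then $(A\setminus A')\cap B=(A\cap B)\setminus(A'\cap B)\in\sigma(\mathcal{R})$.
\end{itemize}
Hence $\mathcal{G}_B\supset\sigma(\mathcal{K})$, so $A\cap B\in\sigma(\mathcal{R})$ for all $A\in\sigma(\mathcal{K})$ and $B\in\mathcal{Q}$.

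\emph{Second pass.} Fix $A\in\sigma(\mathcal{K})$ and run the same argument in the other variable. The set $\set{B\in\sigma(\mathcal{Q})}{A\cap B\in\sigma(\mathcal{R})}$ contains $\mathcal{Q}$ by the first pass. The identical union and difference identities show it is a $\sigma$-ring, so it equals $\sigma(\mathcal{Q})$. This gives~(i).

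\emph{Statement (ii).} Since $A\subset B$, we have $A=A\cap B$, and by~(i) this lies in $\sigma(\mathcal{R})$.

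\emph{Statement (iii).} If $A\in\sigma(\mathcal{K})\cap\sigma(\mathcal{Q})$, apply (ii) with $B=A$.

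There is no genuine obstacle here. The only point needing care is that the class of good sets must be a $\sigma$-ring. That requires nonemptiness, which holds because $\varnothing\in\sigma(\mathcal{R})$, and closure under differences, which follows from the identity for $(A\setminus A')\cap B$ above. No assumptions on $\mathcal{K},\mathcal{Q},\mathcal{R}$ beyond the stated hypothesis are needed.
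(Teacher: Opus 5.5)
Your proposal is correct and follows essentially the same route as the paper. Both argue by a two-pass good-sets argument, first fixing $B\in\mathcal{Q}$ and then fixing $A\in\sigma(\mathcal{K})$, and both obtain (ii) and (iii) from (i) via $A=A\cap B$ (respectively $A=A\cap A$).
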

\begin{proof}
  (i) We first show that
  \begin{equation}
    \label{eq:cap1}
    A\cap B \in \sigma(\mathcal{R}),\quad A\in \sigma(\mathcal{K}),\,B\in \mathcal{Q}.
  \end{equation}
  Let $B\in\mathcal Q$ and $\mathcal K' = \set{A}{A\in\sigma(\mathcal K)\mbox{ and } A\cap B\in \sigma(\mathcal R)}$. Since $\mathcal K\subset \mathcal K'$ and $\mathcal K'$ is a $\sigma$-ring, we have $\mathcal K' = \sigma(\mathcal K)$. This proves~(\ref{eq:cap1}). Let $A\in \sigma(\mathcal{K})$ and $\mathcal{Q}' = \set{B}{B\in\sigma(\mathcal Q)\mbox{ and } A\cap B\in \sigma(\mathcal R)}$. By~(\ref{eq:cap1}), we have $\mathcal{Q}\subset\mathcal{Q}'$. As $\mathcal{Q}'$ is a $\sigma$-ring, it follows that $\mathcal{Q}' = \sigma(\mathcal{Q})$. This means that $A\cap B\in \sigma(\mathcal{R})$ for every $B\in \sigma(\mathcal{Q})$.
  \par\smallskip\noindent
  (ii) Since $A = A\cap B$, it follows from~(i) that $A\in \sigma(\mathcal{R})$.
  \par\smallskip\noindent
  (iii) Let $A\in \sigma(\mathcal{K})\cap \sigma(\mathcal{Q})$. Since $A = A\cap A$, we have $A\in \sigma(\mathcal{R})$ by~(i).
\end{proof}

\begin{lemma}\label{l3sigma}
  Let $\mathcal K$ and $\mathcal Q$ be $\delta$-rings such that $\sigma(\mathcal K) = \sigma(\mathcal Q)$. Then $\sigma(\mathcal K \cap \mathcal Q) = \sigma(\mathcal K)$ and $A\cap B\in \mathcal{K}\cap\mathcal{Q}$ for every $A\in \mathcal{K}$ and $B\in \mathcal{Q}$.
\end{lemma}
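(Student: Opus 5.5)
We first prove the second assertion, since the first follows from it. Let $A\in\mathcal K$ and $B\in\mathcal Q$. Then $B\in\sigma(\mathcal Q)=\sigma(\mathcal K)$, and $A\in\sigma(\mathcal K)$, so $A\cap B\in\sigma(\mathcal K)$. Since $A\cap B\subset A$ and $A\in\mathcal K$, Lemma~\ref{ll1}(iii) applied to the $\delta$-ring $\mathcal K$ gives $A\cap B\in\mathcal K$. By symmetry, $A\in\sigma(\mathcal Q)$ and $A\cap B\subset B\in\mathcal Q$, so Lemma~\ref{ll1}(iii) applied to $\mathcal Q$ gives $A\cap B\in\mathcal Q$. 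Hence $A\cap B\in\mathcal K\cap\mathcal Q$.

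For the first assertion, the inclusion $\sigma(\mathcal K\cap\mathcal Q)\subset\sigma(\mathcal K)$ is immediate because $\mathcal K\cap\mathcal Q\subset\mathcal K$. For the reverse inclusion, we apply Lemma~\ref{l_AcapB} with $\mathcal R=\mathcal K\cap\mathcal Q$. Its hypothesis is exactly what we proved in the first paragraph. Part~(iii) of that lemma then yields
\[
\sigma(\mathcal K)=\sigma(\mathcal K)\cap\sigma(\mathcal Q)\subset\sigma(\mathcal K\cap\mathcal Q).
\]
We could also argue directly: for $A\in\mathcal K$, write $A\in\sigma(\mathcal Q)=\sigma_0(\mathcal Q)$ (Lemma~\ref{ll1}(ii)) as $A=\bigcup_n B_n$ with $B_n\in\mathcal Q$. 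Then $A=\bigcup_n(A\cap B_n)$ is a countable union of elements of $\mathcal K\cap\mathcal Q$. Either route works, and we would use the one via Lemma~\ref{l_AcapB}.

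There is no real obstacle here. The only point that needs care is that the membership $A\cap B\in\sigma(\mathcal K)$ uses the fact that both $A$ and $B$ lie in the common $\sigma$-ring $\sigma(\mathcal K)=\sigma(\mathcal Q)$. After that, Lemma~\ref{ll1}(iii) does all the work.
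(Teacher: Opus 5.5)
Your proposal is correct and matches the paper's proof essentially step for step: Lemma~\ref{ll1}(iii) is applied twice to get $A\cap B\in\mathcal K\cap\mathcal Q$, and then Lemma~\ref{l_AcapB}(iii) with $\mathcal R=\mathcal K\cap\mathcal Q$ gives $\sigma(\mathcal K)\subset\sigma(\mathcal K\cap\mathcal Q)$. Your alternative direct route via Lemma~\ref{ll1}(ii) is also valid.
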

\begin{proof}
  Let $\mathcal{R} = \mathcal{K}\cap\mathcal{Q}$. Let $A \in \mathcal K$, $B \in\mathcal Q$, and $C = A\cap B$. Since $C\in \sigma(\mathcal K)$ and $C\subset A$, it follows from Lemma~\ref{ll1}(iii) that $C\in \mathcal{K}$. Similarly, we have $C\in \mathcal{Q}$ and, hence, $C\in \mathcal{R}$. Lemma~\ref{l_AcapB}(iii) now implies that $\sigma(\mathcal{K}) \subset \sigma(\mathcal{R})$. The opposite inclusion is obvious.
\end{proof}

Given a family $\fm{\mb A}{I}$, its disjoint union $\bigsqcup_{i\in I} \mb A(i)$ is defined by the equality\footnote{As usual, we let $\{x,y\}$ denote the unordered pair of elements $x$ and $y$ and put $\{x\} = \{x,x\}$. Thus, $\{x\}$ is the one-point set whose only element is $x$.}
\[
  \bigsqcup_{i\in I} \mb A(i) = \bigcup_{i\in I}\bigcup_{s\in \mb A(i)}\{(i,s)\} = \bigcup_{i\in I}\,\{i\}\times \mb A(i).
\] 

Let $f$ be a map such that $f(x)$ is a map for every $x\in D_f$. We put $f(x|y) = (f(x))(y)$ for every $x\in D_f$ and $y\in D_{f(x)}$.

\begin{lemma}\label{p_disjpart}
  Let $\fm{\mb A}{I}$ be a partition of a set $A$ and let $\fm{\mb J}{I}$ and $\fm{\mb B}{I}$ be such that $\fm{\mb B(i)}{\mb J(i)}$ is a partition of $\mb A(i)$ for every $i\in I$. Let $\fm{\mb C}{\bigsqcup_{i\in I}\mb J(i)}$ be such that $\mb C(i,j) = \mb B(i|j)$ for every $i\in I$  and $j\in \mb J(i)$. Then $\mb C$ is a partition of $A$.
\end{lemma}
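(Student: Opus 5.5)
The statement is elementary set theory, and I would verify the two defining properties of a partition directly: that $\mb C$ covers $A$ exactly, and that $\mb C$ is disjoint. Throughout, the domain of $\mb C$ is $\bigsqcup_{i\in I}\mb J(i)$, whose elements are exactly the pairs $(i,j)$ with $i\in I$ and $j\in\mb J(i)$. Hence $\mb C(i,j)=\mb B(i|j)$ makes sense.

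\emph{Union.} I will compute
\[
\bigcup_{(i,j)\in\bigsqcup_{i\in I}\mb J(i)}\mb C(i,j)=\bigcup_{i\in I}\bigcup_{j\in\mb J(i)}\mb B(i|j).
\]
The inner union equals $\mb A(i)$, because $\mb B(i)$ is a partition of $\mb A(i)$. The outer union $\bigcup_{i\in I}\mb A(i)$ then equals $A$, because $\mb A$ is a partition of $A$. The regrouping of the union over the index set of pairs is justified by the explicit description of $\bigsqcup_{i\in I}\mb J(i)$ as $\bigcup_{i\in I}\{i\}\times\mb J(i)$.

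\emph{Disjointness.} Take distinct indices $(i,j)$ and $(i',j')$ in the domain, and split into two cases.
\begin{itemize}
\item If $i\neq i'$, then $\mb C(i,j)\subset\mb A(i)$ and $\mb C(i',j')\subset\mb A(i')$. Since $\mb A$ is disjoint, $\mb A(i)\cap\mb A(i')=\varnothing$, so the two sets are disjoint.
\item If $i=i'$, then $j\neq j'$, both lie in $\mb J(i)$, and $\mb B(i|j)\cap\mb B(i|j')=\varnothing$ because $\mb B(i)$ is disjoint.
\end{itemize}

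There is no real obstacle here. The only point needing care is the bookkeeping of the disjoint union as a set of ordered pairs, which ensures that distinctness of $(i,j)$ and $(i',j')$ splits cleanly into these two cases.
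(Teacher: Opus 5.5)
Your proposal is correct and takes essentially the same route as the paper. The paper proves $A=\bigcup_{k}\mb C(k)$ elementwise ($s\in A$ iff $s\in\mb A(i)$ for some $i$ iff $s\in\mb B(i|j)$ for some $i$ and $j\in\mb J(i)$ iff $s\in\mb C(k)$ for some $k$), where you regroup the union instead, and it then proves disjointness with the same two cases $i\neq i'$ and $i=i'$ on $k=(i,j)$, $k'=(i',j')$.
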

\begin{proof}
  Let $K = \bigsqcup_{i\in I}\mb J(i)$. We have
  \begin{multline}\nonumber
    s\in A\Leftrightarrow s\in \mb A(i)\mbox{ for some }i\in I\Leftrightarrow s\in \mb B(i|j)\mbox{ for some }i\in I\mbox{ and }j\in\mb J(i)\\\Leftrightarrow s\in \mb C(k)\mbox{ for some }k\in K.    
  \end{multline}
  Hence, $A = \bigcup_{k\in K}\mb C(k)$ and we only have to show that $\mb C$ is disjoint. Let $k,k'\in K$ be such that $k\neq k'$. There are $i,i'\in I$, $j\in \mb J(i)$, and $j'\in\mb J(i')$ such that $k = (i,j)$ and $k' = (i',j')$. By the definition of $\mb C$, we have
  \begin{equation}\label{Bkk'}
    \mb C(k) = \mb B(i|j),\quad \mb C(k') = \mb B(i'|j').
  \end{equation} 
  If $i\neq i'$, then by~(\ref{Bkk'}) we have $\mb C(k)\subset \mb A(i)$ and $\mb C(k')\subset \mb A(i')$. Since $\mb A$ is disjoint, we conclude that $\mb C(k)\cap\mb C(k') = \varnothing$. If $i = i'$, then $j \neq j'$ because otherwise we would have $k = k'$. As $\mb B(i)$ is disjoint, it follows from~(\ref{Bkk'}) that $\mb C(k)\cap\mb C(k') = \varnothing$. This means that $\mb C$ is disjoint.
\end{proof}

For every $x$ and $y$, we let $[x;y]$ denote the map on $\{1,2\}$ taking $1$ to $x$ and $2$ to $y$. The disjoint union $A\sqcup B$ of sets $A$ and $B$ is defined by the equality
\[
  A\sqcup B = \bigsqcup_{i\in\{1,2\}}[A;B](i) = (\{1\}\times A)\cup (\{2\}\times B). 
\] 

\begin{lemma}\label{p_disjpart2}
  Let $A$ and $B$ be sets such that $A\cap B = \varnothing$. Let $\fm{\mb A}{I}$ and $\fm{\mb B}{J}$ be partitions of $A$ and $B$ respectively. Let $\fm{\mb C}{I\sqcup J}$ be such that $\mb C(1,i) = \mb A(i)$ for every $i\in I$ and $\mb C(2,j) = \mb B(j)$ for every $j\in J$. Then $\mb C$ is a partition of $A\cup B$.
\end{lemma}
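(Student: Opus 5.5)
The most economical route is to treat this as a special case of Lemma~\ref{p_disjpart}, with index set $\{1,2\}$ and outer partition $[A;B]$. The alternative is to check the two defining properties of a partition directly, imitating the proof of Lemma~\ref{p_disjpart}; either is elementary.

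For the reduction, put $I' = \{1,2\}$. Since $A\cap B=\varnothing$, the family $\fm{[A;B]}{I'}$ is a partition of $A\cup B$. Next define $\fm{\mb J'}{I'}$ by $\mb J' = [I;J]$ and $\fm{\mb B'}{I'}$ by $\mb B' = [\mb A;\mb B]$. Then $\mb B'(1)=\mb A$ is a partition of $[A;B](1)=A$ indexed by $\mb J'(1)=I$, and $\mb B'(2)=\mb B$ is a partition of $B$ indexed by $J$.

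By definition, $\bigsqcup_{k\in I'}\mb J'(k) = (\{1\}\times I)\cup(\{2\}\times J) = I\sqcup J$. Moreover, $\mb C(1,i)=\mb A(i)=\mb B'(1|i)$ and $\mb C(2,j)=\mb B(j)=\mb B'(2|j)$. So $\mb C$ is exactly the family produced in Lemma~\ref{p_disjpart}, and that lemma gives that $\mb C$ is a partition of $A\cup B$.

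If a direct argument is preferred, proceed in two steps. First, show $A\cup B=\bigcup_{k\in I\sqcup J}\mb C(k)$: a point lies in $A$ iff it lies in some $\mb A(i)=\mb C(1,i)$, and similarly for $B$. Second, for disjointness take $k\neq k'$. If their first coordinates differ, the two sets lie in $A$ and $B$ respectively, so they are disjoint by $A\cap B=\varnothing$. Otherwise they are distinct members of the same partition $\mb A$ or $\mb B$, hence disjoint.

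No step is a real obstacle. The only point needing care is the bookkeeping identification of $I\sqcup J$ with $\bigsqcup_{k\in\{1,2\}}[I;J](k)$ and of $\mb C$ with the family of Lemma~\ref{p_disjpart}. This holds literally by the definition of $\sqcup$ given above.
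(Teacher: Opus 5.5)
Your proposal is correct and is essentially the paper's own proof: the paper applies Lemma~\ref{p_disjpart} with index set $\{1,2\}$, outer partition $[A;B]$ of $A\cup B$, inner index sets $[I;J]$ and inner partitions $[\mb A;\mb B]$, observing that $I\sqcup J=\bigsqcup_{i\in\{1,2\}}[I;J](i)$ and that $\mb C(i,j)=[\mb A;\mb B](i|j)$. Your direct two-step verification is also valid, but the paper does not use it.
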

\begin{proof}
  Let  $I' = \{1,2\}$, $A' = A\cup B$, $\mb A' = [A;B]$, $\mb J = [I;J]$, and  $\mb B' = [\mb A;\mb B]$. Then $\mb A'$ is a partition of $A'$, $I\sqcup J = \bigsqcup_{i\in I'} \mb J(i)$,  and $\mb C(i,j) = \mb B'(i|j)$ for every $i\in I'$ and $j\in\mb J(i)$. Hence, the statement follows from Lemma~\ref{p_disjpart} applied to $I'$, $A'$, $\mb A'$, and $\mb B'$ in place of $I$, $A$, $\mb A$, and $\mb B$ respectively.
\end{proof}

Throughout the paper, the words `monoid' and `group' always mean `Abelian monoid' and `Abelian group' respectively. Accordingly, we use additive notation and denote the binary operation and the neutral element of a monoid by $+$ and $0$ respectively. 

A monoid equipped with a topology that makes addition continuous is called a topological monoid. A group $\mf A$ equipped with a topology that makes addition and inversion continuous is called a topological group. For every topological group, there is a unique uniformity that induces its topology and makes addition and inversion uniformly continuous. We assume that every topological group is endowed with this uniformity. Thus, we can speak of (sequential) completeness of a topological group.

In the sequel, we use the abbreviations HTM and HTG for Hausdorff topological monoid and Hausdorff topological group respectively. 

Recall that a topological space is called regular if each of its points has a base of neighbourhoods consisting of closed sets. Every topological group is regular.
There are HTMs that are not regular (see Example~\ref{e_nonreg} and Remark~\ref{r_paratop}).

\section{\texorpdfstring{Additive and $\sigma$-additive functions}{Additive and σ-additive functions}}
\label{s_add}

Let $\mf A$ be a monoid. If $f$ is a finite $\mf A$-valued family, then we let $\finsum f$ denote the sum of $f(i)$ over all $i\in D_f$. Let $I$ be a finite set and $f$ be such that $f(i)\in \mf A$ for every $i\in I$. Then we put $\finsum_{i\in I} f(i) = \finsum\{f(i)\}_{i\in I}$. 

\begin{definition}\label{dd4}
  Let $\mf A$ be a monoid. A map $\mu$ is said to be an $\mf A$-valued additive function if $\im\mu\subset \mf A$ and 
  $\mu(A) = \finsum \mu\circ\mb A$ for every $A\in D_\mu$ and every finite partition $\mb A$ of $A$ into elements of $D_\mu$. A map $\mu$ is called an $\mf A$-valued content if $\mu$ is an $\mf A$-valued additive function and $D_\mu$ is a ring.
\end{definition}

Given a set $I$, we let $\mathcal{F}_I$ denote the set of all finite subsets of $I$ ordered by inclusion. For every map $f$, we put $\mathcal{F}[f] = \mathcal{F}_{D_f}$.

Let $\mf A$ be a monoid and $f$ be such that $f(i)\in\mf A$ for every $i\in I$. Let $\Phi\colon \mathcal{F}_I\to\mf A$ be such that $\Phi(J) = \finsum_{i\in J}f(i)$ for every $J\in \mathcal{F}_I$. Since $\mathcal{F}_I$ is a directed set, $\Phi$ is a net in $\mf A$ over $\mathcal{F}_I$. This net will be denoted by $\bigS_{i\in I}f(i)$. If $f$ is an $\mf A$-valued map, then we define the map $\bigS f\colon \mathcal{F}[f]\to\mf A$ by setting $\bigS f = \bigS_{i\in D_f}f(i)$.

Let $\mf A$ be an HTM and $f$ be such that $f(i)\in\mf A$ for every $i\in I$. If the net $\bigS_{i\in I}f(i)$ converges in $\mf A$ over $\mathcal{F}_I$, then we say that $f(i)$ is summable in $\mf A$ over $i\in I$ and write $\sm{\mf A}{i\in I}{f(i)}$. In this case, its limit is called the unconditional sum of $f(i)$ in $\mf A$ over $i\in I$ and is denoted by $\sum_{i\in I}f(i)$. If both $\finsum_{i\in I} f(i)$ and $\sum_{i\in I} f(i)$ are well defined (i.e., if $\mf A$ is an HTM and $I$ is finite), then $\finsum_{i\in I} f(i) = \sum_{i\in I} f(i)$.

Let $\mf A$ be an HTM and $f$ be an $\mf A$-valued map. If $\sm{\mf A}{i\in D_f}{f(i)}$, then we say that $f$ is summable in $\mf A$ and write $\smm{\mf A}{f}$. In this case, we put $\sum f = \sum_{i\in D_f}f(i)$. 

If the monoid $\mf A$ needs to be indicated explicitly, we write it as a superscript and use the notations $\finsum^{\mf A}f$, $\finsum_{i\in I}^{\mf A}f(i)$, $\bigS^{\mf A}f$, $\bigS_{i\in I}^{\mf A}f(i)$, $\sum^{\mf A}f$, and $\sum_{i\in I}^{\mf A}f(i)$.

\begin{definition}\label{dd5}
  Let $\mf A$ be an HTM. A map $\mu$ is said to be an $\mf A$-valued $\sigma$-additive function if $\im\mu\subset \mf A$ and $\mu(A) = \sum \mu\circ\mb A$ for every $A\in D_\mu$ and every countable partition $\mb A$ of $A$ into elements of $D_\mu$. A map $\mu$ is called an $\mf A$-valued $\sigma$-content (premeasure)  if $\mu$ is an $\mf A$-valued $\sigma$-additive function and $D_\mu$ is a ring (resp., a $\delta$-ring). 
\end{definition}

Every $\mf A$-valued $\sigma$-additive function is an $\mf A$-valued additive function.

Let $\mf A$ be a monoid. A monoid $\mf B$ is called a submonoid of $\mf A$ if $\mf B\subset \mf A$, $0_{\mf A}\in \mf B$, and the addition of $\mf B$ is the restriction of that of $\mf A$. If $\mf B$ is a submonoid of $\mf A$ and $f$ is a $\mf B$-valued map such that $D_f$ is finite, then $\finsum^{\mf B} f = \finsum^{\mf A} f$. Hence, we have the next statement.
\begin{proposition}\label{p_subsgradditive}
  Let $\mf B$ be a submonoid of $\mf A$. Then $\mu$ is a $\mf B$-valued additive function if and only if $\mu$ is an $\mf A$-valued additive function and $\im\mu\subset\mf B$.
\end{proposition}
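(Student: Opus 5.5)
The statement follows almost directly from Definition~\ref{dd4} together with the observation made just before the proposition: if $\mf B$ is a submonoid of $\mf A$ and $f$ is a $\mf B$-valued map with finite domain, then $\finsum^{\mf B} f = \finsum^{\mf A} f$. I would prove the two implications separately, keeping $A\in D_\mu$ and the finite partition $\mb A$ of $A$ into elements of $D_\mu$ arbitrary.

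\emph{Forward direction.} Assume $\mu$ is a $\mf B$-valued additive function. Then $\im\mu\subset\mf B\subset\mf A$, so $\mu$ is $\mf A$-valued and $\im\mu\subset\mf B$. For $A$ and $\mb A$ as above, $\mu\circ\mb A$ is a $\mf B$-valued finite family. Hence
\[
\mu(A)=\finsum^{\mf B}\mu\circ\mb A=\finsum^{\mf A}\mu\circ\mb A,
\]
so $\mu$ is an $\mf A$-valued additive function.

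\emph{Converse.} Assume $\mu$ is an $\mf A$-valued additive function with $\im\mu\subset\mf B$. Then $\mu$ is $\mf B$-valued, and for $A$ and $\mb A$ as above, $\mu\circ\mb A$ is $\mf B$-valued. The same equality of finite sums, read in the other direction, gives $\mu(A)=\finsum^{\mf B}\mu\circ\mb A$.

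The only step needing any care is the equality of finite sums itself. It follows by induction on the cardinality of the finite domain. The empty sum is $0_{\mf A}=0_{\mf B}$, which holds because $0_{\mf A}\in\mf B$ is part of the definition of a submonoid. The inductive step adds one more term using the addition of $\mf B$, which is the restriction of that of $\mf A$; the partial sums stay in $\mf B$ since $\mf B$ is closed under addition. This fact is already asserted in the paper, so I do not expect any real obstacle.
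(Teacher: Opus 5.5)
Your proof is correct and takes the paper's route: the paper derives the proposition directly from Definition~\ref{dd4} and the observation that $\finsum^{\mf B} f = \finsum^{\mf A} f$ for finite $\mf B$-valued families, which you apply in both directions. Your inductive justification of that equality matches the recursive definition of finite sums in Appendix~\ref{app_B}; it is also an instance of Proposition~\ref{p_finsum}(iii) applied to the inclusion homomorphism from $\mf B$ to $\mf A$.
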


Let $\mf A$ and $\mf B$ be topological monoids. We say that $\mf B$ is a topological submonoid of $\mf A$ if $\mf B$ is a submonoid of $\mf A$ and is a topological subspace of $\mf A$. The next statement follows immediately from Proposition~\ref{p_sumsubsgr}.

\begin{proposition}\label{p_subpremeasure}
Let $\mf A$ be an HTM and $\mf B$ be its topological submonoid. Then $\mu$ is a $\mf B$-valued $\sigma$-additive function if and only if $\mu$ is an $\mf A$-valued $\sigma$-additive function and $\im\mu\subset\mf B$.  
\end{proposition}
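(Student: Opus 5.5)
The statement reduces to a comparison of unconditional sums computed in $\mf B$ and in $\mf A$. The paper says it follows from Proposition~\ref{p_sumsubsgr} (in the appendix on summation). I take that result to say: if $\mf B$ is a topological submonoid of an HTM $\mf A$ and $f$ is $\mf B$-valued, then $f$ is summable in $\mf B$ if and only if it is summable in $\mf A$ with $\sum^{\mf A} f\in\mf B$, and in that case $\sum^{\mf B} f = \sum^{\mf A} f$. If I had to justify this directly, I would argue as follows. Finite sums agree in $\mf A$ and $\mf B$, so the nets $\bigS^{\mf B} f$ and $\bigS^{\mf A} f$ are the same map $\mathcal{F}[f]\to\mf B\subset\mf A$. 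Since $\mf B$ carries the subspace topology, a net in $\mf B$ converges in $\mf B$ to $b$ exactly when it converges in $\mf A$ to $b$. Limits are unique because $\mf A$ is Hausdorff, and $\mf B$ is Hausdorff as a subspace.

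\emph{Forward direction.} Let $\mu$ be $\mf B$-valued and $\sigma$-additive, so $\im\mu\subset\mf B$. Take $A\in D_\mu$ and a countable partition $\mb A$ of $A$ into elements of $D_\mu$. Then $\mu\circ\mb A$ is summable in $\mf B$ with sum $\mu(A)$. By the fact above it is also summable in $\mf A$, with the same sum. Hence $\mu$ is $\mf A$-valued and $\sigma$-additive.

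\emph{Converse.} Let $\mu$ be $\mf A$-valued and $\sigma$-additive with $\im\mu\subset\mf B$. For $A$ and $\mb A$ as before, the family $\mu\circ\mb A$ is $\mf B$-valued and summable in $\mf A$ with sum $\mu(A)$. This sum lies in $\mf B$ because $\im\mu\subset\mf B$. By the fact above, $\mu\circ\mb A$ is summable in $\mf B$ with sum $\mu(A)$, so $\mu$ is $\mf B$-valued and $\sigma$-additive.

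The only delicate point is in the converse: $\mf B$ need not be closed in $\mf A$, so the limit of a $\mf B$-valued net need not lie in $\mf B$. Here this causes no difficulty, because the limit is $\mu(A)$ itself, which lies in $\mf B$ by hypothesis. Everything else is bookkeeping.
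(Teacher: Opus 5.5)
Your proof is correct and follows the paper's route. The paper simply states that the result follows immediately from Proposition~\ref{p_sumsubsgr} applied to each family $\mu\circ\mb A$, exactly as you do, with $\mu(A)\in\mf B$ supplying the membership hypothesis needed for the converse. Your direct justification of that proposition, via identical finite-sum nets and the subspace topology, is also sound.
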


Recall that a subset $X$ of a topological space $\mf A$ is called sequentially closed in $\mf A$ if $x\in X$ whenever there is a sequence of elements of $X$ that converges to $x$ in $\mf A$. 

\begin{proposition}\label{p_Im0}
  Let $\mf A$ be an HTM, $X$ be a sequentially closed subset of $\mf A$, and $\mu$ be an $\mf A$-valued $\sigma$-additive function. Let $\mathcal{K}$ be a ring such that $\mathcal{K}\subset D_\mu\subset\sigma_0(\mathcal{K})$ and $\im\mu|_{\mathcal{K}}\subset X$. Then $\im \mu\subset X$. 
\end{proposition}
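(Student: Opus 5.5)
Take any $A\in D_\mu$. Since $D_\mu\subset\sigma_0(\mathcal{K})$, there is a countable family $\mb B\colon\N\to\mathcal{K}$ (possibly with repeated sets or empty sets, so we may index by $\N$) with $A=\bigcup_{n\in\N}\mb B(n)$. The plan is to write $\mu(A)$ as the limit of a sequence of values of $\mu$ on elements of $\mathcal{K}$, and then use sequential closedness of $X$.

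First disjointify inside $\mathcal{K}$: put $\mb C(n)=\mb B(n)\setminus\bigcup_{k<n}\mb B(k)$. Because $\mathcal{K}$ is a ring, each $\mb C(n)\in\mathcal{K}\subset D_\mu$, and $\mb C$ is a countable partition of $A$ into elements of $D_\mu$. By $\sigma$-additivity of $\mu$ we have $\mu(A)=\sum\mu\circ\mb C$, i.e.\ the net $\bigS(\mu\circ\mb C)$ converges to $\mu(A)$ over $\mathcal{F}_\N$.

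Next I extract a sequence. The sets $J_n=\{1,\dots,n\}$ form a cofinal increasing sequence in $\mathcal{F}_\N$, so the subnet $n\mapsto\finsum_{k\le n}\mu(\mb C(k))$ also converges to $\mu(A)$. To see that each term lies in $X$, set $E_n=\bigcup_{k\le n}\mb C(k)=\bigcup_{k\le n}\mb B(k)\in\mathcal{K}$. Since $\{\mb C(k)\}_{k\le n}$ is a finite partition of $E_n$ into elements of $D_\mu$, additivity gives $\mu(E_n)=\finsum_{k\le n}\mu(\mb C(k))$. By hypothesis $\mu(E_n)\in X$. Thus $\mu(A)$ is the limit of the sequence $\mu(E_n)$ in $X$, and sequential closedness of $X$ yields $\mu(A)\in X$.

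There is no real obstacle here. The only points needing care are bookkeeping ones: reindexing a possibly finite or empty countable family by $\N$, which can be done by padding with $\varnothing\in\mathcal{K}$, and the fact that restricting a convergent net over $\mathcal{F}_\N$ to the cofinal chain $J_n$ still converges. The latter is just the subnet property in a Hausdorff space, so the limit is still $\mu(A)$.
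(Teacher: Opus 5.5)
Your proof is correct and follows the paper's argument. Partition $A$ into elements of $\mathcal{K}$, note that each finite partial sum equals $\mu$ of a set in $\mathcal{K}$ and so lies in $X$, and pass to the limit using sequential closedness. The only difference is that you make explicit, by indexing over $\N$ and restricting to the cofinal chain $\{1,\dots,n\}$, the extraction of a convergent sequence that the paper leaves implicit in the phrase ``since $I$ is countable.''
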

\begin{proof}
  Let $A\in D_\mu$. Since $\mathcal{K}$ is a ring and $A\in \sigma_0(\mathcal{K})$, there is a countable partition $\mb A\colon I\to \mathcal{K}$ of $A$. Let $K\in \mathcal{F}_I$. Then the set $B = \bigcup_{i\in K}\mb A(i)$ belongs to $\mathcal{K}$ because $\mathcal{K}$ is a ring. By the additivity of $\mu$, this implies that $\finsum_{i\in K}\mu(\mb A(i))$ is equal to $\mu(B)$ and, hence, belongs to $X$. Since $X$ is sequentially closed in $\mf A$, $I$ is countable, and
  \[
    \mu(A) = \sum_{i\in I} \mu(\mb A(i)) = \lim_{K\in \mathcal{F}_I}\finsum_{i\in K}\mu(\mb A(i))
  \]
   by the $\sigma$-additivity of $\mu$, we conclude that $\mu(A)\in X$. 
\end{proof}

\begin{proposition}\label{p_un0}
  Let $\mf A$ be an HTM and $\mu_1$ and $\mu_2$ be $\mf A$-valued $\sigma$-additive functions. Let $\mathcal{K}$ be a ring such that $\mu_1$ and $\mu_2$ coincide on $\mathcal{K}$. Then $\mu_1$ and $\mu_2$ coincide on $D_{\mu_1}\cap D_{\mu_2}\cap\sigma_0(\mathcal{K})$. 
\end{proposition}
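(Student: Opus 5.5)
The argument mirrors the proof of Proposition~\ref{p_Im0}: represent a set of $\sigma_0(\mathcal{K})$ as a countable disjoint union of elements of $\mathcal{K}$ and use $\sigma$-additivity of both functions on that partition.

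Let $A\in D_{\mu_1}\cap D_{\mu_2}\cap\sigma_0(\mathcal{K})$. Since $A\in\sigma_0(\mathcal{K})$, there is a countable family $\fm{\mb B}{I}$ with values in $\mathcal{K}$ such that $A=\bigcup_{i\in I}\mb B(i)$. Because $\mathcal{K}$ is a ring, the standard disjointification produces a countable partition $\mb A\colon I\to\mathcal{K}$ of $A$. For this, well-order $I$ as a subset of $\N$ and put $\mb A(i)=\mb B(i)\setminus\bigcup_{j<i}\mb B(j)$. Each such set lies in $\mathcal{K}$, since only finitely many $j<i$ occur and $\mathcal{K}$ is closed under finite unions and differences. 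This step was already used implicitly in the proof of Proposition~\ref{p_Im0}.

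Next, $\mathcal{K}\subset D_{\mu_1}\cap D_{\mu_2}$ because the two maps coincide on $\mathcal{K}$. Hence $\mb A$ is a countable partition of $A$ into elements of $D_{\mu_1}$, and also into elements of $D_{\mu_2}$. Definition~\ref{dd5} then gives
\[
\mu_1(A)=\sum \mu_1\circ\mb A,\qquad \mu_2(A)=\sum\mu_2\circ\mb A.
\]
Since $\mu_1\circ\mb A=\mu_2\circ\mb A$ as families (because $\im\mb A\subset\mathcal{K}$), the two nets $\bigS\mu_1\circ\mb A$ and $\bigS\mu_2\circ\mb A$ are identical. Limits in the Hausdorff space $\mf A$ are unique, so $\mu_1(A)=\mu_2(A)$.

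The only step needing any care is producing a partition into elements of $\mathcal{K}$ rather than merely a countable cover. The disjointification above handles this using only the ring axioms. Nothing else requires more than unwinding the definitions.
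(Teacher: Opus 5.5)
Your proposal is correct and follows essentially the same route as the paper's proof: take a countable partition of $A$ into elements of the ring $\mathcal{K}$, observe that $\mu_1\circ\mb A=\mu_2\circ\mb A$, and conclude from $\sigma$-additivity that $\mu_1(A)=\sum\mu_1\circ\mb A=\sum\mu_2\circ\mb A=\mu_2(A)$. The only difference is that you spell out the disjointification step, which the paper takes for granted.
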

\begin{proof}
  Let $A\in D_{\mu_1}\cap D_{\mu_2}\cap\sigma_0(\mathcal{K})$. Since $\mathcal{K}$ is a ring and $A\in \sigma_0(\mathcal{K})$, there is a countable partition $\mb A$ of $A$ into elements of $\mathcal{K}$. As $\mu_1$ and $\mu_2$ coincide on $\mathcal{K}$, we have $\mu_1\circ\mb A = \mu_2\circ\mb A$. By the $\sigma$-additivity of $\mu_1$ and $\mu_2$, we obtain $\mu_1(A) = \sum\mu_1\circ \mb A = \sum\mu_2\circ\mb A = \mu_2(A)$.
\end{proof}

If $\mathcal{K}$ is a $\delta$-ring, then $\sigma_0(\mathcal{K}) = \sigma(\mathcal{K})$ by Lemma~\ref{ll1}(ii). Hence, Propositions~\ref{p_Im0} and~\ref{p_un0} can be modified as follows.

\begin{proposition}\label{p_Im0delta}
  Let $\mf A$, $X$, and $\mu$ be as in Proposition~\textup{\ref{p_Im0}}. Let $\mathcal{K}$ be a $\delta$-ring such that $\mathcal{K}\subset D_\mu\subset\sigma(\mathcal{K})$ and $\im\mu|_{\mathcal{K}}\subset X$. Then $\im \mu\subset X$. 
\end{proposition}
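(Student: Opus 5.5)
This is a direct reduction to Proposition~\ref{p_Im0}, and we check its hypotheses one by one. Here $\mf A$, $X$, and $\mu$ are as in Proposition~\ref{p_Im0}: $\mf A$ is an HTM, $X$ is a sequentially closed subset of $\mf A$, and $\mu$ is an $\mf A$-valued $\sigma$-additive function. The set $\mathcal{K}$ is a $\delta$-ring with $\mathcal{K}\subset D_\mu\subset\sigma(\mathcal{K})$ and $\im\mu|_{\mathcal{K}}\subset X$.

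First, every $\delta$-ring is in particular a ring, so $\mathcal{K}$ satisfies the ring requirement of Proposition~\ref{p_Im0}. Second, we replace $\sigma(\mathcal{K})$ by $\sigma_0(\mathcal{K})$. Since $\mathcal{K}$ is a $\delta$-ring, Lemma~\ref{ll1}(ii) gives $\sigma(\mathcal{K})=\sigma_0(\mathcal{K})$. The assumed inclusion $D_\mu\subset\sigma(\mathcal{K})$ therefore becomes $\mathcal{K}\subset D_\mu\subset\sigma_0(\mathcal{K})$, which is exactly the hypothesis of Proposition~\ref{p_Im0}.

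All remaining hypotheses carry over unchanged. Applying Proposition~\ref{p_Im0} then yields $\im\mu\subset X$.

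There is no real obstacle. The only substantive input is the identity $\sigma(\mathcal{K})=\sigma_0(\mathcal{K})$ for $\delta$-rings, which is already available as Lemma~\ref{ll1}(ii).
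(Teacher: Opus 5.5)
Your proof is correct and matches the paper's own argument: since $\mathcal{K}$ is a $\delta$-ring (in particular a ring), Lemma~\ref{ll1}(ii) gives $\sigma(\mathcal{K})=\sigma_0(\mathcal{K})$, and Proposition~\ref{p_Im0} then applies directly. The paper states this same one-line reduction just before the statement.
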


\begin{proposition}\label{p_un0delta}
  Let $\mf A$, $\mu_1$, and $\mu_2$ be as in Proposition~\textup{\ref{p_un0}}. Let $\mathcal{K}$ be a $\delta$-ring such that $\mu_1$ and $\mu_2$ coincide on $\mathcal{K}$. Then $\mu_1$ and $\mu_2$ coincide on $D_{\mu_1}\cap D_{\mu_2}\cap\sigma(\mathcal{K})$. 
\end{proposition}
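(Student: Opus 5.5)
The statement is Proposition~\ref{p_un0delta}. It follows from Proposition~\ref{p_un0} once we know that $\sigma_0(\mathcal{K})$ and $\sigma(\mathcal{K})$ coincide when $\mathcal{K}$ is a $\delta$-ring.

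First, every $\delta$-ring is in particular a ring. So $\mathcal{K}$ satisfies the hypothesis on the coinciding set in Proposition~\ref{p_un0}. Applying that proposition to $\mu_1$, $\mu_2$, and $\mathcal{K}$, we find that $\mu_1$ and $\mu_2$ coincide on $D_{\mu_1}\cap D_{\mu_2}\cap\sigma_0(\mathcal{K})$.

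Second, since $\mathcal{K}$ is a $\delta$-ring, Lemma~\ref{ll1}(ii) gives $\sigma(\mathcal{K})=\sigma_0(\mathcal{K})$. Substituting this into the conclusion of the first step gives exactly the claim: $\mu_1$ and $\mu_2$ coincide on $D_{\mu_1}\cap D_{\mu_2}\cap\sigma(\mathcal{K})$.

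There is no real obstacle here. All the analytic content sits in Proposition~\ref{p_un0}, whose proof partitions a set $A\in\sigma_0(\mathcal{K})$ countably into elements of $\mathcal{K}$ and uses $\sigma$-additivity of both functions. The only point to check is that Lemma~\ref{ll1}(ii) applies, and it does because $\mathcal{K}$ is a $\delta$-ring by hypothesis.
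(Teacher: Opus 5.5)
Your proposal is correct and matches the paper's argument exactly: the paper likewise obtains this proposition from Proposition~\ref{p_un0} (a $\delta$-ring being in particular a ring) together with the equality $\sigma_0(\mathcal{K})=\sigma(\mathcal{K})$ from Lemma~\ref{ll1}(ii).
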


\section{Semi-rings}
\label{s_semi}

\begin{definition}\label{dd3}
  A set $\mathcal Q$ is called a semi-ring if
  \begin{enumerate}
  \item $A\cap B\in\mathcal Q$ for every $A,B\in\mathcal Q$,
  \item for every $A,B\in\mathcal Q$, there is a finite disjoint family $\mb C\colon I\to \mathcal Q$ such that $A\setminus B=\bigcup_{i\in I} \mb C(i)$.
  \end{enumerate}
\end{definition}

A proof of the next statement can be found, e.g., in~\cite[Sec.~4.3]{KolmogorovFomin1975}. 

\begin{proposition}\label{p_semiring}
  Let $\mathcal Q$ be a semi-ring. Then
  \begin{equation}\label{kappa}
    \kappa(\mathcal Q) = \set{A}{\mbox{there is a finite partition of $A$ into elements of $\mathcal Q$}}.
  \end{equation}
\end{proposition}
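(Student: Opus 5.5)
Let $\mathcal R$ denote the right-hand side of~(\ref{kappa}). The plan is to prove the two inclusions $\mathcal R\subset\kappa(\mathcal Q)$ and $\kappa(\mathcal Q)\subset\mathcal R$ separately.

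The inclusion $\mathcal R\subset\kappa(\mathcal Q)$ is immediate. $\kappa(\mathcal Q)$ is a ring containing $\mathcal Q$, and rings are closed under finite unions. Hence every finite union of elements of $\mathcal Q$ lies in $\kappa(\mathcal Q)$. This includes the empty union $\varnothing$.

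For the reverse inclusion, it suffices to show that $\mathcal R$ is a ring. Indeed, clearly $\mathcal Q\subset\mathcal R$ (take one-element partitions), and then minimality of $\kappa(\mathcal Q)$ gives $\kappa(\mathcal Q)\subset\mathcal R$. The set $\mathcal R$ is nonempty because $\varnothing\in\mathcal R$ via the empty partition. I would verify the ring axioms in the following order.

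\begin{enumerate}
\item[(a)] $\mathcal R$ is $\cap$-closed. If $A=\bigcup_i\mb A(i)$ and $B=\bigcup_j\mb B(j)$ are finite partitions into elements of $\mathcal Q$, then the sets $\mb A(i)\cap\mb B(j)\in\mathcal Q$, indexed by $I\times J$, form a finite partition of $A\cap B$.
\item[(b)] $A\setminus B\in\mathcal R$ for $A\in\mathcal Q$ and $B\in\mathcal R$. This is the main obstacle; I would proceed by induction on the size of the partition $\mb B\colon J\to\mathcal Q$ of $B$. Write $A\setminus B=\bigcap_{j\in J}(A\setminus\mb B(j))$. Each $A\setminus\mb B(j)$ lies in $\mathcal R$ by Definition~\ref{dd3}(2). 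A finite nonempty intersection of elements of $\mathcal R$ is in $\mathcal R$ by (a) and induction. The case $J=\varnothing$ gives $A\setminus B=A\in\mathcal R$. This avoids any delicate refinement argument.
\item[(c)] $A\setminus B\in\mathcal R$ for $A,B\in\mathcal R$. Partition $A$ by $\mb A\colon I\to\mathcal Q$. Then $A\setminus B$ is the disjoint union of the sets $\mb A(i)\setminus B$, each of which lies in $\mathcal R$ by (b). Concatenating their partitions gives a finite partition of $A\setminus B$, by Lemma~\ref{p_disjpart}.
\item[(d)] $A\cup B\in\mathcal R$ for $A,B\in\mathcal R$. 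Write $A\cup B=A\cup(B\setminus A)$, a disjoint union. By (c), $B\setminus A\in\mathcal R$, and Lemma~\ref{p_disjpart2} combines the two partitions into one finite partition of $A\cup B$.
\end{enumerate}

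Finiteness of every index set involved (such as $I\times J$ and $\bigsqcup_{i}\mb J(i)$) is routine. Together, (a)–(d) show that $\mathcal R$ is a ring, which completes the proof.
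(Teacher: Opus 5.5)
Your proof is correct and complete. It also fits the paper's relaxed conventions: the empty partition puts $\varnothing$ into $\mathcal R$, and you treat $J=\varnothing$ separately, so nothing uses $\varnothing\in\mathcal Q$ (even $\mathcal Q=\varnothing$ works).

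The paper gives no proof of its own but refers to \cite[Sec.~4.3]{KolmogorovFomin1975}, where the route is different. There one first proves, by induction on $n$, a completion lemma: pairwise disjoint $A_1,\dots,A_n\in\mathcal Q$ contained in some $A\in\mathcal Q$ can be extended by further disjoint elements of $\mathcal Q$ to a finite partition of $A$. Then, for partitions $A=\bigcup_i A_i$ and $B=\bigcup_j B_j$, this lemma is applied to the sets $A_i\cap B_j$ inside each $A_i$ and inside each $B_j$. This exhibits both $A\cap B$ and the symmetric difference $(A\setminus B)\cup(B\setminus A)$ as finite disjoint unions, which is what a ring means in that book.

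Your step (b) replaces the inductive completion lemma. Because $\mathcal R$ is obviously $\cap$-closed, the identity $A\setminus B=\bigcap_j(A\setminus\mb B(j))$ reduces everything directly to axiom (2) of Definition~\ref{dd3}. The completion lemma then becomes a corollary: append a partition of $A\setminus\bigcup_{i\le n}A_i$ to $A_1,\dots,A_n$ via Lemma~\ref{p_disjpart2}.

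So your argument is shorter and needs only the paper's own bookkeeping Lemmas~\ref{p_disjpart} and~\ref{p_disjpart2}. The textbook route states the completion lemma as a separate tool, but your (b) yields it in one line anyway.
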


\begin{remark}
A notion equivalent to that of Definition~\ref{dd3} already appears in
Kolmogorov's 1930 paper~\cite{Kolmogoroff1930}, where it is called a
$Z$-system (decomposable system). Proposition~\ref{p_semiring} is also
proved there. A more restrictive variant, called a half-ring, was subsequently used in von~Neumann's
lectures~\cite[Sec.~10.1]{NeumannLectures} and was adopted by Halmos~\cite{Halmos} under the name of a semi-ring. The standard modern definition of a semi-ring essentially coincides with Kolmogorov's original definition.
\end{remark}

\begin{remark}
  Definition~\ref{dd3} is slightly different from the standard one (see, e.g., Definition~2 in~\cite[Sec.~4.2]{KolmogorovFomin1975}) because it does not require $\varnothing\in \mathcal{Q}$ and allows $\mb C$ to be the empty family. In particular, $\varnothing$ and all one-element sets are semi-rings according to Definition~\ref{dd3}. We note that $\varnothing$ always belongs to the set in the right-hand side of~(\ref{kappa}) because the empty family is its partition into elements of $\mathcal{Q}$.
  This modification of the standard definition does not essentially affect the proof of Proposition~\ref{p_semiring}.
\end{remark}

\begin{proposition}\label{l_semi}
  Let $\mf A$ be a monoid and $\mu$ be an $\mf A$-valued additive function such that $D_\mu$ is a semi-ring. There is a unique $\mf A$-valued additive function $\nu$ such that $D_\nu= \kappa(D_\mu)$ and $\mu = \nu|_{D_\mu}$. If $\mf A$ is an HTM and $\mu$ is $\sigma$-additive, then so is $\nu$.
\end{proposition}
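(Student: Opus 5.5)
We would follow the classical route: define $\nu$ on $\kappa(D_\mu)$ by partitions, check well-definedness and additivity through common refinements, and obtain $\sigma$-additivity by regrouping a doubly indexed countable family. Throughout, $\mf A$ is only a monoid, so no subtraction is used.

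\emph{Definition and well-definedness.} By Proposition~\ref{p_semiring}, every $A\in\kappa(D_\mu)$ has a finite partition $\mb A\colon I\to D_\mu$. We put $\nu(A)=\finsum\mu\circ\mb A$. Let $\mb B\colon J\to D_\mu$ be another finite partition of $A$. Since $D_\mu$ is $\cap$-closed, the family $(i,j)\mapsto \mb A(i)\cap\mb B(j)$ on $I\times J$ consists of elements of $D_\mu$. For fixed $i$, the family $j\mapsto\mb A(i)\cap\mb B(j)$ is a finite partition of $\mb A(i)\in D_\mu$ into elements of $D_\mu$. Additivity of $\mu$ on $D_\mu$ gives $\mu(\mb A(i))=\finsum_j\mu(\mb A(i)\cap\mb B(j))$. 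Summing over $i$ and using commutativity and associativity of finite sums, then arguing symmetrically in $j$, we get $\finsum\mu\circ\mb A=\finsum\mu\circ\mb B$. Taking the one-element partition shows $\nu|_{D_\mu}=\mu$.

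\emph{Additivity and uniqueness.} Let $A\in\kappa(D_\mu)$ and let $\mb C\colon K\to\kappa(D_\mu)$ be a finite partition of $A$. Choose a finite partition of each $\mb C(k)$ into elements of $D_\mu$. By Lemma~\ref{p_disjpart}, these assemble into a finite partition of $A$ indexed by a disjoint union. Computing $\nu(A)$ with this assembled partition and regrouping the finite sum gives $\nu(A)=\finsum\nu\circ\mb C$. Uniqueness is immediate: any additive extension $\nu'$ must satisfy $\nu'(A)=\finsum\mu\circ\mb A$ for any finite $D_\mu$-partition $\mb A$ of $A$.

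\emph{$\sigma$-additivity.} This is the main obstacle. Let $A\in\kappa(D_\mu)$ and let $\mb C\colon K\to\kappa(D_\mu)$ be a countable partition of $A$. Fix a finite partition $\mb A\colon I\to D_\mu$ of $A$ and finite partitions $\mb C(k)=\bigcup_{j\in J_k}\mb B(k|j)$ into elements of $D_\mu$. Then for each $i$, the sets $\mb A(i)\cap\mb B(k|j)$ with $(k,j)\in\bigsqcup_k J_k$ lie in $D_\mu$ and form a countable partition of $\mb A(i)$ (Lemma~\ref{p_disjpart}). Hence $\sigma$-additivity of $\mu$ gives
\[
\mu(\mb A(i))=\sum_{(k,j)}\mu(\mb A(i)\cap\mb B(k|j)).
\]
Finite sums of convergent nets converge to the sum of the limits by continuity of addition, so $\nu(A)$ is the unconditional sum over the index set $I\times\bigsqcup_kJ_k$ of $\mu(\mb A(i)\cap\mb B(k|j))$.

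It then remains to regroup this into the sum over $k$ of the finite blocks $\nu(\mb C(k))=\finsum_{i,j}\mu(\mb A(i)\cap\mb B(k|j))$. The grouping is into finite blocks, and a subnet argument does this: the net over finite subsets of $K$ is, up to identification, cofinal among the finite subsets of the big index set that are unions of whole blocks, and the restriction of a convergent net to a cofinal subset converges to the same limit. This step needs care, since in a monoid that is not regular, arbitrary (infinite-block) regrouping can fail. Here only finite blocks occur, so the argument uses only the convergence of the net restricted to a cofinal subfamily. This presumably corresponds to the appendix material on summation in monoids (Appendix~\ref{app_B}) that the paper invokes for non-regular HTMs.
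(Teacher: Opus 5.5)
Your proposal is correct and follows essentially the paper's proof in Appendix~\ref{app_C}. Well-definedness and additivity go through common refinements and finite regrouping (Propositions~\ref{p_fubini2} and~\ref{p_findisjointsum}). For $\sigma$-additivity, you intersect with a fixed finite $D_\mu$-partition of $A$, combine the finitely many resulting unconditional sums (Proposition~\ref{p_fubiniHTM}), and regroup into finite blocks by the cofinality argument of Proposition~\ref{p_sumunionfin}, which needs no regularity. The only difference is cosmetic: you regroup directly into the blocks $I\times J_k$ in one step, whereas the paper first sums over the refined partition $\mb C$ and then regroups via Proposition~\ref{p_disjointsum}.
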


For scalar-valued $\mu$, a proof of this statement can be found in~\cite[Sec.~10.1]{NeumannLectures} and~\cite[Secs.~26.1 and~26.2]{KolmogorovFomin1975}. 
While the treatment of the general case is essentially the same, the proof involves an interchange of the order of summation, which is, in general, a subtle operation for HTMs (see Propositions~\ref{p_fubiniHTM} and~\ref{p_fubiniRTS} and Example~\ref{e_nonreg}). For this reason, we give a proof of this result in Appendix~\ref{app_C}.

\begin{proposition}\label{ll6a}
  Let $\fm{\Theta}{\Lambda}$ be a family of sets such that $A\cap B\in \Theta(\lambda)\cap\Theta(\lambda')$ for every $\lambda,\lambda'\in \Lambda$, $A\in\Theta(\lambda)$, and $B\in\Theta(\lambda')$. Let $\mathcal Q = \bigcup_{\lambda\in \Lambda} \Theta(\lambda)$.
  \begin{enumerate}
  \item [(i)] Let $\Theta(\lambda)$ be a semi-ring for every $\lambda\in \Lambda$. Then $\mathcal{Q}$ is a semi-ring.
  \item [(ii)] Let $\Theta(\lambda)$ be a $\delta$-ring for every $\lambda\in \Lambda$. Then $\kappa(\mathcal Q)$ is a $\delta$-ring.
  \item [(iii)] Let $\mf A$ be an HTM. A map $\fm{\eta}{\mathcal{Q}}$ is an $\mf A$-valued $\sigma$-additive function if and only if $\eta|_{\Theta(\lambda)}$ is an $\mf A$-valued $\sigma$-additive function for every $\lambda\in \Lambda$. 
  \end{enumerate}
\end{proposition}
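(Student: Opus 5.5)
The three parts rest on one observation. Any finitely many elements of $\mathcal Q$ lie in a common member of $\Theta$ after intersection. Precisely, if $A\in\Theta(\lambda)$ and $B\in\Theta(\lambda')$, then the hypothesis gives $A\cap B\in\Theta(\lambda)\cap\Theta(\lambda')$. Moreover, whenever a set $C\subset A$ lies in $\mathcal Q$, say $C\in\Theta(\lambda')$, we have $C=C\cap A\in\Theta(\lambda)$. So every element of $\mathcal Q$ contained in $A\in\Theta(\lambda)$ already belongs to $\Theta(\lambda)$. This \emph{heredity} property will be used repeatedly.

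For (i), closure of $\mathcal Q$ under intersections is immediate from the hypothesis. For the difference condition, take $A\in\Theta(\lambda)$ and $B\in\Theta(\lambda')$. Then $A\setminus B=A\setminus(A\cap B)$, and both $A$ and $A\cap B$ lie in $\Theta(\lambda)$. Since $\Theta(\lambda)$ is a semi-ring, this set is a finite disjoint union of elements of $\Theta(\lambda)\subset\mathcal Q$.

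For (ii), first note that each $\delta$-ring is a semi-ring, so $\mathcal Q$ is a semi-ring by (i). By Proposition~\ref{p_semiring}, $\kappa(\mathcal Q)$ then consists of finite disjoint unions of elements of $\mathcal Q$, and it is a ring. It remains to check closure under countable intersections. Let $\mb A\colon I\to\kappa(\mathcal Q)$ be a nonempty countable family and fix $i_0\in I$. Write $\mb A(i_0)=\bigcup_{k=1}^n C_k$ with $C_k\in\Theta(\lambda_k)$, so that $\bigcap_i\mb A(i)=\bigcup_k\bigl(C_k\cap\bigcap_i\mb A(i)\bigr)$. It therefore suffices to show $D_k:=C_k\cap\bigcap_i\mb A(i)\in\Theta(\lambda_k)$. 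Each $C_k\cap\mb A(i)$ is a finite union of sets $C_k\cap E$ with $E\in\mathcal Q$, and these lie in $\Theta(\lambda_k)$ by heredity. Hence $C_k\cap\mb A(i)\in\Theta(\lambda_k)$, because $\Theta(\lambda_k)$ is a ring. Then $D_k=\bigcap_i (C_k\cap\mb A(i))$ is a countable intersection in the $\delta$-ring $\Theta(\lambda_k)$. So $D_k\in\Theta(\lambda_k)$, and $\bigcap_i\mb A(i)$ is a finite union of elements of $\mathcal Q$, hence lies in $\kappa(\mathcal Q)$.

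For (iii), the `only if' direction is trivial, since a restriction of a $\sigma$-additive function is $\sigma$-additive. For `if', let $A\in\mathcal Q$, say $A\in\Theta(\lambda)$, and let $\mb A$ be a countable partition of $A$ into elements of $\mathcal Q$. Each $\mb A(i)\subset A$, so by heredity $\mb A(i)\in\Theta(\lambda)$. Thus $\mb A$ is a partition into elements of $D_{\eta|_{\Theta(\lambda)}}$, and $\eta(A)=\sum\eta\circ\mb A$ follows from the $\sigma$-additivity of $\eta|_{\Theta(\lambda)}$.

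The only step needing care is (ii): one has to reduce countable intersections in $\kappa(\mathcal Q)$ to countable intersections inside a single $\Theta(\lambda_k)$. Fixing one index $i_0$ and using heredity is what makes this reduction work.
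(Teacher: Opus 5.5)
Your proposal is correct and follows essentially the same route as the paper's proof: in (i) you rewrite $A\setminus B$ as $A\setminus(A\cap B)$ inside one $\Theta(\lambda)$, in (ii) you fix $i_0$, decompose $\mb A(i_0)$ and push each countable intersection into a single $\delta$-ring $\Theta(\lambda_k)$, and in (iii) you use $\mb A(i)=A\cap\mb A(i)\in\Theta(\lambda)$. Your ``heredity'' remark simply packages the identity $C=C\cap A$ that the paper invokes inline; the fact $\im\eta\subset\mf A$ in (iii), which you leave implicit, is equally immediate.
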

\begin{proof}
  (i) Let $A, B\in \mathcal Q$. Then $A\in \Theta(\lambda)$ and $B\in\Theta(\lambda')$ for some $\lambda,\lambda'\in \Lambda$ and, hence, $A\cap B\in \Theta(\lambda)$. As $A\setminus B=A\setminus(A\cap B)$ and $\Theta(\lambda)$ is a semi-ring, there is a finite disjoint family $\mb C\colon I\to \Theta(\lambda)$ such that $A\setminus B = \bigcup_{i\in I}\mb C(i)$. Since $\Theta(\lambda)\subset \mathcal{Q}$, we have $A\cap B\in \mathcal{Q}$ and $\mb C\colon I\to \mathcal{Q}$. Thus, $\mathcal Q$ is a semi-ring.
  \par\medskip\noindent
  (ii) We have to show that $\kappa(\mathcal Q)$ is closed under countable intersections. Let $\mb A\colon I\to \kappa(\mathcal Q)$ be a nonempty countable family and $A=\bigcap_{i\in I}\mb A(i)$. Let  $i_0\in I$. By~(i) and Proposition~\ref{p_semiring}, there is a finite family $\mb B\colon J\to \mathcal Q$ such that $\mb A(i_0) = \bigcup_{j\in J} \mb B(j)$. We therefore obtain $A = \bigcup_{j\in J} \mb C(j)$, where $\fm{\mb C}{J}$ is such that $\mb C(j) = \bigcap_{i\in I} (\mb A(i)\cap \mb B(j))$ for every $j\in J$.
  Let $j\in J$ and $\lambda\in\Lambda$ be such that $\mb B(j)\in\Theta(\lambda)$. Let $i\in I$. By~(i) and Proposition~\ref{p_semiring}, there is a finite family $\mb B'\colon K\to \mathcal Q$ such that $\mb A(i) = \bigcup_{k\in K} \mb B'(k)$. Since $\mb B'(k)\cap\mb B(j)\in \Theta(\lambda)$ for every $k\in K$ and $\Theta(\lambda)$ is a ring, we conclude that  $\mb A(i)\cap\mb B(j)\in\Theta(\lambda)$. As $\Theta(\lambda)$ is closed under countable intersections, this implies that $\mb C(j)\in \Theta(\lambda)$. Thus, $\mb C(j)\in \mathcal{Q}$ for all $j\in J$ and, hence, $A\in \kappa(\mathcal Q)$.
  \par\medskip\noindent
  (iii) We prove the `if' part, the `only if' part being obvious. Clearly, $\im\eta\subset\mf A$. Let $A\in \mathcal{Q}$ and $\mb A\colon I\to \mathcal{Q}$ be a countable partition of $A$. Let $\lambda\in \Lambda$ be such that $A\in \Theta(\lambda)$ and $\eta' = \eta|_{\Theta(\lambda)}$. Since $\mb A(i) = A\cap \mb A(i)$, we have $\mb A(i)\in \Theta(\lambda)$ for every $i\in I$. Hence, $\eta\circ \mb A = \eta'\circ\mb A$ and the $\sigma$-additivity of $\eta'$ implies that $\eta(A) = \eta'(A) = \sum \eta'\circ\mb A = \sum \eta\circ\mb A$. This means that $\eta$ is $\sigma$-additive.
\end{proof}

\section{Measures}
\label{s_meas}

For a map $\mu$, the $\sigma$-ring $\sigma(D_\mu)$ is called its host $\sigma$-ring and is denoted by $\q{\mu}$.

\begin{definition}\label{d_sim}
  We say that maps $\mu$ and $\nu$ are similar and write $\mu\asymp\nu$ if $\q\mu=\q\nu$ and $\mu$ and $\nu$ coincide on $D_\mu\cap D_\nu$. 
\end{definition}

Clearly, similarity is a reflexive and symmetric relation on the class of all maps. The next statement shows that its restriction to the class of all $\mf A$-valued premeasures is an equivalence relation for every HTM $\mf A$.

\begin{proposition}\label{l_trans}
  Let $\mf A$ be an HTM and $\mu$, $\nu$, and $\eta$ be $\mf A$-valued premeasures. If $\mu\asymp\nu$ and $\nu\asymp\eta$, then $\mu\asymp\eta$. 
\end{proposition}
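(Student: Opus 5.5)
The host condition is immediate: $\q\mu=\q\nu=\q\eta$. The real content is that $\mu$ and $\eta$ coincide on $D_\mu\cap D_\eta$. The sets in this intersection need not lie in $D_\nu$, so we cannot compare directly through $\nu$. Instead we use $\sigma$-additivity to propagate agreement from a smaller $\delta$-ring via Proposition~\ref{p_un0delta}.

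Put $\mathcal{K}=D_\mu\cap D_\nu\cap D_\eta$. Each of $D_\mu$, $D_\nu$, $D_\eta$ is a $\delta$-ring, so $\mathcal{K}$ is a $\delta$-ring as an intersection of $\delta$-rings. On $\mathcal{K}$ we have $\mu=\nu$ (since $\mu\asymp\nu$) and $\nu=\eta$ (since $\nu\asymp\eta$), hence $\mu$ and $\eta$ coincide on $\mathcal{K}$. By Proposition~\ref{p_un0delta}, applied to $\mu_1=\mu$ and $\mu_2=\eta$, the maps $\mu$ and $\eta$ coincide on $D_\mu\cap D_\eta\cap\sigma(\mathcal{K})$. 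It therefore suffices to show that $D_\mu\cap D_\eta\subset\sigma(\mathcal{K})$, and in fact it is enough to show $\sigma(\mathcal{K})=\mathcal{Q}$, where $\mathcal{Q}$ is the common host $\sigma$-ring.

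We obtain this from Lemma~\ref{l3sigma} in two steps. The $\delta$-rings $D_\mu$ and $D_\nu$ have the same generated $\sigma$-ring $\mathcal{Q}$, so Lemma~\ref{l3sigma} gives $\sigma(D_\mu\cap D_\nu)=\mathcal{Q}$. Now $D_\mu\cap D_\nu$ is a $\delta$-ring with $\sigma(D_\mu\cap D_\nu)=\sigma(D_\eta)=\mathcal{Q}$, so a second application of Lemma~\ref{l3sigma} gives $\sigma(\mathcal{K})=\mathcal{Q}$. Every element of $D_\mu$ lies in $\sigma(D_\mu)=\mathcal{Q}=\sigma(\mathcal{K})$, so $D_\mu\cap D_\eta\subset\sigma(\mathcal{K})$ and the conclusion follows.

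The key step, and the only one that is more than bookkeeping, is recognising that $\sigma(D_\mu\cap D_\nu\cap D_\eta)$ is still the full host $\sigma$-ring. This is exactly where the $\delta$-ring hypothesis on premeasures enters: through Lemma~\ref{l3sigma}, which rests on Lemma~\ref{ll1}(iii). The argument fails for general maps, which is why transitivity is stated only for premeasures.
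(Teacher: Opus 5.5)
Your proposal is correct and is essentially the paper's proof: you take $\mathcal{K}=D_\mu\cap D_\nu\cap D_\eta$, apply Lemma~\ref{l3sigma} twice to get $\sigma(\mathcal{K})=\q\mu$, and conclude with Proposition~\ref{p_un0delta}. The only difference is cosmetic: you pair $D_\mu$ with $D_\nu$ first and then intersect with $D_\eta$, whereas the paper pairs $D_\mu$ with $D_\eta$ first and then intersects with $D_\nu$.
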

\begin{proof}
  Let $\mathcal{K} = D_\mu\cap D_\nu\cap D_\eta$. Relations $\mu\asymp\nu$ and $\nu\asymp\eta$ ensure that $\q\mu=\q\nu=\q\eta$ and $\mu$ and $\eta$ coincide on $\mathcal{K}$. Applying Lemma~\ref{l3sigma} first to $D_\mu$ and $D_\eta$ and then to $D_\mu\cap D_\eta$ and $D_\nu$, we obtain $\q\mu = \sigma(D_\mu\cap D_\eta) = \sigma(\mathcal{K})$ and, hence, $\mathcal{K}\subset D_\mu\cap D_\eta\subset \sigma(\mathcal{K})$. By Proposition~\ref{p_un0delta}, $\mu$ and $\eta$ coincide on $D_\mu\cap D_\eta$. Thus, $\mu\asymp\eta$.  
\end{proof}

\begin{definition}\label{dd6}
  Let $\mf A$ be an HTM. An $\mf A$-valued premeasure $\mu$ is called an $\mf A$-valued measure if $\mu$ is an extension of every $\mf A$-valued premeasure $\nu$ such that $\nu\asymp\mu$.
\end{definition}

The next simple uniqueness result is sufficient for our treatment of spaces of measures in Sec.~\ref{s_spaces}. In the group-valued case, it can be significantly generalized (see Theorem~\ref{tt1} below).

\begin{theorem}\label{t_unique}
  Let $\mathcal K$ be a $\delta$-ring, $\mf A$ be an HTM, and $\mu_1$ and $\mu_2$ be $\mf A$-valued premeasures such that $\mathcal K\subset D_{\mu_1}\subset \sigma(\mathcal K)$, $\mathcal K\subset D_{\mu_2}\subset\sigma(\mathcal K)$, and $\mu_1|_{\mathcal K}=\mu_2|_{\mathcal K}$. Then $\mu_1 \asymp \mu_2$. If $\mu_2$ is a measure, then it is an extension of $\mu_1$. If $\mu_1$ and $\mu_2$ are both measures, then $\mu_1=\mu_2$.
\end{theorem}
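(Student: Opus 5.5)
We first show that $\mu_1\asymp\mu_2$, which requires two things: equal host $\sigma$-rings and agreement on $D_{\mu_1}\cap D_{\mu_2}$.

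For the host $\sigma$-rings, observe that $\mathcal K\subset D_{\mu_j}\subset\sigma(\mathcal K)$ for $j=1,2$. Taking generated $\sigma$-rings in this chain gives $\sigma(\mathcal K)\subset\q{\mu_j}\subset\sigma(\mathcal K)$. Hence $\q{\mu_1}=\q{\mu_2}=\sigma(\mathcal K)$.

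For agreement, we apply Proposition~\ref{p_un0delta} with the $\delta$-ring $\mathcal K$. Its hypotheses hold because $\mu_1$ and $\mu_2$ are $\sigma$-additive and coincide on $\mathcal K$ (the inclusion $\mathcal K\subset D_{\mu_1}\cap D_{\mu_2}$ is assumed). The proposition then says that $\mu_1$ and $\mu_2$ coincide on $D_{\mu_1}\cap D_{\mu_2}\cap\sigma(\mathcal K)$. Since both domains lie in $\sigma(\mathcal K)$, this set is exactly $D_{\mu_1}\cap D_{\mu_2}$. Therefore $\mu_1\asymp\mu_2$.

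The remaining claims follow directly from Definition~\ref{dd6} together with the symmetry of $\asymp$. If $\mu_2$ is a measure, then $\mu_1$ is an $\mf A$-valued premeasure with $\mu_1\asymp\mu_2$, so $\mu_2$ extends $\mu_1$. If both are measures, the same argument with the roles swapped shows that $\mu_1$ also extends $\mu_2$. Two maps that extend each other have equal domains and equal values, so $\mu_1=\mu_2$.

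No step here is a real obstacle. The substantive input is Proposition~\ref{p_un0delta}, and the only thing to watch is that the intersection with $\sigma(\mathcal K)$ is harmless because $D_{\mu_j}\subset\sigma(\mathcal K)$.
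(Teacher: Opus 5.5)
Your proof is correct and is essentially the paper's argument. The paper writes $\mu_1\asymp\mu_1|_{\mathcal K}\asymp\mu_2$ and invokes transitivity (Proposition~\ref{l_trans}), whose proof rests on Proposition~\ref{p_un0delta}; you apply Proposition~\ref{p_un0delta} directly, which is legitimate here because $\mathcal K$ is already a $\delta$-ring contained in both domains, and your derivation of the last two claims from Definition~\ref{dd6} matches the paper's.
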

\begin{proof}
  Let $\mu = \mu_1|_{\mathcal{K}}$. Since $\sigma(\mathcal{K}) = \q{\mu_1} = \q{\mu_2}$, we have $\mu_1\asymp\mu$ and $\mu\asymp\mu_2$. By Proposition~\ref{l_trans}, it follows that $\mu_1\asymp\mu_2$. If $\mu_2$ is a measure, then it is an extension of $\mu_1$ by Definition~\ref{dd6}. If $\mu_1$ and $\mu_2$ are both measures, then they are extensions of each other and, hence, $\mu_1=\mu_2$.
\end{proof}

We now show that every similarity class of premeasures contains precisely one measure. 

\begin{theorem}\label{tt2}
  Let $\mf A$ be an HTM and $\mu$ be an $\mf A$-valued premeasure. There exists a unique $\mf A$-valued measure $\nu$ such that $\nu\asymp\mu$. 
\end{theorem}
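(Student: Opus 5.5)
Uniqueness is immediate. If $\nu_1$ and $\nu_2$ are measures with $\nu_1\asymp\mu$ and $\nu_2\asymp\mu$, then Proposition~\ref{l_trans} gives $\nu_1\asymp\nu_2$. By Definition~\ref{dd6}, each of $\nu_1,\nu_2$ extends the other, so $\nu_1=\nu_2$. The real work is existence, for which I would glue together all premeasures in the similarity class of $\mu$.

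Let $\Lambda$ be the set of all $\mf A$-valued premeasures $\lambda$ with $\lambda\asymp\mu$. This is a set, since every such $\lambda$ has $D_\lambda\subset\q\mu$. Put $\Theta(\lambda)=D_\lambda$ and $\mathcal{Q}=\bigcup_{\lambda\in\Lambda}D_\lambda$.

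\begin{enumerate}
\item[(a)] \emph{Verify the hypothesis of Proposition~\ref{ll6a}.} For $\lambda,\lambda'\in\Lambda$ we have $\q\lambda=\q{\lambda'}$, so by Lemma~\ref{l3sigma} $A\cap B\in D_\lambda\cap D_{\lambda'}$ for all $A\in D_\lambda$ and $B\in D_{\lambda'}$.
\item[(b)] \emph{Obtain the domain.} By Proposition~\ref{ll6a}(i),(ii), $\mathcal{Q}$ is a semi-ring and $\kappa(\mathcal{Q})$ is a $\delta$-ring.
\item[(c)] \emph{Define the glued map.} Let $\eta\colon\mathcal{Q}\to\mf A$ be $\eta(A)=\lambda(A)$ for $A\in D_\lambda$. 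This is well defined because, by Proposition~\ref{l_trans}, any two members of $\Lambda$ are similar and hence coincide on the intersection of their domains.
\item[(d)] \emph{Check $\sigma$-additivity.} Each restriction $\eta|_{D_\lambda}=\lambda$ is $\sigma$-additive, so Proposition~\ref{ll6a}(iii) makes $\eta$ an $\mf A$-valued $\sigma$-additive function on the semi-ring $\mathcal{Q}$.
\item[(e)] \emph{Extend to the generated ring.} Proposition~\ref{l_semi} extends $\eta$ to a $\sigma$-additive $\nu$ on $\kappa(\mathcal{Q})$. By (b), $\nu$ is a premeasure.
\end{enumerate}

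It remains to check that $\nu\asymp\mu$ and that $\nu$ is a measure. Since $D_\mu\subset\mathcal{Q}\subset\q\mu$, we get $\q\nu=\q\mu$. Moreover $\nu$ extends $\mu$, because $\mu\in\Lambda$, so $\nu$ and $\mu$ coincide on $D_\mu=D_\mu\cap D_\nu$. Hence $\nu\asymp\mu$.

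For maximality, let $\lambda$ be any premeasure with $\lambda\asymp\nu$. By Proposition~\ref{l_trans}, $\lambda\asymp\mu$, so $\lambda\in\Lambda$. Then $D_\lambda\subset\mathcal{Q}\subset D_\nu$, and $\nu$ agrees with $\eta=\lambda$ on $D_\lambda$. Thus $\nu$ extends $\lambda$, and $\nu$ is a measure.

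The main obstacle is step (e): extending $\sigma$-additivity from the semi-ring $\mathcal{Q}$ to $\kappa(\mathcal{Q})$ in an arbitrary HTM. This requires interchanging summations, which is delicate without regularity. Here I rely on Proposition~\ref{l_semi}, which is asserted for general HTMs with its proof given in Appendix~\ref{app_C}. The remaining delicate point is the well-definedness in step (c), and transitivity of $\asymp$ (Proposition~\ref{l_trans}) settles it.
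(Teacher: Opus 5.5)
Your proposal is correct and matches the paper's proof essentially step for step. Both glue all premeasures similar to $\mu$ via Lemma~\ref{l3sigma} and Proposition~\ref{ll6a}, extend to $\kappa(\mathcal Q)$ by Proposition~\ref{l_semi}, and derive similarity, maximality and uniqueness from the transitivity in Proposition~\ref{l_trans}. Your side remark that $\Lambda$ is a set because each $D_\lambda\subset\q\mu$ is a harmless addition not made explicit in the paper.
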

\begin{proof}
  Let $\Lambda$ be the set of all $\mf A$-valued premeasures that are similar to $\mu$ and let $\mathcal Q = \bigcup_{\lambda\in\Lambda} D_\lambda$. If $\lambda,\lambda'\in\Lambda$, then $\lambda\asymp \lambda'$ by Proposition~\ref{l_trans}. In particular, $\lambda$ and $\lambda'$ coincide on $D_\lambda\cap D_{\lambda'}$ for every $\lambda,\lambda'\in\Lambda$ and, therefore, there is a map $\eta$ such that $D_\eta = \mathcal Q$ and $\lambda = \eta|_{D_\lambda}$ for every $\lambda\in\Lambda$. Let $\lambda,\lambda'\in\Lambda$. Since $\q\lambda = \q{\lambda'}$, Lemma~\ref{l3sigma} implies that $A\cap B\in D_\lambda\cap D_{\lambda'}$ for every $A\in D_\lambda$ and $B\in D_{\lambda'}$. Applying Proposition~\ref{ll6a} to $\Theta = \{D_\lambda\}_{\lambda\in\Lambda}$, we conclude that $\mathcal Q$ is a semi-ring, $\kappa(\mathcal Q)$ is a $\delta$-ring, and $\eta$ is an $\mf A$-valued $\sigma$-additive function. By Proposition~\ref{l_semi}, there exists an $\mf A$-valued premeasure $\nu$ such that $D_\nu = \kappa(\mathcal Q)$ and $\nu|_{\mathcal Q}=\eta$. As $\mu\in\Lambda$ and $\q\lambda = \q\mu$ for all $\lambda\in\Lambda$, we have $D_\mu\subset \mathcal Q\subset \q\mu$ and $\eta|_{D_\mu}=\mu$.  It follows that $D_\mu\subset D_\nu\subset \q\mu$ and $\nu|_{D_\mu} =\mu$, whence $\nu\asymp\mu$. Let $\lambda$ be an $\mf A$-valued premeasure such that $\lambda\asymp \nu$. Then $\lambda\asymp\mu$ by Proposition~\ref{l_trans} and, therefore, $\lambda\in \Lambda$. This implies that $D_\lambda\subset\mathcal{Q}\subset D_\nu$ and, hence, $\nu$ is an extension of $\lambda$. Thus, $\nu$ is a measure. If $\lambda$ is an $\mf A$-valued measure such that $\lambda\asymp \mu$, then $\lambda\asymp\nu$ by Proposition~\ref{l_trans} and, hence, $\lambda = \nu$. This proves the uniqueness of $\nu$.          
\end{proof}

\begin{definition}\label{d_ass}
  Let $\mf A$ be an HTM and $\mu$ be an $\mf A$-valued premeasure. The $\mf A$-valued measure $\nu$ such that $\nu\asymp\mu$ is called the $\mf A$-valued measure associated with $\mu$ and is denoted by $\as[\mf A]\mu$. The domain of $\as[\mf A]\mu$ will be denoted by $\Sigma_{\mf A}(\mu)$.
\end{definition}

Theorem~\ref{tt2} ensures that Definition~\ref{d_ass} is correct.

Let $\mf A$ be an HTM and $\mu$ be an $\mf A$-valued premeasure. It follows from Definitions~\ref{dd6} and~\ref{d_ass} that $\as[\mf A]\mu$ is an extension of $\mu$. By Definitions~\ref{d_sim}  and~\ref{d_ass}, we have
\begin{equation}\label{Qmu}
  \q{\as[\mf A]{\mu}} = \q{\mu}.
\end{equation}

\begin{proposition}\label{p_ass}
  Let $\mf A$ be an HTM, $\mu$ be an $\mf A$-valued premeasure, and $\nu$ be an $\mf A$-valued measure such that $\nu\asymp\mu$. Then $\nu  = \as[\mf A]\mu$.
\end{proposition}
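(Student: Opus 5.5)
This is immediate from the uniqueness part of Theorem~\ref{tt2}, so the plan is simply to line up the hypotheses with that theorem and with Definition~\ref{d_ass}.

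By Theorem~\ref{tt2}, applied to the premeasure $\mu$, there is exactly one $\mf A$-valued measure similar to $\mu$. Definition~\ref{d_ass} names this measure $\as[\mf A]\mu$. In particular, $\as[\mf A]\mu$ is an $\mf A$-valued measure with $\as[\mf A]\mu\asymp\mu$.

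The given $\nu$ is also an $\mf A$-valued measure with $\nu\asymp\mu$. Uniqueness therefore forces $\nu=\as[\mf A]\mu$.

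If one prefers not to quote uniqueness as a black box, the same argument can be rerun directly. By symmetry of $\asymp$ and Proposition~\ref{l_trans}, we get $\nu\asymp\as[\mf A]\mu$. Both maps are measures, so Definition~\ref{dd6} makes each an extension of the other, and hence they are equal.

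There is no real obstacle here. The only point to check is that transitivity is applied to premeasures, as Proposition~\ref{l_trans} requires. This holds because every measure is by definition a premeasure.
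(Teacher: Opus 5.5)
Your proposal is correct and matches the paper's proof, which likewise deduces the statement directly from the uniqueness assertion of Theorem~\ref{tt2} together with Definition~\ref{d_ass}. Your optional direct argument, via Proposition~\ref{l_trans} and Definition~\ref{dd6}, is also valid; it is essentially how the uniqueness part of Theorem~\ref{tt2} itself is proved.
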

\begin{proof}
  The statement follows immediately from Theorem~\ref{tt2} and Definition~\ref{d_ass}.
\end{proof}

\begin{proposition}\label{p_measequiv}
  Let $\mf A$ be an HTM and $\mu$ be an $\mf A$-valued premeasure. Then
  \[
    \mu \mbox{ is an $\mf A$-valued measure} \Leftrightarrow \mu = \as[\mf A]\mu \Leftrightarrow \Sigma_{\mf A}(\mu)\subset D_\mu.
  \]
\end{proposition}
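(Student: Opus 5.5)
I will prove the cycle of implications: first that $\mu$ being a measure implies $\mu = \as[\mf A]\mu$, then that $\mu = \as[\mf A]\mu$ implies $\Sigma_{\mf A}(\mu)\subset D_\mu$, and finally that $\Sigma_{\mf A}(\mu)\subset D_\mu$ implies that $\mu$ is a measure.

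For the first implication, suppose $\mu$ is an $\mf A$-valued measure. Since $\asymp$ is reflexive, $\mu\asymp\mu$. Proposition~\ref{p_ass}, applied with $\nu=\mu$, then gives $\mu = \as[\mf A]\mu$.

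The second implication is trivial. If $\mu = \as[\mf A]\mu$, the two maps have the same domain, so $\Sigma_{\mf A}(\mu) = D_\mu$.

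For the third implication, assume $\Sigma_{\mf A}(\mu)\subset D_\mu$.
\begin{itemize}
\item As noted after Definition~\ref{d_ass}, $\as[\mf A]\mu$ is an extension of $\mu$, so $D_\mu\subset \Sigma_{\mf A}(\mu)$.
\item Combined with the assumption, this gives $D_\mu = \Sigma_{\mf A}(\mu)$.
\item Since $\as[\mf A]\mu$ extends $\mu$ and the domains agree, the maps coincide: $\mu = \as[\mf A]\mu$.
\item By Definition~\ref{d_ass}, $\as[\mf A]\mu$ is an $\mf A$-valued measure (its existence is Theorem~\ref{tt2}), hence so is $\mu$.
\end{itemize}

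There is no real obstacle here. The only point needing care is recalling that $\as[\mf A]\mu$ extends $\mu$. This holds because $\mu\asymp\as[\mf A]\mu$, and Definition~\ref{dd6}, applied to the measure $\as[\mf A]\mu$ and the premeasure $\mu$, says that $\as[\mf A]\mu$ extends $\mu$.
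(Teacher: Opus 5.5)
Your proof is correct and uses the same ingredients as the paper's: that $\as[\mf A]\mu$ extends $\mu$ (so $\Sigma_{\mf A}(\mu)\subset D_\mu$ forces $\mu=\as[\mf A]\mu$), and Proposition~\ref{p_ass} together with $\mu\asymp\mu$ for the direction from ``measure'' to $\mu=\as[\mf A]\mu$. You arrange it as a cycle of three implications, whereas the paper proves two equivalences, but the argument is essentially identical.
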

\begin{proof}
  Let $\nu = \as[\mf A]\mu$. Since $\nu$ is an extension of $\mu$, we have $\Sigma_{\mf A}(\mu)\subset D_\mu$ if and only if $\mu = \nu$. As $\nu$ is a measure, the equality $\mu = \nu$ implies that $\mu$ is a measure. Let $\mu$ be a measure. Since $\mu\asymp\mu$, it follows from Proposition~\ref{p_ass} that $\mu = \nu$. 
\end{proof}

\begin{proposition}\label{p_asssimilar}
  Let $\mf A$ be an HTM and let $\mu$ and $\nu$ be $\mf A$-valued premeasures. Then $\mu\asymp\nu$ if and only if $\as[\mf A]\mu = \as[\mf A]\nu$.
\end{proposition}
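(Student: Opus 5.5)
Both directions follow from the transitivity of $\asymp$ on premeasures (Proposition~\ref{l_trans}), combined with the fact that each premeasure is similar to its associated measure. That fact holds by Definition~\ref{d_ass}: for every $\mf A$-valued premeasure $\mu$ we have $\as[\mf A]\mu\asymp\mu$, and $\as[\mf A]\mu$ is itself an $\mf A$-valued premeasure, being a measure.

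For the `only if' direction, suppose $\mu\asymp\nu$.
\begin{itemize}
\item Since $\as[\mf A]\nu\asymp\nu$ and $\nu\asymp\mu$, Proposition~\ref{l_trans} (applied to the three premeasures $\as[\mf A]\nu$, $\nu$, $\mu$) gives $\as[\mf A]\nu\asymp\mu$.
\item Thus $\as[\mf A]\nu$ is an $\mf A$-valued measure similar to $\mu$.
\item Proposition~\ref{p_ass} (equivalently, the uniqueness part of Theorem~\ref{tt2}) then yields $\as[\mf A]\nu=\as[\mf A]\mu$.
\end{itemize}

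For the `if' direction, suppose $\as[\mf A]\mu=\as[\mf A]\nu$.
\begin{itemize}
\item We have $\mu\asymp\as[\mf A]\mu$, using the symmetry of $\asymp$.
\item We also have $\as[\mf A]\mu=\as[\mf A]\nu\asymp\nu$.
\item One more application of Proposition~\ref{l_trans} gives $\mu\asymp\nu$.
\end{itemize}

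There is no real obstacle here. The only points to check are that every object fed into Proposition~\ref{l_trans} is a premeasure, which holds since measures are premeasures, and that similarity is symmetric, which is immediate from Definition~\ref{d_sim}.
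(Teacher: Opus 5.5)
Your proposal is correct and matches the paper's proof: the `if' direction is a single application of Proposition~\ref{l_trans} through the common associated measure. The `only if' direction uses transitivity followed by Proposition~\ref{p_ass}; the only difference is that you show $\as[\mf A]\nu\asymp\mu$ where the paper shows $\as[\mf A]\mu\asymp\nu$, a purely symmetric choice.
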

\begin{proof}
  Let $\mu' = \as[\mf A]\mu$ and $\nu' =\as[\mf A]\nu$. Definition~\ref{d_ass} implies that $\mu'$ is a measure, $\mu'\asymp\mu$, and $\nu'\asymp\nu$. If $\mu' = \nu'$, then $\mu\asymp\nu$ by Proposition~\ref{l_trans}. Let $\mu\asymp\nu$. Then $\mu'\asymp\nu$ by Proposition~\ref{l_trans} and it follows from Proposition~\ref{p_ass} that $\mu' = \nu'$.
\end{proof}

\begin{proposition}\label{p_Dmusigma}
	Let $\mf A$ be an HTM and $\mu$ be an $\mf A$-valued premeasure such that $D_\mu$ is a $\sigma$-ring. Then $\mu$ is an $\mf A$-valued measure.
\end{proposition}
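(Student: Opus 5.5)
We will use the criterion of Proposition~\ref{p_measequiv}: it is enough to show $\Sigma_{\mf A}(\mu)\subset D_\mu$, or, more directly, to check Definition~\ref{dd6}. So let $\nu$ be any $\mf A$-valued premeasure with $\nu\asymp\mu$. We must show that $D_\nu\subset D_\mu$ and that $\nu$ coincides with $\mu$ on $D_\nu$. The agreement is automatic once the inclusion is known, since similarity gives $\mu|_{D_\mu\cap D_\nu}=\nu|_{D_\mu\cap D_\nu}$.

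The inclusion is a purely set-theoretic consequence of the host condition. Because $D_\mu$ is a $\sigma$-ring, we have $\q\mu=\sigma(D_\mu)=D_\mu$. Similarity gives $\q\nu=\q\mu$, hence $D_\nu\subset\sigma(D_\nu)=\q\nu=D_\mu$. Thus $D_\mu\cap D_\nu=D_\nu$, and $\mu$ and $\nu$ coincide on $D_\nu$. In other words, $\mu$ is an extension of $\nu$.

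Since $\nu$ was an arbitrary premeasure similar to $\mu$, and $\mu$ is a premeasure by hypothesis (a $\sigma$-ring being a $\delta$-ring), $\mu$ satisfies Definition~\ref{dd6} and is therefore an $\mf A$-valued measure. Equivalently, applying the argument to $\nu=\as[\mf A]\mu$ gives $\Sigma_{\mf A}(\mu)\subset D_\mu$, and Proposition~\ref{p_measequiv} concludes.

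The argument has no real obstacle. The only point needing care is the identity $\sigma(D_\mu)=D_\mu$ for a $\sigma$-ring, which is immediate from the definition of the generated $\sigma$-ring.
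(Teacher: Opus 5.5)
Your proof is correct and matches the paper's argument: from $D_\mu=\sigma(D_\mu)=\q\mu$ and $\q\nu=\q\mu$ you get $D_\nu\subset D_\mu$, so similarity makes $\mu$ an extension of every $\nu\asymp\mu$, which is exactly Definition~\ref{dd6}. The alternative remark via Proposition~\ref{p_measequiv} is also valid, since that result precedes this one and creates no circularity.
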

\begin{proof}
	Let $\nu$ be an $\mf A$-valued premeasure such that $\nu\asymp\mu$. Since $D_\mu = \q\mu$, Definition~\ref{d_sim} implies that $\mu$ is an extension of $\nu$. By Definition~\ref{dd6}, we conclude that $\mu$ is a measure. 
\end{proof}

Let $\mf A$ and $\mf B$ be monoids. A map $f\colon \mf A\to\mf B$ is called a homomorphism from $\mf A$ to $\mf B$ if $f(0_{\mf A}) = 0_{\mf B}$ and $f(x+_{\mf A} y) = f(x)+_{\mf B} f(y)$ for every $x,y\in \mf A$. A map $f$ is called an isomorphism from $\mf A$ to $\mf B$ if it is both a homomorphism from $\mf A$ to $\mf B$ and a bijection between $\mf A$ and $\mf B$. In this case, $f^{-1}$ is an isomorphism from $\mf B$ to $\mf A$.

Let $\mf A$ and $\mf B$ be topological monoids. A map $f$ is called a topological isomorphism from $\mf A$ to $\mf B$ if it is both an isomorphism and a homeomorphism from $\mf A$ to $\mf B$. In this case, $f^{-1}$ is a topological isomorphism from $\mf B$ to $\mf A$.

Let $\mf A$ and $\mf B$ be HTMs, $f\colon \mf A\to \mf B$ be a continuous homomorphism, and $\mu$ be an $\mf A$-valued premeasure. It follows from Proposition~\ref{p_sum}(ii) that $f\circ\mu$ is a $\mf B$-valued premeasure.

\begin{proposition}\label{l_iso}
  Let $\mf A$ and $\mf B$ be HTMs, $f\colon \mf A\to \mf B$ be a topological isomorphism, and $\mu$ be an $\mf A$-valued measure. Then $f\circ\mu$ is a $\mf B$-valued measure.
\end{proposition}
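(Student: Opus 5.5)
The plan is to verify Definition~\ref{dd6} for $f\circ\mu$ directly, transporting everything back to $\mf A$ through $f^{-1}$. First, $f\circ\mu$ is a $\mf B$-valued premeasure by the remark preceding the statement (Proposition~\ref{p_sum}(ii) applied to the continuous homomorphism $f$). Its domain is $D_\mu$, so $D_{f\circ\mu}$ is a $\delta$-ring and $\q{f\circ\mu} = \q\mu$.

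Next, let $\nu$ be a $\mf B$-valued premeasure with $\nu\asymp f\circ\mu$. We must show that $f\circ\mu$ extends $\nu$. Since $f$ is a topological isomorphism, $f^{-1}\colon\mf B\to\mf A$ is also a topological isomorphism, and in particular a continuous homomorphism. Applying the same remark with $f^{-1}$ in place of $f$, we see that $\lambda = f^{-1}\circ\nu$ is an $\mf A$-valued premeasure with $D_\lambda = D_\nu$. It remains to check that $\lambda\asymp\mu$.

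\begin{itemize}
\item The host $\sigma$-rings agree: $\q\lambda = \q\nu = \q{f\circ\mu} = \q\mu$.
\item On $D_\lambda\cap D_\mu = D_\nu\cap D_{f\circ\mu}$ we have $\nu = f\circ\mu$, and hence $\lambda = f^{-1}\circ\nu = \mu$ there.
\end{itemize}

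Since $\mu$ is a measure, Definition~\ref{dd6} gives $D_\nu = D_\lambda\subset D_\mu$ and $\mu|_{D_\nu} = \lambda = f^{-1}\circ\nu$. Applying $f$ yields $(f\circ\mu)|_{D_\nu} = \nu$, so $f\circ\mu$ is an extension of $\nu$. Therefore $f\circ\mu$ is a $\mf B$-valued measure.

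There is no real obstacle here. The only point to watch is that continuity of $f^{-1}$ is genuinely needed, namely to know that $f^{-1}\circ\nu$ is $\sigma$-additive. This is exactly why the statement assumes a topological isomorphism rather than just a continuous bijective homomorphism.
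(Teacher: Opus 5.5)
Your proof is correct and follows essentially the same route as the paper's: pull the competing premeasure back along $f^{-1}$, observe that it is similar to $\mu$, use the maximality of $\mu$, and push forward again with $f$. You additionally spell out why $f\circ\mu$ and $f^{-1}\circ\nu$ are premeasures and check the similarity conditions explicitly, which the paper leaves implicit.
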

\begin{proof}
  Let $\nu = f\circ\mu$ and $\eta$ be a $\mf B$-valued premeasure such that $\nu\asymp\eta$. Then $f^{-1}\circ\nu\asymp f^{-1}\circ\eta$. Since $f^{-1}\circ\nu = \mu$ and $\mu$ is a measure, it follows that $\mu$ is an extension of $f^{-1}\circ\eta$ and, hence, $\nu$ is an extension of $\eta$. This means that $\nu$ is a $\mf B$-valued measure.
\end{proof}

\begin{corollary}\label{c_iso}
  Let $\mf A$ and $\mf B$ be HTMs, $f\colon \mf A\to \mf B$ be a topological isomorphism, and $\mu$ be an $\mf A$-valued premeasure. Then $f\circ \as[\mf A]{\mu} = \as[\mf B]{f\circ\mu}$.
\end{corollary}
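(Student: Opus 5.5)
The plan is to combine Proposition~\ref{l_iso} with the characterization of associated measures in Proposition~\ref{p_ass}. Put $\nu = f\circ\as[\mf A]{\mu}$. We show that $\nu$ is a $\mf B$-valued measure and that $\nu\asymp f\circ\mu$. Proposition~\ref{p_ass}, applied to the $\mf B$-valued premeasure $f\circ\mu$, then gives $\nu = \as[\mf B]{f\circ\mu}$.

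First, $f\circ\mu$ is a $\mf B$-valued premeasure by the remark preceding Proposition~\ref{l_iso}, since $f$ is a continuous homomorphism. For the same reason, $\as[\mf A]{\mu}$ is an $\mf A$-valued measure by Definition~\ref{d_ass}. Proposition~\ref{l_iso} then shows that $\nu$ is a $\mf B$-valued measure.

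It remains to check that $\nu\asymp f\circ\mu$. Composition with $f$ does not change domains, so $D_\nu = \Sigma_{\mf A}(\mu)$ and $D_{f\circ\mu} = D_\mu$. Hence $\q\nu = \q{\as[\mf A]\mu} = \q\mu = \q{f\circ\mu}$ by~(\ref{Qmu}). The relation $\as[\mf A]\mu\asymp\mu$ says that $\as[\mf A]\mu$ and $\mu$ coincide on $D_\mu\cap\Sigma_{\mf A}(\mu)$. Composing with $f$, the maps $\nu$ and $f\circ\mu$ coincide on the same set, which is $D_\nu\cap D_{f\circ\mu}$. This gives $\nu\asymp f\circ\mu$.

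There is no real obstacle here. The only care needed is in noting that similarity is preserved under composition with $f$, since $f$ leaves domains unchanged and preserves pointwise equality. The substantive work was already done in Proposition~\ref{l_iso}, which is where the invertibility of $f$ is used.
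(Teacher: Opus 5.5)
Your proposal is correct and follows essentially the same route as the paper: you show $f\circ\as[\mf A]{\mu}$ is a $\mf B$-valued measure via Proposition~\ref{l_iso}, observe that similarity to $\mu$ is preserved under composition with $f$, and conclude with Proposition~\ref{p_ass}. The only difference is that you spell out the similarity check (domains unchanged, host $\sigma$-rings equal by~(\ref{Qmu}), pointwise agreement preserved), which the paper states in one line.
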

\begin{proof}
  Let $\nu = \as[\mf A]{\mu}$. Since $\mu\asymp\nu$, we have $f\circ\mu\asymp f\circ\nu$. As $f\circ\nu$ is a measure by Proposition~\ref{l_iso}, the statement follows from Proposition~\ref{p_ass}.  
\end{proof}

\begin{proposition}\label{l_subgr}
  Let $\mf A$ be an HTM, $\mf B$ be its topological submonoid, $\mu$ be a $\mf B$-valued premeasure, and $\nu = \as[\mf B]\mu$. Then $\as[\mf A]\mu = \as[\mf A]\nu$ and, in particular, $\as[\mf A]\mu$ is an extension of $\nu$. If $\mf B$ is sequentially closed in $\mf A$, then $\nu=\as[\mf A]\mu$. 
\end{proposition}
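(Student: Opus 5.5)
The first claim rests on two observations. First, every $\mf B$-valued premeasure is an $\mf A$-valued premeasure, by Proposition~\ref{p_subpremeasure}. Second, $\nu\asymp\mu$ holds by Definition~\ref{d_ass}, regardless of which monoid we regard the values as lying in, since similarity is a purely set-theoretic relation. So $\mu$ and $\nu$ are similar $\mf A$-valued premeasures, and Proposition~\ref{p_asssimilar} (applied in $\mf A$) gives $\as[\mf A]\mu = \as[\mf A]\nu$. Since $\as[\mf A]\nu$ extends $\nu$ (as noted after Definition~\ref{d_ass}), $\as[\mf A]\mu$ is an extension of $\nu$.

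For the second claim, put $\lambda = \as[\mf A]\mu$. It extends $\nu$, so it suffices to show $D_\lambda\subset D_\nu$. The plan is to show that $\lambda$ is actually $\mf B$-valued. Then Proposition~\ref{p_subpremeasure} makes $\lambda$ a $\mf B$-valued premeasure. Moreover $\lambda\asymp\nu$: both have host $\sigma$-ring $\q\mu$, and $\lambda$ extends $\nu$. Since $\nu$ is a $\mf B$-valued measure, Definition~\ref{dd6} then forces $\nu$ to extend $\lambda$, hence $\lambda = \nu$.

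To see that $\im\lambda\subset\mf B$, I would apply Proposition~\ref{p_Im0delta} with $X = \mf B$, which is sequentially closed by hypothesis, and with $\mathcal{K} = D_\mu$. This is a $\delta$-ring with $D_\mu\subset D_\lambda\subset\q\lambda = \q\mu = \sigma(D_\mu)$, by~(\ref{Qmu}). On $D_\mu$ we have $\lambda = \mu$, which is $\mf B$-valued. Hence $\im\lambda\subset\mf B$.

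There is no real obstacle here. The only point needing care is checking the hypotheses of Proposition~\ref{p_Im0delta}, namely that the domain of $\lambda$ lies between $D_\mu$ and $\sigma(D_\mu)$; this is exactly what~(\ref{Qmu}) provides.
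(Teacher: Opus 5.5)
Your proof is correct and follows essentially the same route as the paper, which gets $\as[\mf A]\mu = \as[\mf A]\nu$ from Propositions~\ref{l_trans} and~\ref{p_ass}. Those are exactly the ingredients of Proposition~\ref{p_asssimilar}. The paper then uses Proposition~\ref{p_Im0delta} to show that $\as[\mf A]\mu$ is $\mf B$-valued, so that the maximality of the $\mf B$-valued measure $\nu$ forces equality; your explicit choice $\mathcal{K} = D_\mu$ there supplies a detail the paper leaves implicit.
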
 
\begin{proof}
  By Proposition~\ref{p_subpremeasure}, $\mu$ and $\nu$ are $\mf A$-valued premeasures. Let $\nu' = \as[\mf A]\mu$. Since $\nu\asymp \mu$ and $\nu'\asymp\mu$, we have 
  \begin{equation}\label{sim}
    \nu\asymp \nu'
  \end{equation}
  by Proposition~\ref{l_trans}. As $\nu'$ is an $\mf A$-valued measure, Proposition~\ref{p_ass} implies that $\nu' = \as[\mf A]\nu$. If $\mf B$ is sequentially closed in $\mf A$, then $\im\nu'\subset\mf B$ by Proposition~\ref{p_Im0delta} and Proposition~\ref{p_subpremeasure} ensures that $\nu'$ is a $\mf B$-valued premeasure. Since $\nu$ is a $\mf B$-valued measure, it follows from~(\ref{sim}) that $\nu$ is an extension of $\nu'$. This means that $\nu=\nu'$. 
\end{proof}

\begin{corollary}\label{l_subgr1}
  Let $\mf A$ be an HTM and $\mf B$ be its sequentially closed topological submonoid. Then $\mu$ is a $\mf B$-valued measure if and only if $\mu$ is an $\mf A$-valued measure and $\im\mu\subset\mf B$.
\end{corollary}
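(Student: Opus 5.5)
The plan is to derive both implications from Proposition~\ref{l_subgr} and Proposition~\ref{p_subpremeasure}, using the characterization of measures in Proposition~\ref{p_measequiv}.

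For the `only if' direction, let $\mu$ be a $\mf B$-valued measure. Then $\im\mu\subset\mf B$ is immediate, and by Proposition~\ref{p_subpremeasure} $\mu$ is also an $\mf A$-valued premeasure. Since $\mu$ is a $\mf B$-valued measure, Proposition~\ref{p_measequiv} gives $\mu = \as[\mf B]\mu$. We now apply Proposition~\ref{l_subgr} with $\nu = \as[\mf B]\mu = \mu$. Because $\mf B$ is sequentially closed in $\mf A$, it yields $\mu = \as[\mf A]\mu$. Proposition~\ref{p_measequiv}, now applied in $\mf A$, shows that $\mu$ is an $\mf A$-valued measure.

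For the `if' direction, let $\mu$ be an $\mf A$-valued measure with $\im\mu\subset\mf B$. By Proposition~\ref{p_subpremeasure}, $\mu$ is a $\mf B$-valued premeasure, so $\nu=\as[\mf B]\mu$ is defined. Proposition~\ref{l_subgr} then gives $\nu = \as[\mf A]\mu$. Since $\mu$ is an $\mf A$-valued measure, Proposition~\ref{p_measequiv} gives $\as[\mf A]\mu=\mu$, so $\as[\mf B]\mu = \mu$. Applying Proposition~\ref{p_measequiv} in $\mf B$, we conclude that $\mu$ is a $\mf B$-valued measure.

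There is no real obstacle here. The only point to check is that Proposition~\ref{l_subgr} requires $\mu$ to be a $\mf B$-valued premeasure, and Proposition~\ref{p_subpremeasure} supplies this in both directions.
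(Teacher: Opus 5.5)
Your proof is correct and matches the paper's, which consists of the single remark that the statement follows immediately from Propositions~\ref{p_subpremeasure}, \ref{p_measequiv} and~\ref{l_subgr}. You have written out how these three results combine in each direction.
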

\begin{proof}
  The statement follows immediately from Propositions~\ref{p_subpremeasure}, \ref{p_measequiv} and~\ref{l_subgr}.
\end{proof}

Corollary~\ref{l_subgr1} is no longer valid if $\mf B$ is not assumed to be sequentially closed in $\mf A$ (see Example~\ref{e_sub} below).

\section{Domain of associated measure}
\label{s_ass}

Let $\mf A$ be an HTM. Given an $\mf A$-valued premeasure $\mu$, we define
\begin{multline}\label{SigmaA}
  \mathcal{L}_{\mf A}(\mu) = \sett{A}{A\in \q\mu \mbox{ and the family $\mu\circ \mb A$ is summable in $\mf A$ for every}}{\mbox{ countable disjoint family $\mb A\colon I\to D_\mu$ such that $\mb A(i)\subset A$ for all $i\in I$ }}.
\end{multline}

\begin{proposition}\label{p_Sigdelta}
  Let $\mf A$ be an HTM and $\mu$ be an $\mf A$-valued premeasure. Then $\mathcal{L}_{\mf A}(\mu)$ is a $\delta$-ring. 
\end{proposition}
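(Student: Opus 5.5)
We have to show that $\mathcal{L} = \mathcal{L}_{\mf A}(\mu)$ is nonempty, closed under differences and finite unions, and closed under countable nonempty intersections. All the conditions in (\ref{SigmaA}) concern disjoint families $\mb A\colon I\to D_\mu$ lying inside $A$. So the key observation is \emph{heredity}: if $A\in\mathcal{L}$, $B\in\q\mu$ and $B\subset A$, then $B\in\mathcal{L}$. Indeed, any countable disjoint family in $D_\mu$ whose members lie in $B$ also has members lying in $A$, so the family $\mu\circ\mb A$ is summable.

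Heredity settles most of the ring axioms at once. Since $\varnothing\in\mathcal{L}$ (the only admissible family consists of empty sets, and $\mu(\varnothing)=0$ is summable), $\mathcal{L}$ is nonempty. Differences follow because $A\setminus B\subset A$ and $A\setminus B\in\q\mu$. Countable intersections follow similarly: for a nonempty countable family in $\mathcal{L}$, the intersection lies in $\q\mu$ and inside any one member. So the only real work is closure under finite unions, and by induction it suffices to treat $A\cup B$ with $A,B\in\mathcal{L}$.

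For the union, let $\mb A\colon I\to D_\mu$ be a countable disjoint family with $\mb A(i)\subset A\cup B$. Split each member as $\mb A(i)\cap A$ and $\mb A(i)\cap(B\setminus A)$. Both pieces belong to $D_\mu$, because $D_\mu$ is a $\delta$-ring and $A,B\in\q\mu$: they lie in $\q\mu$ and are contained in $\mb A(i)\in D_\mu$, so Lemma~\ref{ll1}(iii) applies. The families $\{\mb A(i)\cap A\}_{i\in I}$ and $\{\mb A(i)\cap (B\setminus A)\}_{i\in I}$ are disjoint and lie in $A$ and $B$ respectively, so their images under $\mu$ are summable, with sums $a$ and $b$. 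By additivity, $\mu(\mb A(i))=\mu(\mb A(i)\cap A)+\mu(\mb A(i)\cap(B\setminus A))$. Hence the partial-sum net $\bigS(\mu\circ\mb A)$ is the termwise sum of the two partial-sum nets over $\mathcal{F}_I$, and by continuity of addition it converges to $a+b$. Therefore $\mu\circ\mb A$ is summable and $A\cup B\in\mathcal{L}$.

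The main point to check carefully is this union step: that the pieces lie in $D_\mu$ (handled by Lemma~\ref{ll1}(iii)), and that summability of a termwise sum of two summable families holds in an arbitrary HTM. The latter needs only continuity of $+$ and the fact that both nets are indexed by the same directed set $\mathcal{F}_I$, so no regularity or Fubini-type interchange is required.
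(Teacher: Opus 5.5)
Your proof is correct and follows the paper's argument essentially step for step. Heredity under passing to subsets lying in $\q\mu$ gives nonemptiness, differences and countable intersections; for the union you split each $\mb A(i)$ into $\mb A(i)\cap A$ and $\mb A(i)\setminus A$ (your $\mb A(i)\cap(B\setminus A)$ is the same set), use Lemma~\ref{ll1}(iii), and add the two summable families termwise, which is exactly Proposition~\ref{p_sum}(i) and indeed requires no regularity.
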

\begin{proof}
  Let $\mathcal{L} = \mathcal{L}_{\mf A}(\mu)$. It is clear from~(\ref{SigmaA}) that $\mathcal{L}\neq\varnothing$ and that, together with each of its elements, $\mathcal{L}$ contains all its subsets belonging to $\q\mu$. In particular, if $A,B\in \mathcal{L}$, then $A\setminus B\in \mathcal{L}$, and if $\mb A\colon I\to\mathcal{L}$ is a nonempty countable family, then the set $\bigcap_{i\in I} \mb A(i)$ belongs to $\mathcal{L}$. Let $A,B\in\mathcal{L}$ and $\mb A\colon I\to D_\mu$ be a countable disjoint family such that $\mb A(i)\subset A\cup B$ for every $i\in I$. Let $\fm{\mb C}{I}$ and $\fm{\mb C'}{I}$ be such that $\mb C(i) = \mb A(i)\cap A$ and $\mb C'(i)=\mb A(i)\setminus A$ for every $i\in I$. In view of Lemma~\ref{ll1}(iii), $\mb C$ and $\mb C'$ are disjoint families of elements of $D_\mu$ such that $\mb C(i)\subset A$ and $\mb C'(i)\subset B$ for every $i\in I$. Since $A,B\in \mathcal{L}$, the families $\mu\circ\mb C$ and $\mu\circ\mb C'$ are summable in $\mf A$. As $\mu(\mb A(i))=\mu(\mb C(i))+\mu(\mb C'(i))$ for every $i\in I$, it follows from Proposition~\ref{p_sum}(i) that the family $\mu\circ \mb A$ is summable in $\mf A$ and, hence, $A\cup B\in\mathcal{L}$. Thus, $\mathcal{L}$ is a $\delta$-ring.
\end{proof}

The next theorem is the main result of this section.

\begin{theorem}\label{ll16}
  Let $\mf A$ be an HTM and $\mu$ be an $\mf A$-valued premeasure. Then $\Sigma_{\mf A}(\mu) \subset \mathcal{L}_{\mf A}(\mu)$. If $\mf A$ is regular, then $\Sigma_{\mf A}(\mu) = \mathcal{L}_{\mf A}(\mu)$. 
\end{theorem}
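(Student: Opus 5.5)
For the inclusion $\Sigma_{\mf A}(\mu)\subset\mathcal{L}_{\mf A}(\mu)$, let $\nu=\as[\mf A]\mu$ and $A\in D_\nu$. Then $A\in\q\nu=\q\mu$ by~(\ref{Qmu}). Let $\mb A\colon I\to D_\mu$ be a countable disjoint family with $\mb A(i)\subset A$ for all $i$. Since $D_\mu\subset D_\nu$ and $D_\nu$ is a $\delta$-ring, Lemma~\ref{ll1}(i) gives $B=\bigcup_{i\in I}\mb A(i)\in D_\nu$. Thus $\mb A$ is a countable partition of $B$ into elements of $D_\nu$. The $\sigma$-additivity of $\nu$, together with $\nu|_{D_\mu}=\mu$, shows that $\mu\circ\mb A=\nu\circ\mb A$ is summable with sum $\nu(B)$. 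Hence $A\in\mathcal{L}_{\mf A}(\mu)$. This direction needs no regularity.

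For the reverse inclusion when $\mf A$ is regular, write $\mathcal{L}=\mathcal{L}_{\mf A}(\mu)$. By Proposition~\ref{p_Sigdelta}, $\mathcal{L}$ is a $\delta$-ring. It contains $D_\mu$: if $A\in D_\mu$ and $\mb A$ is as in~(\ref{SigmaA}), then $\bigcup\mb A(i)\in D_\mu$ by Lemma~\ref{ll1}(i), so $\sigma$-additivity of $\mu$ gives summability. Also $\mathcal{L}\subset\q\mu$.

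Define $\eta$ on $\mathcal{L}$ as follows. For $A\in\mathcal{L}$, Lemma~\ref{ll1}(ii) gives $A\in\sigma_0(D_\mu)$, so there is a countable partition $\mb A$ of $A$ into elements of $D_\mu$. Set $\eta(A)=\sum\mu\circ\mb A$, which exists because $A\in\mathcal{L}$.

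To see that $\eta$ is well defined, take two such partitions $\mb A\colon I\to D_\mu$ and $\mb B\colon J\to D_\mu$. Form the common refinement $\mb C(i,j)=\mb A(i)\cap\mb B(j)$, which consists of elements of $D_\mu$. Its image under $\mu$ is summable over $I\times J$, since it is a countable disjoint family inside $A$. For each $i$, $\mu(\mb A(i))=\sum_j\mu(\mb C(i,j))$ by $\sigma$-additivity of $\mu$, and similarly for each $j$. Interchanging the order of summation then gives $\sum\mu\circ\mb A=\sum\mu\circ\mb C=\sum\mu\circ\mb B$. This interchange is where regularity is essential, and I would invoke the Fubini-type result for regular HTMs (Proposition~\ref{p_fubiniRTS}): if the double family is summable and each row sums, then the row sums are summable with the same total. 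Clearly $\eta$ extends $\mu$, using the trivial partition.

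Next I show that $\eta$ is $\sigma$-additive. Let $A\in\mathcal{L}$ and let $\mb A\colon I\to\mathcal{L}$ be a countable partition of $A$. Partition each $\mb A(i)$ into elements of $D_\mu$, and combine these into one partition $\mb C$ of $A$ via Lemma~\ref{p_disjpart}. Then $\mu\circ\mb C$ is summable with sum $\eta(A)$, its row sums are the $\eta(\mb A(i))$, and the same Fubini step gives $\eta(A)=\sum\eta\circ\mb A$. So $\eta$ is a premeasure with $D_\mu\subset\mathcal{L}\subset\q\mu$ and $\eta|_{D_\mu}=\mu$, which gives $\eta\asymp\mu$. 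Since $\as[\mf A]\mu$ is a measure similar to $\eta$ (by Proposition~\ref{l_trans}), it extends $\eta$, and therefore $\mathcal{L}\subset\Sigma_{\mf A}(\mu)$.

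The main obstacle is the repeated interchange of summation. The hardest point is ensuring that the version of Proposition~\ref{p_fubiniRTS} available for regular HTMs gives exactly the statement needed: summability over the product together with row summability implies summability of the row sums, with equal totals. The plan rests on this.
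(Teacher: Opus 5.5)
Your proposal is correct and follows the paper's proof essentially step by step. The inclusion $\Sigma_{\mf A}(\mu)\subset\mathcal{L}_{\mf A}(\mu)$ is Lemma~\ref{l_SigmaA}, and the construction of the premeasure on $\mathcal{L}_{\mf A}(\mu)$ (well-definedness via the common refinement and Proposition~\ref{p_fubiniRTS}, then the maximality argument) is Lemma~\ref{l_sc} plus the proof of Theorem~\ref{ll16}. The one deviation is in the $\sigma$-additivity step. You glue independent $D_\mu$-partitions of the pieces via Lemma~\ref{p_disjpart}, so your index set is $\bigsqcup_{i}\mb J(i)$ rather than a product. The regrouping result you actually need there is therefore Proposition~\ref{p_sumunion} (with reindexing by Proposition~\ref{p_sumchange}), not Proposition~\ref{p_fubiniRTS} itself. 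The paper instead intersects the pieces with a single fixed $D_\mu$-partition of $A$, using Lemma~\ref{ll1}(iii), so that Proposition~\ref{p_fubiniRTS}(i) applies verbatim.
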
 

In Appendix~\ref{s_example1}, we construct a nonregular HTM $\mf A$ and an $\mf A$-valued premeasure $\mu$ such that $\Sigma_{\mf A}(\mu) \neq \mathcal{L}_{\mf A}(\mu)$. Thus, the regularity assumption in Theorem~\ref{ll16} cannot be dropped.

The proof of Theorem~\ref{ll16} is based on the following two lemmas.

\begin{lemma}\label{l_SigmaA}
  Let $\mf A$ be an HTM and $\mu$ be an $\mf A$-valued premeasure. Then $\Sigma_{\mf A}(\mu) \subset \mathcal{L}_{\mf A}(\mu)$.
\end{lemma}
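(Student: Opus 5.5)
Let $\nu = \as[\mf A]\mu$, so that $D_\nu = \Sigma_{\mf A}(\mu)$, $\nu$ extends $\mu$, and $\q\nu = \q\mu$ by~(\ref{Qmu}). Fix $A\in\Sigma_{\mf A}(\mu)$. We must show two things: that $A\in\q\mu$, and that $\mu\circ\mb A$ is summable in $\mf A$ for every countable disjoint family $\mb A\colon I\to D_\mu$ with $\mb A(i)\subset A$ for all $i\in I$. The first is immediate, since $A\in D_\nu\subset\sigma(D_\nu)=\q\nu=\q\mu$.

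For the second, fix such a family $\mb A$. Put $B=\bigcup_{i\in I}\mb A(i)$ and $C = A\setminus B$.
\begin{enumerate}
\item[(a)] Each $\mb A(i)$ lies in $D_\mu\subset D_\nu$. Since $I$ is countable and $\mb A(i)\subset A\in D_\nu$, Lemma~\ref{ll1}(i), applied to the $\delta$-ring $D_\nu$, gives $B\in D_\nu$.
\item[(b)] Because $D_\nu$ is a ring, $C\in D_\nu$ as well.
\item[(c)] Define a family $\mb A'$ on $I\sqcup\{*\}$ (formally $I\sqcup\{1\}$ via the construction preceding Lemma~\ref{p_disjpart2}) by $\mb A'(1,i) = \mb A(i)$ and $\mb A'(2,1) = C$. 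By Lemma~\ref{p_disjpart2}, this is a countable partition of $A$ into elements of $D_\nu$.
\item[(d)] The $\sigma$-additivity of $\nu$ makes $\nu\circ\mb A'$ summable. Alternatively, and more simply, the $\sigma$-additivity of $\nu$ applied to $B$ and its partition $\mb A$ already yields $\nu(B)=\sum\nu\circ\mb A$. In particular $\nu\circ\mb A$ is summable in $\mf A$.
\item[(e)] Since $\nu$ extends $\mu$, we have $\nu\circ\mb A = \mu\circ\mb A$, so $\mu\circ\mb A$ is summable.
\end{enumerate}

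So the argument is short: the $\delta$-ring property of $D_\nu$ (Lemma~\ref{ll1}(i)) puts $B$ into $D_\nu$, and $\sigma$-additivity of $\nu$ on $B$ gives the summability. There is no real obstacle. The only point needing care is that Lemma~\ref{ll1}(i) requires a bounding set in the $\delta$-ring $D_\nu$ itself, and $A$ serves as that bound. This step is where membership $A\in\Sigma_{\mf A}(\mu)$ is genuinely used, as opposed to merely $A\in\q\mu$. The passage through $C$ and Lemma~\ref{p_disjpart2} is unnecessary and can be dropped.
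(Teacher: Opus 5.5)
Your proof is correct and is essentially the paper's argument: Lemma~\ref{ll1}(i), applied in the $\delta$-ring $D_\nu$ with $A$ as the bounding set, puts $\bigcup_{i\in I}\mb A(i)$ into $D_\nu$, and the $\sigma$-additivity of $\nu$ together with $\nu\circ\mb A=\mu\circ\mb A$ gives summability. As you note, the detour through $C$ and Lemma~\ref{p_disjpart2} can be dropped; you also make explicit the check $A\in\q\mu$, which the paper leaves implicit via~(\ref{Qmu}).
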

\begin{proof}
  Let $\nu = \as[\mf A]\mu$ and $A\in D_\nu$. Let $\mb A\colon I\to D_\mu$ be a countable disjoint family such that $\mb A(i)\subset A$ for all $i\in I$. Since $\nu$ is an extension of $\mu$, we have $\mb A(i)\in D_\nu$ for every $i\in I$ and $\mu\circ \mb A = \nu\circ \mb A$. As the set $\bigcup_{i\in I}\mb A(i)$ belongs to $D_\nu$ by Lemma~\ref{ll1}(i) and $\nu$ is $\sigma$-additive, we conclude that the family $\mu\circ\mb A$ is summable in $\mf A$ and, therefore, $A\in\mathcal{L}_{\mf A}(\mu)$. Thus, $D_\nu\subset \mathcal{L}_{\mf A}(\mu)$.
\end{proof}

\begin{lemma}\label{l_sc}
  Let $\mf A$ be a regular HTM and $\mu$ be an $\mf A$-valued premeasure. There is an $\mf A$-valued premeasure $\nu$ such that $D_\nu = \mathcal{L}_{\mf A}(\mu)$ and $\nu$ is an extension of $\mu$.
\end{lemma}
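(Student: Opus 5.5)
Set $\mathcal{L} = \mathcal{L}_{\mf A}(\mu)$, which is a $\delta$-ring by Proposition~\ref{p_Sigdelta}. I will define $\nu$ on $\mathcal{L}$ by countable partitions, check that it is well defined and extends $\mu$, and then verify $\sigma$-additivity; regularity is needed only in the last step.

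First, $\mathcal{L}\subset\q\mu = \sigma(D_\mu)$, and $\sigma(D_\mu)=\sigma_0(D_\mu)$ by Lemma~\ref{ll1}(ii). Since $D_\mu$ is a ring, every $A\in\mathcal{L}$ has a countable partition $\mb A\colon I\to D_\mu$; each $\mb A(i)\subset A$, so $\mu\circ\mb A$ is summable by the definition of $\mathcal{L}$. Put $\nu(A)=\sum\mu\circ\mb A$.

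For independence of the partition, take two partitions $\mb A\colon I\to D_\mu$ and $\mb B\colon J\to D_\mu$ of $A$ and form the common refinement $\mb C(i,j)=\mb A(i)\cap\mb B(j)$ on $I\times J$. Its members lie in $D_\mu$ by Lemma~\ref{ll1}(iii), and $\mb C$ is a countable disjoint family inside $A$, so $\mu\circ\mb C$ is summable. For each $i$, $\sigma$-additivity of $\mu$ gives $\mu(\mb A(i))=\sum_j\mu(\mb C(i,j))$. I then need the interchange result: if $\sum_{(i,j)}c_{ij}$ exists and each row sum $\sum_j c_{ij}$ exists, then $\sum_i\sum_j c_{ij}=\sum_{(i,j)}c_{ij}$. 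Proposition~\ref{p_fubiniRTS} is cited exactly for this, and it is here that regularity of $\mf A$ is used. Applying it in both orders shows $\sum\mu\circ\mb A=\sum\mu\circ\mb C=\sum\mu\circ\mb B$. If $A\in D_\mu$, the one-element partition shows $\nu(A)=\mu(A)$, so $\nu$ extends $\mu$ and $D_\mu\subset\mathcal{L}$.

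For $\sigma$-additivity, let $A\in\mathcal{L}$ and let $\mb A\colon I\to\mathcal{L}$ be a countable partition of $A$. Choose partitions $\mb B(i)\colon\mb J(i)\to D_\mu$ of each $\mb A(i)$. By Lemma~\ref{p_disjpart}, $\mb C(i,j)=\mb B(i|j)$ is a countable partition of $A$ into elements of $D_\mu$. Hence $\nu(A)=\sum\mu\circ\mb C$, while $\nu(\mb A(i))=\sum_{j}\mu(\mb B(i|j))$. The same interchange of summation (Proposition~\ref{p_fubiniRTS}) gives $\sum_i\nu(\mb A(i))=\nu(A)$, including summability in $\mf A$ of the outer family. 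Thus $\nu$ is a premeasure with $D_\nu=\mathcal{L}$ that extends $\mu$.

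The main obstacle is this interchange of iterated and unconditional sums in an HTM. It fails without regularity (Example~\ref{e_nonreg}), so the whole argument rests on the regular-case Fubini statement. If Proposition~\ref{p_fubiniRTS} is not already stated in the required form, I would prove it by a closed-neighbourhood argument. Given a closed neighbourhood $\bar U$ of $s=\sum_{(i,j)}c_{ij}$, choose a finite $F_0\subset I\times J$ such that all finite partial sums over $F\supset F_0$ lie in a smaller neighbourhood $V$ with $V+W\subset\bar U$. For a finite $K\subset I$ containing the projection of $F_0$, the iterated partial sums $\sum_{i\in K}\sum_{j}c_{ij}$ are limits of such finite partial sums. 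Continuity of addition, together with closedness of $\bar U$, then keeps these limits in $\bar U$. Since the closed neighbourhoods of $s$ form a base, this gives the claimed convergence.
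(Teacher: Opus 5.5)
Your proposal is correct in substance and follows the paper's proof closely. You define $\nu$ through countable partitions into elements of $D_\mu$, obtain well-definedness from the common refinement together with Proposition~\ref{p_fubiniRTS}, and use regularity only in that interchange of summation. Two small repairs are needed.

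\textbf{The inclusion $D_\mu\subset\mathcal{L}$.} This does not follow from the one-element partition. You need the inclusion before $\nu(A)$ is even defined for $A\in D_\mu$. Moreover, membership in $\mathcal{L}$ requires summability of $\mu\circ\mb A$ for \emph{every} countable disjoint family $\mb A\colon I\to D_\mu$ with $\mb A(i)\subset A$, not just for the partition $\{A\}$. The correct argument is short: $\bigcup_{i\in I}\mb A(i)\in D_\mu$ by Lemma~\ref{ll1}(i), so the $\sigma$-additivity of $\mu$ gives summability. This is exactly what the paper obtains from Lemma~\ref{l_SigmaA}.

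\textbf{The interchange in the $\sigma$-additivity step.} Your concatenated family $\mb C$ is indexed by $\bigsqcup_{i\in I}\mb J(i)$ rather than by a product, so Proposition~\ref{p_fubiniRTS} does not apply verbatim. You have two ways to fix this. You can use Proposition~\ref{p_sumunion} with the partition $\{\{i\}\times\mb J(i)\}_{i\in I}$ of the index set, together with Proposition~\ref{p_sumchange}. Alternatively, do as the paper does: intersect the sets $\mb A(i)$ with one fixed partition $\mb A'\colon J\to D_\mu$ of $A$. By Lemma~\ref{ll1}(iii), the sets $\mb A(i)\cap\mb A'(j)$ belong to $D_\mu$, which gives a $D_\mu$-partition of $A$ indexed by $I\times J$, so Proposition~\ref{p_fubiniRTS}(i) applies directly.
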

\begin{proof}
  Let $\mathcal{L} = \mathcal{L}_{\mf A}(\mu)$. Suppose $A\in \mathcal{L}$ and there are two countable partitions $\fm{\mb A}{I}$ and $\fm{\mb A'}{J}$ of $A$ into elements of $D_\mu$. Since $A\in\mathcal{L}$, the families $\mu\circ \mb A$ and $\mu\circ \mb A'$ are summable in $\mf A$. Let $\fm{\mb B}{I\times J}$ be such that $\mb B(i,j)=\mb A(i)\cap \mb A'(j)$ for every $i\in I$ and $j\in J$. As $\mb B$ is a countable partition of $A$ into elements of $D_\mu$, the family $\mu\circ \mb B$ is also summable in $\mf A$. For every $i\in I$, the family $\{\mb B(i,j)\}_{j\in J}$ is a countable partition of $\mb A(i)$ into elements of $D_\mu$. 
  For every $j\in J$, the family $\{\mb B(i,j)\}_{i\in I}$ is a countable partition of $\mb A'(j)$ into elements of $D_\mu$. By the $\sigma$-additivity of $\mu$, it follows that  $\mu(\mb A(i))=\sum_{j\in J}\mu(\mb B(i,j))$ for every $i\in I$ and $\mu(\mb A'(j))=\sum_{i\in I}\mu(\mb B(i,j))$ for every $j\in J$.
  By Proposition~\ref{p_fubiniRTS}, we conclude that 
  \[
    \sum_{i\in I}\mu(\mb A(i)) = \sum_{i\in I}\sum_{j\in J} \mu(\mb B(i,j)) = \sum_{k\in I\times J} \mu(\mb B(k)) = \sum_{j\in J}\sum_{i\in I} \mu(\mb B(i,j)) = \sum_{j\in J}\mu(\mb A'(j)).
  \]
  Since $\mathcal{L}\subset\q\mu$, Lemma~\ref{ll1}(ii) implies that every element of $\mathcal{L}$ has a countable partition into elements of $D_\mu$. Let $\nu$ be a map such that $D_\nu = \mathcal{L}$ and
  \begin{equation}\label{mu}
    \nu(A)=\sum_{i\in I}\mu(\mb A(i))
  \end{equation}
  for every $A\in\mathcal{L}$ and every countable partition $\mb A\colon I\to D_\mu$ of $A$. As shown above, the right-hand side of~(\ref{mu}) does not depend on the choice of partition and, therefore, $\nu$ is well defined. Lemma~\ref{l_SigmaA} implies that $D_\mu \subset D_\nu$ and we obviously have $\nu(A) = \mu(A)$ for every $A\in D_\mu$.

  We now show that $\nu$ is $\sigma$-additive. Suppose $A\in D_\nu$ and $\fm{\mb A}{I}$ is a countable partition of $A$ into elements of $D_\nu$. 
  Let $\fm{\mb A'}{J}$ be a countable partition of $A$ into elements of $D_\mu$. Let $\fm{\mb B}{I\times J}$ be such that $\mb B(i,j)=\mb A(i)\cap \mb A'(j)$ for every $i\in I$ and $j\in J$. In view of Lemma~\ref{ll1}(iii), we have $\mb B(i,j)\in D_\mu$ for every $i\in I$ and $j\in J$. Thus, $\mb B$ is a countable partition of $A$ into elements of $D_\mu$. Hence, $\nu(A)=\sum_{k\in I\times J} \mu(\mb B(k))$. On the other hand, for every $i\in I$, the family $\{\mb B(i,j)\}_{j\in J}$ is a countable partition of $\mb A(i)$ into elements of $D_\mu$ and, therefore, $\nu(\mb A(i))=\sum_{j\in J} \mu(\mb B(i,j))$. By Proposition~\ref{p_fubiniRTS}(i), it follows that
  \[
    \nu(A)= \sum_{i\in I}\sum_{j\in J} \mu(\mb B(i,j)) = \sum_{i\in I}\nu(\mb A(i)).
  \]
  This means that $\nu$ is $\sigma$-additive. As $\mathcal{L}$ is a $\delta$-ring by Proposition~\ref{p_Sigdelta}, $\nu$ is an $\mf A$-valued premeasure.
\end{proof}

\begin{remark}
  The above proof of Lemma~\ref{l_sc} relies on Proposition~\ref{p_fubiniRTS}. As shown by Example~\ref{e_nonreg}, this statement breaks down for nonregular HTMs.
\end{remark}

\begin{remark}
  When $\mf A$ is a complete uniform HTM and $\mathcal{L}_{\mf A}(\mu) = \q\mu$, Lemma~\ref{l_sc} follows from Lemma~2.6 in~\cite{FoxMorales1983}. 
\end{remark}

\begin{proof}[Proof of Theorem~\ref{ll16}]
  Let $\nu = \as[\mf A]\mu$ and $\mathcal{L} = \mathcal{L}_{\mf A}(\mu)$. Lemma~\ref{l_SigmaA} implies that $D_\nu\subset \mathcal{L}$. Suppose $\mf A$ is regular. By Lemma~\ref{l_sc}, there is an $\mf A$-valued premeasure $\nu'$ such that $D_{\nu'} = \mathcal{L}$ and $\nu'$ is an extension of $\mu$. Since $\mathcal{L}\subset \q\mu$, we have $\mu\asymp\nu'$ and, therefore, $\nu\asymp\nu'$ by Proposition~\ref{l_trans}. As $\nu$ is a measure, we conclude that $\mathcal{L}\subset D_\nu$. Thus, $D_\nu = \mathcal{L}$.
\end{proof}

We now illustrate the constructions considered above with the simple example of a weighted counting measure. We use the notations $\mathcal{F}[f]$ and $\bigS f$ introduced in Sec.~\ref{s_add} before Definition~\ref{dd5}.

\begin{proposition}\label{p_Spremeasure}
  Let $\mf A$ be a monoid and $f$ be an $\mf A$-valued map. Then $\bigS f$ is an $\mf A$-valued content. If $\mf A$ is an HTM, then $\bigS f$ is an $\mf A$-valued premeasure. 
\end{proposition}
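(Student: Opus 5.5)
The map $\bigS f$ has domain $\mathcal{F}[f]=\mathcal{F}_{D_f}$ and sends each finite $J\subset D_f$ to $\finsum_{i\in J}f(i)$. The plan is to verify the two claims directly from Definitions~\ref{dd4} and~\ref{dd5}.

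\emph{$\bigS f$ is a content.} First, $\mathcal{F}_{D_f}$ is a ring: it is nonempty since it contains $\varnothing$, and unions and differences of finite subsets of $D_f$ are again finite subsets of $D_f$. For additivity, let $J\in\mathcal{F}_{D_f}$ and let $\fm{\mb A}{K}$ be a finite partition of $J$ into elements of $\mathcal{F}_{D_f}$. The claim
\[
\finsum_{i\in J}f(i)=\finsum_{k\in K}\finsum_{i\in\mb A(k)}f(i)
\]
is generalized associativity and commutativity of finite sums in an Abelian monoid. I would prove it by induction on $\card{K}$. One step peels off a single block $\mb A(k_0)$ and uses $\finsum_{i\in B\cup C}f(i)=\finsum_{i\in B}f(i)+\finsum_{i\in C}f(i)$ for disjoint finite $B,C$. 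Empty blocks contribute $0$, which takes care of the case $K=\varnothing$, $J=\varnothing$.

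\emph{$\bigS f$ is a premeasure when $\mf A$ is an HTM.} The ring $\mathcal{F}_{D_f}$ is also a $\delta$-ring, because any nonempty intersection of finite sets is contained in one of them and hence is finite. It remains to check $\sigma$-additivity. Let $J$ be finite and let $\fm{\mb A}{I}$ be a countable partition of $J$ into finite sets. Since $J$ is finite and the $\mb A(i)$ are disjoint, the set $I_0=\set{i\in I}{\mb A(i)\neq\varnothing}$ is finite, and $\mb A(i)=\varnothing$ gives $\bigS f(\mb A(i))=0$ for $i\notin I_0$. So the net $\bigS(\bigS f\circ\mb A)$ over $\mathcal{F}_I$ is eventually constant: for every finite $L\supset I_0$,
\[
\finsum_{i\in L}\bigS f(\mb A(i))=\finsum_{i\in I_0}\bigS f(\mb A(i)).
\]
By the finite additivity just proved, applied to the finite partition $\mb A|_{I_0}$ of $J$, this common value equals $\bigS f(J)$. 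An eventually constant net converges to its eventual value, and limits are unique because $\mf A$ is Hausdorff. Hence $\sum\bigS f\circ\mb A=\bigS f(J)$.

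There is no real obstacle here. The only points needing care are the bookkeeping for finite sums in a general monoid, namely the induction and the zero contribution of empty blocks, and the observation that a countable partition of a finite set has only finitely many nonempty members, which reduces $\sigma$-additivity to finite additivity.
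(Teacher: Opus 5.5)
Your proof is correct and follows essentially the paper's route. The paper obtains finite additivity from Proposition~\ref{p_finsumunion}, which is the finite regrouping you prove directly by induction. It obtains $\sigma$-additivity from Proposition~\ref{p_sumunionfin}, which it cites as a general appendix result (also needed, e.g., in Lemma~\ref{l_LL} for infinite countable sets). The argument of that proposition, specialized to a finite set, amounts to your observation that only finitely many blocks are nonempty, so the net of partial sums is eventually constant.
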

\begin{proof}
  Let $\mu = \bigS f$. The additivity of $\mu$ follows immediately from Proposition~\ref{p_finsumunion}. Since $\mathcal{F}[f]$ is obviously a $\delta$-ring, we conclude that $\mu$ is an $\mf A$-valued content. Suppose $\mf A$ is an HTM, $K\in \mathcal{F}[f]$, and $\fm{\mb J}{L}$ is a countable partition of $K$. By Proposition~\ref{p_sumfin}, we have $\mu(K) = \sum_{k\in K}f(k)$ and $\mu(\mb J(l)) = \sum_{k\in\mb J(l)}f(k)$ for every $l\in L$. By Proposition~\ref{p_sumunionfin}, it follows that $\mu(K) = \sum_{l\in L}\mu(\mb J(l))$. Thus, $\mu$ is $\sigma$-additive and, hence, is an $\mf A$-valued premeasure.   
\end{proof}

Given a map $f$, we let $\mathcal{C}[f]$ denote the set of all countable subsets of $D_f$. If $\mf A$ is a monoid and $f$ is an $\mf A$-valued map, then we obviously have
\begin{equation}
  \label{eq:sigFI}
  \q{\bigS f} = \mathcal{C}[f].
\end{equation}
Let $\mf A$ be an HTM and $f$ be an $\mf A$-valued map. We define
\begin{equation}
  \label{eq:LAf}
  \mathcal{L}_{\mf A}^f = \set{K}{K\in \mathcal{C}[f]\mbox{ and }\sm{\mf A}{j\in J}{f(j)}\mbox{ for every }J\subset K}.
\end{equation}

\begin{lemma}\label{l_LL}
  Let $\mf A$ be an HTM and $f$ be an $\mf A$-valued map. Then $\mathcal{L}_{\mf A}(\bigS f) = \mathcal{L}_{\mf A}^f$.
\end{lemma}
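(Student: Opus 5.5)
We prove the two inclusions separately, working directly from the definitions (\ref{SigmaA}) and (\ref{eq:LAf}). Throughout, $\mu = \bigS f$, so $D_\mu = \mathcal{F}[f]$ and, by (\ref{eq:sigFI}), $\q\mu = \mathcal{C}[f]$. The key observation is that a countable disjoint family of finite subsets of $K$ is essentially a partition of a subset $J\subset K$ into finite blocks. Summability of $\mu$ over such blocks should then be equivalent to summability of $f$ over $J$.

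\emph{Inclusion $\mathcal{L}_{\mf A}^f\subset\mathcal{L}_{\mf A}(\mu)$.} Let $K\in\mathcal{L}_{\mf A}^f$. Let $\mb A\colon I\to\mathcal{F}[f]$ be a countable disjoint family with $\mb A(i)\subset K$ for all $i$. Put $J=\bigcup_{i\in I}\mb A(i)\subset K$. By assumption $\sm{\mf A}{j\in J}{f(j)}$. The family $\mb A$ is a partition of $J$ into finite sets, and $\mu(\mb A(i))=\finsum_{j\in\mb A(i)}f(j)=\sum_{j\in\mb A(i)}f(j)$ by Proposition~\ref{p_sumfin}. We then want summability of $\{\mu(\mb A(i))\}_{i\in I}$, which is associativity of unconditional sums with finite blocks. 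I expect this to be exactly Proposition~\ref{p_sumunionfin} (or a close relative from Appendix~\ref{app_B}), which says that if $f$ is summable over $J$ and $J$ is partitioned into finite pieces, then the block sums are summable with the same total. Failing that, the direct argument is short: the net over $\mathcal{F}_I$ of finite block sums is a subnet-like composition of the net $\bigS_{j\in J}f(j)$ with the monotone cofinal map $L\mapsto\bigcup_{i\in L}\mb A(i)$ from $\mathcal{F}_I$ to $\mathcal{F}_J$. Hence it converges to the same limit. Empty blocks are harmless, since they contribute $0$. Thus $K\in\mathcal{L}_{\mf A}(\mu)$, using $K\in\mathcal{C}[f]=\q\mu$.

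\emph{Inclusion $\mathcal{L}_{\mf A}(\mu)\subset\mathcal{L}_{\mf A}^f$.} Let $K\in\mathcal{L}_{\mf A}(\mu)$. Then $K\in\q\mu=\mathcal{C}[f]$. Let $J\subset K$ and consider the family $\mb A\colon J\to\mathcal{F}[f]$ with $\mb A(j)=\{j\}$. It is countable, disjoint, and consists of subsets of $K$. Hence $\mu\circ\mb A=\{f(j)\}_{j\in J}$ is summable in $\mf A$, since $\mu(\{j\})=f(j)$. Therefore $\sm{\mf A}{j\in J}{f(j)}$ and $K\in\mathcal{L}_{\mf A}^f$.

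The only nontrivial step is the block-summation step in the first inclusion. It is safe for arbitrary HTMs, without regularity, because the blocks are finite: no genuine Fubini-type interchange is needed, only cofinality of a monotone reindexing of nets.
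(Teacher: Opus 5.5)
Your proposal is correct and matches the paper's proof step for step. The first inclusion identifies $\mu(\mb A(i))$ with $\sum_{j\in\mb A(i)}f(j)$ via Proposition~\ref{p_sumfin} and then applies Proposition~\ref{p_sumunionfin} to the union $J$ of the blocks; the second uses the singleton family $\{\{j\}\}_{j\in J}$, which is countable because $J\subset K\in\mathcal{C}[f]$. The only thing the paper adds is an explicit appeal to Proposition~\ref{p_Spremeasure}, to ensure that $\bigS f$ is a premeasure and hence that $\mathcal{L}_{\mf A}(\bigS f)$ is defined.
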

\begin{proof}
  Let $\mu = \bigS f$. By Proposition~\ref{p_Spremeasure}, $\mu$ is an $\mf A$-valued premeasure.  Let $K\in \mathcal{L}_{\mf A}^f$. Then $K\in\mathcal{C}[f]$ and, hence, $K\in \q\mu$ by~(\ref{eq:sigFI}). Let $\mb J\colon L\to D_\mu$ be a disjoint family such that $\mb J(l)\subset K$ for every $l\in L$. By Proposition~\ref{p_sumfin}, we have $\mu(\mb J(l)) = \sum_{i\in\mb J(l)}f(i)$ for every $l\in L$. Let $J = \bigcup_{l\in L}\mb J(l)$. Since $J\subset K$, we have $\sm{\mf A}{j\in J}{f(j)}$ and Proposition~\ref{p_sumunionfin} implies that $\sm{\mf A}{l\in L}{\mu(\mb J(l))}$. This means that $K\in \mathcal{L}_{\mf A}(\mu)$ and, hence, $\mathcal{L}_{\mf A}^f\subset \mathcal{L}_{\mf A}(\mu)$. Conversely, let $K\in \mathcal{L}_{\mf A}(\mu)$. Then $K\in\q\mu$ and, therefore, $K\in \mathcal{C}[f]$ by~(\ref{eq:sigFI}). Let $J\subset K$ and $\mb A = \{\{j\}\}_{j\in J}$. Then $\mb A$ is a countable disjoint family.  As $\mb A(j)\subset K$, $\mb A(j)\in D_\mu$  and $\mu(\mb A(j)) = f(j)$ for every $j\in J$, it follows from~(\ref{SigmaA}) that $\sm{\mf A}{j\in J}{f(j)}$. Thus, $K\in \mathcal{L}_{\mf A}^f$. This means that $\mathcal{L}_{\mf A}(\mu)\subset\mathcal{L}_{\mf A}^f$  and, hence, $\mathcal{L}_{\mf A}(\mu) = \mathcal{L}_{\mf A}^f$.  
\end{proof}

\begin{proposition}\label{p_assS}
  Let $\mf A$ be an HTM, $f$ be an $\mf A$-valued map, and $\nu = \as[\mf A]{\bigS f}$. Then $\nu(K) = \sum_{k\in K}f(k)$ for every $K\in D_\nu$. If $\mf A$ is regular, then $D_\nu = \mathcal{L}_{\mf A}^f$. If $\mf A$ is a sequentially complete HTG, then $D_\nu = \set{K}{K\in \mathcal{C}[f]\mbox{ and }\sm{\mf A}{k\in K}{f(k)}}$.
\end{proposition}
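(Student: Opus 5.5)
Let $\mu = \bigS f$; by Proposition~\ref{p_Spremeasure} this is an $\mf A$-valued premeasure, so $\nu = \as[\mf A]\mu$ is well defined. There are three claims, which I would prove in turn.

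\emph{The value formula.} Let $K\in D_\nu$. Then $K\in\q\mu = \mathcal{C}[f]$ by~(\ref{Qmu}) and~(\ref{eq:sigFI}), so $K$ is countable. Since $\nu$ extends $\mu$ and $\{k\}\in\mathcal{F}[f]=D_\mu$, we have $\nu(\{k\}) = \mu(\{k\}) = f(k)$ for every $k\in K$. The family $\{\{k\}\}_{k\in K}$ is a countable partition of $K$ into elements of $D_\nu$, and $\nu$ is $\sigma$-additive. Hence $\nu(K) = \sum_{k\in K}\nu(\{k\}) = \sum_{k\in K} f(k)$, and summability is part of this conclusion.

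\emph{The regular case.} This is immediate from Theorem~\ref{ll16}, which gives $D_\nu=\Sigma_{\mf A}(\mu)=\mathcal{L}_{\mf A}(\mu)$, combined with Lemma~\ref{l_LL}, which gives $\mathcal{L}_{\mf A}(\mu)=\mathcal{L}_{\mf A}^f$.

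\emph{The sequentially complete HTG case.} Every topological group is regular, so $D_\nu=\mathcal{L}_{\mf A}^f$. It remains to show that for countable $K$, summability of $f$ over $K$ implies summability over every $J\subset K$; the reverse inclusion is trivial by taking $J=K$. This step is the main obstacle, since it is a genuine fact about groups and not about general HTMs.

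I would argue via the Cauchy criterion.
\begin{itemize}
\item First, summability over $K$ gives the following: for every neighbourhood $U$ of $0$ there is a finite $F_0\subset K$ such that $\finsum_{i\in G} f(i)\in U$ for every finite $G\subset K\setminus F_0$. This uses the uniform structure and subtraction: write the partial sum over $G$ as a difference of partial sums over $F_0\cup G$ and $F_0$.
\item Restricting to finite subsets of $J$, the net $\bigS_{j\in J}f(j)$ is Cauchy.
\item Since $J$ is countable, I would enumerate it and pass to the sequence of partial sums over initial segments. This sequence is Cauchy, so it converges by sequential completeness.
\item The Cauchy condition then upgrades convergence of this sequence to convergence of the whole net over $\mathcal{F}_J$.
\end{itemize}
If an appendix result on summability of subfamilies in sequentially complete groups is available, I would invoke it instead; otherwise I would carry out the routine argument above.
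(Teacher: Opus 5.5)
Your proposal is correct and follows the paper's proof essentially step for step. The value formula comes from the partition $\{\{k\}\}_{k\in K}$, the regular case from Theorem~\ref{ll16} combined with Lemma~\ref{l_LL}, and the group case from the regularity of topological groups together with the fact that summability passes to countable subfamilies. The appendix result you anticipate is exactly Proposition~\ref{p_seqcompl}, which is proved by the same Cauchy-net argument you sketch; the paper compresses your last two bullets into the remark that $\mathcal{F}_J$ is countable.
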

\begin{proof}
   Let $\mu = \bigS f$. By Proposition~\ref{p_Spremeasure}, $\mu$ is an $\mf A$-valued premeasure and, therefore, $\nu$ is an $\mf A$-valued measure. Let $K\in D_\nu$. Then $K\in \q\mu$ and, hence, $K$ is countable by~(\ref{eq:sigFI}). Since $\nu$ is an extension of $\mu$, we have $\{k\}\in D_\nu$ and $\nu(\{k\}) = f(k)$ for every $k\in K$. As $\{\{k\}\}_{k\in K}$ is a partition of $K$, the $\sigma$-additivity of $\nu$ implies that $\nu(K) = \sum_{k\in K}f(k)$. If $\mf A$ is regular, then Theorem~\ref{ll16} and Lemma~\ref{l_LL} ensure that $D_\nu = \mathcal{L}_{\mf A}^f$. If $\mf A$ is a sequentially complete HTG, then $\mathcal{L}_{\mf A}^f = \set{K}{K\in \mathcal{C}[f]\mbox{ and }\sm{\mf A}{k\in K}{f(k)}}$ by Proposition~\ref{p_seqcompl}.   
\end{proof}

Using Proposition~\ref{p_assS}, we give an example showing that Corollary~\ref{l_subgr1} is not true without the assumption that $\mf B$ be sequentially closed in $\mf A$.

\begin{example}\label{e_sub}
  We construct a $\mathbb Q$-valued measure $\mu$ that is not an $\R$-valued measure. Let $f\colon \N\to \mathbb Q$ be such that $f(n) = 1/n!$ for every $n\in\N$. Let $\eta = \bigS^{\mathbb Q}f $. By Proposition~\ref{p_Spremeasure}, $\eta$ is a $\mathbb Q$-valued premeasure. Let $\mu = \as[\mathbb Q]\eta$ and $\nu = \as[\R]\eta$. Clearly, $\mu$ is a $\mathbb Q$-valued measure. By Proposition~\ref{l_trans} applied to $\mf A = \R$, we have  $\mu\asymp \nu$. Since $\eta = \bigS^\R f$, Proposition~\ref{p_assS} implies that $\N\in D_\nu$ and $\nu(\N) = \sum^\R_{n\in\N}1/n! = e -1$. On the other hand, $\N\notin D_\mu$ because otherwise $f$ would be summable in $\mathbb Q$ by Proposition~\ref{p_assS} and, hence, $e-1$ would be rational by Proposition~\ref{p_sumsubsgr}. Thus, $\mu$ is not an extension of $\nu$ and, therefore, is not an $\R$-valued measure.   
\end{example}

If $\mf A$ is a sequentially complete HTG, then the description of $\Sigma_{\mf A}(\mu)$ given by Theorem~\ref{ll16} can be somewhat simplified.

\begin{proposition}\label{p_seqcomplSigma}
  Let $\mf A$ be a sequentially complete HTG and $\mu$ be an $\mf A$-valued premeasure. Then
  \begin{multline}\label{Sigma_sc}
    \Sigma_{\mf A}(\mu) =  \sett{A}{A\in \q\mu \mbox{ and the family $\mu\circ \mb A$ is summable in $\mf A$}}{\mbox{for every countable partition $\mb A$ of $A$ into elements of $D_\mu$}}.
  \end{multline}
\end{proposition}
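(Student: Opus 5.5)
Denote the right-hand side of~(\ref{Sigma_sc}) by $\mathcal{P}$. Since every topological group is regular, Theorem~\ref{ll16} gives $\Sigma_{\mf A}(\mu) = \mathcal{L}_{\mf A}(\mu)$. It therefore suffices to show that $\mathcal{L}_{\mf A}(\mu) = \mathcal{P}$.

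The inclusion $\mathcal{L}_{\mf A}(\mu)\subset\mathcal{P}$ is immediate. A countable partition of $A$ into elements of $D_\mu$ is in particular a countable disjoint family of elements of $D_\mu$ contained in $A$.

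For the reverse inclusion, let $A\in\mathcal{P}$ and let $\fm{\mb A}{I}$ be a countable disjoint family of elements of $D_\mu$ with $\mb A(i)\subset A$. We must show that $\mu\circ\mb A$ is summable.
\begin{enumerate}
\item[(a)] Put $B = A\setminus\bigcup_{i\in I}\mb A(i)\in\q\mu$. By Lemma~\ref{ll1}(ii), $B$ has a countable partition $\fm{\mb B}{J}$ into elements of $D_\mu$.
\item[(b)] Combine $\mb A$ and $\mb B$ as in Lemma~\ref{p_disjpart2} into a countable partition $\mb C$ of $A$ into elements of $D_\mu$, indexed by $I\sqcup J$. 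Since $A\in\mathcal{P}$, the family $\mu\circ\mb C$ is summable in $\mf A$.
\item[(c)] In a sequentially complete HTG, summability of a countable family passes to subfamilies. Indeed, summability of $\mu\circ\mb C$ gives the Cauchy condition: for every neighbourhood $U$ of $0$ there is a finite $K_0$ such that every finite sum over a set disjoint from $K_0$ lies in $U$. This condition is inherited by the subfamily indexed by $\{1\}\times I$, that is, by $\mu\circ\mb A$. Because the index set is countable, the Cauchy condition plus sequential completeness gives summability; enumerate the index set and use the standard argument that a Cauchy net over $\mathcal{F}_I$ with countable $I$ converges when sequences do.
\end{enumerate}
This step is presumably what Proposition~\ref{p_seqcompl} (already used in the proof of Proposition~\ref{p_assS}) provides, so I would invoke it rather than reprove it. Concretely, it states that for a sequentially complete HTG, summability over a countable set implies summability over all of its subsets. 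Applied to $\mu\circ\mb C$ and the subset $\{1\}\times I$, it gives $\sm{\mf A}{i\in I}{\mu(\mb A(i))}$. Hence $A\in\mathcal{L}_{\mf A}(\mu)$.

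The only real obstacle is step~(c), the passage from summability to subfamily summability. If Proposition~\ref{p_seqcompl} is phrased only for subsets of an index set rather than for reindexed subfamilies, a little bookkeeping is needed. One must identify $\mu\circ\mb A$ with the restriction of $\mu\circ\mb C$ to $\{1\}\times I$ via the bijection $i\mapsto(1,i)$, and note that unconditional summability is invariant under such bijections.
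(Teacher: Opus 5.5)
Your proposal is correct and follows the paper's proof essentially step for step. You complete the disjoint family $\mb A$ to a countable partition $\mb C$ of $A$ (Lemma~\ref{ll1}(ii) and Lemma~\ref{p_disjpart2}), pass to the subfamily indexed by $\{1\}\times I$ via Proposition~\ref{p_seqcompl}, reindex with Proposition~\ref{p_sumchange}, and conclude $A\in\mathcal{L}_{\mf A}(\mu)=\Sigma_{\mf A}(\mu)$ by Theorem~\ref{ll16}, which applies because topological groups are regular.
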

\begin{proof}
  Let $\Sigma = \Sigma_{\mf A}(\mu)$ and $\mathcal Q$ be the set in the right-hand side of~(\ref{Sigma_sc}). We obviously have $\Sigma\subset \mathcal Q$. Let $A\in\mathcal Q$ and $\mb A\colon I\to D_\mu$ be a countable disjoint family such that $\mb A(i)\subset A$ for all $i\in I$. 
  Since $A\in \q\mu$, Lemma~\ref{ll1}(ii) implies that there exists a countable partition $\mb B\colon J\to D_\mu$ of $A\setminus \bigcup_{i\in I}\mb A(i)$. 
  Let $\mb C\colon I\sqcup J\to D_\mu$ be such that $\mb C(1,i) = \mb A(i)$ for every $i\in I$ and $\mb C(2,j) = \mb B(j)$ for every $j\in J$. By Lemma~\ref{p_disjpart2}, $\mb C$ is a countable partition of $A$. Since $A\in \mathcal Q$, it follows that the family $\mu\circ \mb C$ is summable in $\mf A$. Let $I' = \{1\}\times I$ and $\mb C' = \mb C|_{I'}$. By Proposition~\ref{p_seqcompl}, the family $\mu\circ \mb C'$ is summable in $\mf A$. Let $\fm{\tau}{I}$ be such that $\tau(i) = (1,i)$ for every $i\in I$. Then $\tau$ is a bijection of $I$ onto $I'$ and $\mb C'\circ\tau = \mb A$. It follows that $\mu\circ\mb A = (\mu\circ \mb C')\circ \tau$ and, hence, $\mu\circ\mb A$ is summable in $\mf A$ by Proposition~\ref{p_sumchange}. This means that $A\in \mathcal{L}_{\mf A}(\mu)$ and, therefore, $A\in \Sigma$ by Theorem~\ref{ll16}. Thus, $\mathcal Q\subset \Sigma$ and, hence, $\mathcal Q = \Sigma$.	
\end{proof}

As shown by the next example, the condition that $\mf A$ be sequentially complete cannot be dropped in Proposition~\ref{p_seqcomplSigma}.

\begin{example}
  Let $f\colon \N\to \mathbb Q$ be such that $f(n) = 2^{-n}$ for every $n\in\N$. We have $\sum^{\mathbb Q}_{n\in \N}f(n) = 1$. Let $\mu = \bigS^{\mathbb Q}f $. By Proposition~\ref{p_Spremeasure}, $\mu$ is a $\mathbb Q$-valued premeasure. If $\mb J\colon I\to D_\mu$ is a partition of $\N$, then $\mu\circ\mb J$ is summable in $\mathbb Q$ by Propositions~\ref{p_sumfin} and~\ref{p_sumunionfin}. We claim that $\N\notin\Sigma_{\mathbb Q}(\mu)$. If $J$ and $J'$ are infinite subsets of $\N$, then $\sum^\R_{n\in J}f(n) = \sum^\R_{n\in J'}f(n)$ if and only if $J = J'$. Since the set of all infinite subsets of $\N$ is uncountable, we conclude that $\sum^\R_{n\in J}f(n)$ is irrational for some $J\subset \N$. By Proposition~\ref{p_sumsubsgr}, we conclude that $\sm{\mathbb Q}{n\in J}f(n)$ does not hold and, hence, $\N\notin\mathcal{L}^f_{\mathbb Q}$.  Our claim therefore follows from~Proposition~\ref{p_assS}.   
\end{example}

\section{Spaces of measures}
\label{s_spaces}

Let $\mf A$ be a monoid and $\mu$ and $\nu$ be $\mf A$-valued maps. The map $\eta\colon D_\mu\cap D_\nu\to\mf A$ such that $\eta(A) = \mu(A)+\nu(A)$ for every $A\in D_\mu\cap D_\nu$ is called the pointwise sum of $\mu$ and $\nu$ in $\mf A$ and is denoted by $\mu\dotplus\nu$. The operation $\dotplus$ is commutative and associative. If $\mf A$ is an HTM and $\mu$ and $\nu$ are $\mf A$-valued  premeasures, then so is $\mu\dotplus\nu$.

Let $\mf A$ be an HTM. For $\mf A$-valued premeasures $\mu$ and $\nu$, we define the $\mf A$-valued measure $\mu\boxplus\nu$ by
\begin{equation}\label{meas_sum}
  \mu\boxplus\nu = \as[\mf A]{\mu\dotplus\nu}.
\end{equation}
Since $\dotplus$ is commutative, we have $\mu\boxplus\nu = \nu\boxplus\mu$.

If the monoid $\mf A$ has to be explicitly indicated, we write $\mu\dotplus_{\mf A}\nu$ and $\mu\boxplus_{\mf A}\nu$ in place of $\mu\dotplus\nu$ and $\mu\boxplus\nu$ respectively.

\begin{proposition}\label{l_boxplus}
  Let $\mf A$ be an HTM and $\mu$ and $\nu$ be $\mf A$-valued premeasures such that $\q{\mu}=\q{\nu}$.
  \begin{enumerate}
  \item[(i)]  $\q{\mu\dotplus\nu}=\q{\mu\boxplus\nu}=\q{\mu}$.
  \item[(ii)] $\mu\boxplus\nu = \as[\mf A]{\mu}\boxplus\nu = \mu\boxplus\as[\mf A]{\nu} = \as[\mf A]{\mu}\boxplus\as[\mf A]{\nu}$.  
  \end{enumerate} 
\end{proposition}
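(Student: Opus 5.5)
For (i), the starting point is that $D_\mu$ and $D_\nu$ are $\delta$-rings with $\sigma(D_\mu)=\sigma(D_\nu)$. By definition, the domain of $\mu\dotplus\nu$ is $D_\mu\cap D_\nu$, so Lemma~\ref{l3sigma} gives
\[
  \q{\mu\dotplus\nu}=\sigma(D_\mu\cap D_\nu)=\sigma(D_\mu)=\q\mu .
\]
Since $\mu\dotplus\nu$ is an $\mf A$-valued premeasure, as noted before~(\ref{meas_sum}), we have $\mu\boxplus\nu=\as[\mf A]{\mu\dotplus\nu}$. Formula~(\ref{Qmu}) then yields $\q{\mu\boxplus\nu}=\q{\mu\dotplus\nu}=\q\mu$.

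For (ii), it is enough to prove $\mu\boxplus\nu=\as[\mf A]\mu\boxplus\nu$ for all premeasures $\mu,\nu$ with a common host. The remaining equalities follow from this one. Commutativity of $\boxplus$ gives $\mu\boxplus\nu=\mu\boxplus\as[\mf A]\nu$. Applying the first identity again to the pair $(\mu,\as[\mf A]\nu)$ gives the last equality; this is legitimate because $\q{\as[\mf A]\nu}=\q\nu=\q\mu$ by~(\ref{Qmu}).

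By Proposition~\ref{p_asssimilar}, proving the first identity amounts to showing $\mu\dotplus\nu\asymp\mu'\dotplus\nu$, where $\mu'=\as[\mf A]\mu$.
\begin{enumerate}
  \item [(a)] The hosts agree: both equal $\q\mu$, by part (i) applied to each pair (note $\q{\mu'}=\q\nu$).
  \item [(b)] The maps agree on the common domain. Since $\mu'$ extends $\mu$, we have $D_\mu\subset D_{\mu'}$. Hence the intersection of the domains is $D_\mu\cap D_\nu\cap D_{\mu'}=D_\mu\cap D_\nu$.
  \item [(c)] On this set $\mu'=\mu$, so $\mu\dotplus\nu$ and $\mu'\dotplus\nu$ coincide there.
\end{enumerate}
This establishes the similarity, and Proposition~\ref{p_asssimilar} gives the equality of the associated measures.

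The only point needing care is the host-equality bookkeeping in (a), which rests on Lemma~\ref{l3sigma} and (\ref{Qmu}); everything else is formal.
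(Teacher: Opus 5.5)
Your proposal is correct and follows the paper's proof essentially step for step. Part (i) uses Lemma~\ref{l3sigma} together with~(\ref{Qmu}). For part (ii), you show that $\as[\mf A]{\mu}\dotplus\nu$ extends $\mu\dotplus\nu$ and has the same host, conclude via Proposition~\ref{p_asssimilar}, and obtain the remaining equalities by commutativity and by reapplying the first identity with $\as[\mf A]{\nu}$ in place of $\nu$.
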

\begin{proof}
  (i) Since $\q{\mu\dotplus\nu} = \sigma(D_\mu\cap D_\nu)$ and $\sigma(D_\mu\cap D_\nu)=\q{\mu}$ by Lemma~\ref{l3sigma}, the statement follows from~(\ref{Qmu}) and~(\ref{meas_sum}). 
  \par\medskip\noindent
  (ii) By~(\ref{Qmu}) and~(i), the premeasures $\mu\dotplus\nu$ and $\as[\mf A]{\mu}\dotplus\nu$ have the same host $\sigma$-ring. Since the latter extends the former, they are similar. By Proposition~\ref{p_asssimilar} and~(\ref{meas_sum}), we conclude that $\mu\boxplus\nu = \as[\mf A]{\mu}\boxplus\nu$. Interchanging $\mu$ and $\nu$ and using the commutativity of $\boxplus$, we obtain $\mu\boxplus\nu = \mu\boxplus\as[\mf A]{\nu}$. Finally, applying the first equality above with $\as[\mf A]{\nu}$ in place of $\nu$ gives $\mu\boxplus\as[\mf A]{\nu} = \as[\mf A]{\mu}\boxplus\as[\mf A]{\nu}$.
\end{proof}  

\begin{proposition}\label{l_comp}
  Let $\mf A$ be an HTM and 
  $\mu$, $\nu$, and $\eta$ be $\mf A$-valued premeasures such that $\q\mu=\q\nu=\q\eta$. Then $(\mu\boxplus\nu)\boxplus\eta=\mu\boxplus(\nu\boxplus\eta)$. 
\end{proposition}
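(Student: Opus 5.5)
The plan is to reduce both sides to the associated measure of the triple pointwise sum $(\mu\dotplus\nu)\dotplus\eta = \mu\dotplus(\nu\dotplus\eta)$, using Proposition~\ref{l_boxplus}(ii) to absorb the inner associated-measure operation. Since the common host $\sigma$-ring condition has to be checked at each step, I will first record the relevant host $\sigma$-rings.

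First, by Proposition~\ref{l_boxplus}(i) applied to $\mu$ and $\nu$, we have $\q{\mu\dotplus\nu}=\q{\mu\boxplus\nu}=\q\mu=\q\eta$. Similarly, $\q{\nu\dotplus\eta}=\q{\nu\boxplus\eta}=\q\mu$. All the premeasures involved therefore share the host $\sigma$-ring $\q\mu$, and we may apply Proposition~\ref{l_boxplus}(ii) to the pairs $(\mu\dotplus\nu,\eta)$ and $(\mu,\nu\dotplus\eta)$.

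Next, by~(\ref{meas_sum}) we have $\mu\boxplus\nu=\as[\mf A]{\mu\dotplus\nu}$. Proposition~\ref{l_boxplus}(ii) applied to the premeasures $\mu\dotplus\nu$ and $\eta$ gives
\[
(\mu\boxplus\nu)\boxplus\eta=\as[\mf A]{\mu\dotplus\nu}\boxplus\eta=(\mu\dotplus\nu)\boxplus\eta=\as[\mf A]{(\mu\dotplus\nu)\dotplus\eta}.
\]
In the same way, applying Proposition~\ref{l_boxplus}(ii) to $\mu$ and $\nu\dotplus\eta$ yields $\mu\boxplus(\nu\boxplus\eta)=\as[\mf A]{\mu\dotplus(\nu\dotplus\eta)}$. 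Since $\dotplus$ is associative (both maps are defined on $D_\mu\cap D_\nu\cap D_\eta$ with the same values, by associativity in $\mf A$), the two associated measures coincide, and the proposition follows.

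I expect no step to be a genuine obstacle. The only point requiring care is verifying the hypothesis $\q{\cdot}=\q{\cdot}$ before each use of Proposition~\ref{l_boxplus}(ii), and this is exactly what part~(i) provides.
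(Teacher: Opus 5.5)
Your proposal is correct and follows the paper's proof essentially verbatim. You use Proposition~\ref{l_boxplus}(i) to match the host $\sigma$-rings, then Proposition~\ref{l_boxplus}(ii) with~(\ref{meas_sum}) to reduce both sides to $\as[\mf A]{(\mu\dotplus\nu)\dotplus\eta}$ and $\as[\mf A]{\mu\dotplus(\nu\dotplus\eta)}$, and conclude by associativity of $\dotplus$.
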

\begin{proof}
  By Proposition~\ref{l_boxplus}(i), $\q{\mu\dotplus\nu} = \q{\eta}$. Hence, by~(\ref{meas_sum}) and Proposition~\ref{l_boxplus}(ii), 
  \[
    (\mu\boxplus\nu)\boxplus\eta = (\mu\dotplus \nu)\boxplus\eta = \as[\mf A]{(\mu\dotplus \nu)\dotplus\eta}.
  \]
  Similarly, $\mu\boxplus(\nu\boxplus\eta) = \as[\mf A]{\mu\dotplus (\nu\dotplus\eta)}$. Since $\dotplus$ is associative, the result follows.  
\end{proof}

Given a set $\mathcal Q$ and a monoid $\mf A$, we let $\mathbf 0_{\mf A}(\mathcal Q)$ denote the constant map on $\mathcal Q$ with value $0_{\mf A}$. If $\mf A$ is an HTM and $\mathcal Q$ is a $\delta$-ring, then $\mathbf 0_{\mf A}(\mathcal Q)$ is an $\mf A$-valued premeasure and 
\begin{equation}\label{ass_zero}
  \as[\mf A]{\mathbf 0_{\mf A}(\mathcal Q)} = \mathbf 0_{\mf A}(\sigma(\mathcal Q)).
\end{equation}
Indeed, $\mathbf 0_{\mf A}(\sigma(\mathcal Q))$ is a measure by Proposition~\ref{p_Dmusigma} and is similar to $\mb 0_{\mf A}(\mathcal Q)$, so the equality follows from Proposition~\ref{p_ass}.

Let $\mf A$ be an HTM and $\mathcal Q$ be a $\sigma$-ring. We let $\ms M(\mathcal Q,\mf A)$ denote the set of all $\mf A$-valued measures $\mu$ such that $\q{\mu} = \mathcal Q$. By Proposition~\ref{l_boxplus}(i), we have $\mu\boxplus\nu\in \ms M(\mathcal Q,\mf A)$ for every $\mu,\nu\in \ms M(\mathcal Q,\mf A)$.
We define addition on $\ms M(\mathcal Q,\mf A)$ by $\mu+\nu=\mu\boxplus\nu$ for every $\mu,\nu\in \ms M(\mathcal Q,\mf A)$.

\begin{proposition}\label{t_semigrspace}
  Let $\mf A$ be an HTM and $\mathcal Q$ be a $\sigma$-ring. Then $\ms M(\mathcal Q,\mf A)$ is a monoid whose neutral element is $\mathbf 0_{\mf A}(\mathcal Q)$.
\end{proposition}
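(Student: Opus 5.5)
We have to check three things: $+$ is a well-defined binary operation on $\ms M(\mathcal Q,\mf A)$, it is commutative and associative, and $\mathbf 0_{\mf A}(\mathcal Q)$ lies in $\ms M(\mathcal Q,\mf A)$ and is neutral. Closure under $+$ was noted just before the statement, via Proposition~\ref{l_boxplus}(i). Commutativity holds because $\mu\boxplus\nu=\nu\boxplus\mu$, which follows from the commutativity of $\dotplus$. Associativity is Proposition~\ref{l_comp}, applied with $\mu,\nu,\eta\in\ms M(\mathcal Q,\mf A)$; these all have host $\sigma$-ring $\mathcal Q$.

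For the neutral element, first note that $\mathcal Q$ is a $\sigma$-ring, hence a $\delta$-ring, so $\mathbf 0=\mathbf 0_{\mf A}(\mathcal Q)$ is an $\mf A$-valued premeasure. Its domain $\mathcal Q$ is a $\sigma$-ring, so $\mathbf 0$ is a measure by Proposition~\ref{p_Dmusigma}. Moreover $\q{\mathbf 0}=\sigma(\mathcal Q)=\mathcal Q$, and therefore $\mathbf 0\in\ms M(\mathcal Q,\mf A)$.

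Now let $\mu\in\ms M(\mathcal Q,\mf A)$. Since $D_\mu\subset\q\mu=\mathcal Q$, we get $D_\mu\cap D_{\mathbf 0}=D_\mu$. Hence $\mu\dotplus\mathbf 0$ has domain $D_\mu$ and takes the value $\mu(A)+0=\mu(A)$ at each $A$, that is, $\mu\dotplus\mathbf 0=\mu$. Consequently $\mu+\mathbf 0=\as[\mf A]{\mu}=\mu$, where the last equality is Proposition~\ref{p_measequiv}, because $\mu$ is a measure. Commutativity then gives $\mathbf 0+\mu=\mu$.

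None of these steps is a real obstacle. The only point needing care is that $D_\mu\subset\mathcal Q$, which makes the pointwise sum have exactly the domain $D_\mu$ and so reduces it to $\mu$ itself.
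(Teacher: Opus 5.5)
Your proposal is correct and follows essentially the same route as the paper's proof. Both show $\mathbf 0_{\mf A}(\mathcal Q)\in\ms M(\mathcal Q,\mf A)$ via Proposition~\ref{p_Dmusigma}, deduce $\mu\boxplus\mathbf 0_{\mf A}(\mathcal Q)=\mu$ from $\mu\dotplus\mathbf 0_{\mf A}(\mathcal Q)=\mu$ and Proposition~\ref{p_measequiv}, and obtain commutativity and associativity from the commutativity of $\dotplus$ and Proposition~\ref{l_comp}. You also spell out the domain check $D_\mu\subset\mathcal Q$, which the paper leaves implicit.
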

\begin{proof}
  Let $\ms M = \ms M(\mathcal Q,\mf A)$ and $\zeta = \mathbf 0_{\mf A}(\mathcal Q)$. By Proposition~\ref{p_Dmusigma}, we have $\zeta\in\ms M$. Since $\mu\dotplus\zeta =\mu$, it follows from~(\ref{meas_sum}) and Proposition~\ref{p_measequiv} that $\mu\boxplus\zeta = \mu$ for every $\mu\in\ms M$. The addition on $\ms M$ is commutative and, by Proposition~\ref{l_comp}, associative. Thus, $\ms M$ is a monoid whose neutral element is $\zeta$.
\end{proof}

Let $\mf A$ be a group and $\mu$ and $\nu$ be $\mf A$-valued maps. The pointwise minus of $\mu$, denoted by $\dotdiv\mu$, is the map on $D_\mu$ given by $(\dotdiv\mu)(A) = -\mu(A)$ for every $A\in D_\mu$. The pointwise difference $\mu\dotdiv \nu$ of $\mu$ and $\nu$ is defined by $\mu\dotdiv \nu = \mu\dotplus(\dotdiv\nu)$. If $\mf A$ is an HTG and $\mu$ and $\nu$ are $\mf A$-valued premeasures, then so are $\dotdiv\mu$ and $\mu\dotdiv\nu$.

Let $\mf A$ be an HTG. For $\mf A$-valued premeasures $\mu$ and $\nu$, we define the $\mf A$-valued measure $\mu\boxminus\nu$ by
\begin{equation}\label{meas_diff}
  \mu\boxminus\nu = \mu\boxplus(\dotdiv \nu) = \as[\mf A]{\mu\dotdiv\nu}.
\end{equation}
Since $\{-x\}_{x\in \mf A}$ is a topological isomorphism from $\mf A$ to itself, Corollary~\ref{c_iso} implies that
\begin{equation}\label{dotdiv}
  \as[\mf A]{\dotdiv \mu} = \dotdiv\as[\mf A]{\mu}
\end{equation}
for every $\mf A$-valued premeasure $\mu$.

\begin{proposition}\label{l_boxminus}
  Let $\mf A$ be an HTG and $\mu$ and $\nu$ be $\mf A$-valued premeasures such that $\q{\mu}=\q{\nu}$.
  Then $\q{\mu\dotdiv\nu}=\q{\mu\boxminus\nu}=\q{\mu}$ and $\mu\boxminus\nu = \as[\mf A]{\mu}\boxminus\nu = \mu\boxminus\as[\mf A]{\nu} = \as[\mf A]{\mu}\boxminus\as[\mf A]{\nu}$. 
\end{proposition}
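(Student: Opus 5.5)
The plan is to reduce everything to Proposition~\ref{l_boxplus} applied to $\mu$ and $\dotdiv\nu$, using the definition~(\ref{meas_diff}) together with~(\ref{dotdiv}).

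First I check that $\dotdiv\nu$ is an $\mf A$-valued premeasure with $D_{\dotdiv\nu} = D_\nu$. Hence $\q{\dotdiv\nu} = \q{\nu} = \q{\mu}$, so Proposition~\ref{l_boxplus} applies to the pair $\mu$, $\dotdiv\nu$. Since $\mu\dotdiv\nu = \mu\dotplus(\dotdiv\nu)$ and $\mu\boxminus\nu = \mu\boxplus(\dotdiv\nu)$, Proposition~\ref{l_boxplus}(i) directly gives
\[
\q{\mu\dotdiv\nu}=\q{\mu\boxminus\nu}=\q{\mu}.
\]

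Next, for the chain of equalities, Proposition~\ref{l_boxplus}(ii) gives
\[
\mu\boxplus(\dotdiv\nu) = \as[\mf A]{\mu}\boxplus(\dotdiv\nu) = \mu\boxplus\as[\mf A]{\dotdiv\nu} = \as[\mf A]{\mu}\boxplus\as[\mf A]{\dotdiv\nu}.
\]
The first term is $\mu\boxminus\nu$ and the second is $\as[\mf A]{\mu}\boxminus\nu$, both by~(\ref{meas_diff}). For the third and fourth terms I use~(\ref{dotdiv}), which says $\as[\mf A]{\dotdiv\nu} = \dotdiv\as[\mf A]{\nu}$. So the third term equals $\mu\boxplus(\dotdiv\as[\mf A]{\nu}) = \mu\boxminus\as[\mf A]{\nu}$, and likewise the fourth equals $\as[\mf A]{\mu}\boxminus\as[\mf A]{\nu}$.

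There is no real obstacle. The only point needing care is that the use of~(\ref{meas_diff}) for $\as[\mf A]{\mu}$ and $\as[\mf A]{\nu}$ is legitimate, since these are themselves premeasures. The substantive content, namely the behaviour of associated measures under negation, is already supplied by Corollary~\ref{c_iso} through~(\ref{dotdiv}).
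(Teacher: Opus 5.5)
Your proposal is correct and follows the paper's own proof essentially verbatim. The paper likewise notes that $D_{\dotdiv\nu}=D_\nu$, hence $\q{\dotdiv\nu}=\q\nu$, and then derives the statement from~(\ref{meas_diff}), (\ref{dotdiv}), and Proposition~\ref{l_boxplus} applied to $\mu$ and $\dotdiv\nu$.
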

\begin{proof}
  Since $D_{\dotdiv\nu} = D_\nu$, we have $\q{\dotdiv\nu} = \q\nu$. Hence, the statement follows from~(\ref{meas_diff}), (\ref{dotdiv}), and Proposition~\ref{l_boxplus}.
\end{proof}

\begin{theorem}\label{l_ab}
  Let $\mf A$ be an HTG and $\mathcal Q$ be a $\sigma$-ring. Then $\ms M(\mathcal Q,\mf A)$ is a group. We have $-\mu = \dotdiv\mu$ for every $\mu\in \ms M(\mathcal Q,\mf A)$. 
\end{theorem}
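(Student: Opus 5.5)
By Proposition~\ref{t_semigrspace}, $\ms M(\mathcal Q,\mf A)$ is already a monoid with neutral element $\zeta = \mathbf 0_{\mf A}(\mathcal Q)$. So it only remains to show that every $\mu\in\ms M(\mathcal Q,\mf A)$ has an inverse, namely $\dotdiv\mu$. The proof has three steps: check that $\dotdiv\mu$ lies in the space, compute $\mu\boxplus(\dotdiv\mu)$ through a pointwise difference, and identify the resulting associated measure with $\zeta$.

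First, $\dotdiv\mu\in\ms M(\mathcal Q,\mf A)$. Indeed, $x\mapsto -x$ is a topological isomorphism of $\mf A$ onto itself, so $\dotdiv\mu$ is an $\mf A$-valued measure by Proposition~\ref{l_iso}. Equivalently, (\ref{dotdiv}) together with $\mu=\as[\mf A]\mu$ gives $\dotdiv\mu = \as[\mf A]{\dotdiv\mu}$. Since $D_{\dotdiv\mu}=D_\mu$, we also have $\q{\dotdiv\mu}=\mathcal Q$.

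Next, by~(\ref{meas_sum}) and~(\ref{meas_diff}),
\[
\mu+(\dotdiv\mu)=\mu\boxplus(\dotdiv\mu)=\as[\mf A]{\mu\dotdiv\mu}.
\]
The pointwise difference $\mu\dotdiv\mu$ is defined on $D_\mu\cap D_\mu = D_\mu$ and takes the value $\mu(A)-\mu(A)=0$ for every $A\in D_\mu$. Hence $\mu\dotdiv\mu=\mathbf 0_{\mf A}(D_\mu)$. Since $D_\mu$ is a $\delta$-ring with $\sigma(D_\mu)=\mathcal Q$, equation~(\ref{ass_zero}) gives
\[
\as[\mf A]{\mathbf 0_{\mf A}(D_\mu)}=\mathbf 0_{\mf A}(\mathcal Q)=\zeta.
\]
Thus $\mu+(\dotdiv\mu)=\zeta$, and by commutativity also $(\dotdiv\mu)+\mu=\zeta$. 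Every element therefore has an inverse, so $\ms M(\mathcal Q,\mf A)$ is a group, and uniqueness of inverses in a monoid gives $-\mu=\dotdiv\mu$.

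There is no real obstacle here. The key point, easy to overlook, is that the pointwise difference is taken on the \emph{same} domain $D_\mu$, so no domain shrinks. The genuine work, namely that $\as[\mf A]{\cdot}$ of a zero premeasure on $D_\mu$ fills out all of $\mathcal Q$, is already contained in~(\ref{ass_zero}). The only care needed is to confirm that $\dotdiv\mu$ is a measure and not merely a premeasure, which Proposition~\ref{l_iso} provides.
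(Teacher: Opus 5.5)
Your proof is correct and follows the paper's own argument: the monoid structure comes from Proposition~\ref{t_semigrspace}, $\dotdiv\mu\in\ms M(\mathcal Q,\mf A)$ follows from~(\ref{dotdiv}) and Proposition~\ref{p_measequiv} (or, as you note, directly from Proposition~\ref{l_iso}), and $\mu\boxplus(\dotdiv\mu)=\as[\mf A]{\mathbf 0_{\mf A}(D_\mu)}=\mathbf 0_{\mf A}(\mathcal Q)$ by~(\ref{meas_sum}) and~(\ref{ass_zero}). You have simply spelled out the details that the paper leaves implicit.
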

\begin{proof}
  Let $\ms M = \ms M(\mathcal Q,\mf A)$ and $\zeta = \mathbf 0_{\mf A}(\mathcal Q)$. By Proposition~\ref{t_semigrspace}, $\ms M$ is a monoid whose neutral element is $\zeta$. If $\mu\in\ms M$, then $\dotdiv\mu\in \ms M$ by~(\ref{dotdiv}) and Proposition~\ref{p_measequiv} and it follows from~(\ref{meas_sum}) and~(\ref{ass_zero}) that $\mu\boxplus(\dotdiv\mu) = \zeta$. Thus, $\ms M$ is a group and $-\mu = \dotdiv \mu$ for every $\mu\in \ms M$. 
\end{proof}

\begin{proposition}\label{t_semigrsubspace}
  Let $\mf A$ be an HTM, $\mathcal Q$ be a $\sigma$-ring, and $\mf B$ be a sequentially closed topological submonoid of $\mf A$. Then $\ms M(\mathcal Q,\mf B)$ is a submonoid of $\ms M(\mathcal Q,\mf A)$. If $\mf A$ and $\mf B$ are topological groups, then $\ms M(\mathcal Q,\mf B)$ is a subgroup of $\ms M(\mathcal Q,\mf A)$.
\end{proposition}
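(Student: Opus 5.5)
We need three things: $\ms M(\mathcal Q,\mf B)\subset\ms M(\mathcal Q,\mf A)$, the neutral elements agree, and the addition of $\ms M(\mathcal Q,\mf B)$ is the restriction of that of $\ms M(\mathcal Q,\mf A)$. For the group case we also need closure under negation. Since $0_{\mf A}\in\mf B$, a sequentially closed topological submonoid is also a subgroup when $\mf A$ and $\mf B$ are groups. So once we know $\ms M(\mathcal Q,\mf B)$ is a submonoid, it is a submonoid of a group that is itself a group by Theorem~\ref{l_ab}, and hence a subgroup. Thus the group statement reduces to the monoid statement.

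\emph{Inclusion.} If $\mu\in\ms M(\mathcal Q,\mf B)$, then $\mu$ is a $\mf B$-valued measure. By Corollary~\ref{l_subgr1}, which uses the sequential closedness of $\mf B$, $\mu$ is also an $\mf A$-valued measure. Its host $\sigma$-ring is still $\mathcal Q$, so $\mu\in\ms M(\mathcal Q,\mf A)$.

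\emph{Neutral element.} By Proposition~\ref{t_semigrspace}, the neutral elements are $\mathbf 0_{\mf B}(\mathcal Q)$ and $\mathbf 0_{\mf A}(\mathcal Q)$. These are the same map because $0_{\mf B}=0_{\mf A}$.

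\emph{Addition.} Let $\mu,\nu\in\ms M(\mathcal Q,\mf B)$. The pointwise sums $\mu\dotplus_{\mf B}\nu$ and $\mu\dotplus_{\mf A}\nu$ coincide, since the addition of $\mf B$ is the restriction of that of $\mf A$. Call this common map $\eta$; it is a $\mf B$-valued premeasure. Then $\mu\boxplus_{\mf B}\nu=\as[\mf B]\eta$ and $\mu\boxplus_{\mf A}\nu=\as[\mf A]\eta$. By Proposition~\ref{l_subgr}, again using sequential closedness, $\as[\mf B]\eta=\as[\mf A]\eta$, so the two sums agree.

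This completes the submonoid claim. For a direct check in the group case, $-\mu=\dotdiv\mu$ in both groups by Theorem~\ref{l_ab}, and $\dotdiv\mu$ is computed identically in both. The only substantive input is Proposition~\ref{l_subgr} and Corollary~\ref{l_subgr1}, which is where sequential closedness is essential (cf.\ Example~\ref{e_sub}); the remaining steps are bookkeeping.
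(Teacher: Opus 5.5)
Your proposal is correct and matches the paper's proof step for step. Inclusion comes from Corollary~\ref{l_subgr1}, agreement of the sums from Proposition~\ref{l_subgr} applied to the common pointwise sum, and equality of the neutral elements from Proposition~\ref{t_semigrspace}; the group case then follows from Theorem~\ref{l_ab} applied to both spaces, and your opening remark that $\mf B$ is a subgroup of $\mf A$ is harmless but not needed.
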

\begin{proof}
  Let $\ms M = \ms M(\mathcal Q,\mf A)$ and $\ms M' = \ms M(\mathcal Q,\mf B)$. By Proposition~\ref{t_semigrspace}, $\ms M$ and $\ms M'$ are monoids. By Corollary~\ref{l_subgr1}, we have $\ms M'\subset\ms M$. For every $\mu,\nu\in \ms M'$, Proposition~\ref{l_subgr} and~(\ref{meas_sum}) give $\mu\boxplus_{\mf B} \nu = \mu\boxplus_{\mf A} \nu$, so the addition in $\ms M'$ is induced by that in $\ms M$. Since $0_{\mf B} = 0_{\mf A}$, we have $0_{\ms M} =  0_{\ms M'}$ by Proposition~\ref{t_semigrspace}. Thus, $\ms M'$ is a submonoid of $\ms M$. If $\mf A$ and $\mf B$ are topological groups, then $\ms M$ and $\ms M'$ are groups by Theorem~\ref{l_ab} and, therefore, $\ms M'$ is a subgroup of $\ms M$.
\end{proof}

Suppose a set $\mf A$ is endowed with a left multiplication by elements of a set $\mathbb K$ (i.e., a map $(k,x)\mapsto k x$ from $\mathbb K\times\mf A$ to $\mf A$). Given an $\mf A$-valued map $\mu$ and $k\in\mathbb K$, the map $A\mapsto k\mu(A)$ on $D_\mu$ is called the pointwise multiple of $\mu$ by $k$ and is denoted by $k\cdot \mu$.

We say that  $\mf A$ is a left $\mathbb{K}$-monoid if $\mf A$ is a monoid endowed with a left multiplication by elements of $\mathbb{K}$ such that $k(x+y) = k x + k y$ and $k0 = 0$ for every $k\in \mathbb{K}$ and $x,y\in \mf A$. We say that $\mf A$ is a continuous left $\mathbb{K}$-monoid if $\mf A$ is both a topological monoid and a left $\mathbb{K}$-monoid and $x\mapsto k x$ is a continuous map from $\mf A$ to itself for every $k\in \mathbb{K}$. We use the abbreviation HCLM$_{\mathbb{K}}$ for Hausdorff continuous left $\mathbb{K}$-monoid. 

Let $\mf A$ be an HCLM$_{\mathbb{K}}$, $\mu$ be an $\mf A$-valued premeasure, and $k\in \mathbb{K}$. By Proposition~\ref{p_sum}(ii), $k\cdot\mu$ is an $\mf A$-valued premeasure. We define the $\mf A$-valued measure $k\boxdot\mu$ by
\begin{equation}\label{meas_mult}
  k\boxdot\mu = \as[\mf A]{k\cdot\mu}.
\end{equation} 
Since $D_{k\cdot\mu}=D_\mu$, (\ref{Qmu}) and~(\ref{meas_mult}) imply that
\begin{equation}\label{Qboxdot}
  \q{k\cdot\mu} = \q{k\boxdot\mu} = \q{\mu}.
\end{equation}
By~(\ref{Qmu}) and~(\ref{Qboxdot}), we have 
$\q{k\boxdot\as[\mf A]{\mu}} = \q{\mu}$. Since both $k\boxdot\mu$ and $k\boxdot\as[\mf A]{\mu}$ coincide with $k\cdot\mu$ on $D_\mu$, it follows from Theorem~\ref{t_unique} that
\begin{equation}\label{asboxd}
  k\boxdot\mu = k\boxdot\as[\mf A]{\mu}.
\end{equation} 

\begin{proposition}\label{p_HCLMdistr}
  Let $\mf A$ be an HCLM$_{\mathbb{K}}$, $k\in \mathbb{K}$, and $\mu$ and $\nu$ be $\mf A$-valued premeasures such that $\q\mu=\q\nu$. Then $k\boxdot(\mu\boxplus\nu) = (k\boxdot\mu)\boxplus(k\boxdot\nu)$.
\end{proposition}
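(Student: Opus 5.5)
The plan is to reduce both sides to associated measures of pointwise operations on the common domain $D_\mu\cap D_\nu$, and then compare them using the uniqueness results of Section~\ref{s_meas}. Throughout, write $\mathcal{Q}=\q\mu=\q\nu$ and $\mathcal{K}=D_\mu\cap D_\nu$. By Lemma~\ref{l3sigma}, $\mathcal{K}$ is a $\delta$-ring with $\sigma(\mathcal{K})=\mathcal{Q}$.

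For the left-hand side, (\ref{asboxd}) applied to the premeasure $\mu\dotplus\nu$ gives
\[
k\boxdot(\mu\boxplus\nu)=k\boxdot\as[\mf A]{\mu\dotplus\nu}=k\boxdot(\mu\dotplus\nu)=\as[\mf A]{k\cdot(\mu\dotplus\nu)}.
\]
Since $k(x+y)=kx+ky$ in $\mf A$, the pointwise identity $k\cdot(\mu\dotplus\nu)=(k\cdot\mu)\dotplus(k\cdot\nu)$ holds, and both sides are maps on $\mathcal{K}$. Hence
\[
k\boxdot(\mu\boxplus\nu)=\as[\mf A]{(k\cdot\mu)\dotplus(k\cdot\nu)}=(k\cdot\mu)\boxplus(k\cdot\nu).
\]

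For the right-hand side, we have $k\boxdot\mu=\as[\mf A]{k\cdot\mu}$ and $k\boxdot\nu=\as[\mf A]{k\cdot\nu}$. By (\ref{Qboxdot}), the premeasures $k\cdot\mu$ and $k\cdot\nu$ have the common host $\sigma$-ring $\mathcal{Q}$. Proposition~\ref{l_boxplus}(ii) therefore applies and gives
\[
(k\boxdot\mu)\boxplus(k\boxdot\nu)=\as[\mf A]{k\cdot\mu}\boxplus\as[\mf A]{k\cdot\nu}=(k\cdot\mu)\boxplus(k\cdot\nu).
\]
This coincides with the expression obtained for the left-hand side, which proves the claim.

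The only points needing care are bookkeeping. First, $k\cdot\mu$, $k\cdot\nu$ and $\mu\dotplus\nu$ must be premeasures; this follows from Proposition~\ref{p_sum}(ii) and the remark on $\dotplus$ at the start of Section~\ref{s_spaces}. Second, (\ref{asboxd}) must be applicable with $\mu\dotplus\nu$ in place of $\mu$. It is, since (\ref{asboxd}) holds for an arbitrary premeasure.
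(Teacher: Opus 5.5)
Your proposal is correct and follows the paper's proof essentially step for step. You reduce the left side via (\ref{asboxd}) to $\as[\mf A]{k\cdot(\mu\dotplus\nu)}$ and the right side via (\ref{Qboxdot}) and Proposition~\ref{l_boxplus}(ii) to $\as[\mf A]{k\cdot\mu\dotplus k\cdot\nu}$, then conclude from the pointwise identity $k\cdot(\mu\dotplus\nu)=k\cdot\mu\dotplus k\cdot\nu$; the introduction of $\mathcal{K}$ and the appeal to Lemma~\ref{l3sigma} are harmless but not needed.
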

\begin{proof}
  By~(\ref{meas_sum}), (\ref{meas_mult}), and~(\ref{asboxd}), we have
  \[
    k\boxdot(\mu\boxplus\nu) = k\boxdot\as[\mf A]{\mu\dotplus\nu} = k\boxdot (\mu\dotplus\nu) = \as[\mf A]{k\cdot(\mu\dotplus\nu)}.  
  \] 
  Since $\q{k\cdot\mu}=\q{k\cdot\nu}$ by~(\ref{Qboxdot}), Proposition~\ref{l_boxplus}(ii), (\ref{meas_sum}), and~(\ref{meas_mult}) imply that
  \[
    (k\boxdot\mu)\boxplus(k\boxdot\nu) = \as[\mf A]{k\cdot\mu}\boxplus\as[\mf A]{k\cdot\nu} = (k\cdot\mu)\boxplus(k\cdot\nu) = \as[\mf A]{k\cdot\mu\dotplus k\cdot\nu}.
  \]
  As $k\cdot(\mu\dotplus\nu) = k\cdot\mu\dotplus k\cdot\nu$, the result follows.
\end{proof}

Let $\mf A$ be an HCLM$_{\mathbb{K}}$ and $\mathcal Q$ be a $\sigma$-ring. It follows from~(\ref{Qboxdot}) that $k\boxdot\mu\in \ms M(\mathcal Q,\mf A)$ for every $k\in\mathbb K$ and $\mu\in \ms M(\mathcal Q,\mf A)$. We define the left multiplication by elements of $\K$ on $\ms M(\mathcal Q,\mf A)$ by $k\mu = k\boxdot\mu$ for every $k\in\mathbb K$ and $\mu\in \ms M(\mathcal Q,\mf A)$.

\begin{proposition}\label{p_HCLM}
  Let $\mf A$ be an HCLM$_{\mathbb{K}}$ and $\mathcal Q$ be a $\sigma$-ring. Then $\ms M(\mathcal Q,\mf A)$ is a left $\mathbb{K}$-monoid.
\end{proposition}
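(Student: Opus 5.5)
We have to check two things: that $\ms M(\mathcal Q,\mf A)$ is a monoid, and that the left multiplication $k\mu = k\boxdot\mu$ satisfies $k(\mu+\nu) = k\mu+k\nu$ and $k\mathbf 0 = \mathbf 0$ for all $k\in\K$ and $\mu,\nu\in\ms M(\mathcal Q,\mf A)$. The first point is Proposition~\ref{t_semigrspace}, since an HCLM$_{\K}$ is in particular an HTM. We also already noted, via~(\ref{Qboxdot}), that $k\boxdot\mu\in\ms M(\mathcal Q,\mf A)$, so the multiplication is a well-defined map $\K\times\ms M(\mathcal Q,\mf A)\to\ms M(\mathcal Q,\mf A)$.

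For distributivity, take $\mu,\nu\in\ms M(\mathcal Q,\mf A)$. Both have host $\sigma$-ring $\mathcal Q$, so $\q\mu=\q\nu$. The addition on $\ms M(\mathcal Q,\mf A)$ is $\boxplus$ and the multiplication is $\boxdot$. Proposition~\ref{p_HCLMdistr} then gives
\[
  k(\mu+\nu) = k\boxdot(\mu\boxplus\nu) = (k\boxdot\mu)\boxplus(k\boxdot\nu) = k\mu+k\nu.
\]

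For the neutral element, let $\zeta=\mathbf 0_{\mf A}(\mathcal Q)$, which is the zero of $\ms M(\mathcal Q,\mf A)$ by Proposition~\ref{t_semigrspace}. Since $k0=0$ in $\mf A$, the pointwise multiple $k\cdot\zeta$ equals $\zeta$. Also $\zeta$ is a measure by Proposition~\ref{p_Dmusigma}, because its domain $\mathcal Q$ is a $\sigma$-ring. Hence $k\zeta = \as[\mf A]{k\cdot\zeta} = \as[\mf A]{\zeta} = \zeta$ by Proposition~\ref{p_measequiv}.

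The main step is distributivity, and it reduces entirely to Proposition~\ref{p_HCLMdistr}; everything else is bookkeeping about host $\sigma$-rings. Once we have the monoid structure, the closure of the multiplication, distributivity, and $k\zeta=\zeta$, we have shown that $\ms M(\mathcal Q,\mf A)$ is a left $\K$-monoid.
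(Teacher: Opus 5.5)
Your proof is correct and follows essentially the same route as the paper. The monoid structure comes from Proposition~\ref{t_semigrspace}, $k\zeta=\zeta$ follows from $k\cdot\zeta=\zeta$ together with~(\ref{meas_mult}) and Proposition~\ref{p_measequiv}, and distributivity is exactly Proposition~\ref{p_HCLMdistr}.
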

\begin{proof}
  Let $\ms M = \ms M(\mathcal Q,\mf A)$ and $\zeta = \mathbf 0_{\mf A}(\mathcal Q)$. By Proposition~\ref{t_semigrspace}, $\ms M$ is a monoid whose neutral element is $\zeta$. Since $k\cdot\zeta = \zeta$, it follows from~(\ref{meas_mult}) and Proposition~\ref{p_measequiv} that $k\zeta = \zeta$ for every $k\in \mathbb{K}$. By Proposition~\ref{p_HCLMdistr}, we have $k(\mu+\nu) = k\mu + k\nu$ for every $k\in \mathbb{K}$ and $\mu,\nu\in \ms M$. Thus, $\ms M$ is a left $\mathbb{K}$-monoid. 
\end{proof}

Let $\mathbb{K}$ be an algebraic ring and $\mf A$ be both a group and a left $\mathbb{K}$-monoid. If $(k_1 + k_2)x = k_1 x + k_2 x$, $k_1(k_2 x) = (k_1 k_2) x$, and $1x = x$ for every $k_1,k_2\in \mathbb{K}$ and $x\in \mf A$, then $\mf A$ is called a left $\mathbb{K}$-module. If, in addition, $\mathbb{K}$ is a field, then $\mf A$ is called a $\mathbb{K}$-vector space.

\begin{proposition}\label{l_distr}
  Let $\mathbb K$ be an algebraic ring and $\mf A$ be both an HCLM$_{\mathbb{K}}$ and a left $\mathbb K$-module.
  \begin{enumerate}
  \item[(i)]  Let $\mu$ be an $\mf A$-valued measure. If $k\in\mathbb K$ is invertible, then $k\boxdot \mu = k\cdot\mu$.
  \item[(ii)] If $k,l\in\mathbb K$ and $\mu$ is an $\mf A$-valued premeasure, then $(kl)\boxdot\mu=k\boxdot(l\boxdot\mu)$ and $(k+l)\boxdot\mu = (k\boxdot\mu)\boxplus(l\boxdot\mu)$.
  \end{enumerate}
\end{proposition}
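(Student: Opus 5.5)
For (i), I plan to show that $k\cdot\mu$ is itself an $\mf A$-valued measure; then $k\boxdot\mu = \as[\mf A]{k\cdot\mu} = k\cdot\mu$ by Proposition~\ref{p_measequiv}. The map $x\mapsto kx$ is a continuous homomorphism of $\mf A$. If $k$ is invertible with inverse $k^{-1}$, the module axioms give $k^{-1}(kx) = (k^{-1}k)x = 1x = x$, and similarly $k(k^{-1}x)=x$. Moreover $x\mapsto k^{-1}x$ is continuous because $\mf A$ is an HCLM$_{\K}$. Hence $f = \{kx\}_{x\in\mf A}$ is a topological isomorphism of $\mf A$ onto itself. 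Since $k\cdot\mu = f\circ\mu$, Proposition~\ref{l_iso} shows that $k\cdot\mu$ is a measure, as required.

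For the first identity in (ii), I plan to reduce both sides to associated measures of pointwise objects. By~(\ref{meas_mult}) we have $(kl)\boxdot\mu = \as[\mf A]{(kl)\cdot\mu}$. By~(\ref{asboxd}) applied to the premeasure $l\cdot\mu$, we get $k\boxdot(l\boxdot\mu) = k\boxdot\as[\mf A]{l\cdot\mu} = k\boxdot(l\cdot\mu) = \as[\mf A]{k\cdot(l\cdot\mu)}$. The module axiom $k(lx)=(kl)x$ gives $k\cdot(l\cdot\mu) = (kl)\cdot\mu$ pointwise on $D_\mu$, so the two sides coincide.

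For the second identity, I will use~(\ref{Qboxdot}), which gives $\q{k\cdot\mu}=\q{l\cdot\mu}=\q\mu$, so Proposition~\ref{l_boxplus}(ii) applies. Then
\[
(k\boxdot\mu)\boxplus(l\boxdot\mu) = \as[\mf A]{k\cdot\mu}\boxplus\as[\mf A]{l\cdot\mu} = (k\cdot\mu)\boxplus(l\cdot\mu) = \as[\mf A]{k\cdot\mu\dotplus l\cdot\mu}.
\]
Since $D_{k\cdot\mu}=D_{l\cdot\mu}=D_\mu$, the pointwise sum $k\cdot\mu\dotplus l\cdot\mu$ is defined on $D_\mu$ and equals $(k+l)\cdot\mu$ by distributivity. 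Its associated measure is therefore $(k+l)\boxdot\mu$.

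The only step needing care is (i), specifically verifying that $x\mapsto kx$ is a homeomorphism. This follows from the continuity of multiplication by $k^{-1}$ together with the axioms $1x=x$ and associativity of the scalar action. The rest is bookkeeping with~(\ref{meas_mult}), (\ref{asboxd}) and Proposition~\ref{l_boxplus}.
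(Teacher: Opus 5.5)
Your proposal is correct and follows the paper's proof essentially step for step. The only difference is in (i), where you apply Proposition~\ref{l_iso} to conclude that $k\cdot\mu$ is a measure and then use Proposition~\ref{p_measequiv}, whereas the paper applies Corollary~\ref{c_iso} to get $\as[\mf A]{k\cdot\mu}=k\cdot\as[\mf A]{\mu}$ and then uses $\as[\mf A]{\mu}=\mu$; this is an immaterial variant, and your explicit check that $x\mapsto kx$ is a topological isomorphism fills in a step the paper only asserts.
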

\begin{proof}
  (i) If $k\in\mathbb K$ is invertible, then $x\mapsto k x$ is a topological isomorphism from $\mf A$ to itself. Hence, (\ref{meas_mult}), Proposition~\ref{p_measequiv}, and Corollary~\ref{c_iso} imply that $k\boxdot\mu = \as[\mf A]{k\cdot\mu} = k\cdot\as[\mf A]{\mu} = k\cdot\mu$.
  \par\medskip\noindent
  (ii) Let $\eta_1 = (kl)\boxdot\mu$, $\eta_2 = k\boxdot(l\boxdot\mu)$, $\zeta_1 = (k+l)\boxdot\mu$, and $\zeta_2 = (k\boxdot\mu)\boxplus(l\boxdot\mu)$. By~(\ref{meas_mult}), we have $\eta_1 = \as[\mf A]{(kl)\cdot\mu}$ and~(\ref{meas_mult}) and~(\ref{asboxd}) imply that 
  \[
    \eta_2 = k\boxdot\as[\mf A]{l\cdot\mu} = k\boxdot(l\cdot\mu) = \as[\mf A]{k\cdot (l\cdot\mu)}.
  \]
  Since $k\cdot (l\cdot\mu) = (kl)\cdot\mu$, we conclude that $\eta_1=\eta_2$. It follows from~(\ref{meas_mult}) that $\zeta_1 = \as[\mf A]{(k+l)\cdot\mu}$. Since $\q{k\cdot\mu}=\q{l\cdot\mu}$ by~(\ref{Qboxdot}), Proposition~\ref{l_boxplus}(ii) and~(\ref{meas_mult}) imply that
  \[
    \zeta_2 = \as[\mf A]{k\cdot\mu}\boxplus \as[\mf A]{l\cdot\mu} = (k\cdot\mu)\boxplus(l\cdot\mu)=\as[\mf A]{k\cdot\mu\dotplus l\cdot\mu},
  \] 
  whence $\zeta_1=\zeta_2$ because $(k+l)\cdot\mu = k\cdot\mu\dotplus l\cdot\mu$.
\end{proof} 

\begin{theorem}\label{t_module}
  Let $\mathcal{Q}$ be a $\sigma$-ring, $\mathbb K$ be an algebraic ring, and $\mf A$ be both an HTG and a left $\mathbb K$-module. Suppose $x\mapsto k x$ is a continuous map from $\mf A$ to itself for every $k\in \mathbb{K}$.  Then $\ms M(\mathcal Q,\mf A)$ is a left $\mathbb K$-module. If $k\in \mathbb{K}$ is invertible, then $k\mu = k\cdot\mu$ for every $\mu\in \ms M(\mathcal Q,\mf A)$. If $\mf B$ is a sequentially closed submodule of $\mf A$ endowed with the induced topology, then $\ms M(\mathcal Q,\mf B)$ is a submodule of $\ms M(\mathcal Q,\mf A)$.   
\end{theorem}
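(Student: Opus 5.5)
The plan is to assemble the module axioms from results already established. First, $\mf A$ is an HTM, and the maps $x\mapsto kx$ are continuous and satisfy $k(x+y)=kx+ky$ and $k0=0$ (the latter since $\mf A$ is a left $\K$-module). Hence $\mf A$ is an HCLM$_{\K}$, and Proposition~\ref{p_HCLM} shows that $\ms M=\ms M(\mathcal Q,\mf A)$ is a left $\K$-monoid. By Theorem~\ref{l_ab}, $\ms M$ is a group because $\mf A$ is an HTG. It remains to verify the three identities $(k+l)\mu=k\mu+l\mu$, $k(l\mu)=(kl)\mu$, and $1\mu=\mu$ for $\mu\in\ms M$.

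The first two identities follow directly from Proposition~\ref{l_distr}(ii) applied to $\mu\in\ms M$. Since addition and scalar multiplication in $\ms M$ are $\boxplus$ and $\boxdot$ restricted to $\ms M$, that proposition gives
\[
(k+l)\mu=(k\boxdot\mu)\boxplus(l\boxdot\mu)=k\mu+l\mu,\qquad (kl)\mu=k\boxdot(l\boxdot\mu)=k(l\mu).
\]
For the third, $1$ is invertible in $\K$, so Proposition~\ref{l_distr}(i) gives $1\mu=1\cdot\mu=\mu$. The same item shows that $k\mu=k\boxdot\mu=k\cdot\mu$ whenever $k$ is invertible, because every element of $\ms M$ is a measure. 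This proves the first two assertions of the theorem.

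For the submodule claim, let $\mf B$ be a sequentially closed submodule of $\mf A$ with the induced topology. Then $\mf B$ is an HTG and a left $\K$-module with continuous scalar maps, so by the first part $\ms M(\mathcal Q,\mf B)$ is a left $\K$-module. It is a subgroup of $\ms M(\mathcal Q,\mf A)$ by Proposition~\ref{t_semigrsubspace}. The only remaining point is that scalar multiplication agrees: for $\mu\in\ms M(\mathcal Q,\mf B)$ we need $k\boxdot_{\mf B}\mu=k\boxdot_{\mf A}\mu$, that is, $\as[\mf B]{k\cdot\mu}=\as[\mf A]{k\cdot\mu}$. This is exactly Proposition~\ref{l_subgr} applied to the $\mf B$-valued premeasure $k\cdot\mu$, using sequential closedness.

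This last step is the one place where a hypothesis beyond the axioms enters, and it is the main point needing care. Without sequential closedness the two associated measures can differ (see Example~\ref{e_sub}). Everything else is bookkeeping with the earlier propositions.
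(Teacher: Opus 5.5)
Your proposal is correct and follows essentially the same route as the paper: Theorem~\ref{l_ab} gives the group structure, Proposition~\ref{p_HCLM} the left $\mathbb K$-monoid structure, Proposition~\ref{l_distr} the remaining module axioms and the identity $k\mu=k\cdot\mu$ for invertible $k$, and Propositions~\ref{t_semigrsubspace} and~\ref{l_subgr} the submodule claim. Your extra remark that $\ms M(\mathcal Q,\mf B)$ is a module by the first part is harmless but unnecessary, since being a subgroup on which scalar multiplication agrees with that of $\ms M(\mathcal Q,\mf A)$ already gives the submodule statement.
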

\begin{proof}
  Let $\ms M = \ms M(\mathcal Q,\mf A)$. By Theorem~\ref{l_ab}, $\ms M$ is a group. Since $\mf A$ is an HCLM$_{\K}$, Proposition~\ref{p_HCLM} implies that $\ms M$ is a left $\mathbb{K}$-monoid. By Proposition~\ref{l_distr}(ii), we have $(k_1 + k_2)\mu = k_1 \mu + k_2 \mu$, $k_1(k_2 \mu) = (k_1 k_2) \mu$ for every $k_1,k_2\in \mathbb{K}$ and $\mu\in \ms M$. If $k\in \mathbb{K}$ is invertible, then $k\mu = k\cdot\mu$ for every $\mu\in \ms M$ by Proposition~\ref{l_distr}(i). In particular, $1\mu = \mu$ for every $\mu\in\ms M$. Thus, $\ms M$ is a left $\mathbb{K}$-module. Suppose $\mf B$ is a sequentially closed submodule of $\mf A$. By Proposition~\ref{t_semigrsubspace}, $\ms M(\mathcal Q,\mf B)$ is a subgroup of $\ms M$. For every $k\in\K$ and $\mu\in \ms M(\mathcal Q,\mf B)$, we have $k\boxdot_{\mf B}\mu = k\boxdot_{\mf A}\mu$ by Proposition~\ref{l_subgr} and~(\ref{meas_mult}). This implies that the left multiplication in $\ms M(\mathcal Q,\mf B)$ is induced by that in $\ms M$, i.e., $\ms M(\mathcal Q,\mf B)$ is a submodule of $\ms M$.
\end{proof}

\begin{corollary}\label{c_vector}
  Let $\mathcal{Q}$ be a $\sigma$-ring, $\K$ be a topological field, and $\mf A$ be a Hausdorff topological $\K$-vector space. Then $\ms M(\mathcal Q,\mf A)$ is a $\mathbb K$-vector space. If $k\in \mathbb{K}$ is nonzero, then $k\mu = k\cdot\mu$ for every $\mu\in \ms M(\mathcal Q,\mf A)$. If $\mf B$ is a sequentially closed topological vector subspace of $\mf A$, then $\ms M(\mathcal Q,\mf B)$ is a $\K$-vector subspace  of $\ms M(\mathcal Q,\mf A)$.
\end{corollary}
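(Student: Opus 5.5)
The plan is to obtain Corollary~\ref{c_vector} as the special case of Theorem~\ref{t_module} in which the algebraic ring $\mathbb K$ is a field. First I will check the hypotheses of Theorem~\ref{t_module}. A Hausdorff topological $\K$-vector space $\mf A$ is, in particular, a Hausdorff topological Abelian group under addition, so it is an HTG. It is also a left $\K$-module, i.e., a $\K$-vector space in the algebraic sense. For each fixed $k\in\K$, the map $x\mapsto kx$ is continuous, because scalar multiplication $\K\times\mf A\to\mf A$ is jointly continuous and we may restrict it to the slice $\{k\}\times\mf A$. Theorem~\ref{t_module} therefore shows that $\ms M(\mathcal Q,\mf A)$ is a left $\K$-module. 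Since $\K$ is a field, this is exactly the statement that $\ms M(\mathcal Q,\mf A)$ is a $\K$-vector space, according to the definition given just before Proposition~\ref{l_distr}.

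Next, every nonzero $k$ in a field is invertible in the algebraic ring $\K$. Hence the second assertion of Theorem~\ref{t_module} gives $k\mu=k\cdot\mu$ for every nonzero $k\in\K$ and every $\mu\in\ms M(\mathcal Q,\mf A)$.

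Finally, suppose $\mf B$ is a sequentially closed topological vector subspace of $\mf A$. Then $\mf B$ is a submodule of $\mf A$: it is an additive subgroup closed under multiplication by scalars, and it carries the induced topology. The last assertion of Theorem~\ref{t_module} then shows that $\ms M(\mathcal Q,\mf B)$ is a submodule, and hence a $\K$-vector subspace, of $\ms M(\mathcal Q,\mf A)$.

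I do not expect a genuine obstacle. The only point needing care is that the phrase ``topological vector subspace'' should be read as a linear subspace endowed with the induced topology, which is the form of hypothesis Theorem~\ref{t_module} requires. I will also note that $\ms M(\mathcal Q,\mf B)$ is indeed a Hausdorff topological $\K$-vector space in its own right, so the spaces involved are of the same kind as in the first part of the corollary.
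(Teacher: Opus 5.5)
Your proof is correct and matches the paper, which presents this corollary without a separate proof as the immediate specialization of Theorem~\ref{t_module} to a field $\K$: nonzero scalars are invertible, and a sequentially closed linear subspace with the induced topology plays the role of the submodule $\mf B$. (One minor slip in your closing remark: it is $\mf B$, not $\ms M(\mathcal Q,\mf B)$, that is a Hausdorff topological vector space, since the paper puts no topology on spaces of measures.)
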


Corollary~\ref{c_vector} implies, in particular, that $\ms M(\mathcal Q,\R)$ and $\ms M(\mathcal Q,\C)$ are real and complex vector spaces respectively. 
We now prove condition~\ref{item:2} formulated in Section~\ref{s_intro}.

\begin{proposition}
  $\ms M(\mathcal{Q},\R)$ is a real subspace of $\ms M(\mathcal{Q},\C)$. Every $\mu\in \ms M(\mathcal{Q},\C)$ has a unique representation $\mu = \mu'+i\mu''$, where $\mu',\mu''\in \ms M(\mathcal{Q},\R)$.
\end{proposition}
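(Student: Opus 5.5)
For the first assertion I would view $\C$ as a Hausdorff topological real vector space and $\R$ as a closed, hence sequentially closed, real subspace of it. Corollary~\ref{c_vector} with $\K=\R$ then shows that $\ms M(\mathcal Q,\R)$ is a real vector subspace of $\ms M(\mathcal Q,\C)$, where $\ms M(\mathcal Q,\C)$ carries its real structure. This real structure is the restriction of the complex one: both use $\boxplus_{\C}$ for addition and $k\boxdot\mu=\as[\C]{k\cdot\mu}$ for real $k$, and the space $\ms M(\mathcal Q,\C)$ and the sum do not depend on the scalar field.

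For existence of the representation, let $\mu\in\ms M(\mathcal Q,\C)$. Put $\mathrm{Re}\cdot\mu$ and $\mathrm{Im}\cdot\mu$ for the pointwise real and imaginary parts on $D_\mu$. Since $\mathrm{Re}$ and $\mathrm{Im}$ are continuous homomorphisms $\C\to\R$, Proposition~\ref{p_sum}(ii) shows these are $\R$-valued premeasures hosted by $\mathcal Q$. Set $\mu'=\as[\R]{\mathrm{Re}\cdot\mu}$ and $\mu''=\as[\R]{\mathrm{Im}\cdot\mu}$, both in $\ms M(\mathcal Q,\R)\subset\ms M(\mathcal Q,\C)$.

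By Corollary~\ref{c_vector}, $i\mu''=i\cdot\mu''$ because $i\neq0$. Then $\mu'+i\mu''=\as[\C]{\mu'\dotplus i\cdot\mu''}$. On $D_\mu\cap D_{\mu'}\cap D_{\mu''}=D_\mu$, the premeasure $\mu'\dotplus i\cdot\mu''$ agrees with $\mu$, and it is hosted by $\mathcal Q$. By Lemma~\ref{l3sigma}, $\mu'\dotplus i\cdot\mu''$ and $\mu$ are therefore similar. Proposition~\ref{p_asssimilar} together with $\mu=\as[\C]\mu$ (Proposition~\ref{p_measequiv}) then gives $\mu'+i\mu''=\mu$.

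For uniqueness it suffices, by linearity, to show that $\alpha+i\beta=0$ with $\alpha,\beta\in\ms M(\mathcal Q,\R)$ forces $\alpha=\beta=0$. We have $\alpha+i\beta=\as[\C]{\alpha\dotplus i\cdot\beta}$, and by (\ref{ass_zero}) this equals $\mathbf 0_{\C}(\mathcal Q)$, which is defined on all of $\mathcal Q$. As an associated measure extends its premeasure, $\alpha(A)+i\beta(A)=0$ on $D_\alpha\cap D_\beta$. Since the values are real, $\alpha$ and $\beta$ vanish on $D_\alpha\cap D_\beta$, which generates $\mathcal Q$ by Lemma~\ref{l3sigma}. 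So $\alpha\asymp\mathbf 0_{\R}(\mathcal Q)$, and since both are $\R$-valued measures, Proposition~\ref{p_ass} gives $\alpha=\mathbf 0_{\R}(\mathcal Q)$, which is the zero of $\ms M(\mathcal Q,\R)$; likewise for $\beta$.

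The main obstacle is the first step: checking that Corollary~\ref{c_vector} really applies to $\C$ as a real vector space. In particular, one must make sure that the real structure on $\ms M(\mathcal Q,\C)$ obtained this way coincides with the restriction of its complex structure. This follows because the operations are defined pointwise-then-associated in the same way regardless of which field is used.
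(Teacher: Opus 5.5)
Your proof is correct and follows essentially the paper's argument: real and imaginary parts, associated measures, $i\mu''=i\cdot\mu''$ via Corollary~\ref{c_vector}, and uniqueness via vanishing on a generating $\delta$-ring; the differences (associated measures taken in $\R$ rather than $\C$, similarity checked directly rather than via Proposition~\ref{l_boxplus}(ii)) are cosmetic. One step needs tightening: ``So $\alpha\asymp\mathbf 0_{\R}(\mathcal Q)$'' does not follow from vanishing on $D_\alpha\cap D_\beta$ alone, since similarity requires agreement on all of $D_\alpha$, so cite Theorem~\ref{t_unique} with $\mathcal K=D_\alpha\cap D_\beta$, as the paper does.
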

\begin{proof}
  By Corollary~\ref{c_vector}, $\ms M(\mathcal{Q},\R)$ is a real subspace of $\ms M(\mathcal{Q},\C)$. Let $\mu\in \ms M(\mathcal{Q},\C)$. Let $\nu' = \Re\circ\mu$ and $\nu'' = \Im\circ\mu$, where $\Re$ and $\Im$ are the maps on $\C$ taking each complex number to its real and imaginary parts respectively. Since $\Re$ and $\Im$ are continuous group homomorphisms from $\C$ to itself, $\nu'$ and $\nu''$ are $\C$-valued premeasures. Let $\mu' = \as[\C]{\nu'}$ and $\mu'' = \as[\C]{\nu''}$. As $\q{\nu'} = \q{\nu''} = \mathcal{Q}$, equality~(\ref{Qmu}) implies that $\mu',\mu''\in\ms M(\mathcal{Q},\C)$. Since $\as[\C]{i\cdot\nu''} = i\mu''$ by~(\ref{meas_mult}) and~(\ref{asboxd}) and $\mu = \nu'\dotplus(i\cdot\nu'')$, it follows from~(\ref{meas_sum}), (\ref{Qboxdot}), and Proposition~\ref{l_boxplus}(ii) that $\mu = \mu'+i\mu''$. By Propositions~\ref{p_subpremeasure} and~\ref{l_subgr}, we have $\mu',\mu''\in\ms M(\mathcal{Q},\R)$. Let $\eta',\eta''\in \ms M(\mathcal{Q},\R)$ be such that $\mu = \eta'+i\eta''$. Let $\zeta' = \mu' - \eta'$ and $\zeta'' = \mu'' - \eta''$. Then $\zeta' = -i\zeta''$ and, hence, $\zeta' = -i\cdot\zeta''$ by Corollary~\ref{c_vector}. As $\zeta'$ and $\zeta''$ are $\R$-valued maps, it follows that both $\zeta'$ and $\zeta''$ coincide with $\mb 0_{\C}(\mathcal{Q})$ on $D_{\zeta'}$. By Theorem~\ref{t_unique}, we conclude that $\zeta' = \zeta'' = 0$, i.e., $\mu' = \eta'$ and $\mu'' =\eta''$.
\end{proof}

Let $\mf A$ be a partially ordered set and $\mu$ and $\nu$ be $\mf A$-valued maps. We say that $\mu$ is pointwise less than or equal to $\nu$ (notation $\mu\dotleq\nu$) if $\mu(A)\leq\nu(A)$ for every $A\in D_\mu\cap D_\nu$. 

We call $\mf A$ an ordered monoid if $\mf A$ is both a monoid and a partially ordered set and, moreover, the inequality $x\leq y$ implies that $x+z\leq y+z$ for all $x,y,z\in\mf A$. 

Let $\mf A$ be both a topological monoid and an ordered monoid. We say that $\mf A$ is an ordered topological monoid if the set $\set{(x,y)}{(x,y)\in\mf A\times \mf A\mbox{ and } x\leq y}$ is sequentially closed in $\mf A\times\mf A$. We use the abbreviation HOTM for Hausdorff ordered topological monoid. 

\begin{proposition}\label{l_dotleq0}
  Let $\mf A$ be an HOTM, $\mu$ and $\nu$ be $\mf A$-valued $\sigma$-additive functions, and $\mathcal K$ be a $\delta$-ring such that $\mathcal K\subset D_\mu\cap D_\nu\subset\sigma(\mathcal K)$. Let $\mu(A)\leq \nu(A)$ for every $A\in \mathcal K$. Then $\mu\dotleq\nu$.
\end{proposition}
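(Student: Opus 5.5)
The plan is to mimic the proof of Proposition~\ref{p_Im0} (and of Proposition~\ref{p_un0delta}), now applied to the pair $(\mu,\nu)$ with the sequentially closed set $X = \set{(x,y)}{(x,y)\in\mf A\times\mf A\mbox{ and } x\leq y}$ in the product $\mf A\times\mf A$. We fix $A\in D_\mu\cap D_\nu$ and aim to show $\mu(A)\leq\nu(A)$.

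Since $A\in\sigma(\mathcal K)$ and $\mathcal K$ is a $\delta$-ring, Lemma~\ref{ll1}(ii) yields a countable partition $\mb A\colon I\to\mathcal K$ of $A$ (we first write $A$ as a countable union of elements of $\mathcal K$ and then disjointify using the ring operations). For each finite $K\in\mathcal F_I$, the union $B_K = \bigcup_{i\in K}\mb A(i)$ lies in $\mathcal K$. By additivity of $\mu$ and $\nu$ we have $\finsum_{i\in K}\mu(\mb A(i)) = \mu(B_K)\leq \nu(B_K) = \finsum_{i\in K}\nu(\mb A(i))$. Thus the pair of partial sums lies in $X$ for every $K$.

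Next, by $\sigma$-additivity, the nets $\bigS_{i\in I}\mu(\mb A(i))$ and $\bigS_{i\in I}\nu(\mb A(i))$ converge to $\mu(A)$ and $\nu(A)$ respectively. Hence the product net $K\mapsto(\mu(B_K),\nu(B_K))$ converges in $\mf A\times\mf A$ to $(\mu(A),\nu(A))$.

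The only subtle point, and the one I expect to require care, is that $X$ is only assumed \emph{sequentially} closed, whereas we have convergence of a net over $\mathcal F_I$. This is handled as in Proposition~\ref{p_Im0}: because $I$ is countable, we may pass to a sequence. If $I$ is finite, then $I\in\mathcal F_I$ and there is nothing to prove. If $I$ is infinite, we choose an increasing exhausting sequence $K_n\in\mathcal F_I$ (for instance via an enumeration of $I$); it is cofinal in $\mathcal F_I$. Then $(\mu(B_{K_n}),\nu(B_{K_n}))$ is a sequence in $X$ converging to $(\mu(A),\nu(A))$, so sequential closedness of $X$ gives $\mu(A)\leq\nu(A)$. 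Since $A$ was arbitrary, $\mu\dotleq\nu$.
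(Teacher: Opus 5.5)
Your proof is correct and follows the paper's approach: the paper forms the $(\mf A\times\mf A)$-valued $\sigma$-additive function $\eta(A)=(\mu(A),\nu(A))$ on $D_\mu\cap D_\nu$ and applies Proposition~\ref{p_Im0delta} to the sequentially closed set $X=\{(x,y): x\leq y\}$. You have simply unfolded the proof of that proposition for the pair, including the passage from the net over $\mathcal F_I$ to a cofinal sequence.
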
 
\begin{proof}
  Let $X = \{(x,y)\in\mf A\times \mf A: x\leq y\}$ and $\fm{\eta}{D_\mu\cap D_\nu}$ be such that $\eta(A) = (\mu(A),\nu(A))$ for every $A\in D_\mu\cap D_\nu$. Then $\eta$ is an $(\mf A\times\mf A)$-valued $\sigma$-additive function and $\eta(A)\in X$ for every $A\in\mathcal K$. Since $X$ is a sequentially closed subset of $\mf A\times\mf A$, Proposition~\ref{p_Im0delta} implies that $\im\eta\subset X$. This means that $\mu\dotleq\nu$.
\end{proof}
    
\begin{proposition}\label{l_dotleq}
  Let $\mf A$ be an HOTM and $\mu$ and $\nu$ be $\mf A$-valued premeasures such that $\q{\mu}=\q{\nu}$. 
  \begin{enumerate}
  \item[(i)]  $\mu\dotleq\nu$ if and only if $\as[\mf A]{\mu}\dotleq\as[\mf A]{\nu}$.
  \item[(ii)] Suppose $\mu$ and $\nu$ are measures. If $\mu\dotleq\nu$ and $\nu\dotleq\mu$, then $\mu=\nu$.
  \end{enumerate}
\end{proposition}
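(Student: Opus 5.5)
For (i), I would reduce both directions to Proposition~\ref{l_dotleq0} by finding a suitable $\delta$-ring $\mathcal K$. Put $\mu' = \as[\mf A]\mu$ and $\nu' = \as[\mf A]\nu$, and take $\mathcal K = D_\mu\cap D_\nu$. Since $D_\mu$ and $D_\nu$ are $\delta$-rings with $\sigma(D_\mu)=\sigma(D_\nu)$, Lemma~\ref{l3sigma} shows that $\mathcal K$ is a $\delta$-ring (an intersection of $\delta$-rings) and that $\sigma(\mathcal K) = \q\mu$. Moreover, $\mu'$ extends $\mu$ and $\nu'$ extends $\nu$, and by~(\ref{Qmu}) we have $\q{\mu'} = \q{\nu'} = \q\mu$. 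Hence
\[
\mathcal K\subset D_{\mu'}\cap D_{\nu'}\subset \q\mu = \sigma(\mathcal K),
\]
and also $\mathcal K\subset D_\mu\cap D_\nu\subset\sigma(\mathcal K)$ trivially.

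If $\mu\dotleq\nu$, then $\mu'(A)=\mu(A)\le\nu(A)=\nu'(A)$ for every $A\in\mathcal K$. Proposition~\ref{l_dotleq0}, applied to the $\sigma$-additive functions $\mu'$ and $\nu'$, then gives $\mu'\dotleq\nu'$. Conversely, if $\mu'\dotleq\nu'$, then for $A\in D_\mu\cap D_\nu$ we have $A\in D_{\mu'}\cap D_{\nu'}$ with $\mu'(A)=\mu(A)$ and $\nu'(A)=\nu(A)$, so $\mu\dotleq\nu$ follows directly. Proposition~\ref{l_dotleq0} is not even needed for this direction.

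For (ii), let $\mu$ and $\nu$ be measures with $\mu\dotleq\nu$ and $\nu\dotleq\mu$. On $D_\mu\cap D_\nu$ we have $\mu(A)\le\nu(A)$ and $\nu(A)\le\mu(A)$, hence $\mu(A)=\nu(A)$ by antisymmetry of the partial order. Together with $\q\mu=\q\nu$, this gives $\mu\asymp\nu$. Since both are measures, Definition~\ref{dd6} says each extends the other, so $\mu=\nu$. Alternatively, one can invoke Proposition~\ref{p_asssimilar} together with Proposition~\ref{p_measequiv}.

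The only nonroutine point is checking the hypotheses of Proposition~\ref{l_dotleq0} in the forward direction of (i). The key step there is that $\sigma(D_\mu\cap D_\nu)=\q\mu$, which Lemma~\ref{l3sigma} supplies, and that the domains of the associated measures stay inside $\q\mu$, which follows from~(\ref{Qmu}).
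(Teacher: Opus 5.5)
Your proof is correct and follows the paper's argument. Part (i) is the same reduction to Proposition~\ref{l_dotleq0} with $\mathcal K = D_\mu\cap D_\nu$, using Lemma~\ref{l3sigma} and~(\ref{Qmu}). In (ii) the paper cites Theorem~\ref{t_unique} with this $\mathcal K$, whereas you read $\mu\asymp\nu$ directly off Definition~\ref{d_sim} and apply Definition~\ref{dd6}; that is exactly what the theorem's proof does, so the difference is only cosmetic.
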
 
\begin{proof}
  (i) Let $\tilde\mu = \as[\mf A]{\mu}$ and $\tilde\nu = \as[\mf A]{\nu}$. If $\tilde\mu\dotleq\tilde\nu$, then we obviously have $\mu\dotleq\nu$. Conversely, let $\mu\dotleq\nu$ and $\mathcal K = D_\mu\cap D_\nu$. By~(\ref{Qmu}), we have $\q{\tilde\mu}=\q{\tilde\nu}=\q{\mu}$. Lemma~\ref{l3sigma} implies that $\sigma(\mathcal K) = \q{\mu}$ and, hence, $D_{\tilde\mu}\cap D_{\tilde\nu}\subset \sigma(\mathcal K)$. Since $\tilde\mu(A)\leq \tilde\nu(A)$ for every $A\in \mathcal K$, it follows from Proposition~\ref{l_dotleq0} that $\tilde\mu\dotleq\tilde\nu$.   
  \par\medskip\noindent
  (ii) If $\mu\dotleq\nu$ and $\nu\dotleq\mu$, then $\mu$ coincides with $\nu$ on $D_\mu\cap D_\nu$. Since $\sigma(D_\mu\cap D_\nu)=\q\mu$ by Lemma~\ref{l3sigma}, it follows from Theorem~\ref{t_unique} that $\mu=\nu$. 
\end{proof}

\begin{proposition}\label{l_dotleq1}
  Let $\mf A$ be an HOTM and $\mu$, $\nu$, and $\eta$ be $\mf A$-valued premeasures such that $\q\mu=\q\nu=\q\eta$. If $\mu\dotleq\nu$ and $\nu\dotleq\eta$, then $\mu\dotleq\eta$. If $\mu\dotleq\nu$, then $\mu\boxplus\eta\dotleq \nu\boxplus\eta$.
\end{proposition}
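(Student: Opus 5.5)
For transitivity, I would set $\mathcal K = D_\mu\cap D_\nu\cap D_\eta$. Since the three domains are $\delta$-rings with the same host $\sigma$-ring, two applications of Lemma~\ref{l3sigma} give $\sigma(\mathcal K) = \q\mu$, exactly as in the proof of Proposition~\ref{l_trans}. Also, $\mathcal K$ is a $\delta$-ring, being an intersection of $\delta$-rings. For every $A\in\mathcal K$, the hypotheses give $\mu(A)\leq\nu(A)\leq\eta(A)$, and transitivity of the partial order on $\mf A$ yields $\mu(A)\leq\eta(A)$. Moreover, $\mathcal K\subset D_\mu\cap D_\eta\subset\q\mu = \sigma(\mathcal K)$. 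Proposition~\ref{l_dotleq0}, applied to $\mu$, $\eta$ and $\mathcal K$, then gives $\mu\dotleq\eta$.

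For the second assertion, I would first work at the level of pointwise sums. Set $\mathcal K' = D_\mu\cap D_\nu\cap D_\eta$ again. For $A\in\mathcal K'$, the compatibility of order with addition in the ordered monoid $\mf A$ gives
\[
  (\mu\dotplus\eta)(A) = \mu(A)+\eta(A)\leq \nu(A)+\eta(A) = (\nu\dotplus\eta)(A).
\]
I would not try to check the inequality directly on all of $D_{\mu\dotplus\eta}\cap D_{\nu\dotplus\eta} = D_\mu\cap D_\nu\cap D_\eta$; it already holds on this whole set, since that set is $\mathcal K'$ itself. So $\mu\dotplus\eta\dotleq\nu\dotplus\eta$ holds directly.

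Next I pass to associated measures. By Proposition~\ref{l_boxplus}(i), $\q{\mu\dotplus\eta} = \q{\nu\dotplus\eta} = \q\mu$. Both pointwise sums are $\mf A$-valued premeasures, so Proposition~\ref{l_dotleq}(i) applies and gives $\as[\mf A]{\mu\dotplus\eta}\dotleq\as[\mf A]{\nu\dotplus\eta}$. By~(\ref{meas_sum}), this is exactly $\mu\boxplus\eta\dotleq\nu\boxplus\eta$.

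I do not expect a real obstacle. The only point needing care is the domain bookkeeping in the first part: Proposition~\ref{l_dotleq0} requires a $\delta$-ring $\mathcal K$ with $\mathcal K\subset D_\mu\cap D_\eta\subset\sigma(\mathcal K)$. Intersecting with $D_\nu$ is what makes the comparison through $\nu$ possible, and Lemma~\ref{l3sigma} is what guarantees that $\mathcal K$ still generates the common host $\sigma$-ring.
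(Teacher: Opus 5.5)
Your proof is correct and follows the paper's argument essentially step for step. The paper also takes $\mathcal K = D_\mu\cap D_\nu\cap D_\eta$, uses Lemma~\ref{l3sigma} to get $\sigma(\mathcal K)=\q\mu\supset D_\mu\cap D_\eta$, and concludes with Proposition~\ref{l_dotleq0}. For the second claim it likewise passes from $\mu\dotplus\eta\dotleq\nu\dotplus\eta$ to the associated measures via Proposition~\ref{l_boxplus}(i), (\ref{meas_sum}), and Proposition~\ref{l_dotleq}(i).
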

\begin{proof}
  Let $\mu\dotleq\nu$ and $\nu\dotleq\eta$ and let $\mathcal K = D_\mu\cap D_\nu\cap D_\eta$. By Lemma~\ref{l3sigma}, we have $\sigma(\mathcal K)=\q\mu\supset D_\mu\cap D_\eta$. Since $\mu(A)\leq\eta(A)$ for every $A\in \mathcal K$, we conclude that $\mu\dotleq\eta$ by Proposition~\ref{l_dotleq0}. If $\mu\dotleq\nu$, then $\mu\dotplus\eta\dotleq \nu\dotplus\eta$. As $\q{\mu\dotplus\eta}=\q{\nu\dotplus\eta}$ by Proposition~\ref{l_boxplus}(i), it follows from~(\ref{meas_sum}) and Proposition~\ref{l_dotleq}(i) that $\mu\boxplus\eta\dotleq \nu\boxplus\eta$.
\end{proof}

Let $\mathcal Q$ be a $\sigma$-ring and $\mf A$ be an HOTM. We endow the set $\ms M(\mathcal Q,\mf A)$ with the relation $\leq$ by requiring that $\mu\leq \nu$ holds for $\mu,\nu\in\ms M(\mathcal Q,\mf A)$ if and only if $\mu\dotleq \nu$.

\begin{proposition}\label{l_ordtab}
  Let $\mathcal Q$ be a $\sigma$-ring and $\mf A$ be an HOTM. Then $\ms M(\mathcal Q,\mf A)$ is an ordered monoid. 
\end{proposition}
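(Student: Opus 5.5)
We have to check two things: that $\leq$ is a partial order on $\ms M = \ms M(\mathcal Q,\mf A)$, and that $\mu\leq\nu$ implies $\mu+\eta\leq\nu+\eta$ for all $\mu,\nu,\eta\in\ms M$. The monoid structure on $\ms M$ is already given by Proposition~\ref{t_semigrspace}, so everything reduces to the order-theoretic properties of $\dotleq$ for measures sharing the host $\sigma$-ring $\mathcal Q$.

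Reflexivity is immediate. For $\mu\in\ms M$ we have $D_\mu\cap D_\mu = D_\mu$, and $\mu(A)\leq\mu(A)$ holds for every $A$ because $\leq$ is a partial order on $\mf A$. Hence $\mu\dotleq\mu$.

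Antisymmetry is exactly Proposition~\ref{l_dotleq}(ii). Elements of $\ms M$ are measures with the same host $\sigma$-ring $\mathcal Q$, so $\mu\dotleq\nu$ and $\nu\dotleq\mu$ give $\mu=\nu$.

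Transitivity and compatibility with addition both come from Proposition~\ref{l_dotleq1}. Since all elements of $\ms M$ are premeasures hosted by $\mathcal Q$, the hypothesis $\q\mu=\q\nu=\q\eta$ holds automatically. Then $\mu\dotleq\nu$ and $\nu\dotleq\eta$ give $\mu\dotleq\eta$. Likewise $\mu\dotleq\nu$ gives $\mu\boxplus\eta\dotleq\nu\boxplus\eta$. Since addition in $\ms M$ is defined by $\mu+\eta=\mu\boxplus\eta$, this is precisely $\mu+\eta\leq\nu+\eta$.

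There is no real obstacle here. The substantive work, namely transitivity and antisymmetry of pointwise comparison on the possibly different domains $D_\mu\cap D_\nu$ and its preservation under the associated-measure operation, was done in Propositions~\ref{l_dotleq0}, \ref{l_dotleq} and~\ref{l_dotleq1} via the sequential closedness of the order relation. The only point needing care is that $\dotleq$ is not obviously transitive for arbitrary maps, because the three domains differ. That is why one must invoke Proposition~\ref{l_dotleq1}, which uses Lemma~\ref{l3sigma} and Proposition~\ref{l_dotleq0}, rather than argue pointwise.
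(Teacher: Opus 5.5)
Your proof is correct and matches the paper's argument: the monoid structure comes from Proposition~\ref{t_semigrspace}, antisymmetry from Proposition~\ref{l_dotleq}(ii), and both transitivity and compatibility with $+=\boxplus$ from Proposition~\ref{l_dotleq1}. The only difference is that you spell out reflexivity, which the paper leaves implicit.
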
   
\begin{proof}
  Let $\ms M = \ms M(\mathcal Q,\mf A)$. By Proposition~\ref{t_semigrspace}, $\ms M$ is a monoid and Propositions~\ref{l_dotleq} and~\ref{l_dotleq1} imply that $\ms M$ is a partially ordered set. By Proposition~\ref{l_dotleq1}, we have $\mu+\eta\leq \nu+\eta$ for every $\mu,\nu,\eta\in\ms M$ such that $\mu\leq\nu$. Thus, $\ms M$ is an ordered monoid. 
\end{proof}

We call $\mf A$ an ordered vector space if $\mf A$ is both an ordered monoid and an $\R$-vector space and, moreover,  $k x\geq 0$ for all $x\in\mf A$ and $k\in\mathbb R$ such that $x\geq 0$ and $k\geq0$.  

\begin{proposition}\label{l_ordmult}
  Let $\mf A$ be both an HCLM$_\R$ and an ordered vector space. Let $\mu$ and $\nu$ be $\mf A$-valued premeasures such that $\mu\dotleq \nu$ and $\q\mu=\q\nu$. Then $k\boxdot\mu\dotleq k\boxdot\nu$ for every real $k\geq 0$.
\end{proposition}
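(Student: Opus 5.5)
The plan is to reduce the claim to Proposition~\ref{l_dotleq}(i), applied to the pointwise multiples $k\cdot\mu$ and $k\cdot\nu$.

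First I show that $k\cdot\mu\dotleq k\cdot\nu$ for every real $k\geq 0$. Take $A\in D_{k\cdot\mu}\cap D_{k\cdot\nu}=D_\mu\cap D_\nu$. Then $\mu(A)\leq\nu(A)$, so $\nu(A)-\mu(A)\geq 0$ in the ordered vector space $\mf A$. Since $k\geq 0$, we get $k(\nu(A)-\mu(A))\geq 0$. Linearity gives $k(\nu(A)-\mu(A)) = k\nu(A)-k\mu(A)$. Adding $k\mu(A)$ to both sides of $k\nu(A)-k\mu(A)\geq 0$ and using the ordered-monoid property yields $k\mu(A)\leq k\nu(A)$. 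Note that $\mf A$ is a vector space, hence a group, so these subtractions are legitimate.

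Second, the premeasures $k\cdot\mu$ and $k\cdot\nu$ have a common host $\sigma$-ring. By~(\ref{Qboxdot}), $\q{k\cdot\mu}=\q\mu=\q\nu=\q{k\cdot\nu}$, and both are $\mf A$-valued premeasures because $\mf A$ is an HCLM$_\R$.

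Third, I apply Proposition~\ref{l_dotleq}(i). This gives $\as[\mf A]{k\cdot\mu}\dotleq\as[\mf A]{k\cdot\nu}$, which is precisely $k\boxdot\mu\dotleq k\boxdot\nu$ by~(\ref{meas_mult}).

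The one point that needs care is that Proposition~\ref{l_dotleq} is stated for an HOTM, i.e.\ it requires the order relation to be sequentially closed in $\mf A\times\mf A$. The present hypotheses do not state this. If that assumption is intended to be implicit, the argument above is complete. If not, I would avoid Proposition~\ref{l_dotleq}(i) and argue directly: both associated measures extend $k\cdot\mu$ and $k\cdot\nu$ on $\mathcal K=D_\mu\cap D_\nu$, and $\sigma(\mathcal K)=\q\mu$ by Lemma~\ref{l3sigma}. The remaining task would be to pass from inequalities on $\mathcal K$ to all of $\sigma(\mathcal K)$ via Proposition~\ref{l_dotleq0}. That step uses sequential closedness of the order, so without it I would expect the statement to need that hypothesis. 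I therefore expect the statement to be read with $\mf A$ also an HOTM, with the first step above the only substantive verification.
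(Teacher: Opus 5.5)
Your argument is the paper's proof: derive $k\cdot\mu\dotleq k\cdot\nu$ pointwise, note $\q{k\cdot\mu}=\q{k\cdot\nu}$ by~(\ref{Qboxdot}), and conclude by~(\ref{meas_mult}) and Proposition~\ref{l_dotleq}(i). Your caveat is also correct: the paper's proof tacitly uses that $\mf A$ is an HOTM, which holds in its only application (Theorem~\ref{p_orderedspace}), and without it the statement can fail. For instance, take $\mf A=\ell^2$ ordered by the cone of finitely supported sequences whose last nonzero entry is positive, $\mu=\mb 0_{\mf A}(\mathcal F_\N)$, $\nu=\bigS\{2^{-n}e_n\}_{n\in\N}$ (with $e_n$ the unit vectors), and $k=1$; then $(1\boxdot\mu)(\N)=0$, but $(1\boxdot\nu)(\N)=\sum_{n\in\N}2^{-n}e_n$ has infinite support and hence is not $\geq 0$.
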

\begin{proof}
  Let $k\geq 0$. Then $k\cdot\mu\dotleq k\cdot\nu$. Since $\q{k\cdot\mu}=\q{k\cdot\nu}$ by~(\ref{Qboxdot}), it follows from~(\ref{meas_mult}) and Proposition~\ref{l_dotleq}(i) that $k\boxdot\mu\dotleq k\boxdot\nu$.  
\end{proof}

\begin{theorem}\label{p_orderedspace}
  Let $\mf A$ be both an HTG and an ordered vector space. Suppose the set $\set{x}{x\in\mf A\mbox{ and }x\geq 0}$ is sequentially closed in $\mf A$ and $x\mapsto k x$ is a continuous map from $\mf A$ to itself for every $k\in \R$. Let $\mathcal Q$ be a $\sigma$-ring. Then $\ms M(\mathcal Q,\mf A)$ is an ordered vector space.   
\end{theorem}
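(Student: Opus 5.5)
We have to check three things: $\ms M(\mathcal Q,\mf A)$ is an $\R$-vector space, it is an ordered monoid, and $k\mu\geq0$ whenever $\mu\geq0$ and $k\geq0$. The plan is to reduce each one to results already proved.

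\emph{Vector space structure.} Since $\mf A$ is an HTG, $\R$ is an algebraic ring, $\mf A$ is a left $\R$-module (an $\R$-vector space), and each $x\mapsto kx$ is continuous, Theorem~\ref{t_module} applies directly. It shows that $\ms M(\mathcal Q,\mf A)$ is a left $\R$-module, hence an $\R$-vector space, with the operations $\mu+\nu=\mu\boxplus\nu$ and $k\mu=k\boxdot\mu$. Note that we cannot invoke Corollary~\ref{c_vector} directly, because continuity of scalar multiplication in $k$ is not assumed.

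\emph{Ordered monoid.} We need $\mf A$ to be an HOTM, i.e.\ the set $X=\{(x,y): x\leq y\}$ must be sequentially closed in $\mf A\times\mf A$. The hypothesis only gives sequential closedness of the positive cone $P=\{x: x\geq0\}$, so this is the one step needing an argument. Because $\mf A$ is an ordered vector space, $x\leq y$ iff $y-x\in P$; indeed, adding $-x$ preserves order, so this holds in an ordered monoid that is a group. Hence $X=\phi^{-1}(P)$ with $\phi(x,y)=y-x$. Now $\phi$ is continuous from $\mf A\times\mf A$ to $\mf A$ because $\mf A$ is a topological group. Continuous maps preserve convergence of sequences, so preimages of sequentially closed sets are sequentially closed, and $X$ is sequentially closed. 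Together with Hausdorffness this makes $\mf A$ an HOTM, and Proposition~\ref{l_ordtab} shows that $\ms M(\mathcal Q,\mf A)$ is an ordered monoid.

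\emph{Positivity of scalar multiples.} Let $\mu\geq0$, i.e.\ $\zeta\dotleq\mu$ with $\zeta=\mathbf 0_{\mf A}(\mathcal Q)$ the neutral element (Proposition~\ref{t_semigrspace}), and let $k\geq0$. Since $\mf A$ is an HCLM$_\R$ and an ordered vector space, and $\q\zeta=\q\mu=\mathcal Q$, Proposition~\ref{l_ordmult} gives $k\boxdot\zeta\dotleq k\boxdot\mu$. Finally, $k\boxdot\zeta=\zeta$: we have $k\cdot\zeta=\zeta$, and $\zeta$ is a measure, so Proposition~\ref{p_measequiv} applies (as in the proof of Proposition~\ref{p_HCLM}). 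Thus $k\mu\geq0$.

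Combining the three parts proves the theorem. The only nonformal step is deriving the HOTM condition from closedness of the cone, and it is handled by the continuity of $(x,y)\mapsto y-x$.
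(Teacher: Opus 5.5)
Your proposal is correct and follows the paper's proof essentially step for step: you show $\mf A$ is an HOTM via the preimage of the positive cone under $(x,y)\mapsto y-x$, then invoke Proposition~\ref{l_ordtab}, Theorem~\ref{t_module}, and Proposition~\ref{l_ordmult}. The details you add (that $x\le y$ iff $y-x\ge 0$, and that $k\boxdot\zeta=\zeta$ for the zero measure $\zeta$) are correct and only make explicit what the paper leaves implicit.
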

\begin{proof}
  The set $\set{(x,y)}{(x,y)\in\mf A\times \mf A\mbox{ and } x\leq y}$ is sequentially closed in $\mf A\times\mf A$ because it is the inverse image of the set $\set{x}{x\in\mf A\mbox{ and }x\geq 0}$ under the continuous map $(x,y)\mapsto y-x$ from $\mf A\times\mf A$ to $\mf A$. Thus, $\mf A$ is an HOTM. Let $\ms M = \ms M(\mathcal Q,\mf A)$. By Proposition~\ref{l_ordtab}, $\ms M$ is an ordered monoid and Theorem~\ref{t_module} implies that $\ms M$ is an $\R$-vector space. Since $\mf A$ is an HCLM$_\R$, Proposition~\ref{l_ordmult} ensures that $k\mu\geq 0$ for every $\mu\in \ms M$ and $k\in\R$ such that $\mu\geq 0$ and $k\geq 0$. Thus, $\ms M$ is an ordered vector space.
\end{proof}

Let $\mathcal{K}$ be a $\delta$-ring and $\mf A$ be an HTM. We define
\[
  \ms M_0(\mathcal{K},\mf A) = \set{\mu}{\mu\mbox{ is an $\mf A$-valued premeasure and }D_\mu = \mathcal{K}}.
\]
We make $\ms M_0(\mathcal{K},\mf A)$ into a monoid by endowing it with the pointwise addition. If $\mf A$ is an HTG, then $\ms M_0(\mathcal{K},\mf A)$ is a group. If $\mf A$ is an HCLM$_\K$, then we make $\ms M_0(\mathcal{K},\mf A)$ into a left $\K$-monoid by setting $k\mu = k\cdot \mu$ for every $k\in\K$ and $\mu\in \ms M_0(\mathcal{K},\mf A)$. If $\mf A$ is both an HCLM$_\K$ and a left $\K$-module ($\K$-vector space), then $\ms M_0(\mathcal{K},\mf A)$ is a left $\K$-module (resp., $\K$-vector space). In particular, if $\mf A$ is a Hausdorff topological vector space over a topological field $\K$, then $\ms M_0(\mathcal{K},\mf A)$ is a $\K$-vector space. If $\mf A$ is an HOTM, then we make $\ms M_0(\mathcal{K},\mf A)$ into an ordered monoid by requiring that $\mu\leq \nu$ if and only if $\mu\dotleq \nu$ for every $\mu,\nu\in \ms M_0(\mathcal{K},\mf A)$.

\begin{proposition}\label{p_M0}
  Let $\mf A$ be an HTM, $\mathcal{K}$ be a $\delta$-ring, $\mathcal{Q} = \sigma(\mathcal{K})$, $\ms M_0 = \ms M_0(\mathcal{K},\mf A)$, and $\fm{f}{\ms M_0}$ be such that $f(\mu) = \as[\mf A]\mu$ for every $\mu\in\ms M_0$. Then $f$ is an injective homomorphism from $\ms M_0$ to $\ms M(\mathcal{Q},\mf A)$ and $\im f = \set{\mu}{\mu\in\ms M(\mathcal{Q},\mf A)\mbox { and }\mathcal{K}\subset D_\mu}$. If $\mf A$ is an HCLM$_\K$, then $f(k\mu) = k f(\mu)$ for every $k\in\K$ and $\mu\in\ms M_0$. If $\mf A$ is an HOTM, then $f(\mu)\leq f(\nu)$ if and only if $\mu\leq \nu$ for every $\mu,\nu\in\ms M_0$.
\end{proposition}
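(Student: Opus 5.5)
We go through the assertions of Proposition~\ref{p_M0} in the order stated; each reduces to results already available in Sections~\ref{s_meas} and~\ref{s_spaces}.

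\emph{$f$ is well defined.} Let $\mu\in\ms M_0$. Then $D_\mu=\mathcal{K}$, so $\q{\mu}=\mathcal{Q}$. By~(\ref{Qmu}) we get $\q{f(\mu)}=\mathcal{Q}$, hence $f(\mu)\in\ms M(\mathcal{Q},\mf A)$.

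\emph{$f$ is a homomorphism.} For $\mu,\nu\in\ms M_0$, we have $D_\mu=D_\nu=\mathcal{K}$, so the sum in $\ms M_0$ is $\mu\dotplus\nu$. Proposition~\ref{l_boxplus}(ii) and~(\ref{meas_sum}) give
\[
f(\mu)+f(\nu)=\as[\mf A]{\mu}\boxplus\as[\mf A]{\nu}=\mu\boxplus\nu=\as[\mf A]{\mu\dotplus\nu}=f(\mu\dotplus\nu).
\]
The neutral element of $\ms M_0$ is $\mathbf 0_{\mf A}(\mathcal{K})$, and by~(\ref{ass_zero}) it is sent to $\mathbf 0_{\mf A}(\mathcal{Q})$. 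This is the zero of $\ms M(\mathcal{Q},\mf A)$ by Proposition~\ref{t_semigrspace}.

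\emph{Injectivity.} Since $f(\mu)$ extends $\mu$, we have $\mu=f(\mu)|_{\mathcal{K}}$. Hence $f(\mu)=f(\nu)$ implies $\mu=\nu$.

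\emph{Description of $\im f$.} The inclusion $\subset$ holds because $f(\mu)$ extends $\mu$, so $\mathcal{K}\subset D_{f(\mu)}$. For $\supset$, take $\nu\in\ms M(\mathcal{Q},\mf A)$ with $\mathcal{K}\subset D_\nu$ and put $\mu=\nu|_{\mathcal{K}}$. Then $\mu$ is a premeasure with domain $\mathcal{K}$, so $\mu\in\ms M_0$. Moreover $\q{\mu}=\mathcal{Q}=\q\nu$ and $\mu$ coincides with $\nu$ on $D_\mu\cap D_\nu=\mathcal{K}$, so $\mu\asymp\nu$. Since $\nu$ is a measure, Proposition~\ref{p_ass} gives $\nu=\as[\mf A]\mu=f(\mu)$.

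\emph{Scalar multiplication.} Let $\mf A$ be an HCLM$_\K$. Then $k\mu=k\cdot\mu$ in $\ms M_0$. Using~(\ref{meas_mult}) and~(\ref{asboxd}) we get
\[
f(k\cdot\mu)=\as[\mf A]{k\cdot\mu}=k\boxdot\mu=k\boxdot\as[\mf A]{\mu}=kf(\mu).
\]

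\emph{Order.} Let $\mf A$ be an HOTM. Since $\q\mu=\q\nu$, Proposition~\ref{l_dotleq}(i) gives $\mu\dotleq\nu\Leftrightarrow f(\mu)\dotleq f(\nu)$, which is exactly the claim by the definitions of the two orders.

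There is no real obstacle here. The only points needing care are checking that the restriction $\nu|_{\mathcal{K}}$ is similar to $\nu$, and matching the neutral elements.
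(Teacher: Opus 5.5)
Your proof is correct and follows the paper's argument step by step: well-definedness via (\ref{Qmu}), the homomorphism property via (\ref{meas_sum}), Proposition~\ref{l_boxplus}(ii) and (\ref{ass_zero}), injectivity by restriction to $\mathcal{K}$, and the scalar and order parts via (\ref{meas_mult}), (\ref{asboxd}) and Proposition~\ref{l_dotleq}(i). The only cosmetic difference is in the description of $\im f$: you check $\nu|_{\mathcal{K}}\asymp\nu$ directly and invoke Proposition~\ref{p_ass}, whereas the paper appeals to Theorem~\ref{t_unique}, which amounts to the same thing.
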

\begin{proof}
  Let $\ms M = \ms M(\mathcal{Q},\mf A)$. By~(\ref{Qmu}), we have $\q{f(\mu)} = \mathcal{Q}$ and, hence, $f(\mu)\in\ms M$ for every $\mu\in\ms M_0$. By~(\ref{ass_zero}), we have $f(0_{\ms M_0}) = 0_{\ms M}$, while~(\ref{meas_sum}) and Proposition~\ref{l_boxplus}(ii) imply that $f(\mu + \nu) = \mu\boxplus\nu$ for every $\mu,\nu\in \ms M_0$. Thus, $f$ is a homomorphism from $\ms M_0$ to $\ms M$. If $\mu,\nu\in\ms M_0$ are such that $f(\mu) = f(\nu)$, then $\mu = f(\mu)|_{\mathcal{K}} = f(\nu)|_{\mathcal{K}} =\nu$. Thus, $f$ is injective. Let $\mu\in\ms M$ be such that $\mathcal{K}\subset D_\mu$. Let $\nu = \mu|_{\mathcal{K}}$. Then $\nu\in \ms M_0$ and $\mu$ and $f(\nu)$ coincide on $\mathcal{K}$. By Theorem~\ref{t_unique}, it follows that $\mu = f(\nu)$. This proves that $\im f = \set{\mu}{\mu\in\ms M\mbox{ and }\mathcal{K}\subset D_\mu}$. If $\mf A$ is an HCLM$_\K$, then it follows from~(\ref{meas_mult}) and~(\ref{asboxd}) that $f(k\mu) = k\boxdot\mu = k\boxdot\as[\mf A]\mu = k f(\mu)$ for every $k\in\K$ and $\mu\in\ms M_0$. If $\mf A$ is an HOTM and $\mu,\nu\in \ms M_0$, then the relations $\mu\leq \nu$ and $f(\mu)\leq f(\nu)$ are equivalent by Proposition~\ref{l_dotleq}(i).
\end{proof}

Applying Proposition~\ref{p_M0} to $\R$ and $\C$ in place of $\mf A$, we immediately recover condition~\ref{item:4} for scalar measures formulated in Section~\ref{s_intro}.

\begin{corollary}
  Let $\mf A$ be an HTM and $\mathcal{Q}$ be a $\sigma$-ring. Then $\ms M_0(\mathcal{Q},\mf A)$ is a submonoid of $\ms M(\mathcal{Q},\mf A)$. If $\mf A$ is both an HCLM$_{\mathbb{K}}$ and a left $\mathbb K$-module, then $\ms M_0(\mathcal{Q},\mf A)$ is a left $\K$-submodule of $\ms M(\mathcal{Q},\mf A)$. If $\mf A$ is an HOTM, then the order on $\ms M_0(\mathcal{Q},\mf A)$ is induced by that on $\ms M(\mathcal{Q},\mf A)$.
\end{corollary}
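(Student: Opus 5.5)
The statement follows from Proposition~\ref{p_M0} applied with $\mathcal{K} = \mathcal{Q}$. This choice is legitimate because $\mathcal{Q}$ is a $\sigma$-ring, hence a $\delta$-ring, and $\sigma(\mathcal{Q}) = \mathcal{Q}$. So Proposition~\ref{p_M0} gives an injective homomorphism $f\colon \ms M_0(\mathcal{Q},\mf A)\to \ms M(\mathcal{Q},\mf A)$ with $f(\mu) = \as[\mf A]\mu$.

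The key observation is that $f$ is the identity map. Every $\mu\in\ms M_0(\mathcal{Q},\mf A)$ is a premeasure whose domain $\mathcal{Q}$ is a $\sigma$-ring, so by Proposition~\ref{p_Dmusigma} it is an $\mf A$-valued measure. Since $\q\mu = \sigma(\mathcal{Q}) = \mathcal{Q}$, we have $\mu\in\ms M(\mathcal{Q},\mf A)$. Proposition~\ref{p_measequiv} then gives $\as[\mf A]\mu = \mu$, i.e.\ $f(\mu)=\mu$. Consequently $\ms M_0(\mathcal{Q},\mf A)\subset\ms M(\mathcal{Q},\mf A)$, and because $f$ is a homomorphism, the pointwise sum in $\ms M_0(\mathcal{Q},\mf A)$ coincides with the sum $\boxplus$ in $\ms M(\mathcal{Q},\mf A)$. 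The neutral elements also agree: both are $\mathbf 0_{\mf A}(\mathcal{Q})$, by Proposition~\ref{t_semigrspace} and the definition of $\ms M_0$. Hence $\ms M_0(\mathcal{Q},\mf A)$ is a submonoid of $\ms M(\mathcal{Q},\mf A)$.

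For the module statement, the clause of Proposition~\ref{p_M0} for an HCLM$_\K$ gives $f(k\mu)=kf(\mu)$. With $f$ the identity, this says $k\cdot\mu = k\boxdot\mu$, so the scalar multiplications agree. It remains to check that $\ms M_0(\mathcal{Q},\mf A)$ is closed under the group operations of the module structure. It is a group, since for a left $\K$-module $\mf A$ (which is in particular a group) the pointwise negative $\dotdiv\mu$ has domain $\mathcal{Q}$, and it is closed under scalar multiplication because $D_{k\cdot\mu}=\mathcal{Q}$. Thus $\ms M_0(\mathcal{Q},\mf A)$ is a submonoid closed under the operations, hence a left $\K$-submodule. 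Finally, the order statement is exactly the last clause of Proposition~\ref{p_M0}, again read with $f$ the identity.

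There is no real obstacle here. The only point needing care is confirming that the hypotheses used for the group structure of $\ms M(\mathcal{Q},\mf A)$ are implicitly available; in particular, the module claim should be read with $\mf A$ regarded as an HTG, as in Theorem~\ref{t_module}.
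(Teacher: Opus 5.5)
Your proof is correct and matches the paper's. Proposition~\ref{p_Dmusigma} makes every element of $\ms M_0(\mathcal{Q},\mf A)$ a measure, so the map $f$ of Proposition~\ref{p_M0} (with $\mathcal{K}=\mathcal{Q}$) is the identity, and all three claims are read off from that proposition. Your closing caveat is unnecessary: if $\mf A$ is an HCLM$_{\K}$ and a left $\K$-module, then inversion $x\mapsto -x=(-1)x$ is automatically continuous, so $\mf A$ is an HTG.
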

\begin{proof}
  By Proposition~\ref{p_Dmusigma}, every element of $\ms M_0(\mathcal{Q},\mf A)$ is a measure and, hence, the homomorphism $f$ in Proposition~\ref{p_M0} is the identity map of $\ms M_0(\mathcal{Q},\mf A)$. The statement therefore follows from Proposition~\ref{p_M0}. 
\end{proof}

Let $\mf A$ be an HTM and $\mathcal{Q}$ be a $\sigma$-ring. We let $\Delta_{\mathcal{Q}}$ denote the set of all $\delta$-rings $\mathcal{K}$ such that $\sigma(\mathcal{K}) = \mathcal{Q}$, ordered by reverse inclusion.  By Lemma~\ref{l3sigma}, we have $\mathcal{K}\cap\mathcal{K}'\in \Delta_{\mathcal{Q}}$ for every $\mathcal{K},\mathcal{K}'\in \Delta_{\mathcal{Q}}$. Hence, $\Delta_{\mathcal{Q}}$ is a directed set. If $\mathcal{K},\mathcal{K}'\in \Delta_{\mathcal{Q}}$ and $\mathcal{K}\subset\mathcal{K}'$, then there is a natural homomorphism $\ms M_0(\mathcal{K}',\mf A)\to \ms M_0(\mathcal{K},\mf A)$ given by restriction to $\mathcal{K}$. The monoids $\ms M_0(\mathcal{K},\mf A)$ and the restriction maps constitute an inductive system over $\Delta_{\mathcal{Q}}$. Given $\mathcal{K}\in \Delta_{\mathcal{Q}}$, we let $\rho^{\mathcal{Q}}_{\mathcal{K}}$ denote the canonical map from $\ms M_0(\mathcal{K},\mf A)$ to $\varinjlim_{\mathcal{K}\in\Delta_{\mathcal{Q}}}\ms M_0(\mathcal{K},\mf A)$ (note that inductive limits always exist in the category of Abelian monoids; see~\cite[Sec.~I.10.3]{BourbakiAlgebre}).   

\begin{proposition}
  Let $\mf A$ be an HTM and $\mathcal{Q}$ be a $\sigma$-ring. There is a unique isomorphism $l\colon \varinjlim_{\mathcal{K}\in\Delta_{\mathcal{Q}}}\ms M_0(\mathcal{K},\mf A)\to \ms M(\mathcal{Q},\mf A)$ such that $l(\rho^{\mathcal{Q}}_{\mathcal{K}}(\mu)) = \as[\mf A]\mu$ for every $\mathcal{K}\in \Delta_{\mathcal{Q}}$ and $\mu\in\ms M_0(\mathcal{K},\mf A)$.
\end{proposition}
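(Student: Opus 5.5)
The plan is to use the universal property of the inductive limit. For each $\mathcal{K}\in\Delta_{\mathcal{Q}}$, let $f_{\mathcal{K}}\colon \ms M_0(\mathcal{K},\mf A)\to\ms M(\mathcal{Q},\mf A)$ be the map $\mu\mapsto\as[\mf A]\mu$. By Proposition~\ref{p_M0}, each $f_{\mathcal{K}}$ is an injective homomorphism. Next I check compatibility with the transition maps. Let $\mathcal{K}\subset\mathcal{K}'$ be in $\Delta_{\mathcal{Q}}$ and $\mu\in\ms M_0(\mathcal{K}',\mf A)$. Then $\mu|_{\mathcal{K}}$ and $\mu$ have the same host $\sigma$-ring $\mathcal{Q}$ and coincide on $\mathcal{K}$, so $\mu|_{\mathcal{K}}\asymp\mu$. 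Proposition~\ref{p_asssimilar} then gives $f_{\mathcal{K}}(\mu|_{\mathcal{K}})=f_{\mathcal{K}'}(\mu)$. The universal property of $\varinjlim$ in the category of Abelian monoids therefore yields a unique homomorphism $l$ with $l\circ\rho^{\mathcal{Q}}_{\mathcal{K}}=f_{\mathcal{K}}$ for all $\mathcal{K}$. Uniqueness of $l$ as stated follows because the images of the $\rho^{\mathcal{Q}}_{\mathcal{K}}$ cover the limit. It remains to show that $l$ is bijective.

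Surjectivity is easy. Take $\nu\in\ms M(\mathcal{Q},\mf A)$ and put $\mathcal{K}=D_\nu$, which is a $\delta$-ring with $\sigma(\mathcal{K})=\mathcal{Q}$, so $\mathcal{K}\in\Delta_{\mathcal{Q}}$. Then $\nu\in\ms M_0(\mathcal{K},\mf A)$, and since $\nu$ is a measure, Proposition~\ref{p_measequiv} gives $l(\rho^{\mathcal{Q}}_{\mathcal{K}}(\nu))=\as[\mf A]\nu=\nu$.

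Injectivity is the step needing care. Inductive limits of monoids over a directed set are not always computed as plain unions: in general the limit is a quotient of the direct sum. Since $\Delta_{\mathcal{Q}}$ is directed, however, I expect the standard description to hold. Every element of the limit has the form $\rho^{\mathcal{Q}}_{\mathcal{K}}(\mu)$. Moreover, $\rho^{\mathcal{Q}}_{\mathcal{K}}(\mu)=\rho^{\mathcal{Q}}_{\mathcal{K}'}(\mu')$ if and only if some $\mathcal{K}''\subset\mathcal{K}\cap\mathcal{K}'$ in $\Delta_{\mathcal{Q}}$ satisfies $\mu|_{\mathcal{K}''}=\mu'|_{\mathcal{K}''}$. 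I would justify this by building the set-theoretic filtered colimit, noting that it carries a well-defined monoid structure (the indexing set is directed and the transition maps are homomorphisms), and checking that it satisfies the universal property; see~\cite[Sec.~I.10.3]{BourbakiAlgebre}.

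With this description, suppose $l(\rho^{\mathcal{Q}}_{\mathcal{K}}(\mu))=l(\rho^{\mathcal{Q}}_{\mathcal{K}'}(\mu'))$, that is, $\as[\mf A]\mu=\as[\mf A]{\mu'}$. By Proposition~\ref{p_asssimilar}, $\mu\asymp\mu'$, so $\mu$ and $\mu'$ coincide on $\mathcal{K}''=\mathcal{K}\cap\mathcal{K}'$. This set lies in $\Delta_{\mathcal{Q}}$ by Lemma~\ref{l3sigma}. Hence the two classes are equal, so $l$ is injective. A bijective homomorphism is an isomorphism, which completes the argument.
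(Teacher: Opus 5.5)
Your proposal is correct and follows the paper's proof. It uses the same maps $\mu\mapsto\as[\mf A]\mu$, checks compatibility via Proposition~\ref{p_asssimilar}, obtains $l$ from the universal property, and gets surjectivity from $D_\nu\in\Delta_{\mathcal{Q}}$ and Proposition~\ref{p_measequiv}.

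The only difference is in the injectivity step. You unfold the explicit description of the directed colimit and verify it directly with Proposition~\ref{p_asssimilar} and Lemma~\ref{l3sigma}. The paper instead cites Bourbaki's general fact that a compatible family of injective maps induces an injective map on the inductive limit of the underlying sets.
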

\begin{proof}
  For brevity, we set $\Delta = \Delta_{\mathcal{Q}}$, $\ms M = \ms M(\mathcal{Q},\mf A)$, and $\ms M' = \varinjlim_{\mathcal{K}\in\Delta_{\mathcal{Q}}}\ms M_0(\mathcal{K},\mf A)$. Let $\fm{\theta}{\Delta}$ be such that $\theta(\mathcal{K})$ is the map $\mu\mapsto \as[\mf A]\mu$ on $\ms M_0(\mathcal{K},\mf A)$ for every $\mathcal{K}\in \Delta$. By Proposition~\ref{p_M0}, $\theta(\mathcal{K})$ is an injective homomorphism from $\ms M_0(\mathcal{K},\mf A)$ to $\ms M(\mathcal{Q},\mf A)$ for every $\mathcal{K}\in \Delta$. Let $\mathcal{K},\mathcal{K}'\in\Delta$ be such that $\mathcal{K}\subset\mathcal{K}'$ and let $\varphi\colon \ms M_0(\mathcal{K}',\mf A)\to\ms M_0(\mathcal{K},\mf A)$ be the restriction map. If $\mu\in \ms M_0(\mathcal{K}',\mf A)$, then $\mu\asymp \varphi(\mu)$ and, hence, $\as[\mf A]\mu = \as[\mf A]{\varphi(\mu)}$ by Proposition~\ref{p_asssimilar}. This means that $\theta(\mathcal{K})\circ\varphi = \theta(\mathcal{K}')$. By the universal property of inductive limits, there is a unique homomorphism $l\colon \ms M'\to \ms M$ such that
  \begin{equation}
    \label{circ}
    l\circ\rho^{\mathcal{Q}}_{\mathcal{K}} = \theta(\mathcal{K}),\quad \mathcal{K}\in \Delta.
  \end{equation}
  Since the underlying set of an inductive limit of monoids is the inductive limit of the underlying sets (see~\cite[Sec.~I.10.3]{BourbakiAlgebre}), it follows from the injectivity of $\theta(\mathcal{K})$ and Proposition~6 in~\cite[Sec.~III.7.6]{BourbakiEnsembles} that $l$ is injective. Let $\mu\in \ms M$. Then $D_\mu\in \Delta$ and $\mu\in \ms M_0(D_\mu,\mf A)$. As $\as[\mf A]\mu = \mu$ by Proposition~\ref{p_measequiv}, it follows from~(\ref{circ}) that $\mu = l(t)$, where $t = \rho^{\mathcal{Q}}_{D_\mu}(\mu)$. This means that $l$ is surjective and, hence, is an isomorphism. 
\end{proof}

\section{Positive measures and infinities}
\label{s_pos}

We say that $\mu$ is a positive additive function ($\sigma$-additive function, content, $\sigma$-content, premeasure, measure) if $\mu$ is an $\R$-valued additive function (resp., $\sigma$-additive function, content,  $\sigma$-content, premeasure, measure) and $\mu(A)\geq 0$ for every $A\in D_\mu$. 

\begin{proposition}\label{p_PosSigma}
  Let $\mu$ be a positive premeasure. Then 
  \begin{multline}\label{pmSigma}
    \Sigma_\R(\mu) = \sett{A}{\mbox{there is a countable partition $\mb A$ of $A$ into}}{\mbox{elements of $D_\mu$  
      such that the family $\mu\circ \mb A$ is summable in $\R$}}. 
  \end{multline}
\end{proposition}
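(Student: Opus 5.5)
Write $\mathcal P$ for the set on the right-hand side of~(\ref{pmSigma}). The plan is to use Theorem~\ref{ll16}, since $\R$ is a regular HTM (indeed an HTG). Because $\R$ is also sequentially complete, Proposition~\ref{p_seqcomplSigma} gives the further description: $\Sigma_\R(\mu)$ consists of those $A\in\q\mu$ for which $\mu\circ\mb A$ is summable in $\R$ for \emph{every} countable partition $\mb A$ of $A$ into elements of $D_\mu$. So the task reduces to trading ``every partition'' for ``some partition'' by using positivity.

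The inclusion $\Sigma_\R(\mu)\subset\mathcal P$ is easy. If $A\in\Sigma_\R(\mu)$, then $A\in\q\mu$, so Lemma~\ref{ll1}(ii) gives some countable partition $\mb A$ of $A$ into elements of $D_\mu$. By~(\ref{Sigma_sc}), or directly by the $\sigma$-additivity of $\as[\R]\mu$, the family $\mu\circ\mb A$ is summable in $\R$.

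For the converse, take $A\in\mathcal P$ with a partition $\fm{\mb A}{I}$ into elements of $D_\mu$ such that $\mu\circ\mb A$ is summable. First, $A\in\q\mu$, since $A$ is a countable union of elements of $D_\mu$. By Theorem~\ref{ll16}, it then suffices to show $A\in\mathcal L_\R(\mu)$. So let $\fm{\mb B}{J}$ be a countable disjoint family of elements of $D_\mu$ with $\mb B(j)\subset A$. Since $\mu\ge0$, summability of $\mu\circ\mb B$ in $\R$ is equivalent to boundedness of its finite partial sums. Fix a finite $K\subset J$ and put $B=\bigcup_{j\in K}\mb B(j)\in D_\mu$. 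The sets $B\cap\mb A(i)$ lie in $D_\mu$ by Lemma~\ref{ll1}(iii) and form a countable partition of $B$. Using additivity and then $\sigma$-additivity of $\mu$, together with monotonicity of the positive content $\mu$ on $D_\mu$, we get
\[
\finsum_{j\in K}\mu(\mb B(j))=\mu(B)=\sum_{i\in I}\mu(B\cap\mb A(i))\le\sum_{i\in I}\mu(\mb A(i)).
\]
Here the last inequality holds termwise, and unconditional sums of nonnegative reals preserve the order. The partial sums are therefore bounded by $\sum\mu\circ\mb A$, which gives summability and hence $A\in\mathcal L_\R(\mu)=\Sigma_\R(\mu)$.

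The main point needing care is this last estimate. Specifically, I must justify the standard fact that a nonnegative real family is unconditionally summable if and only if its finite sums are bounded, and that such sums are monotone termwise. This is elementary for $\R$, and I would cite it as a standard property of summation (Appendix~\ref{app_B}) or prove it by monotone convergence of the net of finite partial sums.
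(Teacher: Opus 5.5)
Your proof is correct, but it takes a different route from the paper's. The paper proves that the right-hand side of~(\ref{pmSigma}) is contained in $\Sigma_\R(\mu)$ via Proposition~\ref{p_seqcomplSigma}. Given a second countable partition $\mb B$ of $A$ into elements of $D_\mu$, it forms the common refinement $\mb C(i,j)=\mb A(i)\cap\mb B(j)$. It then uses positivity to get summability of $\mu\circ\mb C$ over $I\times J$ from $\mu(\mb A(i))=\sum_{j}\mu(\mb C(i,j))$, and invokes the interchange of summation in Proposition~\ref{p_fubiniRTS}(ii) to conclude that $\mu\circ\mb B$ is summable.

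You instead verify the defining condition of $\mathcal L_\R(\mu)$ directly and apply Theorem~\ref{ll16}. Because you handle arbitrary disjoint subfamilies of $D_\mu$ lying in $A$, not only partitions of $A$, you do not need Proposition~\ref{p_seqcomplSigma}, and with it Proposition~\ref{p_seqcompl}. The domination $\finsum_{j\in K}\mu(\mb B(j))=\mu(B)\le\sum\mu\circ\mb A$ replaces the Fubini step. Your one outside ingredient is that a nonnegative real family with bounded finite partial sums is summable (the net of partial sums is monotone). The paper uses this tacitly too, both in this very proof and in Corollary~\ref{c_posext}. So your argument is a bit more elementary and self-contained, whereas the paper's reuses machinery it has already built.

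As a minor point, $B\cap\mb A(i)\in D_\mu$ follows simply because $D_\mu$ is a ring; Lemma~\ref{ll1}(iii) is not needed.
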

\begin{proof}
  Let $\mathcal K$ denote the set in the right-hand side of~(\ref{pmSigma}). Let $A\in \Sigma_\R(\mu)$ and $\nu = \as[\R]\mu$. By~(\ref{Qmu}), we have $A\in\q\mu$ and Lemma~\ref{ll1}(ii) implies that there is a partition $\mb A$ of $A$ into elements of $D_\mu$. Since $\mu\circ\mb A = \nu\circ\mb A$, it follows from the $\sigma$-additivity of $\nu$ that $\mu\circ \mb A$ is summable in $\R$ and, hence, $A\in\mathcal K$. Thus, $\Sigma_\R(\mu)\subset \mathcal K$. Let $A\in \mathcal K$ and $\mb A\colon I\to D_\mu$ be a countable partition of $A$ such that $\mu\circ\mb A$ is summable in $\R$. Let $\mb B\colon J\to D_\mu$ be another countable partition of $A$. Let $K = I\times J$ and $\fm{\mb C}{K}$ be such that $\mb C(i,j) = \mb A(i)\cap \mb B(j)$ for every $i\in I$ and $j\in J$. Since $\mu(\mb A(i)) = \sum_{j\in J}\mu(\mb C(i,j))$ for every $i\in I$, the positivity of $\mu$ implies that the family $\{\mu(\mb C(k))\}_{k\in K}$ is summable in $\R$. As $\mu(\mb B(j)) = \sum_{i \in I} \mu(\mb C(i,j))$ for every $j\in J$ by the $\sigma$-additivity of $\mu$, it follows from Proposition~\ref{p_fubiniRTS}(ii) that $\mu\circ\mb B$ is summable in $\R$. By Proposition~\ref{p_seqcomplSigma}, we conclude that $A \in \Sigma_\R(\mu)$ and, hence, $\mathcal K\subset \Sigma_\R(\mu)$.
\end{proof}

\begin{corollary}\label{cSumPosMeas}
  Let $\mu_1$ and $\mu_2$ be positive measures. Then $\mu_1 \dotplus\mu_2$ is a positive measure.
\end{corollary}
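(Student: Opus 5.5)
The plan is to show directly that the pointwise sum satisfies the criterion of Proposition~\ref{p_measequiv}, i.e. that $\Sigma_\R(\eta)\subset D_\eta$ for $\eta = \mu_1\dotplus\mu_2$, using the explicit description of $\Sigma_\R$ for positive premeasures given by Proposition~\ref{p_PosSigma}.

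First, $\eta$ is an $\R$-valued premeasure with $D_\eta = D_{\mu_1}\cap D_{\mu_2}$. This was noted at the beginning of Section~\ref{s_spaces}, and it uses only that the intersection of two $\delta$-rings is a $\delta$-ring and that pointwise sums of $\sigma$-additive functions are $\sigma$-additive. Clearly $\eta\geq 0$ on $D_\eta$, so $\eta$ is a positive premeasure. Moreover $\q\eta = \sigma(D_{\mu_1}\cap D_{\mu_2})\subset \q{\mu_1}\cap\q{\mu_2}$. We do not need equality of the host $\sigma$-rings of $\mu_1$ and $\mu_2$ here.

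Next, let $A\in\Sigma_\R(\eta)$. By Proposition~\ref{p_PosSigma} there is a countable partition $\mb A\colon I\to D_\eta$ of $A$ such that $\{\mu_1(\mb A(i))+\mu_2(\mb A(i))\}_{i\in I}$ is summable in $\R$. Fix $k\in\{1,2\}$. For every finite $J\subset I$ we have
\[
0\leq\finsum_{i\in J}\mu_k(\mb A(i))\leq \finsum_{i\in J}\eta(\mb A(i))\leq \sum_{i\in I}\eta(\mb A(i)).
\]
A family of nonnegative reals with bounded finite partial sums is summable in $\R$, so $\mu_k\circ\mb A$ is summable in $\R$. Each $\mb A(i)$ lies in $D_{\mu_k}$, and $A\in\q\eta\subset\q{\mu_k}$. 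Hence $A$ belongs to the right-hand side of~(\ref{pmSigma}) with $\mu_k$ in place of $\mu$. Proposition~\ref{p_PosSigma} then gives $A\in\Sigma_\R(\mu_k)$. Since $\mu_k$ is a measure, Proposition~\ref{p_measequiv} gives $\Sigma_\R(\mu_k)=D_{\mu_k}$, so $A\in D_{\mu_k}$.

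Thus $A\in D_{\mu_1}\cap D_{\mu_2}=D_\eta$, i.e. $\Sigma_\R(\eta)\subset D_\eta$. Proposition~\ref{p_measequiv} then shows that $\eta$ is a measure, and it is positive.

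There is no real obstacle in this argument. The only points needing care are the comparison of positive sums, which yields summability of each $\mu_k\circ\mb A$, and the inclusion $\q\eta\subset\q{\mu_k}$. That inclusion is exactly what lets us apply Proposition~\ref{p_PosSigma} to $\mu_k$ even when the host $\sigma$-rings of $\mu_1$ and $\mu_2$ differ.
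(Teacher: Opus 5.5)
Your proof is correct and is essentially the paper's own argument. Proposition~\ref{p_PosSigma} supplies a partition with summable $\eta$-values, and the domination $0\leq\mu_k\leq\eta$ on the pieces gives summability of each $\mu_k\circ\mb A$; Proposition~\ref{p_measequiv} is then applied first to each $\mu_k$ and then to $\eta$. Your check that $A\in\q\eta\subset\q{\mu_k}$ is harmless but not needed, since the right-hand side of~(\ref{pmSigma}) only asks for a countable partition into elements of $D_{\mu_k}$.
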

\begin{proof}
  Clearly, $\mu = \mu_1\dotplus\mu_2$ is a positive premeasure. Let $A\in \Sigma_\R(\mu)$. By Proposition~\ref{p_PosSigma}, there is a countable partition $\mb A\colon I\to D_\mu$ of $A$ such that $\mu\circ \mb A$ is summable in $\R$. Since $D_\mu = D_{\mu_1}\cap D_{\mu_2}$ and $\mu_1$ and $\mu_2$ are positive, we have $\mb A(i)\in D_{\mu_1}$ and $\mu_1(\mb A(i))\leq \mu(\mb A(i))$ for every $i\in I$. This implies that $\mu_1\circ \mb A$ is summable in $\R$ and, hence, $A\in \Sigma_\R(\mu_1)$ by Proposition~\ref{p_PosSigma}. As $\mu_1$ is a measure, Proposition~\ref{p_measequiv} implies that $A\in D_{\mu_1}$. In the same way, we obtain $A\in D_{\mu_2}$ and, therefore, $A\in D_\mu$. This means that $\Sigma_\R(\mu)\subset D_\mu$. By Proposition~\ref{p_measequiv}, we conclude that $\mu$ is an $\R$-valued measure and, hence, a positive measure. 
\end{proof}

\begin{corollary}\label{cIneqDomainPosMeas}
	Let $\mu$ be a positive measure and $\nu$ be an $\R$-valued premeasure such that $\q\mu = \q\nu$. If $\mu \dotleq \nu$, then $\nu$ is a positive premeasure and $D_\nu \subset D_\mu$.
\end{corollary}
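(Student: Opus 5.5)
We must show two things: that $\nu\geq 0$ on $D_\nu$, and that $D_\nu\subset D_\mu$. The plan is to establish positivity first and then use it together with Proposition~\ref{p_PosSigma} to get the inclusion.

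\emph{Positivity.} Put $\mathcal K = D_\mu\cap D_\nu$. By Lemma~\ref{l3sigma}, $\mathcal K$ is a $\delta$-ring with $\sigma(\mathcal K)=\q\mu=\q\nu$. On $\mathcal K$ we have $0\leq\mu(A)\leq\nu(A)$, so $\nu|_{\mathcal K}$ takes values in the closed set $[0,\infty)$. Since $\mathcal K\subset D_\nu\subset\sigma(\mathcal K)$, Proposition~\ref{p_Im0delta} applied with $X=[0,\infty)$ gives $\im\nu\subset[0,\infty)$. Hence $\nu$ is a positive premeasure.

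\emph{Inclusion.} Let $A\in D_\nu$. Then $A\in\q\mu$, and since $\mathcal K$ is a $\delta$-ring with $\sigma(\mathcal K)\ni A$, Lemma~\ref{ll1}(ii) yields a countable partition $\mb A\colon I\to\mathcal K$ of $A$. The family $\nu\circ\mb A$ is summable in $\R$, because $\nu$ is $\sigma$-additive and $A\in D_\nu$. Moreover, $0\leq\mu(\mb A(i))\leq\nu(\mb A(i))$ for every $i\in I$. For nonnegative real families, summability amounts to having bounded finite partial sums, so $\mu\circ\mb A$ is summable in $\R$ as well. Since $\mb A$ is a partition of $A$ into elements of $D_\mu$, Proposition~\ref{p_PosSigma} gives $A\in\Sigma_\R(\mu)$. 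Finally, $\mu$ is a measure, so Proposition~\ref{p_measequiv} gives $\Sigma_\R(\mu)\subset D_\mu$, and therefore $A\in D_\mu$.

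The only point requiring care is the choice of partition: it must consist of sets in $D_\mu\cap D_\nu$, not merely in $D_\mu$, so that the pointwise inequality $\mu\dotleq\nu$ can be used termwise. Lemmas~\ref{l3sigma} and~\ref{ll1}(ii) supply exactly such a partition. The comparison test for nonnegative summable families is standard and is the same argument already used in Corollary~\ref{cSumPosMeas}.
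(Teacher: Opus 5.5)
Your proof is correct and matches the paper's argument: the inclusion step is the same (a partition of $A$ into elements of $D_\mu\cap D_\nu$ via Lemmas~\ref{l3sigma} and~\ref{ll1}(ii), the comparison $0\leq\mu\leq\nu$ termwise, then Propositions~\ref{p_PosSigma} and~\ref{p_measequiv}). For positivity you apply Proposition~\ref{p_Im0delta} directly with $X=[0,\infty)$, whereas the paper uses the transitivity result Proposition~\ref{l_dotleq1} with $\mb 0_\R(\q\mu)\dotleq\mu\dotleq\nu$, which rests on the same closed-set argument.
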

\begin{proof}
  Let $\mathcal{Q} = \q\mu$ and $\eta = \mb 0_\R(\mathcal{Q})$. Since $\eta\dotleq\mu$, it follows from Proposition~\ref{l_dotleq1} that $\eta\dotleq\nu$ and, hence, $\nu$ is a positive premeasure. Let $A\in D_\nu$. By Lemma~\ref{l3sigma}, we have $\mathcal{Q} = \sigma(D_\mu\cap D_\nu)$. As $A\in\mathcal{Q}$, Lemma~\ref{ll1}(ii) implies that there is a countable partition $\fm{\mb A}{I}$ of $A$ into elements of $D_\mu\cap D_\nu$. Since $\nu\circ \mb A$ is summable and $\mu(\mb A(i))\leq \nu(\mb A(i))$ for every $i\in I$, we conclude that $\mu\circ\mb A$ is summable and, hence, $A\in \Sigma_\R(\mu)$ by Proposition~\ref{p_PosSigma}. As $\mu$ is a measure, it follows from Proposition~\ref{p_measequiv} that $A\in D_\mu$. This means that $D_\nu\subset D_\mu$.
\end{proof}

We let $\R_+$ denote the closed positive half-axis $[0,\infty)$ endowed with the addition, order, and topology induced from $\R$. Clearly, $\R_+$ is a regular HOTM and a closed topological submonoid of $\R$.  

Let $\infty$ be an element such that $\infty\notin \R$. We let $\mf R$ denote the set $\R_+\cup\{\infty\}$. The set $\mf R$ is endowed with the unique topology such that $\mf R$ becomes a compact Hausdorff space and $\R_+$ becomes its topological subspace. The addition on $\R_+$ is extended to $\mf R$ by setting $x + \infty = \infty + x = \infty$ for every $x\in\mf R$.  The order on $\R_+$ is extended to $\mf R$ by setting $x\leq\infty$ for every $x\in\mf R$. This makes $\mf R$ into a regular HOTM. Every subset of $\mf R$ has a supremum in $\mf R$.

Since $\R_+$ is a topological submonoid of both $\R$ and $\mf R$, it follows from Propositions~\ref{p_subsgradditive} and~\ref{p_subpremeasure} that
\begin{multline}
  \label{eq:posR+premeas}
  \mbox{$\mu$ is a positive additive ($\sigma$-additive) function} \\ \mbox{$\Leftrightarrow$ $\mu$ is an $\R_+$-valued additive (resp., $\sigma$-additive) function} \\\mbox{$\Leftrightarrow$ $\mu$ is an $\mf R$-valued additive (resp., $\sigma$-additive) function and $\im \mu\subset \R_+$}. 
\end{multline}

As $\R_+$ is closed in $\R$, Corollary~\ref{l_subgr1} implies that
\begin{equation}
  \label{eq:posR+meas}
  \mbox{$\mu$ is a positive measure $\Leftrightarrow$ $\mu$ is an $\R_+$-valued measure}.
\end{equation}

\begin{lemma}\label{l_Rsummable}
  Let $f$ be an $\mf R$-valued map. Then $f$ is summable in $\mf R$.
\end{lemma}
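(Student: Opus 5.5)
The plan is to show that the net $\bigS f$ over $\mathcal{F}[f]$ converges in $\mf R$ to $s = \sup\set{\finsum_{i\in K} f(i)}{K\in\mathcal{F}[f]}$, which exists because every subset of $\mf R$ has a supremum in $\mf R$. The argument rests on monotonicity: addition on $\mf R$ is order preserving and every element is $\geq 0$. Hence $K\subset K'$ implies
\[
  \finsum_{i\in K'} f(i) = \finsum_{i\in K} f(i) + \finsum_{i\in K'\setminus K} f(i) \geq \finsum_{i\in K} f(i),
\]
so $\bigS f$ is an increasing net in $\mf R$ whose values are bounded above by $s$. The splitting of the finite sum here is Proposition~\ref{p_finsumunion}.

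Next I would describe the neighbourhoods of points of $\mf R$. The topology is the one making $\mf R$ a compact Hausdorff space with $\R_+$ as a subspace, so $\mf R$ is the one-point compactification of $[0,\infty)$, homeomorphic to $[0,\infty]$ with its order topology. Consequently, every neighbourhood $U$ of a point $s\in\mf R$ contains a set of the form $\set{x\in\mf R}{a<x\leq s}$ for some $a<s$, provided $s\neq 0$. When $s = 0$, every value of $f$ is $0$, the net is constant, and convergence is trivial.

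For $s\neq 0$, fix such a neighbourhood $U$ and an $a<s$ with $(a,s]\subset U$. By the definition of the supremum there is $K_0\in\mathcal{F}[f]$ with $\finsum_{i\in K_0} f(i) > a$. By monotonicity, every $K\supset K_0$ then satisfies
\[
  a < \finsum_{i\in K} f(i) \leq s,
\]
so the sum over $K$ lies in $U$. This gives convergence of the net to $s$, and hence summability of $f$ in $\mf R$.

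The only point needing care is the identification of the neighbourhood base at $s$. If $s = \infty$, the neighbourhoods of $\infty$ in the one-point compactification are complements of compact subsets of $\R_+$, and each compact subset lies in some $[0,a]$. If $s\in\R_+$, then $\R_+$ is open in $\mf R$ (its complement $\{\infty\}$ is closed because $\mf R$ is Hausdorff), so the usual neighbourhoods of $s$ in $\R_+$ serve. With this in hand, the rest of the argument is routine.
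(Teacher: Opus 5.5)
Your proposal is correct and follows the paper's proof essentially verbatim: the paper also takes the supremum $a$ of the finite partial sums, disposes of the trivial case $a=0$, and for $a>0$ combines a neighbourhood $(b,a]\subset U$ with the monotonicity of the net. Your explicit justification of the monotonicity and of the neighbourhood base at $a$ in $\mf R$ fills in details that the paper leaves implicit.
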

\begin{proof}
  Let $\theta = \bigS^{\mf R} f$ and $a$ be the supremum of $\im \theta$ in $\mf R$. It suffices to show that the net $\theta$ converges to $a$ in $\mf R$. If $a = 0$, then $\finsum_{k\in K} f(k) = 0$ for every $K\in\mathcal F[f]$ and, hence, our claim is true. Let $a>0$ and $U$ be a neighbourhood of $a$ in $\mf R$. Then there is $b \in\mf R$ such that $b < a$ and $(b,a]\subset U$. Let $K\in \mathcal F[f]$ be such that $b < \finsum_{k\in K} f(k)$. Let $K'\in \mathcal F[f]$ be such that $K\subset K'$ and let $s = \finsum_{k\in K'}f(k)$. Then $s\geq \finsum_{k\in K}f(k)$ and, hence, $b < s$. Since $s\leq a$, we have $s\in U$. Thus, $\theta$ converges to $a$ in $\mf R$. 
\end{proof}

\begin{proposition}\label{p_Rassdom}
  Let $\eta$ be an $\mf R$-valued premeasure. Then $\Sigma_{\mf R}(\eta) = \q\eta$.
\end{proposition}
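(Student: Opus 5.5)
Since $\mf R$ is a regular HTM, Theorem~\ref{ll16} applies and gives $\Sigma_{\mf R}(\eta) = \mathcal{L}_{\mf R}(\eta)$. So the plan is to show $\mathcal{L}_{\mf R}(\eta) = \q\eta$. The inclusion $\mathcal{L}_{\mf R}(\eta)\subset\q\eta$ is built into definition~(\ref{SigmaA}).

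For the reverse inclusion, take $A\in\q\eta$ and any countable disjoint family $\mb A\colon I\to D_\eta$ with $\mb A(i)\subset A$. The family $\eta\circ\mb A$ is an $\mf R$-valued map, so it is summable in $\mf R$ by Lemma~\ref{l_Rsummable}. Hence $A\in\mathcal{L}_{\mf R}(\eta)$ by~(\ref{SigmaA}), which gives $\q\eta\subset\mathcal{L}_{\mf R}(\eta)$.

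Combining the two inclusions with Theorem~\ref{ll16} yields $\Sigma_{\mf R}(\eta)=\q\eta$. The only point requiring care is confirming that $\mf R$ is regular, so that the equality case of Theorem~\ref{ll16} (not merely the inclusion) is available. This regularity was asserted when $\mf R$ was introduced: $\mf R$ is a compact Hausdorff space, hence regular. No genuine obstacle is expected beyond that; the substance lies in Lemma~\ref{l_Rsummable} and Theorem~\ref{ll16}, both already established.
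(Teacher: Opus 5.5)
Your proposal is correct and follows the paper's proof exactly: Lemma~\ref{l_Rsummable} together with definition~(\ref{SigmaA}) gives $\mathcal{L}_{\mf R}(\eta)=\q\eta$, and the equality case of Theorem~\ref{ll16} then applies because $\mf R$ is regular. Your remark that regularity is needed for the equality case, and that it holds because $\mf R$ is compact Hausdorff, is the right point to check.
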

\begin{proof}
  It follows from~(\ref{SigmaA}) and Lemma~\ref{l_Rsummable} that $\mathcal L_{\mf R}(\eta) = \q\eta$. The statement therefore follows from Theorem~\ref{ll16}.  
\end{proof}

\begin{corollary}\label{c_Rmeasdom}
  Let $\eta$ be an $\mf R$-valued measure. Then $D_\eta$ is a $\sigma$-ring.
\end{corollary}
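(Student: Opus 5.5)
The plan is to combine Proposition~\ref{p_measequiv} with Proposition~\ref{p_Rassdom}. Since $\eta$ is an $\mf R$-valued measure, it is in particular an $\mf R$-valued premeasure. Proposition~\ref{p_measequiv} then gives $\eta = \as[\mf R]\eta$, and therefore $D_\eta = \Sigma_{\mf R}(\eta)$.

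By Proposition~\ref{p_Rassdom}, which applies because $\eta$ is an $\mf R$-valued premeasure, we have $\Sigma_{\mf R}(\eta) = \q\eta = \sigma(D_\eta)$. Putting the two equalities together yields $D_\eta = \sigma(D_\eta)$, and so $D_\eta$ is a $\sigma$-ring.

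There is no real obstacle here. The substantive work is already contained in Proposition~\ref{p_Rassdom}, which uses the regularity of $\mf R$ through Theorem~\ref{ll16} together with the universal summability established in Lemma~\ref{l_Rsummable}. The only point to confirm is that the hypotheses line up. Proposition~\ref{p_measequiv} requires only an HTM and a premeasure, and $\mf R$ was noted above to be a regular HTM. With those checks in place, the corollary follows by combining the two equalities.
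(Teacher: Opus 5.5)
Your proof is correct and follows the paper's own argument: the paper likewise combines Proposition~\ref{p_measequiv} with Proposition~\ref{p_Rassdom} to get $\q\eta \subset D_\eta$, hence $D_\eta = \q\eta$. Your version phrases this as the equality $D_\eta = \Sigma_{\mf R}(\eta) = \sigma(D_\eta)$, which is the same reasoning.
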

\begin{proof}
  Propositions~\ref{p_measequiv} and~\ref{p_Rassdom} imply that $\q\eta\subset D_\eta$ and, hence, $D_\eta = \q\eta$.
\end{proof}

Corollary~\ref{c_Rmeasdom} implies that $\ms M(\mathcal{Q},\mf R) = \ms M_0(\mathcal{Q},\mf R)$ for every $\sigma$-ring $\mathcal{Q}$.

\begin{proposition}\label{p_SumRMeas}
  Let $\eta_1$ and $\eta_2$ be $\mf R$-valued measures. Then so is $\eta_1 \dotplus\eta_2$.
\end{proposition}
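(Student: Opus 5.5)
By Corollary~\ref{c_Rmeasdom}, the domains $D_{\eta_1}$ and $D_{\eta_2}$ are $\sigma$-rings. Let $\eta = \eta_1\dotplus\eta_2$, so that $D_\eta = D_{\eta_1}\cap D_{\eta_2}$. The plan is to show that $\eta$ is an $\mf R$-valued premeasure whose domain is a $\sigma$-ring, and then invoke Proposition~\ref{p_Dmusigma}.

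First, $D_\eta$ is a $\sigma$-ring, since the intersection of two $\sigma$-rings is again a $\sigma$-ring: it is nonempty because it contains $\varnothing$, and closure under differences and countable unions is checked in each factor separately. In particular, $D_\eta$ is a $\delta$-ring.

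Next, $\eta$ is $\sigma$-additive, as remarked at the start of Section~\ref{s_spaces}: the pointwise sum of $\mf A$-valued premeasures is a premeasure for any HTM $\mf A$. Concretely, take a countable partition $\mb A$ of $A\in D_\eta$ into elements of $D_\eta$. Then $\eta_1\circ\mb A$ and $\eta_2\circ\mb A$ are summable with sums $\eta_1(A)$ and $\eta_2(A)$. By Proposition~\ref{p_sum}(i), that is, continuity of addition in $\mf R$, the family $\eta\circ\mb A$ is summable with sum $\eta_1(A)+\eta_2(A)=\eta(A)$. Hence $\eta$ is an $\mf R$-valued premeasure.

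Finally, $D_\eta$ is a $\sigma$-ring, so Proposition~\ref{p_Dmusigma} shows that $\eta$ is an $\mf R$-valued measure. Alternatively, one could use Proposition~\ref{p_Rassdom}: it gives $\Sigma_{\mf R}(\eta)=\q\eta=D_\eta$, and Proposition~\ref{p_measequiv} then yields the same conclusion.

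There is no real obstacle here. The only point needing care is to apply Corollary~\ref{c_Rmeasdom} to $\eta_1$ and $\eta_2$ separately. This is what guarantees that $D_\eta$ is a $\sigma$-ring; without it the intersection of the domains would only be a $\delta$-ring.
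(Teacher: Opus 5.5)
Your proposal is correct and follows essentially the same route as the paper: Corollary~\ref{c_Rmeasdom} makes $D_{\eta_1}\cap D_{\eta_2}$ a $\sigma$-ring, and Proposition~\ref{p_Dmusigma} finishes the argument. The only difference is that you check the premeasure property of $\eta_1\dotplus\eta_2$ explicitly via Proposition~\ref{p_sum}(i), which the paper takes from its remark at the start of Section~\ref{s_spaces}.
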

\begin{proof}
  Let $\eta = \eta_1\dotplus\eta_2$. Since $D_\eta = D_{\eta_1}\cap D_{\eta_2}$, Corollary~\ref{c_Rmeasdom} implies that $D_\eta$ is a $\sigma$-ring. The statement therefore follows from Proposition~\ref{p_Dmusigma}.
\end{proof}

Given a map $\eta$, we put $D^f_\eta = \set{A}{A\in D_\eta\mbox{ and }\eta(A)\in\R}$. The restriction of $\eta$ to $D^f_\eta$ is called the finite part of $\eta$ and is denoted by $\mf F\eta$.

\begin{lemma}\label{l_Dmuf}
  Let $\eta$ be an $\mf R$-valued content. Let $A\in D^f_\eta$ and $B\in D_\eta$ be such that $B\subset A$. Then $B\in D^f_\eta$.
\end{lemma}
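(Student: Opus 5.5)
Since $D_\eta$ is a ring and $B\subset A$ with $A,B\in D_\eta$, the set $A\setminus B$ belongs to $D_\eta$. The plan is to decompose $A$ as the disjoint union $B\cup(A\setminus B)$ and use additivity to compare $\eta(B)$ with $\eta(A)$. Concretely, the family $[B;A\setminus B]$ is a finite partition of $A$ into elements of $D_\eta$. Definition~\ref{dd4} then gives
\[
  \eta(A) = \eta(B) + \eta(A\setminus B).
\]
The addition here is taken in $\mf R$.

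The remaining step is to use the structure of $\mf R$. By definition, $x+\infty=\infty$ for every $x\in\mf R$. Moreover, if $x,y\in\R_+$, then $x+y\in\R_+$ is the usual sum. Hence a sum of two elements of $\mf R$ lies in $\R$ (equivalently, in $\R_+$) only if both summands lie in $\R_+$. Since $A\in D^f_\eta$, we have $\eta(A)\in\R$. If $\eta(B)=\infty$, the displayed equality would force $\eta(A)=\infty$, which is a contradiction. Therefore $\eta(B)\in\R_+\subset\R$, and so $B\in D^f_\eta$.

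There is no real obstacle here. The only point needing care is to justify that $A\setminus B\in D_\eta$, which holds because $D_\eta$ is a ring, and that the two-element family is a legitimate finite partition, so that Definition~\ref{dd4} applies. As a byproduct, the same argument shows that $\eta(B)\le\eta(A)$, although this is not needed for the statement.
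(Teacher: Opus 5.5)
Your proof is correct and follows the paper's argument: both apply additivity to the partition of $A$ into $B$ and $A\setminus B$, getting $\eta(A)=\eta(B)+\eta(A\setminus B)$, and conclude that $\eta(B)$ is finite. The paper phrases the final step as $\eta(B)\leq\eta(A)<\infty$, whereas you use that $\infty$ is absorbing under addition in $\mf R$; these amount to the same observation.
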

\begin{proof}
  Since $\eta(A) = \eta(B) + \eta(A\setminus B)$, we have $\eta(B)\leq\eta(A)<\infty$.
\end{proof}

\begin{proposition}\label{p_Fmupremeas}
  Let $\eta$ be an $\mf R$-valued additive function ($\sigma$-additive function, content, $\sigma$-content, premeasure). Then $\mf F\eta$ is a positive additive function (resp., $\sigma$-additive function, content, $\sigma$-content, premeasure).
\end{proposition}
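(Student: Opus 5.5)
The plan is to check separately the value condition, the additivity conditions, and the structure of the domain $D^f_\eta$. Throughout, the key observation is that the relevant partitions of a set in $D^f_\eta$ consist automatically of elements of $D^f_\eta$.

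\emph{Additive case.} Let $\eta$ be $\mf R$-valued additive. By definition $\im\mf F\eta\subset\R_+$. Let $A\in D^f_\eta$ and let $\mb A$ be a finite partition of $A$ into elements of $D^f_\eta$. Then $\mb A$ is also a partition into elements of $D_\eta$, so additivity of $\eta$ gives $\eta(A)=\finsum\eta\circ\mb A$, where the sum is computed in $\mf R$. All terms lie in $\R_+$, a submonoid of $\mf R$, so this is also the sum in $\R_+$. By~(\ref{eq:posR+premeas}), $\mf F\eta$ is a positive additive function.

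\emph{$\sigma$-additive case.} Here the same argument applies with countable partitions. The sum $\sum\eta\circ\mb A$ converges in $\mf R$ to $\eta(A)\in\R_+$, and all its terms lie in $\R_+$. Since $\R_+$ is a topological subspace of $\mf R$, Proposition~\ref{p_subpremeasure} (that is, Proposition~\ref{p_sumsubsgr}) shows that the family is summable in $\R_+$ with the same sum. By~(\ref{eq:posR+premeas}), $\mf F\eta$ is then positive $\sigma$-additive.

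\emph{Domain cases.} For contents, $\sigma$-contents and premeasures, it remains to show that $D^f_\eta$ is a ring, respectively a $\delta$-ring, whenever $D_\eta$ is. The set $D^f_\eta$ is nonempty because $\eta(\varnothing)=0$, which follows from additivity applied to the empty partition of $\varnothing$. Now take $A,B\in D^f_\eta$. The difference $A\setminus B$ lies in $D_\eta$ and is contained in $A$, so it lies in $D^f_\eta$ by Lemma~\ref{l_Dmuf}. For the union, write it as the disjoint union $A\cup B=A\sqcup(B\setminus A)$. Additivity gives $\eta(A\cup B)=\eta(A)+\eta(B\setminus A)$, and both terms are finite, so $A\cup B\in D^f_\eta$. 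For the $\delta$-ring case, let $\mb A\colon I\to D^f_\eta$ be a nonempty countable family. Its intersection lies in $D_\eta$ and is contained in $\mb A(i_0)$ for any $i_0\in I$, so it lies in $D^f_\eta$ by Lemma~\ref{l_Dmuf} again.

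I expect the only genuinely nontrivial point to be the transfer of summability from $\mf R$ to $\R_+$ in the $\sigma$-additive case. This requires that $\R_+$ be a topological subspace of $\mf R$, which holds by the construction of $\mf R$. Everything else is routine.
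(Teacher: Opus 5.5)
Your proof is correct and follows essentially the same route as the paper. The paper transfers the value and additivity conditions through (\ref{eq:posR+premeas}), which already packages the subspace argument of Proposition~\ref{p_sumsubsgr}, then uses Lemma~\ref{l_Dmuf} for differences and countable intersections and a two-piece disjoint decomposition for unions (minor remarks: the ``key observation'' you announce is never actually used, and in this paper $\sqcup$ denotes the formal disjoint union, so write $A\cup(B\setminus A)$ instead).
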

\begin{proof}
  Let $\mu = \mf F\eta$. If $\eta$ is an $\mf R$-valued additive ($\sigma$-additive) function, then $\mu$ is obviously an $\mf R$-valued additive (resp., $\sigma$-additive) function and it follows from~(\ref{eq:posR+premeas}) that $\mu$ is a positive additive (resp., $\sigma$-additive) function. Let $\eta$ be an $\mf R$-valued content and $\mathcal{Q} = D^f_\eta$. Let $A,B\in \mathcal{Q}$. By Lemma~\ref{l_Dmuf}, we have $A\setminus B\in \mathcal{Q}$. Since $\eta(A\cup B) = \eta(A\setminus B) + \eta(B)$, it follows that $\eta(A\cup B) < \infty$, i.e., $A\cup B\in \mathcal{Q}$. As $\eta(\varnothing) = 0$, we have $\varnothing\in\mathcal{Q}$. This means that $\mathcal{Q}$ is a ring and, hence, $\mu$ is a positive content. Let $\eta$ be an $\mf R$-valued $\sigma$-content. Then $\mu$ is both a positive $\sigma$-additive function and a positive content and, therefore, is a positive $\sigma$-content. Let $\eta$ be an $\mf R$-valued premeasure. Then $\mathcal{Q}$ is closed under countable intersections by Lemma~\ref{l_Dmuf}. As $\mu$ is a positive $\sigma$-content, it follows that $\mu$ is a positive premeasure.
\end{proof}

\begin{proposition}\label{p_Fmu}
  Let $\eta$ be an $\mf R$-valued measure. Then $\mf F\eta$ is a positive measure.
\end{proposition}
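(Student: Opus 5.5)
Let $\mu = \mf F\eta$. By Proposition~\ref{p_Fmupremeas}, $\mu$ is a positive premeasure. Since $D_\mu\subset D_\eta$, we have $\q\mu\subset\q\eta$. By Corollary~\ref{c_Rmeasdom}, $D_\eta$ is a $\sigma$-ring, so $D_\eta=\q\eta$. By Proposition~\ref{p_measequiv}, it then suffices to prove the inclusion $\Sigma_\R(\mu)\subset D_\mu = D^f_\eta$, and I will verify it with the description of $\Sigma_\R(\mu)$ given in Proposition~\ref{p_PosSigma}.

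Let $A\in\Sigma_\R(\mu)$. By Proposition~\ref{p_PosSigma}, there is a countable partition $\fm{\mb A}{I}$ of $A$ into elements of $D_\mu$ such that $\mu\circ\mb A$ is summable in $\R$. Since $A\in \q\mu$ by~(\ref{Qmu}), we get $A\in\q\mu\subset \q\eta = D_\eta$. Hence $\eta(A)$ is defined, and the $\sigma$-additivity of $\eta$ gives $\eta(A) = \sum^{\mf R}_{i\in I}\eta(\mb A(i)) = \sum^{\mf R}_{i\in I}\mu(\mb A(i))$.

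It remains to show that this $\mf R$-sum is finite. Because $\{\mu(\mb A(i))\}_{i\in I}$ is summable in $\R$ with nonnegative terms, its finite partial sums are bounded by the real sum $s = \sum^{\R}_{i\in I}\mu(\mb A(i))$. In $\mf R$, the net of finite partial sums converges to their supremum, as shown in the proof of Lemma~\ref{l_Rsummable}. That supremum is at most $s<\infty$; in fact it equals $s$, since $\R_+$ is a topological subspace of $\mf R$ and limits in $\R_+$ are unique. Therefore $\eta(A)=s\in\R$, so $A\in D^f_\eta=D_\mu$. This gives $\Sigma_\R(\mu)\subset D_\mu$, and $\mu$ is an $\R$-valued measure. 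Since $\mu$ is positive, it is a positive measure.

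The main care point is the comparison of sums in $\R$ and in $\mf R$. This can be handled through the supremum characterization in Lemma~\ref{l_Rsummable}, or through Proposition~\ref{p_sumsubsgr} applied to the common submonoid $\R_+$: the family is summable in $\R_+$ because $\R_+$ is closed in $\R$, and hence it has the same sum in $\mf R$. Everything else is bookkeeping with host $\sigma$-rings.
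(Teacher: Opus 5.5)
Your proof is correct and follows essentially the same route as the paper: reduce via Proposition~\ref{p_measequiv} to $\Sigma_\R(\mu)\subset D_\mu$, take the partition from Proposition~\ref{p_PosSigma}, use Corollary~\ref{c_Rmeasdom} to get $A\in D_\eta$, and show that $\eta(A)$ is finite by comparing the sums in $\R$ and in $\mf R$. For that comparison the paper uses exactly your second option (summability in $\R_+$ via Proposition~\ref{p_seqclosedsum}, then Proposition~\ref{p_sumsubsgr}); your supremum argument based on the proof of Lemma~\ref{l_Rsummable} is an equally valid way to do it.
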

\begin{proof}
  Let $\mu = \mf F\eta$. By Proposition~\ref{p_Fmupremeas}, $\mu$ is a positive premeasure. Let $A\in \Sigma_{\R}(\mu)$. By Proposition~\ref{p_PosSigma}, there is a countable partition $\mb A\colon I\to D_\mu$ of $A$ such that $\mu\circ \mb A$ is summable in $\R$. As $\R_+$ is closed in $\R$, Proposition~\ref{p_seqclosedsum} implies that $\mu\circ \mb A$ is summable in $\R_+$. Since $\eta\circ\mb A = \mu\circ\mb A$, it follows from Proposition~\ref{p_sumsubsgr} that $\sum^{\mf R}\eta\circ\mb A = \sum^{\R_+}\mu\circ\mb A$ and, hence, $\sum^{\mf R}\eta\circ\mb A$ belongs to $\R_+$. As $D_\mu\subset D_\eta$ and $D_\eta$ is a $\sigma$-ring by Corollary~\ref{c_Rmeasdom}, we have $\q\mu\subset D_\eta$. Hence,  $A\in D_\eta$, and the $\sigma$-additivity of $\eta$ implies that $\eta(A)\in\R_+$, i.e., $A\in D_\mu$. Thus, $\Sigma_{\R}(\mu)\subset D_\mu$. By Proposition~\ref{p_measequiv}, we conclude that $\mu$ is an $\R$-valued measure and, hence, is a positive measure.
\end{proof}

\begin{definition}\label{d_barsf}
  A map $\eta$ is called $\sigma$-finite if every element of $D_\eta$ is a subset of some element of $\sigma_0(D^f_\eta)$. 
\end{definition}

\begin{proposition}\label{p_barsigma}
  Let $\eta$ be an $\mf R$-valued content and $\mu = \mf F\eta$. Then
  \[
    \eta \mbox{ is $\sigma$-finite}\Leftrightarrow D_\eta\subset\sigma_0(D_\mu)\Leftrightarrow \q{\mu} = \q\eta.
  \]
\end{proposition}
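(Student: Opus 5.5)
I will prove the three conditions equivalent by the cycle (1)$\Rightarrow$(2)$\Rightarrow$(3)$\Rightarrow$(1), where (1) is ``$\eta$ is $\sigma$-finite'', (2) is $D_\eta\subset\sigma_0(D_\mu)$, and (3) is $\q\mu=\q\eta$. The one structural fact used throughout is that $D_\mu=D^f_\eta$ is a ring (Proposition~\ref{p_Fmupremeas}, applied to the $\mf R$-valued content $\eta$). Also, by Lemma~\ref{l_Dmuf}, every element of $D_\eta$ contained in an element of $D_\mu$ itself belongs to $D_\mu$.

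For (1)$\Rightarrow$(2), let $A\in D_\eta$. By Definition~\ref{d_barsf}, there is a countable family $\mb B\colon I\to D_\mu$ with $A\subset\bigcup_{i\in I}\mb B(i)$. Since $D_\eta$ is a ring and $D_\mu\subset D_\eta$, each set $A\cap\mb B(i)$ lies in $D_\eta$. This set is contained in $\mb B(i)\in D^f_\eta$, so Lemma~\ref{l_Dmuf} gives $A\cap\mb B(i)\in D_\mu$. Therefore $A=\bigcup_{i\in I}(A\cap\mb B(i))\in\sigma_0(D_\mu)$.

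For (2)$\Rightarrow$(3), we have $D_\mu\subset D_\eta$, hence $\q\mu\subset\q\eta$. Conversely, (2) gives $D_\eta\subset\sigma_0(D_\mu)\subset\sigma(D_\mu)=\q\mu$, and therefore $\q\eta=\sigma(D_\eta)\subset\q\mu$.

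For (3)$\Rightarrow$(1), let $A\in D_\eta$. Then $A\in\q\eta=\q\mu=\sigma(D_\mu)$. Lemma~\ref{l_sigma0}, applied with $\mathcal{K}=D_\mu$, supplies some $B\in\sigma_0(D_\mu)=\sigma_0(D^f_\eta)$ with $A\subset B$. This is exactly Definition~\ref{d_barsf}.

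No step is a real obstacle. The only point needing care is (1)$\Rightarrow$(2), where one must check that the pieces $A\cap\mb B(i)$ belong to $D_\eta$ (using the ring property of $D_\eta$) before invoking Lemma~\ref{l_Dmuf}. Note that Lemma~\ref{l_Dmuf} requires $\eta$ to be a content, which is why the hypothesis is stated for contents.
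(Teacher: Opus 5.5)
Your proof is correct and follows the paper's argument exactly: the same cycle (1)$\Rightarrow$(2)$\Rightarrow$(3)$\Rightarrow$(1), using Lemma~\ref{l_Dmuf} for the first step and Lemma~\ref{l_sigma0} with $\mathcal{K}=D_\mu$ for the last, and you also make explicit the paper's implicit check that $A\cap\mb B(i)\in D_\eta$. Your remark that $D_\mu$ is a ring is true but is not actually used anywhere, since $\sigma_0$ and Lemma~\ref{l_sigma0} apply to arbitrary sets.
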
 
\begin{proof}
  Suppose $\eta$ is $\sigma$-finite.  Let $A\in D_\eta$. Then $A\subset \bigcup_{i\in I} \mb A(i)$ for some countable family $\mb A\colon I\to D^f_\eta$. By Lemma~\ref{l_Dmuf}, we have $A\cap\mb A(i)\in D^f_\eta$ for every $i\in I$. Since $A = \bigcup_{i\in I}(A\cap\mb A(i))$, we conclude that $A\in \sigma_0(D_\mu)$. Thus, $D_\eta\subset\sigma_0(D_\mu)$. Suppose $D_\eta\subset\sigma_0(D_\mu)$. Since $\sigma_0(D_\mu)\subset\q\mu$, we have $D_\eta\subset\q\mu\subset\q\eta$ and, hence, $\q\mu = \q\eta$. Finally, if $\q{\mu} = \q\eta$, then $\eta$ is $\sigma$-finite by Lemma~\ref{l_sigma0}. 
\end{proof}

\begin{theorem}\label{t_Fass}
  {$ $}
  \begin{enumerate}
  \item [(i)] Let $\mu$ be a positive measure and $\eta = \as[\mf R]\mu$. Then $\eta$ is a $\sigma$-finite $\mf R$-valued measure, $\eta$ is an extension of $\mu$, $D_\eta = \q\eta = \q\mu$, and $\mu = \mf F\eta$.
  \item [(ii)] Let $\eta$ be a $\sigma$-finite $\mf R$-valued measure and $\mu = \mf F\eta$. Then $\mu$ is a positive measure and $\eta = \as[\mf R]\mu$.
  \end{enumerate}
\end{theorem}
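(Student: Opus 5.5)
For (i), I will first treat $\mu$ as an $\mf R$-valued premeasure; this is legitimate by~(\ref{eq:posR+premeas}). Then $\eta = \as[\mf R]\mu$ is defined, it extends $\mu$, and $\q\eta = \q\mu$ by~(\ref{Qmu}). Proposition~\ref{p_Rassdom} gives $D_\eta = \Sigma_{\mf R}(\mu) = \q\mu$.

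To prove $\mu = \mf F\eta$, I split into two inclusions.

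\emph{$D_\mu\subset D^f_\eta$ with agreement.} Since $\eta$ extends $\mu$ and $\mu$ is real-valued, every $A\in D_\mu$ satisfies $\eta(A)=\mu(A)\in\R$. Hence $D_\mu\subset D^f_\eta$ and $\mf F\eta$ agrees with $\mu$ on $D_\mu$.

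\emph{$D^f_\eta\subset D_\mu$.} Let $A\in D^f_\eta$. By Lemma~\ref{ll1}(ii) there is a countable partition $\mb A$ of $A$ into elements of $D_\mu$. By $\sigma$-additivity of $\eta$ in $\mf R$, $\sum^{\mf R}\mu\circ\mb A = \eta(A)<\infty$. I must convert this finite $\mf R$-sum into summability in $\R$. The partial sums form an increasing net with supremum $\eta(A)$, which is finite. Hence the net converges in $\R_+$ to the same value; alternatively, Proposition~\ref{p_sumsubsgr} applies since $\R_+$ is a topological subspace of $\mf R$. So $\mu\circ\mb A$ is summable in $\R$, and Proposition~\ref{p_PosSigma} gives $A\in\Sigma_\R(\mu)$. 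Since $\mu$ is a measure, Proposition~\ref{p_measequiv} yields $A\in D_\mu$. Thus $D^f_\eta = D_\mu$ and $\mu = \mf F\eta$.

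$\sigma$-finiteness then follows from Proposition~\ref{p_barsigma}, because $\q{\mf F\eta} = \q\mu = \q\eta$. Finally, $\eta$ is an $\mf R$-valued measure by definition of $\as[\mf R]{\cdot}$.

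For (ii), Proposition~\ref{p_Fmu} shows that $\mu = \mf F\eta$ is a positive measure, so $\mu$ is an $\mf R$-valued premeasure. Since $\eta$ is $\sigma$-finite, Proposition~\ref{p_barsigma} gives $\q\mu=\q\eta$. Also $\eta$ extends $\mu$, and $D_\mu\cap D_\eta = D_\mu$, so $\mu\asymp\eta$. As $\eta$ is an $\mf R$-valued measure, Proposition~\ref{p_ass} gives $\eta=\as[\mf R]\mu$.

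The main obstacle is the passage in (i) from a finite unconditional sum in $\mf R$ to summability in $\R$. I plan to handle it via the monotone-net argument from Lemma~\ref{l_Rsummable} combined with Proposition~\ref{p_sumsubsgr}. Everything else reduces to results already established in the paper.
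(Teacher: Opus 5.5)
Your proof is correct, and part (ii) coincides with the paper's argument. The difference is in how part (i) shows $D^f_\eta\subset D_\mu$.

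The paper does this without touching sums. With $\nu=\mf F\eta$, the inclusions $D_\mu\subset D_\nu\subset D_\eta$ give $\q\mu=\q\nu=\q\eta$, so $\mu\asymp\nu$. Since $\nu$ is a positive premeasure by Proposition~\ref{p_Fmupremeas}, the maximality condition of Definition~\ref{dd6} forces $\mu$ to extend $\nu$, hence $\mu=\nu$.

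You instead use the explicit description of $\Sigma_\R(\mu)$ in Proposition~\ref{p_PosSigma}. You transfer finiteness of the $\mf R$-valued sum to summability in $\R$, either through Proposition~\ref{p_sumsubsgr} (applied once from $\mf R$ to $\R_+$ and again from $\R_+$ to $\R$) or through your monotone-net argument. You then conclude with Proposition~\ref{p_measequiv}. In effect, you run the proof of Proposition~\ref{p_Fmu} in reverse.

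The paper's route is shorter and purely structural. It needs only that $D^f_\eta$ is a $\delta$-ring on which $\eta$ is $\sigma$-additive. As a result, the identity $\mu=\mf F\eta$ does not depend on Theorem~\ref{ll16} or on the Fubini-type results underlying Proposition~\ref{p_PosSigma}.

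In (i), your route bypasses Proposition~\ref{p_Fmupremeas}. It also shows concretely why $\eta$ must be infinite on every set in $D_\eta\setminus D_\mu$. The price is reliance on the heavier domain characterization, though the theorem as a whole uses that machinery anyway, via Proposition~\ref{p_Rassdom} in (i) and Proposition~\ref{p_Fmu} in (ii).
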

\begin{proof}
  (i) By~(\ref{eq:posR+premeas}), $\mu$ is an $\mf R$-valued premeasure and, therefore, $\eta$ is an $\mf R$-valued measure and an extension of $\mu$. Let $\nu = \mf F\eta$. Then $\nu$ is an extension of $\mu$. This implies that $D_\mu\subset D_\nu\subset D_\eta$. By~(\ref{Qmu}), it follows that
  \begin{equation}
    \label{eq:qDq}
    \q\mu = \q\nu = \q\eta
  \end{equation}
  and, therefore, $\mu\asymp\nu$.  Since $\nu$ is a positive premeasure by Proposition~\ref{p_Fmupremeas}, Definition~\ref{dd6} ensures that $\mu$ is an extension of $\nu$ and, hence, $\mu = \nu$. By Proposition~\ref{p_barsigma} and the right equality in~(\ref{eq:qDq}), we conclude that $\eta$ is $\sigma$-finite. Finally, it follows from Proposition~\ref{p_Rassdom} that $D_\eta = \q\mu$.
  \par\medskip\noindent
  (ii) By Proposition~\ref{p_Fmu}, $\mu$ is a positive measure and Proposition~\ref{p_barsigma} implies that $\q\mu = \q\eta$. As $\eta$ is an extension of $\mu$, we have $\mu\asymp\eta$. Since $\mu$ is an $\mf R$-valued premeasure by~(\ref{eq:posR+premeas}), it follows from Proposition~\ref{p_ass} that $\eta = \as[\mf R]\mu$.   
\end{proof}

\begin{proposition}\label{p_asdotplus}
  Let $\mu_1$ and $\mu_2$ be positive premeasures such that $\q{\mu_1} = \q{\mu_2}$, $\eta_1 = \as[\mf R]{\mu_1}$, and $\eta_2 = \as[\mf R]{\mu_2}$.
  \begin{enumerate}
  \item [(i)] $\as[\mf R]{\mu_1\dotplus_\R \mu_2} = \eta_1 \dotplus_{\mf R} \eta_2$. 
  \item [(ii)] $\mu_1 \dotleq_\R \mu_2$ if and only if  $\eta_1\dotleq_{\mf R}\eta_2$.
  \end{enumerate}
\end{proposition}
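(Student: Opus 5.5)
For (i), I would use uniqueness of associated measures, that is, Proposition~\ref{p_ass} applied in $\mf R$. Put $\mu = \mu_1\dotplus_\R\mu_2$. This is a positive premeasure. By (\ref{eq:posR+premeas}) it is also an $\mf R$-valued premeasure, and $\mu_1\dotplus_\R\mu_2 = \mu_1\dotplus_{\mf R}\mu_2$, since addition in $\R_+$ is the restriction of addition in $\mf R$. By Proposition~\ref{l_boxplus}(i) we have $\q\mu = \q{\mu_1} = \q{\mu_2}$. Theorem~\ref{t_Fass}(i) cannot be used directly, because the $\mu_i$ are only premeasures. Instead I would note that $\Sigma_{\mf R}(\mu_i) = \q{\mu_i}$ by Proposition~\ref{p_Rassdom}, so $D_{\eta_1} = D_{\eta_2} = \q\mu$. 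Proposition~\ref{p_SumRMeas} then shows that $\eta_1\dotplus_{\mf R}\eta_2$ is an $\mf R$-valued measure with domain $\q\mu$. It extends $\mu$, because each $\eta_i$ extends $\mu_i$ and $D_\mu = D_{\mu_1}\cap D_{\mu_2}$. Its host $\sigma$-ring is $\q\mu$, so it is similar to $\mu$, and Proposition~\ref{p_ass} gives $\as[\mf R]{\mu} = \eta_1\dotplus\eta_2$.

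For (ii), I would treat the two directions separately. Suppose $\eta_1\dotleq\eta_2$. Then $\mu_1\dotleq\mu_2$ at once, since the $\eta_i$ extend the $\mu_i$ and $D_{\mu_1}\cap D_{\mu_2}\subset D_{\eta_1}\cap D_{\eta_2}$. Conversely, suppose $\mu_1\dotleq\mu_2$. The space $\mf R$ is an HOTM, because its order relation is closed in $\mf R\times\mf R$. Both $\mu_i$ are $\mf R$-valued premeasures with the same host $\sigma$-ring, and $\mu_1\dotleq_{\mf R}\mu_2$ holds because the orders agree on $\R_+$. Proposition~\ref{l_dotleq}(i), applied with $\mf A = \mf R$, then gives $\eta_1\dotleq\eta_2$.

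The only delicate point is checking that $\mf R$ really is an HOTM in the paper's sense, namely that $\set{(x,y)}{x\leq y}$ is sequentially closed in $\mf R\times\mf R$. The paper asserts this ("This makes $\mf R$ into a regular HOTM"), so I may take it as given. If I needed to verify it, I would note that $\mf R$ is order-isomorphic and homeomorphic to $[0,1]$ via $x\mapsto x/(1+x)$, and the order relation on $[0,1]$ is closed. Everything else reduces to the earlier propositions cited above.
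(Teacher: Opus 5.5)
Your proof is correct and follows essentially the same route as the paper. For (i), you use Proposition~\ref{p_Rassdom} to get $D_{\eta_1}=D_{\eta_2}=\q\mu$, Proposition~\ref{p_SumRMeas} to see that $\eta_1\dotplus\eta_2$ is a measure extending $\mu$ with host $\q\mu$, and Proposition~\ref{p_ass} to conclude. For (ii), you reduce to Proposition~\ref{l_dotleq}(i) in $\mf R$; the differences from the paper are cosmetic: you get $\q\mu=\q{\mu_1}$ via Proposition~\ref{l_boxplus}(i) rather than Lemma~\ref{l3sigma} directly, and you prove the trivial direction of (ii) by hand instead of citing Proposition~\ref{l_dotleq}(i) for both.
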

\begin{proof}
  By~(\ref{eq:posR+premeas}), $\mu_1$ and $\mu_2$ are $\mf R$-valued premeasures and, hence, $\eta_1$ and $\eta_2$ are $\mf R$-valued measures. 
  \par\smallskip\noindent
  (i)  By Lemma~\ref{l3sigma} and Proposition~\ref{p_Rassdom}, we have
  \begin{equation}
    \label{eq:Dmu'}
    D_{\eta_1} = D_{\eta_2} = \sigma(D_{\mu_1}\cap D_{\mu_2}) = \q{\mu_1}.
  \end{equation}
  Let $\mu = \mu_1\dotplus_\R \mu_2$ and $\eta = \eta_1\dotplus_{\mf R} \eta_2$. By Proposition~\ref{p_SumRMeas}, $\eta$ is an $\mf R$-valued measure. By~(\ref{eq:Dmu'}), we have $\q\mu = D_\eta$ and, hence, $\q\mu = \q\eta$. Clearly, $\mu = \mu_1\dotplus_{\mf R} \mu_2$ and, therefore, $\mu$ is an $\mf R$-valued premeasure. Since $\eta_1$ and $\eta_2$ are extensions of $\mu_1$ and $\mu_2$ respectively, $\eta$ is an extension of $\mu$. It follows that $\mu\asymp\eta$ and, hence, $\eta = \as[\mf R]\mu$ by Proposition~\ref{p_ass}.
  \par\medskip\noindent
  (ii) We have  $\mu_1 \dotleq_\R \mu_2$ if and only if $\mu_1 \dotleq_{\mf R} \mu_2$. The statement therefore follows from Proposition~\ref{l_dotleq}(i).
\end{proof}

\begin{proposition}\label{p_Fdotplus}
  Let $\eta_1$ and $\eta_2$ be $\mf R$-valued maps, $\mu_1 = \mf F\eta_1$, and $\mu_2 = \mf F\eta_2$.
  \begin{enumerate}
  \item [(i)] $\mf F(\eta_1\dotplus_{\mf R}\eta_2) = \mu_1\dotplus_\R\mu_2$.
  \item [(ii)] If $\eta_1\dotleq_{\mf R}\eta_2$, then $\mu_1\dotleq_\R \mu_2$.
  \end{enumerate}
\end{proposition}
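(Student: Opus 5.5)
Both parts reduce to a direct comparison of domains and values; no limiting arguments are needed, since $\mf F$ is a pure restriction and addition in $\mf R$ has the absorbing element $\infty$. Write $\eta = \eta_1\dotplus_{\mf R}\eta_2$. By definition, $D_\eta = D_{\eta_1}\cap D_{\eta_2}$, and $\mf F\eta$ is the restriction of $\eta$ to $D^f_\eta = \set{A}{A\in D_{\eta_1}\cap D_{\eta_2}\mbox{ and }\eta_1(A)+\eta_2(A)\in\R}$. On the other side, $D_{\mu_1\dotplus_\R\mu_2} = D^f_{\eta_1}\cap D^f_{\eta_2}$.

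For (i), the key step is the observation that for $x,y\in\mf R$ we have $x+y\in\R$ (i.e., $x+y\neq\infty$) if and only if $x\in\R$ and $y\in\R$. This holds by the definition of the extended addition: $x+\infty=\infty+x=\infty$, and the sum of two elements of $\R_+$ stays in $\R_+$. Applied with $x=\eta_1(A)$ and $y=\eta_2(A)$, this gives $D^f_\eta = D^f_{\eta_1}\cap D^f_{\eta_2}$. So the two maps in (i) have the same domain. On this common domain both take the value $\eta_1(A)+\eta_2(A)$, computed in $\R_+$, and addition in $\mf R$ extends that of $\R_+\subset\R$. Hence the maps coincide.

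For (ii), take $A\in D_{\mu_1}\cap D_{\mu_2} = D^f_{\eta_1}\cap D^f_{\eta_2}\subset D_{\eta_1}\cap D_{\eta_2}$. The hypothesis $\eta_1\dotleq_{\mf R}\eta_2$ gives $\eta_1(A)\leq\eta_2(A)$ in $\mf R$. Both values lie in $\R_+$, and the order of $\mf R$ restricted to $\R_+$ is the usual order of $\R$. Therefore $\mu_1(A)\leq\mu_2(A)$ in $\R$, which is exactly $\mu_1\dotleq_\R\mu_2$.

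There is no real obstacle here. The only point needing care is the finiteness equivalence in (i), which relies on the values lying in $\mf R = \R_+\cup\{\infty\}$ with $\infty$ absorbing. The same equivalence would fail, for instance, in $\mf R_\uparrow$ if $-\infty$ were allowed, but here only nonnegative values occur. Note also that the hypotheses require no additivity of $\eta_1,\eta_2$, and the argument above uses none.
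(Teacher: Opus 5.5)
Your proof is correct and follows essentially the same route as the paper. For (i), the paper notes that $\eta_1\dotplus_{\mf R}\eta_2$ extends $\mu_1\dotplus_\R\mu_2$ and then uses that $\infty$ is absorbing to get $D^f_\eta\subset D_{\mu_1}\cap D_{\mu_2}$; this is your finiteness equivalence split into its two directions. Part (ii) is the same direct pointwise comparison you give.
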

\begin{proof}
  (i)   Let $\eta = \eta_1\dotplus_{\mf R}\eta_2$ and $\mu = \mu_1\dotplus_\R \mu_2$. Then $\mu = \mu_1\dotplus_{\mf R}\mu_2$ and, hence, $\eta$ is an extension of $\mu$. This implies that $\mf F\eta$ is an extension of $\mu$. Let $A\in D^f_\eta$. Since $\eta_1(A) + \eta_2(A) = \eta(A) < \infty$, we have $\eta_1(A)<\infty$ and $\eta_2(A)<\infty$. It follows that $A\in D_\mu$. Thus, $D^f_\eta\subset D_\mu$ and, therefore, $\mf F\eta = \mu$.
  \par\smallskip\noindent
  (ii) Let $\eta_1\dotleq_{\mf R}\eta_2$ and $A\in D_{\mu_1}\cap D_{\mu_2}$. Then $\eta_1(A)\leq_{\mf R} \eta_2(A)$. As $\mu_1(A) = \eta_1(A)$, $\mu_2(A) = \eta_2(A)$, and $\mu_1(A),\mu_2(A)\in\R$, we have $\mu_1(A)\leq_\R \mu_2(A)$, i.e., $\mu_1\dotleq_\R \mu_2$. 
\end{proof}

\begin{corollary}\label{c_sumsigma}
  Let $\eta_1$ and $\eta_2$ be $\sigma$-finite $\mf R$-valued premeasures such that $\q{\eta_1} = \q{\eta_2}$. Then $\eta_1\dotplus\eta_2$ is $\sigma$-finite.
\end{corollary}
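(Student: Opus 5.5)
The plan is to reduce the claim to Proposition~\ref{p_barsigma}, using the finite parts $\mu_1 = \mf F\eta_1$, $\mu_2 = \mf F\eta_2$ and Proposition~\ref{p_Fdotplus}(i). Let $\eta = \eta_1\dotplus\eta_2$ and $\mathcal{Q} = \q{\eta_1} = \q{\eta_2}$. Since $\eta_1$ and $\eta_2$ are $\mf R$-valued premeasures, $\eta$ is an $\mf R$-valued premeasure, and in particular a content. By Proposition~\ref{p_barsigma}, it therefore suffices to show $\q{\mf F\eta} = \q\eta$.

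First, the host $\sigma$-ring of $\eta$. We have $D_\eta = D_{\eta_1}\cap D_{\eta_2}$, and Lemma~\ref{l3sigma} applied to the $\delta$-rings $D_{\eta_1}$ and $D_{\eta_2}$ gives $\q\eta = \mathcal{Q}$.

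Second, the host $\sigma$-ring of $\mf F\eta$. Proposition~\ref{p_Fdotplus}(i) gives $\mf F\eta = \mu_1\dotplus_\R\mu_2$, so $D_{\mf F\eta} = D_{\mu_1}\cap D_{\mu_2}$. Proposition~\ref{p_Fmupremeas} shows that $\mu_1$ and $\mu_2$ are positive premeasures, so their domains are $\delta$-rings. Because $\eta_1$ and $\eta_2$ are $\sigma$-finite, Proposition~\ref{p_barsigma} yields $\q{\mu_1} = \q{\eta_1} = \mathcal{Q}$ and $\q{\mu_2} = \q{\eta_2} = \mathcal{Q}$. Lemma~\ref{l3sigma} applied to $D_{\mu_1}$ and $D_{\mu_2}$ then gives $\sigma(D_{\mu_1}\cap D_{\mu_2}) = \mathcal{Q}$.

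Combining the two steps, $\q{\mf F\eta} = \mathcal{Q} = \q\eta$, and Proposition~\ref{p_barsigma} shows that $\eta$ is $\sigma$-finite. No step here is a real obstacle; the only points to check are that each set used in Lemma~\ref{l3sigma} is a $\delta$-ring, which holds because finite parts of premeasures are premeasures.
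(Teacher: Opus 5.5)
Your proof is correct and matches the paper's argument: both reduce to the criterion $\q{\mf F\eta} = \q\eta$ of Proposition~\ref{p_barsigma}, obtain $D^f_\eta = D^f_{\eta_1}\cap D^f_{\eta_2}$ from Proposition~\ref{p_Fdotplus}(i), and conclude with Lemma~\ref{l3sigma}. The differences are cosmetic: you compute $\q\eta$ directly from Lemma~\ref{l3sigma} where the paper cites Proposition~\ref{l_boxplus}(i), and you state explicitly that the finite parts have $\delta$-ring domains, which the paper leaves implicit.
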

\begin{proof}
  Let $\eta = \eta_1\dotplus\eta_2$. By Proposition~\ref{l_boxplus}(i), we have $\q\eta = \q{\eta_1} = \q{\eta_2}$ and Proposition~\ref{p_barsigma} implies that $\sigma(D^f_{\eta_1}) = \sigma(D^f_{\eta_2}) = \q\eta$. It follows from Proposition~\ref{p_Fdotplus}(i) that $D^f_\eta = D^f_{\eta_1}\cap D^f_{\eta_2}$ and, hence, $\sigma(D^f_\eta) = \q\eta$ by Lemma~\ref{l3sigma}. By Proposition~\ref{p_barsigma}, we conclude that $\eta$ is $\sigma$-finite. 
\end{proof}

Let $\mathcal Q$ be a $\sigma$-ring. It follows from Proposition~\ref{l_ordtab} that $\ms M(\mathcal Q,\R_+)$ and $\ms M(\mathcal Q,\mf R)$ are ordered monoids and Theorem~\ref{p_orderedspace} implies that $\ms M(\mathcal Q,\R)$ is an ordered vector space. Since $\R_+$ is closed in $\R$, Proposition~\ref{t_semigrsubspace} ensures that $\ms M(\mathcal Q,\R_+)$ is a submonoid of $\ms M(\mathcal Q,\R)$ and it follows from~(\ref{eq:posR+meas}) that
\[
  \ms M(\mathcal Q,\R_+) = \set{\mu}{\mu\in \ms M(\mathcal Q,\R)\mbox{ and }\im\mu\subset\R_+}.
\]
The order on $\ms M(\mathcal Q,\R_+)$ coincides with the order induced from $\ms M(\mathcal Q,\R)$. By Corollary~\ref{cSumPosMeas}, the sum of $\mu_1$ and $\mu_2$ in $\ms M(\mathcal Q,\R_+)$ is equal to $\mu_1\dotplus_\R\mu_2$ for every $\mu_1,\mu_2\in \ms M(\mathcal Q,\R_+)$. Corollary~\ref{c_Rmeasdom} implies that $D_\eta = \mathcal Q$ for every $\eta\in \ms M(\mathcal Q,\mf R)$. By Proposition~\ref{p_SumRMeas}, the sum of $\eta_1$ and $\eta_2$ in $\ms M(\mathcal Q,\mf R)$ is equal to $\eta_1\dotplus_{\mf R}\eta_2$ for every $\eta_1,\eta_2\in \ms M(\mathcal Q,\mf R)$.

Given a $\sigma$-ring $\mathcal Q$, we define
\[
  \ms M_\sigma(\mathcal Q,\mf R) =\set{\eta}{\eta\in \ms M(\mathcal Q,\mf R)\mbox{ and }\eta\mbox{ is $\sigma$-finite}}.
\]
By Corollary~\ref{c_sumsigma}, $\ms M_\sigma(\mathcal Q,\mf R)$ is a submonoid of $\ms M(\mathcal Q,\mf R)$. We endow $\ms M_\sigma(\mathcal Q,\mf R)$ with the order induced from $\ms M(\mathcal Q,\mf R)$. This makes $\ms M_\sigma(\mathcal Q,\mf R)$ into an ordered monoid.

The next theorem implies, in particular, condition~\ref{item:3} for scalar measures formulated in Section~\ref{s_intro}.

\begin{theorem}\label{t_infinite}
  Let $\mathcal Q$ be a $\sigma$-ring and $\fm{f}{\ms M_\sigma(\mathcal Q,\mf R)}$ be such that $f(\eta) = \mf F\eta$ for every $\eta\in \ms M_\sigma(\mathcal Q,\mf R)$. Then $f$ is an isomorphism of ordered monoids from $\ms M_\sigma(\mathcal Q,\mf R)$ to $\ms M(\mathcal Q,\R_+)$. We have $f^{-1}(\mu) = \as[\mf R]\mu$ for every $\mu\in \ms M(\mathcal Q,\R_+)$.
\end{theorem}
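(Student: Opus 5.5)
The plan is to assemble the statement from Theorem~\ref{t_Fass}, Proposition~\ref{p_Fdotplus}, and Proposition~\ref{p_asdotplus}. We check, in order, that $f$ maps into $\ms M(\mathcal Q,\R_+)$, that it is a bijection with the stated inverse, that it is a monoid homomorphism, and that it preserves and reflects the order.

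First, let $\eta\in\ms M_\sigma(\mathcal Q,\mf R)$. By Theorem~\ref{t_Fass}(ii), $\mf F\eta$ is a positive measure and $\eta=\as[\mf R]{\mf F\eta}$. Proposition~\ref{p_barsigma} gives $\q{\mf F\eta}=\q\eta=\mathcal Q$. By~(\ref{eq:posR+meas}), $\mf F\eta\in\ms M(\mathcal Q,\R_+)$, so $f$ is well defined and $g\circ f$ is the identity, where $g(\mu)=\as[\mf R]\mu$.

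Conversely, let $\mu\in\ms M(\mathcal Q,\R_+)$. Theorem~\ref{t_Fass}(i) shows that $\eta=\as[\mf R]\mu$ is a $\sigma$-finite $\mf R$-valued measure with $D_\eta=\q\mu=\mathcal Q$, so $\q\eta=\mathcal Q$ and $\eta\in\ms M_\sigma(\mathcal Q,\mf R)$. The same result gives $\mf F\eta=\mu$, so $f\circ g$ is the identity. Hence $f$ is a bijection with $f^{-1}=g$.

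For additivity, take $\eta_1,\eta_2\in\ms M_\sigma(\mathcal Q,\mf R)$. Their sum in $\ms M(\mathcal Q,\mf R)$ is $\eta_1\dotplus_{\mf R}\eta_2$. Proposition~\ref{p_Fdotplus}(i) gives $\mf F(\eta_1\dotplus\eta_2)=\mf F\eta_1\dotplus_\R\mf F\eta_2$. By Corollary~\ref{cSumPosMeas}, this pointwise sum is exactly the sum in $\ms M(\mathcal Q,\R_+)$. The neutral element also matches: $\mf F(\mb 0_{\mf R}(\mathcal Q))=\mb 0_\R(\mathcal Q)$. So $f$ is a monoid isomorphism.

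For the order, Proposition~\ref{p_Fdotplus}(ii) shows that $\eta_1\le\eta_2$ implies $f(\eta_1)\le f(\eta_2)$. For the converse, suppose $\mu_1\dotleq\mu_2$ with $\mu_i=f(\eta_i)$. Both are positive premeasures hosted by $\mathcal Q$, so Proposition~\ref{p_asdotplus}(ii) gives $\as[\mf R]{\mu_1}\dotleq\as[\mf R]{\mu_2}$, that is, $\eta_1\le\eta_2$.

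No step is really hard, since it is bookkeeping over earlier results. The only point to check carefully is that the monoid operations are the pointwise ones on both sides, which is why Corollary~\ref{cSumPosMeas} and Proposition~\ref{p_SumRMeas} are needed.
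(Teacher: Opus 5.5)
Your proof is correct and follows essentially the same route as the paper. Well-definedness and bijectivity, with inverse $\mu\mapsto\as[\mf R]\mu$, come from Theorem~\ref{t_Fass} and Proposition~\ref{p_barsigma}; additivity comes from Proposition~\ref{p_Fdotplus}(i) together with the pointwise description of the sums; and order reflection comes from Proposition~\ref{p_asdotplus}(ii). The only cosmetic difference is that you get order preservation from Proposition~\ref{p_Fdotplus}(ii), whereas the paper obtains both directions at once by applying the equivalence in Proposition~\ref{p_asdotplus}(ii) to $f^{-1}$.
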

\begin{proof}
  Let $\ms M = \ms M_\sigma(\mathcal Q,\mf R)$ and $\ms M' = \ms M(\mathcal Q,\R_+)$. Propositions~\ref{p_Fmu} and~\ref{p_barsigma} imply that $f$ is a map from $\ms M$ to $\ms M'$. Let $\eta_1,\eta_2\in \ms M$. It follows from Proposition~\ref{p_Fdotplus}(i) that
  \[
    f(\eta_1 +_{\ms M}\eta_2) = \mf F(\eta_1\dotplus_{\mf R}\eta_2) = \mf F\eta_1\dotplus_\R\mf F\eta_2 = f(\eta_1) +_{\ms M'}f(\eta_2).
  \]
  As $0_{\ms M} = \mathbf 0_{\mf R}(\mathcal Q) = \mathbf 0_{\R}(\mathcal Q) = 0_{\ms M'}$, we have $f(0_{\ms M}) = 0_{\ms M'}$. Thus, $f$ is a homomorphism from $\ms M$ to $\ms M'$. If $\eta_1,\eta_2\in\ms M$ are such that $f(\eta_1) = f(\eta_2)$, then we have $\eta_1 = \as[\mf R]{f(\eta_1)} = \as[\mf R]{f(\eta_2)} = \eta_2$ by Theorem~\ref{t_Fass}(ii). Hence, $f$ is injective. Let $\mu\in \ms M'$. By Theorem~\ref{t_Fass}(i), we have $\as[\mf R]\mu\in\ms M$ and $\mu = f(\as[\mf R]\mu)$. This means that $f$ is an isomorphism from $\ms M$ to $\ms M'$ and $f^{-1}(\mu) = \as[\mf R]\mu$ for every $\mu\in \ms M'$. By Proposition~\ref{p_asdotplus}(ii), we have $\mu_1\leq_{\ms M'}\mu_2$ if and only if $f^{-1}(\mu_1)\leq_{\ms M} f^{-1}(\mu_2)$ for every $\mu_1,\mu_2\in\ms M'$. This implies that $f(\eta_1)\leq_{\ms M'} f(\eta_2)$ if and only if $\eta_1\leq_{\ms M}\eta_2$ for every $\eta_1,\eta_2\in\ms M$.     
\end{proof}

The same argument, with obvious modifications, shows that the $\sigma$-finite submonoids of $\ms M_0(\mathcal Q,\mf R_\uparrow)$ and $\ms M_0(\mathcal Q,\mf R_\downarrow)$ embed in $\ms M(\mathcal Q,\R)$ via the map $\mu\mapsto\mf F\mu$, as stated in Section~\ref{s_intro}. We omit the details here. 

\section{Uniqueness}
\label{s_unique}

A family $\fm{\mb A}{\N}$ is called an increasing (decreasing) sequence of sets if $\mb A(n)\subset\mb A(n+1)$ (resp., $\mb A(n)\supset\mb A(n+1)$) for every $n\in\N$.

\begin{definition}\label{d_alpha}
  We say that a nonempty set $\mathcal Q$ is an $\alpha$-class if it satisfies the following conditions:
  \begin{enumerate}
  \item[(1)] If $A,B\in \mathcal Q$ and $A\cap B=\varnothing$, then $A\cup B\in \mathcal Q$.
  \item[(2)] If $A,B\in \mathcal Q$ and $B\subset A$, then $A\setminus B\in \mathcal Q$.
  \item[(3)] If $\mb A\colon\N\to\mathcal{Q}$ is a decreasing sequence of sets, then $\bigcap_{n\in\N} \mb A(n)\in \mathcal Q$.
  \end{enumerate}
\end{definition}

\begin{lemma}\label{l_alpha}
  Let $\mathcal K$ be a $\cap$-closed set and $\mathcal Q$ be an $\alpha$-class such that $\mathcal K\subset\mathcal Q\subset \sigma(\mathcal K)$. Then $\mathcal Q$ is a $\delta$-ring.
\end{lemma}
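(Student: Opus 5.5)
The argument is a Dynkin-type ($\pi$–$\lambda$) argument, adapted to $\delta$-rings. Since $\mathcal Q$ is an $\alpha$-class, it is nonempty and closed under disjoint finite unions and proper differences. Taking $A=B$ in (2) gives $\varnothing\in\mathcal Q$. The key step is to show that $\mathcal Q$ is $\cap$-closed. Once this holds, $A\setminus B = A\setminus(A\cap B)\in\mathcal Q$ and $A\cup B = (A\setminus B)\cup B$ is a disjoint union in $\mathcal Q$, so $\mathcal Q$ is a ring. Closure under countable intersections then follows from (3): given a nonempty countable family $\mb A\colon I\to\mathcal Q$, enumerate it (repeating terms if $I$ is finite) and pass to the decreasing sequence of finite intersections, which lie in $\mathcal Q$ by the ring property. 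Hence $\mathcal Q$ is a $\delta$-ring.

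To prove $\cap$-closedness, I follow the usual two-step localization. For $A\in\mathcal Q$, put $\mathcal Q_A=\set{B}{B\in\mathcal Q\mbox{ and }A\cap B\in\mathcal Q}$. I will check that $\mathcal Q_A$ inherits properties (1)--(3). For (1), if $B,C\in\mathcal Q_A$ are disjoint, then $A\cap(B\cup C)=(A\cap B)\cup(A\cap C)$ is a disjoint union of elements of $\mathcal Q$. For (3), intersections of decreasing sequences commute with $A\cap\cdot$. For (2), if $C\subset B$, then $A\cap(B\setminus C)=(A\cap B)\setminus(A\cap C)$ with $A\cap C\subset A\cap B$. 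So $\mathcal Q_A$ is an $\alpha$-class whenever it is nonempty.

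Here the main obstacle appears. An $\alpha$-class is not closed under increasing unions, so the standard "Dynkin class containing $\mathcal K$ contains $\sigma(\mathcal K)$" is not available. We only know $\mathcal Q\subset\sigma(\mathcal K)$, and we must show $\mathcal Q_A=\mathcal Q$ by some other minimality. My plan is to replace "$\mathcal Q_A\supset\sigma(\mathcal K)$" by "$\mathcal Q_A\supset\mathcal Q$" using the smallest $\alpha$-class containing $\mathcal K$. Let $\mathcal Q_0$ be the intersection of all $\alpha$-classes $\mathcal R$ with $\mathcal K\subset\mathcal R\subset\sigma(\mathcal K)$; this family includes $\sigma(\mathcal K)$ itself. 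Running the localization argument twice (first $A\in\mathcal K$, showing $\mathcal K\subset (\mathcal Q_0)_A$ by $\cap$-closedness of $\mathcal K$, then $A\in\mathcal Q_0$) shows that $\mathcal Q_0$ is $\cap$-closed. By the first paragraph, $\mathcal Q_0$ is then a $\delta$-ring, and it is in fact $\delta(\mathcal K)$.

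It then remains to treat a general $\mathcal Q$. For $A\in\mathcal Q$, I would use Lemma~\ref{l_sigma0} to find $B\in\sigma_0(\mathcal K)$ with $A\subset B$, and write $B$ as a countable union of elements of $\mathcal K$. Intersections with elements $K\in\mathcal K$ are handled by the localization: the class $\set{C\in\mathcal Q}{C\cap K\in\mathcal Q}$ is an $\alpha$-class containing $\mathcal K$, and a $\lambda$-type argument inside the ring of subsets of $K$ should make it all of $\mathcal Q$. There, complements relative to $K$ are available via (2), and (1)+(2)+(3) within $\{C\subset K\}$ give closure under increasing unions through complements. Then for $A,A'\in\mathcal Q$, I approximate $A\cap A'$ by $\bigcap$ of decreasing sets. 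Working inside a fixed $K\in\mathcal K$, the trace $\set{C\cap K}{C\in\mathcal Q}$ is a genuine Dynkin system on $K$ containing $\set{L\cap K}{L\in\mathcal K}$, so it contains the trace of $\sigma(\mathcal K)$. This gives $A\cap A'\cap K\in\mathcal Q$. I expect this relativization to $K$ to be the delicate step. I would handle it first for $\mathcal Q$ contained in sets dominated by elements of $\mathcal K$, and then reduce $A\cap A'$ using property (3) together with disjointification within the decreasing structure.
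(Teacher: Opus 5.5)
Your first three paragraphs are sound. The reduction to $\cap$-closedness is correct, and the double localization does show that the smallest $\alpha$-class between $\mathcal K$ and $\sigma(\mathcal K)$ is $\delta(\mathcal K)$, so $\delta(\mathcal K)\subset\mathcal Q$. The gap is in the last paragraph, which is where the real content of the lemma lies and which you leave as a hope (``should make it all of $\mathcal Q$'', ``disjointification within the decreasing structure''). Two things go wrong there.

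First, for the class $\{C\in\mathcal Q\mid C\cap K\in\mathcal Q\}$, minimality only tells you that it contains $\delta(\mathcal K)$, not that it equals $\mathcal Q$; this is the same obstacle you identified earlier. Your claim that the trace $\{C\cap K\mid C\in\mathcal Q\}$ is a Dynkin system on $K$ is also unjustified: closure under complementation in $K$ presupposes that $C\cap K$ belongs to $\mathcal Q$, which is exactly what is at stake. Even if $A\cap A'\cap K$ lay in that trace, it would not follow that $A\cap A'\cap K\in\mathcal Q$. In fact no relativized Dynkin argument is needed. You already have $\delta(\mathcal K)\subset\mathcal Q$ and $\sigma(\delta(\mathcal K))=\sigma(\mathcal K)$, so Lemma~\ref{ll1}(iii) shows directly that every element of $\sigma(\mathcal K)$ contained in a member of $\delta(\mathcal K)$ lies in $\delta(\mathcal K)\subset\mathcal Q$.

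Second, and more importantly, you never explain how to pass from these localized pieces to $A\cap A'$. The pieces exhaust $A\cap A'$ from below, and $\mathcal Q$ is not closed under increasing unions, so (3) cannot be applied as it stands.

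The missing idea is to take complements inside $A$. Given $A,A'\in\mathcal Q$, apply Lemma~\ref{ll1}(ii) to the $\delta$-ring $\delta(\mathcal K)$ to choose an increasing sequence $L_n\in\delta(\mathcal K)$ with $A'\subset\bigcup_n L_n$. Put $C_n=A\cap A'\cap L_n$.
\begin{itemize}
\item Each $C_n$ lies in $\sigma(\mathcal K)$ and satisfies $C_n\subset L_n$, so $C_n\in\delta(\mathcal K)\subset\mathcal Q$.
\item Since $C_n\subset A$, condition (2) gives $A\setminus C_n\in\mathcal Q$.
\item The sets $A\setminus C_n$ decrease to $A\setminus A'$, so $A\setminus A'\in\mathcal Q$ by (3).
\item Then $A\cap A'=A\setminus(A\setminus A')\in\mathcal Q$ by (2) again.
\end{itemize}
With this step in place, your first paragraph finishes the proof.
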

\begin{proof}
  See~\cite[Lemma~9.6]{Smirnov2015}.
\end{proof}

\begin{remark}
  Suppose condition~(3) in Definition~\ref{d_alpha} is replaced with the condition
  \begin{enumerate}
  \item [(3$'$)] If $\mb A\colon\N\to\mathcal{Q}$ is an increasing sequence of sets, then $\bigcup_{n\in\N} \mb A(n)\in \mathcal Q$.
  \end{enumerate}
  If, in addition, $\mathcal{Q}$ has a greatest element with respect to inclusion, then we obtain the definition of a $\lambda$-system introduced by Dynkin~\cite[Sec.~1.1]{Dynkin1959}. He proved~\cite[Lemma~1.1]{Dynkin1959} that $\sigma(\mathcal{K})\subset\mathcal{Q}$ whenever $\mathcal{Q}$ is a $\lambda$-system and $\mathcal{K}$ is a $\cap$-closed subset of $\mathcal{Q}$.  This result follows at once from Lemma~\ref{l_alpha}, which can be viewed as its generalization. Indeed, every $\lambda$-system is an $\alpha$-class and, hence, $\mathcal{R} = \mathcal{Q}\cap\sigma(\mathcal{K})$ is a $\delta$-ring by Lemma~\ref{l_alpha}. Moreover, $\mathcal R$ is closed under unions of increasing sequences and, therefore, is a $\sigma$-ring. Thus, $\mathcal{R} = \sigma(\mathcal{K})$.
\end{remark}

\begin{definition}\label{dd2}
  A set $\mathcal Q$ is called a semi-monotone class if the following conditions are satisfied:
  \begin{enumerate}
  \item Let $\mb A\colon\N\to\mathcal{Q}$ be an increasing sequence of sets and $A = \bigcup_{n\in\N} \mb A(n)$. If $A$ is a subset of some element of $\mathcal Q$, then $A\in \mathcal Q$. 
  \item If $\mb A\colon\N\to\mathcal{Q}$ is a decreasing sequence of sets, then  $\bigcap_{n\in \N} \mb A(n)\in\mathcal Q$.
  \end{enumerate}
\end{definition}

The intersection of a nonempty set of semi-monotone classes is a semi-monotone class. Hence, for every set $\mathcal K$, there is a unique semi-monotone class $\mathcal Q\supset \mathcal K$ such that $\mathcal Q\subset \mathcal Q'$ for every semi-monotone class $\mathcal Q'\supset \mathcal K$. This $\mathcal Q$ is called the semi-monotone class generated by $\mathcal K$. We note that $\varnothing$ is a semi-monotone class and every $\alpha$-class (in particular, every $\delta$-ring) is a semi-monotone class.

\begin{lemma}\label{ll4}
  Let $\mathcal K$ be a ring. Then the semi-monotone class generated by $\mathcal K$ coincides with $\delta(\mathcal K)$.
\end{lemma}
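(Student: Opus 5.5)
Let $\mathcal M$ denote the semi-monotone class generated by the ring $\mathcal K$. The proof has two inclusions, and I will prove each in turn.

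\emph{The inclusion $\mathcal M\subset\delta(\mathcal K)$.} Every $\delta$-ring is a semi-monotone class, as noted before the statement. Since $\delta(\mathcal K)\supset\mathcal K$, minimality of $\mathcal M$ gives $\mathcal M\subset\delta(\mathcal K)$.

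\emph{The inclusion $\delta(\mathcal K)\subset\mathcal M$.} Here I will show that $\mathcal M$ is a $\delta$-ring. I plan to use Lemma~\ref{l_alpha}: $\mathcal K$ is $\cap$-closed, and $\mathcal M\subset\delta(\mathcal K)\subset\sigma(\mathcal K)$ by the first part. So it suffices to show that $\mathcal M$ is an $\alpha$-class.

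Condition~(3) of Definition~\ref{d_alpha} holds by the definition of a semi-monotone class. Moreover, $\mathcal M\neq\varnothing$ because $\varnothing\in\mathcal K\subset\mathcal M$. It remains to check conditions (1) and (2). I will run the usual monotone-class bootstrapping in two rounds.

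First round. For $A\in\mathcal K$, set
\[
\mathcal M_A=\set{B\in\mathcal M}{A\cup B,\ A\setminus B,\ B\setminus A\in\mathcal M}.
\]
Since $\mathcal K$ is a ring, $\mathcal K\subset\mathcal M_A$. I then show that $\mathcal M_A$ is a semi-monotone class, so that $\mathcal M_A=\mathcal M$.

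For a decreasing sequence $B_n\in\mathcal M_A$, the following hold:
\begin{itemize}
\item $\bigcap B_n\in\mathcal M$ directly.
\item $A\cup\bigcap B_n=\bigcap(A\cup B_n)$ is a decreasing intersection, so it lies in $\mathcal M$.
\item $(\bigcap B_n)\setminus A=\bigcap(B_n\setminus A)$ is a decreasing intersection, so it lies in $\mathcal M$.
\item $A\setminus\bigcap B_n=\bigcup(A\setminus B_n)$ is an increasing union contained in $A\in\mathcal M$, so condition~(1) of Definition~\ref{dd2} puts it in $\mathcal M$.
\end{itemize}

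For an increasing sequence $B_n\in\mathcal M_A$ whose union $B$ is contained in some $C\in\mathcal M$, the following hold:
\begin{itemize}
\item $B\in\mathcal M$ directly.
\item $B\setminus A=\bigcup(B_n\setminus A)\subset C$, so it lies in $\mathcal M$.
\item $A\setminus B=\bigcap(A\setminus B_n)$ is decreasing, so it lies in $\mathcal M$.
\item $A\cup B=\bigcup(A\cup B_n)$ must be contained in some element of $\mathcal M$.
\end{itemize}

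The last item is the main obstacle, since $A\cup C$ need not obviously lie in $\mathcal M$. To fix it, I will work instead inside the class of $B\in\mathcal M$ that are contained in a countable union of elements of $\mathcal K$, or more directly use the fact that every element of $\mathcal M\subset\delta(\mathcal K)$ is contained in some $K\in\mathcal K$. This holds because the elements of $\delta(\mathcal K)$ lying below some element of $\mathcal K$ form a $\delta$-ring containing $\mathcal K$. With this fact, $A\cup B\subset A\cup K\in\mathcal K\subset\mathcal M$, and the semi-monotone condition applies. Hence $\mathcal M_A$ is semi-monotone.

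Second round. For $A\in\mathcal M$, define $\mathcal M_A$ by the same formula. By the first round and the symmetry of the defining conditions, $\mathcal K\subset\mathcal M_A$. The identical argument, again using the covering-by-$\mathcal K$ fact for the union, shows that $\mathcal M_A$ is semi-monotone, so $\mathcal M_A=\mathcal M$.

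Therefore $\mathcal M$ is closed under unions and differences, in particular conditions (1) and (2) hold. Thus $\mathcal M$ is an $\alpha$-class, hence a $\delta$-ring by Lemma~\ref{l_alpha}. Since $\mathcal M\supset\mathcal K$, we get $\delta(\mathcal K)\subset\mathcal M$, which completes the proof.
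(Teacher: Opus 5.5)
Your proof is correct, but it reaches $\delta(\mathcal K)\subset\mathcal M$ by a different route from the paper's.

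\textbf{What is shared.} Both arguments use the same easy inclusion $\mathcal M\subset\delta(\mathcal K)$. Both also rest on the same key observation: every element of $\delta(\mathcal K)$ lies inside some element of $\mathcal K$, because the dominated elements form a $\delta$-ring containing $\mathcal K$.

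\textbf{Where you differ.} The paper uses this observation only to note that the semi-monotone class is ``monotone with respect to $\delta(\mathcal K)$'' in Dinculeanu's sense. It then cites Dinculeanu's Proposition~16 (Sec.~I.1.5) to finish. You instead prove the needed monotone-class statement from scratch, by the standard two-round ``good sets'' argument. You use the domination fact exactly where it is required: to see that $A\cup\bigcup_n B_n$ lies below an element of $\mathcal M$. For this you choose $K_1\supset A$ and $K_2\supset C$ in $\mathcal K$, so that $A\cup\bigcup_n B_n\subset K_1\cup K_2\in\mathcal K$. Your version is self-contained at the cost of length, while the paper's is two lines but depends on an external theorem.

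\textbf{A simplification.} Your appeal to Lemma~\ref{l_alpha} is unnecessary. Your two rounds already show that $\mathcal M$ is closed under arbitrary unions and differences, so $\mathcal M$ is a ring. A ring closed under intersections of decreasing sequences is a $\delta$-ring: write $\bigcap_n A_n=\bigcap_n(A_1\cap\dots\cap A_n)$. Dropping the lemma also removes a dependence on a result whose proof the paper only cites from earlier work.
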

\begin{proof}
  Let $\mathcal Q$ be the semi-monotone class generated by $\mathcal K$ and put $\mathcal R=\delta(\mathcal K)$. Since $\mathcal R$ is semi-monotone, we have $\mathcal Q\subset\mathcal R$. We note that every element of $\mathcal R$ is a subset of some element of $\mathcal K$. Indeed, the set of all $A\in\mathcal R$ having this property is a $\delta$-ring of which $\mathcal K$ is a subset. It follows that $\mathcal Q$ is monotone with respect to $\mathcal R$ in the sense of Dinculeanu~\cite[Sec.~I.1.5, Definition~7]{Dinculeanu}. Hence, by Proposition~16 in Sec.~I.1.5 of~\cite{Dinculeanu}, the monotone class with respect to $\mathcal R$ generated by $\mathcal K$ is $\mathcal R$ and, therefore, $\mathcal R\subset\mathcal Q$. Thus, $\mathcal Q=\mathcal R$.
\end{proof}

\begin{remark}
  The well-known monotone class theorem (see, e.g., Theorem~B in Sec.~I.6 of~\cite{Halmos}) can be easily derived from Lemma~\ref{ll4}, which can be viewed as its generalization. Indeed, let $\mathcal{K}$ be a ring and let $\mathcal{Q}$ and $\mathcal{R}$ be, respectively, the semi-monotone class and the monotone class generated by $\mathcal{K}$. Then $\mathcal{Q}$ is a $\delta$-ring by Lemma~\ref{ll4} and $\mathcal{Q}\subset \mathcal{R}\subset\sigma(\mathcal{K})$. Since $\sigma(\mathcal{Q}) = \sigma(\mathcal{K})$, it follows from Lemma~\ref{ll1}(ii) and the definition of a monotone class that $\mathcal{R} = \sigma(\mathcal{K})$.
\end{remark}

A set $\mathcal{Q}$ is said to be closed under finite disjoint unions if $\bigcup_{i\in I}\mb A(i)\in \mathcal{Q}$ for every finite disjoint family $\mb A\colon I\to\mathcal{Q}$. It follows by induction that $\mathcal{Q}$ is closed under finite disjoint unions if and only if $\varnothing\in \mathcal{Q}$ and $A\cup B\in\mathcal{Q}$ for every disjoint $A,B\in \mathcal{Q}$. Every $\alpha$-class is closed under finite disjoint unions.

\begin{lemma}\label{l_semimonotone}
  Let $\mathcal{K}$ be a semi-ring and $\mathcal{Q}$ be a semi-monotone class such that $\mathcal{K}\subset\mathcal{Q}\subset\sigma(\mathcal{K})$. Let $\mathcal{Q}$ be closed under finite disjoint unions. Then $\mathcal{Q}$ is a $\delta$-ring.
\end{lemma}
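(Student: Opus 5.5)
Since $\mathcal{K}$ is a semi-ring, it is $\cap$-closed, so by Lemma~\ref{l_alpha} it suffices to show that $\mathcal{Q}$ is an $\alpha$-class. Condition~(3) of Definition~\ref{d_alpha} is part of the semi-monotone property. Condition~(1) follows from the assumed closedness under finite disjoint unions. Nonemptiness holds because $\varnothing\in\mathcal{Q}$. The only real work is therefore condition~(2): if $A,B\in\mathcal{Q}$ and $B\subset A$, then $A\setminus B\in\mathcal{Q}$.

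First, $\kappa(\mathcal{K})\subset\mathcal{Q}$. By Proposition~\ref{p_semiring}, every element of $\kappa(\mathcal{K})$ is a finite disjoint union of elements of $\mathcal{K}$, and $\mathcal{Q}$ is closed under such unions. Since $\kappa(\mathcal{K})$ is a ring, $\mathcal{Q}$ is a semi-monotone class containing it, so $\mathcal{Q}\supset\mathcal{Q}_0$, where $\mathcal{Q}_0$ denotes the semi-monotone class generated by $\kappa(\mathcal{K})$. Lemma~\ref{ll4} gives $\mathcal{Q}_0=\delta(\kappa(\mathcal{K}))$. This is a $\delta$-ring with $\sigma(\mathcal{Q}_0)=\sigma(\mathcal{K})$.

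The plan is then to show $\mathcal{Q}\subset\mathcal{Q}_0$, which would make $\mathcal{Q}=\mathcal{Q}_0$ a $\delta$-ring and finish the proof. It is enough to show that every $A\in\mathcal{Q}$ is contained in some element of $\mathcal{Q}_0$, since Lemma~\ref{ll1}(iii) then gives $A\in\mathcal{Q}_0$ (as $A\in\sigma(\mathcal{K})=\sigma(\mathcal{Q}_0)$). This is the main obstacle: $\mathcal{Q}$ is not assumed minimal, so a priori it may contain large sets of $\sigma(\mathcal{K})$ such as countable unions of elements of $\mathcal{K}$. Indeed, condition~(1) of Definition~\ref{dd2} only allows increasing unions that are bounded by an element of $\mathcal{Q}$. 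I expect the boundedness to be the crux, and I am not sure the statement holds without it.

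If the inclusion fails, I would instead try to verify condition~(2) directly by a good-sets argument. Fix $A\in\mathcal{Q}$ and let $\mathcal{G}_A=\set{B}{B\in\mathcal{Q},\ B\subset A,\ A\setminus B\in\mathcal{Q}}$. I would show that $\mathcal{G}_A$ is closed under increasing unions (their complements in $A$ decrease, using~(2) of Definition~\ref{dd2}) and under decreasing intersections (their complements increase and are bounded by $A$, using~(1) of Definition~\ref{dd2}). I would then show that $\mathcal{G}_A$ contains the sets $A\cap C$ with $C\in\kappa(\mathcal{K})$, by writing $A$ via Lemma~\ref{ll1}(ii) as an increasing union of sets that are bounded within $A$. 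The remaining step is to push this from generators to all of $\set{B\in\mathcal{Q}}{B\subset A}$ by a monotone-class argument based on Lemma~\ref{ll4}. Every monotone limit used stays inside $A\in\mathcal{Q}$, so the boundedness requirement in Definition~\ref{dd2}(1) is always met.
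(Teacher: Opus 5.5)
Your first plan cannot work, because the inclusion $\mathcal{Q}\subset\delta(\mathcal{K})$ is false in general. The hypotheses are satisfied by $\mathcal{Q}=\sigma(\mathcal{K})$ itself. Take $\mathcal{K}$ to be the semi-ring of intervals $(a,b]$ in $\R$: then $\sigma(\mathcal{K})$ is the Borel $\sigma$-ring, whereas $\delta(\mathcal{K})$ consists only of the bounded Borel sets. The lemma does not identify $\mathcal{Q}$ with $\delta(\mathcal{K})$. It only says that $\mathcal{Q}$ is some $\delta$-ring lying between $\delta(\mathcal{K})$ and $\sigma(\mathcal{K})$. There is also no reason to doubt the statement, since the bound required in Definition~\ref{dd2}(1) is always supplied by $A$ itself, as you note at the end.

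Your fallback is correct, but its generator step is already the whole proof, and it does not need $C\in\kappa(\mathcal{K})$. Let $A\in\mathcal{Q}$ and let $B$ be any element of $\sigma(\mathcal{K})$.
\begin{itemize}
\item You showed $\delta(\mathcal{K})\subset\mathcal{Q}$, and $\sigma(\delta(\mathcal{K}))=\sigma(\mathcal{K})$. So Lemma~\ref{ll1}(ii) writes $A$ as the union of an increasing sequence $\mathbf{A}(n)\in\delta(\mathcal{K})$.
\item Each $\mathbf{A}(n)\setminus B$ lies in $\sigma(\mathcal{K})$ and inside $\mathbf{A}(n)$. Hence it belongs to $\delta(\mathcal{K})\subset\mathcal{Q}$ by Lemma~\ref{ll1}(iii).
\item These sets increase to $A\setminus B\subset A$, so $A\setminus B\in\mathcal{Q}$ by Definition~\ref{dd2}(1).
\end{itemize}
This is exactly the paper's argument. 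It gives $A\setminus B\in\mathcal{Q}$ for all $A,B\in\mathcal{Q}$, without assuming $B\subset A$. Then $A\cup B=(A\setminus B)\cup B$ is a disjoint union, so $\mathcal{Q}$ is a ring. Closure under countable intersections follows from finite intersections together with Definition~\ref{dd2}(2).

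So you need neither Lemma~\ref{l_alpha} (which, for semi-rings, the paper treats as a consequence of the present lemma), nor the class $\mathcal{G}_A$, nor a monotone-class extension. If you do keep that extension, be aware that Lemma~\ref{ll4} alone only reaches the $\delta$-ring generated by your generators. To obtain every $B\in\mathcal{Q}$ with $B\subset A$, you must also use that $\mathcal{G}_A$ is closed under all increasing unions, together with Lemma~\ref{ll1}(ii). One way is to run the argument on $\{C\in\sigma(\mathcal{K})\colon A\cap C\in\mathcal{G}_A\}$.
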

\begin{proof}
  It suffices to show that $\mathcal Q$ is a ring. Since $\mathcal{Q}$ is closed under finite disjoint unions, we only need to verify that $A\setminus B\in \mathcal{Q}$ for every $A,B\in \mathcal{Q}$. Let $\mathcal{R} = \delta(\mathcal{K})$. As $\kappa(\mathcal{K})\subset\mathcal{Q}$ by Proposition~\ref{p_semiring}, it follows from Lemma~\ref{ll4} that $\mathcal{R}\subset\mathcal{Q}$. Let $A,B\in \mathcal{Q}$. By Lemma~\ref{ll1}(ii), there is an increasing sequence of sets $\mb A\colon \N\to \mathcal{R}$ such that $A = \bigcup_{n\in\N}\mb A(n)$. Let $\fm{\mb C}{\N}$ be such that $\mb C(n) = \mb A(n)\setminus B$ for every $n\in\N$. Then $\mb C(n)\in \mathcal{R}$ for every $n\in\N$ by Lemma~\ref{ll1}(iii) and $A\setminus B = \bigcup_{n\in\N}\mb C(n)$. Since $\mb C$ is increasing and $A\setminus B\subset A$, we conclude that $A\setminus B\in \mathcal{Q}$. 
\end{proof}

If $\mathcal{K}$ is a semi-ring, then Lemma~\ref{l_alpha} obviously follows from Lemma~\ref{l_semimonotone}.

 \begin{lemma}\label{ll7}
   Let $\mf A$ be an HTM and $\mu$ be an $\mf A$-valued $\sigma$-content.
  \begin{enumerate}             
  \item[(i)] If $\mb A\colon \N\to D_\mu$ is an increasing sequence of sets such that the set $A=\bigcup_{n\in\N}\mb A(n)$ belongs to $D_\mu$, then $\mu(\mb A(n))\to \mu(A)$ in $\mf A$ as $n\to \infty$. 
  \item[(ii)] Suppose $\mf A$ is an HTG. If $\mb B\colon\N\to D_\mu$ is a decreasing sequence of sets such that $B=\bigcap_{n\in\N} \mb B(n)$ belongs to $D_\mu$, then $\mu(\mb B(n))\to \mu(B)$ in $\mf A$ as $n\to \infty$.
  \end{enumerate}
\end{lemma}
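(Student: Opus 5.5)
For (i), I would reduce the convergence of $\mu(\mb A(n))$ to the convergence of partial sums of an unconditional sum. Define $\fm{\mb C}{\N}$ by $\mb C(1) = \mb A(1)$ and $\mb C(n) = \mb A(n)\setminus\mb A(n-1)$ for $n\geq 2$. Since $D_\mu$ is a ring, every $\mb C(n)$ lies in $D_\mu$. Moreover, $\mb C$ is disjoint and $\bigcup_{n\in\N}\mb C(n) = A$, so $\mb C$ is a countable partition of $A$ into elements of $D_\mu$.

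By $\sigma$-additivity, $\mu(A) = \sum\mu\circ\mb C$, which means the net $\bigS\mu\circ\mb C$ converges to $\mu(A)$ over $\mathcal{F}_\N$. For each $n$, the family $\{\mb C(k)\}_{k\in\{1,\dots,n\}}$ is a finite partition of $\mb A(n)$. Additivity therefore gives $\mu(\mb A(n)) = \finsum_{k\leq n}\mu(\mb C(k))$, which is the value of that net at $\{1,\dots,n\}$.

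The map $n\mapsto\{1,\dots,n\}$ is cofinal in $\mathcal{F}_\N$ and monotone. Hence the sequence $n\mapsto\mu(\mb A(n))$ is a subnet of the convergent net $\bigS\mu\circ\mb C$, and so it converges to $\mu(A)$. This last step is the only point needing care: for an arbitrary finite $K\subset\N$ one uses that $K\subset\{1,\dots,n\}$ for large $n$, and this is exactly what cofinality provides.

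For (ii), with $\mf A$ a group, I would pass to differences. Set $\mb A(n) = \mb B(1)\setminus\mb B(n)$. These sets lie in $D_\mu$ and form an increasing sequence with union $\mb B(1)\setminus B$, which belongs to $D_\mu$ because it is a ring. By (i), $\mu(\mb B(1)\setminus\mb B(n))\to\mu(\mb B(1)\setminus B)$.

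Additivity gives $\mu(\mb B(n)) = \mu(\mb B(1)) - \mu(\mb B(1)\setminus\mb B(n))$ and likewise $\mu(B) = \mu(\mb B(1)) - \mu(\mb B(1)\setminus B)$. Since subtraction is continuous in an HTG, the conclusion follows.

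There is no real obstacle here. The main thing to handle carefully is the subnet argument in (i), since only unconditional convergence is available rather than sequential sums. Part (ii) genuinely requires inverses, which is why the group assumption is made.
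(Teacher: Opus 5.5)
Your proof is correct and follows the paper's argument essentially step for step. In (i) you disjointify via $\mb C(n)=\mb A(n)\setminus\mb A(n-1)$ and read off $\mu(\mb A(n))$ as partial sums along the cofinal chain $\{1,\dots,n\}$; your explicit subnet remark spells out what the paper leaves implicit. In (ii) you pass to the increasing sets $\mb B(1)\setminus\mb B(n)$ and use continuity of subtraction, exactly as the paper does.
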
                     
\begin{proof}  
  (i) Let $\fm{\mb C}{\N}$ be such that $\mb C(1)= \mb A(1)$ and $\mb C(n) = \mb A(n)\setminus \mb A(n-1)$ for $n>1$. Since $D_\mu$ is a ring, $\mb C$ is a countable partition of $A$ into elements of $D_\mu$. By the $\sigma$-additivity of $\mu$, we have $\mu(A) = \sum_{n\in\N}\mu(\mb C(n))$. It remains to note that $\mb A(n) = \bigcup_{l=1}^n \mb C(l)$ and, hence, $\mu(\mb A(n)) = \finsum_{l=1}^n \mu(\mb C(l))$ for every $n\in\N$.
  \par\medskip\noindent (ii) Let $A = \mb B(1)\setminus B$ and $\fm{\mb A}{\N}$ be such that $\mb A(n) = \mb B(1)\setminus \mb B(n)$ for every $n\in\N$. Then $A\in D_\mu$ and $\mb A(n)\in D_\mu$ for all $n\in\N$. Since $A = \bigcup_{n\in\N}\mb A(n)$, (i) implies that $\mu(\mb A(n))\to \mu(A)$ in $\mf A$ as $n\to \infty$. As $\mu(B) = \mu(\mb B(1))-\mu(A)$ and $\mu(\mb B(n)) = \mu(\mb B(1))-\mu(\mb A(n))$ for every $n\in\N$, the statement follows. 	
\end{proof}

The next example shows that the assumption that $\mf A$ be an HTG cannot be dropped in Lemma~\ref{ll7}(ii).

\begin{example}
  Let $\mathcal{Q}$ be the set of all subsets of $\N$ and $\mu\colon \mathcal{Q}\to\mf R$ be such that $\mu(A) = \infty$ for infinite $A\in \mathcal{Q}$ and $\mu(A)$ is equal to the number of elements of $A$ for finite $A\in\mathcal{Q}$. Then $\mu$ is an $\mf R$-valued measure. Let $\mb A\colon\N\to\mathcal{Q}$ be such that $\mb A(n) = \set{k}{k\in \N\mbox{ and }k\geq n}$ for every $n\in\N$. Then $\mb A$ is a decreasing sequence of sets and $\mu(\mb A(n)) = \infty$ for every $n\in\N$. At the same time, we have $\bigcap_{n\in\N}\mb A(n) = \varnothing$.  
\end{example}

\begin{lemma}\label{l_semiprem}
  Let $\mf A$ be an HTG, $X$ be a sequentially closed subset of $\mf A$, and $\mu$ be an $\mf A$-valued premeasure. Then $\mu^{-1}(X)$  is a semi-monotone class.
\end{lemma}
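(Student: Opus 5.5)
We verify the two conditions of Definition~\ref{dd2} directly for $\mathcal Q=\mu^{-1}(X)=\set{A}{A\in D_\mu\mbox{ and }\mu(A)\in X}$. Each one should reduce to a continuity property of $\mu$ from Lemma~\ref{ll7} together with the sequential closedness of $X$. In both cases the idea is the same: show that the limit set belongs to $D_\mu$, then use the appropriate part of Lemma~\ref{ll7} to exhibit $\mu$ of the limit set as the limit of a sequence in $X$.

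\textbf{Condition (1).} Let $\mb A\colon\N\to\mathcal Q$ be an increasing sequence of sets, and put $A=\bigcup_{n\in\N}\mb A(n)$. Suppose $A\subset B$ for some $B\in\mathcal Q$. Then $B\in D_\mu$, and each $\mb A(n)\in D_\mu$ satisfies $\mb A(n)\subset B$. Since $D_\mu$ is a $\delta$-ring, Lemma~\ref{ll1}(i) gives $A\in D_\mu$. As $\mu$ is in particular a $\sigma$-content, Lemma~\ref{ll7}(i) yields $\mu(\mb A(n))\to\mu(A)$ in $\mf A$. Every $\mu(\mb A(n))$ lies in $X$ and $X$ is sequentially closed, so $\mu(A)\in X$, that is, $A\in\mathcal Q$.

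\textbf{Condition (2).} Let $\mb B\colon\N\to\mathcal Q$ be a decreasing sequence of sets, and put $B=\bigcap_{n\in\N}\mb B(n)$. Because $D_\mu$ is a $\delta$-ring, $B\in D_\mu$. Here we need that $\mf A$ is an HTG, since Lemma~\ref{ll7}(ii) is proved only in that case. It gives $\mu(\mb B(n))\to\mu(B)$, and sequential closedness of $X$ again gives $\mu(B)\in X$.

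The only real subtlety is condition~(1), where the union must first be shown to lie in $D_\mu$. This is handled by the domination hypothesis in Definition~\ref{dd2} combined with Lemma~\ref{ll1}(i). The group assumption is used only in condition~(2), and it is genuinely needed there, as the preceding example with $\mf R$-valued measures shows.
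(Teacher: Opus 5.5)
Your proof is correct and follows essentially the same route as the paper, which states that the lemma follows immediately from Definition~\ref{dd2}, Lemma~\ref{ll1}(i), and Lemma~\ref{ll7}. You simply spell out that one-line argument in detail, and your closing remark that the group hypothesis is genuinely needed (e.g.\ $X=\{\infty\}$ in the $\mf R$-valued example) is accurate.
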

\begin{proof}
  The statement follows immediately from Definition~\ref{dd2} and Lemmas~\ref{ll1}(i) and~\ref{ll7}. 
\end{proof}

\begin{proposition}\label{p_Im}
  Let $\mf A$ be an HTG and $X$ be a sequentially closed subset of $\mf A$. Let $\mathcal{K}$ be a ring and $\mu$ be an $\mf A$-valued $\sigma$-additive function such that $\delta(\mathcal K)\subset D_\mu\subset \sigma(\mathcal K)$ and $\im\mu|_{\mathcal{K}}\subset X$. Then $\im\mu \subset X$. 
\end{proposition}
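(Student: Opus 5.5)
The plan is to reduce the statement to Proposition~\ref{p_Im0delta} by showing that $\mu$ maps the whole $\delta$-ring $\delta(\mathcal K)$ into $X$. Once this is known, $\mathcal{K}' = \delta(\mathcal K)$ is a $\delta$-ring with $\mathcal K'\subset D_\mu\subset\sigma(\mathcal K) = \sigma(\mathcal K')$ and $\im\mu|_{\mathcal K'}\subset X$. Proposition~\ref{p_Im0delta} then gives $\im\mu\subset X$ directly. Here $\sigma(\mathcal K)=\sigma(\delta(\mathcal K))$ is clear, since $\mathcal K\subset\delta(\mathcal K)\subset\sigma(\mathcal K)$.

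To prove $\mu(\delta(\mathcal K))\subset X$, I will apply Lemma~\ref{l_semiprem} to the premeasure $\eta = \mu|_{\delta(\mathcal K)}$. This restriction is an $\mf A$-valued premeasure: its domain is a $\delta$-ring, and $\sigma$-additivity passes to restrictions whose domain is a ring, because a partition into elements of $\delta(\mathcal K)$ of a set in $\delta(\mathcal K)$ is in particular a partition into elements of $D_\mu$. Since $\mf A$ is an HTG and $X$ is sequentially closed, Lemma~\ref{l_semiprem} shows that $\mathcal{Q}=\eta^{-1}(X)$ is a semi-monotone class. By hypothesis $\mathcal K\subset\mathcal Q$. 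Because $\mathcal K$ is a ring, Lemma~\ref{ll4} says that the semi-monotone class generated by $\mathcal K$ is $\delta(\mathcal K)$. Hence $\delta(\mathcal K)\subset\mathcal Q$, that is, $\mu(A)\in X$ for every $A\in\delta(\mathcal K)$.

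The one step that needs checking is that Lemma~\ref{l_semiprem} applies in the form needed. Semi-monotonicity of $\eta^{-1}(X)$ is taken relative to the domain of $\eta$, so the increasing unions and decreasing intersections involved must stay inside $\delta(\mathcal K)$. For decreasing intersections this holds automatically in a $\delta$-ring. For increasing unions bounded by an element of $\mathcal Q\subset\delta(\mathcal K)$, it follows from Lemma~\ref{ll1}(i). Continuity from below and above then comes from Lemma~\ref{ll7}, which is exactly why the group hypothesis is needed. The remaining assertions are routine.
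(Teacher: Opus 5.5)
Your proposal is correct and follows essentially the same route as the paper. The paper's proof likewise restricts $\mu$ to $\delta(\mathcal K)$, uses Lemma~\ref{l_semiprem} and Lemma~\ref{ll4} to get $\mu(\delta(\mathcal K))\subset X$, and then concludes with Proposition~\ref{p_Im0delta} via $\sigma(\mathcal K)=\sigma(\delta(\mathcal K))$.
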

\begin{proof}
  Let $\mathcal{R} = \delta(\mathcal{K})$ and $\nu = \mu|_{\mathcal{R}}$. Lemma~\ref{l_semiprem} implies that $\mathcal{Q} = \nu^{-1}(X)$ is a semi-monotone class.   Since $\mathcal{K}\subset\mathcal{Q}$, it follows from Lemma~\ref{ll4} that $\mathcal{Q} = \mathcal{R}$ and, hence, $\im\nu \subset X$. Since $\sigma(\mathcal{K}) = \sigma(\mathcal{R})$, Proposition~\ref{p_Im0delta} ensures that $\im\mu\subset X$.
\end{proof}

\begin{theorem}\label{tt4}
  Let $\mf A$ be an HTG and $\mf B$ be a sequentially closed submonoid of $\mf A$, $\mathcal K$ be a $\cap$-closed set, and $\mu$ be an $\mf A$-valued $\sigma$-additive function such that $\delta(\mathcal K)\subset D_\mu\subset \sigma(\mathcal K)$ and $\im\mu|_{\mathcal{K}}\subset\mf B$. Suppose that either $\mathcal K$ is a semi-ring or $\mf B$ is a subgroup of $\mf A$. Then $\im \mu \subset \mf B$. 
\end{theorem}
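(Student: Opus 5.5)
The plan is to reduce to Proposition~\ref{p_Im} by finding a ring $\mathcal R$ with $\mathcal R\subset\mu^{-1}(\mf B)$, $\delta(\mathcal R)=\delta(\mathcal K)$, and $\sigma(\mathcal R)=\sigma(\mathcal K)$. Then $\delta(\mathcal R)\subset D_\mu\subset\sigma(\mathcal R)$ and $\im\mu|_{\mathcal R}\subset\mf B$. Proposition~\ref{p_Im}, applied with $X=\mf B$ (sequentially closed by hypothesis), gives $\im\mu\subset\mf B$. Throughout, $\nu=\mu|_{\delta(\mathcal K)}$ is an $\mf A$-valued premeasure, since $\delta(\mathcal K)\subset D_\mu$ and $\delta(\mathcal K)$ is a $\delta$-ring.

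\emph{Case 1: $\mathcal K$ is a semi-ring.} Take $\mathcal R=\kappa(\mathcal K)$. By Proposition~\ref{p_semiring}, every $A\in\kappa(\mathcal K)$ has a finite partition $\mb A$ into elements of $\mathcal K$. Since $\kappa(\mathcal K)\subset\delta(\mathcal K)\subset D_\mu$, additivity gives $\mu(A)=\finsum\mu\circ\mb A$. This is a finite sum of elements of the submonoid $\mf B$, so it lies in $\mf B$. Clearly $\delta(\kappa(\mathcal K))=\delta(\mathcal K)$ and $\sigma(\kappa(\mathcal K))=\sigma(\mathcal K)$, so Proposition~\ref{p_Im} applies.

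\emph{Case 2: $\mf B$ is a subgroup.} Here differences are allowed, and I would use the $\alpha$-class machinery. Let $\mathcal Q=\set{A\in\delta(\mathcal K)}{\nu(A)\in\mf B}$. I will show that $\mathcal Q$ is an $\alpha$-class.
\begin{itemize}
\item It is nonempty because it contains $\varnothing$.
\item It is closed under disjoint unions, since $\mf B$ is closed under addition.
\item It is closed under proper differences, because $\nu(A\setminus B)=\nu(A)-\nu(B)$ for $B\subset A$ and $\mf B$ is a subgroup.
\item It is closed under decreasing countable intersections: the intersection lies in the $\delta$-ring $\delta(\mathcal K)$, and by Lemma~\ref{ll7}(ii) (valid since $\mf A$ is an HTG) its $\nu$-value is a limit of a sequence in $\mf B$. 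This limit lies in $\mf B$ by sequential closedness.
\end{itemize}
Since $\mathcal K\subset\mathcal Q\subset\sigma(\mathcal K)$ and $\mathcal K$ is $\cap$-closed, Lemma~\ref{l_alpha} shows that $\mathcal Q$ is a $\delta$-ring containing $\mathcal K$. Hence $\mathcal Q=\delta(\mathcal K)$, i.e. $\im\nu\subset\mf B$. Finally, Proposition~\ref{p_Im0delta}, applied to the $\delta$-ring $\delta(\mathcal K)$ with $D_\mu\subset\sigma(\delta(\mathcal K))$, yields $\im\mu\subset\mf B$. In this case Proposition~\ref{p_Im} is not needed at all, and the same argument could be used for Case~1 once $\mathcal Q$ contains $\kappa(\mathcal K)$.

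The only delicate step is the decreasing-intersection property in Case~2. It relies on Lemma~\ref{ll7}(ii), which uses the group structure of $\mf A$ (not of $\mf B$), and on the fact that the intersection stays in $\delta(\mathcal K)$. In Case~1 the main subtlety is that $\mf B$ need not be a subgroup, so differences cannot be used. This is exactly why the finite-partition description of $\kappa(\mathcal K)$ from Proposition~\ref{p_semiring} is needed there.
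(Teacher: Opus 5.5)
Your proof is correct and is essentially the paper's. Case~2 is exactly the paper's argument: your $\mathcal Q$ is an $\alpha$-class, Lemma~\ref{l_alpha} makes it a $\delta$-ring, and Proposition~\ref{p_Im0delta} finishes. In Case~1 you get $\kappa(\mathcal K)$ into $\mu^{-1}(\mf B)$ by finite partitions and then invoke Proposition~\ref{p_Im} (i.e.\ Lemmas~\ref{l_semiprem} and~\ref{ll4}), which is the same content the paper uses through Lemma~\ref{l_semimonotone}.

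One caveat on your closing aside: in Case~1 you cannot verify closure under proper differences. So what carries over from Case~2 is the semi-monotone-class closure of Lemma~\ref{ll4}, not the $\alpha$-class step.
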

\begin{proof}
   Let $\mathcal{R} = \delta(\mathcal{K})$ and $\nu = \mu|_{\mathcal{R}}$. Lemma~\ref{l_semiprem} implies that $\mathcal{Q} = \nu^{-1}(\mf B)$ is a semi-monotone class. Clearly, $\mathcal{K}\subset\mathcal{Q}\subset\sigma(\mathcal{K})$. If $A,B\in\mathcal Q$ and $A\cap B=\varnothing$, then $A\cup B\in\mathcal Q$ because $\nu$ is additive and $\mf B$ is stable under addition. As $0\in\mf B$, we have $\varnothing\in\mathcal Q$ and, hence, $\mathcal Q$ is closed under finite disjoint unions. We claim that $\mathcal Q$ is a $\delta$-ring. If $\mathcal K$ is a semi-ring, then this is true by Lemma~\ref{l_semimonotone}. Suppose that $\mf B$ is a subgroup of $\mf A$. If $A,B\in\mathcal Q$ are such that $B\subset A$, then $A\setminus B\in \mathcal Q$ because $\nu(A\setminus B)=\nu(A)-\nu(B)$ and $\nu(A),\nu(B)\in \mf B$. This means that $\mathcal Q$ is an $\alpha$-class and, therefore, our claim follows from Lemma~\ref{l_alpha}. As $\mathcal{Q}$ is a $\delta$-ring, we have $\mathcal{Q} = \mathcal{R}$ and, hence, $\im\nu \subset \mf B$. Since $\sigma(\mathcal{K}) = \sigma(\mathcal{R})$, Proposition~\ref{p_Im0delta} ensures that $\im\mu\subset \mf B$.
\end{proof}

\begin{proposition}\label{p_groupunique}
  Let $\mathcal K$ be a $\cap$-closed set, $\mf A$ be an HTG, and $\mu_1$ and $\mu_2$ be $\mf A$-valued $\sigma$-additive functions such that $\delta(\mathcal K)\subset D_{\mu_1}\cap D_{\mu_2}\subset \sigma(\mathcal K)$ and $\mu_1|_{\mathcal K}=\mu_2|_{\mathcal K}$. Then $\mu_1(A) = \mu_2(A)$ for every $A\in D_{\mu_1}\cap D_{\mu_2}$.
\end{proposition}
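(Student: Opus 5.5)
We reduce the statement to Theorem~\ref{tt4} by packaging the two functions into a single $(\mf A\times\mf A)$-valued function and testing it against the diagonal. Put $\mathcal{D} = D_{\mu_1}\cap D_{\mu_2}$ and define $\fm{\eta}{\mathcal{D}}$ by $\eta(A) = (\mu_1(A),\mu_2(A))$ for every $A\in\mathcal{D}$.

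The first step is to check that $\eta$ is an $(\mf A\times\mf A)$-valued $\sigma$-additive function. Any countable partition of an element of $\mathcal{D}$ into elements of $\mathcal{D}$ is simultaneously such a partition for $D_{\mu_1}$ and for $D_{\mu_2}$. Summability in the product $\mf A\times\mf A$ is equivalent to coordinatewise summability, since convergence of the net of finite partial sums is coordinatewise. Hence $\eta$ is $\sigma$-additive. Moreover, $\mf A\times\mf A$ is an HTG, and by hypothesis $\delta(\mathcal K)\subset D_\eta\subset\sigma(\mathcal K)$.

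Next, let $\mf B = \set{(x,x)}{x\in\mf A}$ be the diagonal. It is a subgroup of $\mf A\times\mf A$, and it is closed, hence sequentially closed, because $\mf A$ is Hausdorff. The hypothesis $\mu_1|_{\mathcal K} = \mu_2|_{\mathcal K}$ says exactly that $\im\eta|_{\mathcal K}\subset\mf B$. Since $\mathcal K$ is $\cap$-closed and $\mf B$ is a subgroup, Theorem~\ref{tt4}, applied with $\mf A\times\mf A$ in place of $\mf A$ and $\eta$ in place of $\mu$, gives $\im\eta\subset\mf B$. This means $\mu_1(A) = \mu_2(A)$ for all $A\in\mathcal{D}$.

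The only point needing care is the coordinatewise description of summability in the product. It follows from the definition of product convergence for nets, or can be taken from Proposition~\ref{p_sum}(ii) applied to the two projections together with the converse obtained directly from the product topology. Everything else is a direct citation, so the weight of the proof rests entirely on Theorem~\ref{tt4} through its subgroup case, which in turn relies on Lemma~\ref{l_alpha}.
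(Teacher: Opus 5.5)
Your argument is correct and is essentially the paper's proof. The paper also reduces to the subgroup case of Theorem~\ref{tt4}, but applies it to the pointwise difference $A\mapsto\mu_1(A)-\mu_2(A)$ on $D_{\mu_1}\cap D_{\mu_2}$ with $\mf B=\{0\}$; this avoids checking coordinatewise summability in $\mf A\times\mf A$, and your diagonal is just the kernel of $(x,y)\mapsto x-y$.
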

\begin{proof}
  Let $\mathcal Q = D_{\mu_1}\cap D_{\mu_2}$ and the map $\mu\colon\mathcal Q\to\mf A$ be such that $\mu(A) = \mu_1(A) - \mu_2(A)$ for every $A\in \mathcal Q$. Then $\mu$ is an $\mf A$-valued $\sigma$-additive function and $\mu(A)=0$ for every $A\in\mathcal K$. Applying Theorem~\ref{tt4} to $\mf B = \{0\}$, we conclude that $\mu(A)=0$ for every $A\in D_\mu$, i.e., $\mu_1$ and $\mu_2$ coincide on $\mathcal Q$.
\end{proof}

\begin{remark}
  If $\mf A$ is a Hausdorff topological vector space, $\mathcal{K}$ is a ring, and $D_\mu$ and $D_\nu$ are both equal either to $\delta(\mathcal{K})$ or to $\sigma(\mathcal{K})$, then Proposition~\ref{p_groupunique} coincides with Proposition~6 in Sec.~I.2.8 of~\cite{Dinculeanu}.
\end{remark}

\begin{theorem}\label{tt1}
  Let $\mathcal K$ be a $\cap$-closed set, $\mf A$ be an HTG, and $\mu_1$ and $\mu_2$ be $\mf A$-valued premeasures such that $\mathcal K\subset D_{\mu_1}\subset \sigma(\mathcal K)$, $\mathcal K\subset D_{\mu_2}\subset\sigma(\mathcal K)$, and $\mu_1|_{\mathcal K}=\mu_2|_{\mathcal K}$. Then $\mu_1 \asymp \mu_2$. If $\mu_2$ is a measure, then it is an extension of $\mu_1$. If $\mu_1$ and $\mu_2$ are both measures, then $\mu_1=\mu_2$.
\end{theorem}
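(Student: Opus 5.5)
The plan mirrors the proof of Theorem~\ref{t_unique}, with Proposition~\ref{p_groupunique} replacing Proposition~\ref{p_un0delta}. The only real difference is that $\mathcal K$ is now merely $\cap$-closed rather than a $\delta$-ring. So I first have to show that $\mu_1$ and $\mu_2$ share a common $\delta$-ring containing $\delta(\mathcal K)$ on which both are defined.

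First, $\delta(\mathcal K)\subset D_{\mu_1}\cap D_{\mu_2}$. Indeed, $D_{\mu_1}$ and $D_{\mu_2}$ are $\delta$-rings containing $\mathcal K$, hence both contain $\delta(\mathcal K)$.

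Second, the host $\sigma$-rings agree. From $\mathcal K\subset D_{\mu_j}\subset\sigma(\mathcal K)$ we get $\q{\mu_1}=\q{\mu_2}=\sigma(\mathcal K)$. Moreover $\delta(\mathcal K)\subset D_{\mu_1}\cap D_{\mu_2}\subset \sigma(\mathcal K)$, since each $D_{\mu_j}\subset\sigma(\mathcal K)$.

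Third, apply Proposition~\ref{p_groupunique}. Its hypotheses are exactly that $\mathcal K$ is $\cap$-closed, $\mf A$ is an HTG, $\mu_1$ and $\mu_2$ are $\sigma$-additive, $\delta(\mathcal K)\subset D_{\mu_1}\cap D_{\mu_2}\subset\sigma(\mathcal K)$, and $\mu_1|_{\mathcal K}=\mu_2|_{\mathcal K}$. These all hold, so $\mu_1$ and $\mu_2$ coincide on $D_{\mu_1}\cap D_{\mu_2}$. Together with the equality of host $\sigma$-rings, Definition~\ref{d_sim} gives $\mu_1\asymp\mu_2$.

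The remaining assertions follow as in Theorem~\ref{t_unique}. If $\mu_2$ is a measure, Definition~\ref{dd6} applied to the premeasure $\mu_1\asymp\mu_2$ shows that $\mu_2$ extends $\mu_1$. If both are measures, each extends the other, so $\mu_1=\mu_2$.

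I expect no serious obstacle. The substantive work, the semi-monotone class and $\alpha$-class argument, is already contained in Theorem~\ref{tt4} via Proposition~\ref{p_groupunique}. The only point needing care is the first step, namely that $D_{\mu_j}$ is a $\delta$-ring and hence contains $\delta(\mathcal K)$, which holds because the $\mu_j$ are premeasures.
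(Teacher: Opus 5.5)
Your proposal is correct and follows essentially the same argument as the paper: you use the fact that $D_{\mu_1}\cap D_{\mu_2}$ is a $\delta$-ring containing $\mathcal K$, hence contains $\delta(\mathcal K)$, then Proposition~\ref{p_groupunique} together with $\q{\mu_1}=\q{\mu_2}=\sigma(\mathcal K)$ to get $\mu_1\asymp\mu_2$. The remaining assertions then follow directly from Definition~\ref{dd6}.
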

\begin{proof}
  As $D_{\mu_1}\cap D_{\mu_2}$ is a $\delta$-ring, we have $\delta(\mathcal{K})\subset D_{\mu_1}\cap D_{\mu_2}$. Since $\q{\mu_1} = \sigma(\mathcal K) = \q{\mu_2}$, it follows from Proposition~\ref{p_groupunique} that $\mu_1 \asymp \mu_2$. If $\mu_2$ is a measure, then it is an extension of $\mu_1$ by Definition~\ref{dd6}. If $\mu_1$ and $\mu_2$ are both measures, then they are extensions of each other and, hence, $\mu_1=\mu_2$. 
\end{proof}

\section{\texorpdfstring{Extension of $\sigma$-contents}{Extension of σ-contents}}
\label{s_ext}

\begin{definition}\label{d_sf}
  Let $\mf A$ be an HTM.   An $\mf A$-valued map $\mu$ is called exhaustive if $\mu(\mb A(n))\to 0$ in $\mf A$ as $n\to\infty$ for every disjoint family $\mb A\colon\N\to D_\mu$.  
\end{definition}

\begin{proposition}\label{p_sion}
  Let $\mf A$ be a sequentially complete HTG and $\mu$ be an exhaustive $\mf A$-valued $\sigma$-content. Then there is a unique $\mf A$-valued $\sigma$-additive function $\nu$ such that $D_\nu = \q\mu$ and $\mu = \nu|_{D_\mu}$.
\end{proposition}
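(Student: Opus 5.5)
Uniqueness comes first and is immediate. If $\nu_1,\nu_2$ are two such extensions, then $\nu_1$ and $\nu_2$ are $\mf A$-valued $\sigma$-additive functions. They satisfy $\delta(D_\mu)\subset\q\mu = D_{\nu_1}\cap D_{\nu_2}=\sigma(D_\mu)$ and coincide on the ring $D_\mu$, which is $\cap$-closed. Proposition~\ref{p_groupunique} therefore gives $\nu_1=\nu_2$. Alternatively, both are premeasures on the $\sigma$-ring $\q\mu$, hence measures by Proposition~\ref{p_Dmusigma}, and Theorem~\ref{tt1} applies.

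For existence I would follow the classical Sion/Dinculeanu route in two stages: first extend to $\delta(D_\mu)$, then to $\sigma(D_\mu)$. Let $\mathcal{K}=D_\mu$. The key tool is the following Cauchy property. By exhaustivity, if $\mb A\colon\N\to\mathcal{K}$ is monotone, then $\mu(\mb A(n))$ is a Cauchy sequence in $\mf A$. For increasing $\mb A$, if it were not Cauchy, one could extract disjoint differences $\mb A(n_{k+1})\setminus\mb A(n_k)$ whose $\mu$-values do not tend to $0$. A decreasing sequence is handled the same way, via $\mb A(1)\setminus\mb A(n)$ and subtraction in the group. By sequential completeness the limit exists. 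I would then define $\bar\mu$ on the class $\mathcal{M}$ of all sets that are limits of monotone sequences, iterating transfinitely along the semi-monotone class generated by $\mathcal{K}$. The goal is to show that there is a $\sigma$-additive extension to the semi-monotone class generated by $\mathcal{K}$, which equals $\delta(\mathcal{K})$ by Lemma~\ref{ll4}. Concretely, let $\mathcal{Q}$ be the set of $A\in\delta(\mathcal{K})$ for which $\lim\mu(\mb A(n))$ is the same for all monotone approximating sequences in a suitably enlarged domain. A cleaner way to organise this is to show, using exhaustivity, that $\mu$ is uniformly exhaustive on the traces on a fixed $B\in\mathcal{K}$, and to use the Fréchet–Nikodym-type pseudometric $d(A,A')=\sup\{$neighbourhood-controlled $\mu(C)$, $C\subset A\triangle A'\}$, so that $\delta(\mathcal{K})\cap 2^{B}$ lies in the completion closure of $\mathcal{K}\cap 2^B$.

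The main obstacle is this first stage: well-definedness and $\sigma$-additivity of the extension to $\delta(\mathcal{K})$. Exhaustivity gives only sequential Cauchy control, not a norm, so I would work with a fixed neighbourhood basis of $0$. The plan is to show that for every neighbourhood $U$ of $0$ and every $B\in\delta(\mathcal{K})$ there is a finite-type approximation with error in $U$ uniformly over subsets, using the standard argument that an exhaustive content $\mu$ satisfies, for each $U$, that $\{C: \mu(C')\in U$ for all $C'\subset C$, $C'\in\mathcal K\}$ absorbs tails of disjoint sequences. Sion's approach via outer-type "semivariations" is what I would imitate. Once $\bar\mu$ on $\delta(\mathcal{K})$ is a premeasure, Lemma~\ref{l_semiprem} and Proposition~\ref{p_groupunique} keep everything consistent.

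The second stage uses the associated measure. Let $\eta=\as[\mf A]{\bar\mu}$. I must show $\Sigma_{\mf A}(\bar\mu)=\q\mu$. By Proposition~\ref{p_seqcomplSigma}, it suffices that for every $A\in\sigma(\mathcal{K})$ and every countable partition $\mb A$ of $A$ into elements of $\delta(\mathcal{K})$, the family $\bar\mu\circ\mb A$ is summable. Since $\mf A$ is sequentially complete, summability of a countable family follows from an unconditional Cauchy condition. That condition in turn follows from exhaustivity of $\bar\mu$, which is inherited from $\mu$ by approximation. Indeed, if the Cauchy condition failed, one could find disjoint finite unions of the $\mb A(i)$ whose values stay outside a neighbourhood, contradicting exhaustivity. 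Then $\nu=\eta$ has $D_\nu=\q\mu$ and extends $\mu$, completing the proof.
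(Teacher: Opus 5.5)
Your uniqueness argument is correct: the paper uses Theorem~\ref{tt1}, and Proposition~\ref{p_groupunique} works just as well. Your second stage is also a valid reduction within the paper's framework. If you had a premeasure $\bar\mu$ on $\delta(D_\mu)$ extending $\mu$ and \emph{exhaustive}, the Cauchy criterion plus sequential completeness would make every $\bar\mu\circ\mb A$ summable. Proposition~\ref{p_seqcomplSigma} would then give $\Sigma_{\mf A}(\bar\mu)=\q\mu$.

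\textbf{The gap.} You never produce such a $\bar\mu$. Stage~1, together with the exhaustivity of $\bar\mu$, carries the entire content of Sion's theorem. You list strategies but never prove that $\bar\mu$ is well defined, additive, $\sigma$-additive, or exhaustive, and each strategy stalls at a specific point:
\begin{enumerate}
\item Iterating monotone limits past the first level needs Cauchy $\bar\mu$-values along monotone sequences in the enlarged, non-ring class. That is exhaustivity of $\bar\mu$ there, which is exactly what is unproven.
\item A single Fr\'echet--Nikodym pseudometric is not available when $\mf A$ is not metrizable; a supremum of group elements means nothing without a seminorm. In the uniform setting, the natural extension by uniform continuity to a closure uses Cauchy nets, i.e.\ completeness rather than sequential completeness.
\item The claim that exhaustivity of $\bar\mu$ is ``inherited from $\mu$ by approximation'' is not a formality. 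It is precisely where exhaustivity, as opposed to weak exhaustivity, of $\mu$ must be used. For weakly exhaustive $\mu$ an extension to $\delta(D_\mu)$ still exists (Corollary~\ref{c_weber}), yet your Stage~2 conclusion fails. In the example of Appendix~\ref{s_example2}, the measure associated with that extension is the measure constructed there, whose domain $\mathcal Q$ is strictly smaller than $\q\mu$.
\end{enumerate}

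\textbf{The missing idea.} The paper avoids all of this with a short reduction:
\begin{enumerate}
\item Apply Sion's theorem for complete groups in the completion $\hat{\mf A}$. This gives an $\hat{\mf A}$-valued $\sigma$-additive $\nu$ with $D_\nu=\q\mu$ and $\nu|_{D_\mu}=\mu$.
\item Sequential completeness of $\mf A$ implies that $\mf A$ is sequentially closed in $\hat{\mf A}$.
\item Proposition~\ref{p_Im}, applied with $X=\mf A$ and $\mathcal K=D_\mu$ (note $\delta(D_\mu)\subset D_\nu=\sigma(D_\mu)$), gives $\im\nu\subset\mf A$.
\item Proposition~\ref{p_subpremeasure} then shows that $\nu$ is an $\mf A$-valued $\sigma$-additive function.
\end{enumerate}
This is how the paper passes from completeness to sequential completeness, instead of trying to use sequential completeness inside the construction itself. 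To turn your proposal into a proof, either adopt this reduction and cite Sion, or actually carry out Stage~1, including a proof that the resulting $\bar\mu$ is exhaustive on $\delta(D_\mu)$.
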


\begin{proof}
  For a complete $\mf A$, the existence assertion follows from Sion's extension theorem; see~\cite[Theorem~3.3]{Sion1969} and~\cite[Theorem~I.6.2]{Sion1973}. Let $\hat{\mf A}$ be the completion of $\mf A$. Applying the complete case to $\mu$, regarded as an $\hat{\mf A}$-valued $\sigma$-content, we obtain an $\hat{\mf A}$-valued $\sigma$-additive function $\nu$ such that $D_\nu=\q\mu$ and $\mu=\nu|_{D_\mu}$. The sequential completeness of $\mf A$ implies that $\mf A$ is sequentially closed in $\hat{\mf A}$. By Proposition~\ref{p_Im}, we conclude that $\im\nu\subset \mf A$ and, hence, $\nu$ is an $\mf A$-valued $\sigma$-additive function. The uniqueness of $\nu$ is ensured by Theorem~\ref{tt1}.
\end{proof}

\begin{remark}
  For a complete $\mf A$, Proposition~\ref{p_sion} was first obtained by Sion~\cite{Sion1969}, who adapted the well-known Carath\'eodory construction to the group-valued case. Alternative proofs based on topological extension arguments can be found in~\cite{Drewnowski1972,Herer1976,Weber1976, Weber2002}. It was pointed out by Drewnowski (see Remark~1 after Theorem~9.2 in~\cite{Drewnowski1972}) that it suffices to require the sequential completeness of $\mf A$. The above proof adapts Drewnowski's argument in a form that is also applicable to $\delta$-rings (see Remark~\ref{r_weber} below). 
 This may be useful because some groups important for applications (in particular, the space of all bounded linear operators on an infinite-dimensional Hilbert space endowed with the strong operator topology) are sequentially complete but not complete.
\end{remark}

\begin{definition}
  Let $\mf A$ be an HTM and $\mu$ be an $\mf A$-valued map. Given a set $A$, we say that $\mu$ is $A$-exhaustive and write $\ex{\mf A}{A}\mu$ if $\mu(\mb A(n))\to 0$ in $\mf A$ as $n\to\infty$ whenever $\mb A\colon\N\to D_\mu$ is a disjoint family such that $\mb A(n)\subset A$ for every $n\in\N$. We call $\mu$ weakly exhaustive if $\ex{\mf A}{A}\mu$ for every $A\in D_\mu$.
\end{definition}

Clearly, an $\mf A$-valued map is exhaustive if and only if it is $A$-exhaustive for every $A$. Every exhaustive $\mf A$-valued map is weakly exhaustive.

Let $\mf A$ be an HTM and $\mu$ be an $\mf A$-valued map. We define
\begin{equation}
  \label{eq:Smu}
  \mathcal{S}_{\mf A}[\mu] = \set{B\in \q\mu}{B\subset A\mbox{ for some }A\in \sigma_0(D_\mu)\mbox{ such that }\ex{\mf A}{A}\mu}.
\end{equation}
 
\begin{lemma}\label{l_Smuaux}
  Let $\mf A$ be an HTM, $\mu$ be an $\mf A$-valued $\sigma$-content, $C\in D_\mu$, and $V$ be a neighbourhood of $\mu(C)$ in $\mf A$. Let $A,B\in \sigma_0(D_\mu)$ be such that $C\subset A\cup B$. Then there are $A',B'\in D_\mu$ such that $A'\subset A\cap C$, $B'\subset B\cap C$, and $\mu(A') + \mu(B') \in V$.  
\end{lemma}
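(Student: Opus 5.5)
Write $A=\bigcup_{n\in\N}\mb A(n)$ and $B=\bigcup_{n\in\N}\mb B(n)$, where $\mb A,\mb B\colon\N\to D_\mu$. Since $D_\mu$ is a ring, we can replace $\mb A(n)$ by $\bigcup_{k\le n}\mb A(k)$ and $\mb B(n)$ by $\bigcup_{k\le n}\mb B(k)$. So we may assume that $\mb A$ and $\mb B$ are increasing sequences of sets. If $A$ or $B$ is the union of the empty family, take the corresponding sequence to be constantly $\varnothing$.

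Next we intersect with $C$. Put $\mb A'(n)=\mb A(n)\cap C$ and $\mb B'(n)=(\mb B(n)\cap C)\setminus\mb A(n)$. Since $D_\mu$ is a ring, both families consist of elements of $D_\mu$. Moreover, $\mb A'(n)\subset A\cap C$ and $\mb B'(n)\subset B\cap C$, and $\mb A'(n)\cap\mb B'(n)=\varnothing$.

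Now consider $\mb C(n)=\mb A'(n)\cup\mb B'(n)=(\mb A(n)\cup\mb B(n))\cap C$. This is an increasing sequence in $D_\mu$. Because $C\subset A\cup B$, its union is exactly $C$, which belongs to $D_\mu$. Lemma~\ref{ll7}(i) applies to the $\sigma$-content $\mu$ and gives $\mu(\mb C(n))\to\mu(C)$ in $\mf A$. Hence there is $n$ with $\mu(\mb C(n))\in V$. For this $n$, additivity of $\mu$ gives $\mu(\mb C(n))=\mu(\mb A'(n))+\mu(\mb B'(n))$. So $A'=\mb A'(n)$ and $B'=\mb B'(n)$ have the required properties.

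There is no real obstacle here. The one point to check is that the disjointification keeps $B'$ inside $B\cap C$ and keeps $A'\cup B'$ equal to $\mb C(n)$, which holds by construction. Note that only Lemma~\ref{ll7}(i) is used, which holds for any HTM, so no group structure is needed.
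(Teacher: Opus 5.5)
Your proposal is correct and matches the paper's proof: the paper likewise takes increasing sequences $\mb A,\mb B$ in $D_\mu$ exhausting $A$ and $B$, applies Lemma~\ref{ll7}(i) to $C\cap(\mb A(n)\cup\mb B(n))$, and sets $A'=C\cap\mb A(n)$ and $B'=(C\cap\mb B(n))\setminus A'$. This $B'$ is exactly your $\mb B'(n)$.
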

\begin{proof}
  Let $\mb A\colon\N\to D_\mu$ and $\mb B\colon\N\to D_\mu$ be increasing families of sets such that $A = \bigcup_{n\in\N}\mb A(n)$ and $B = \bigcup_{n\in\N}\mb B(n)$. Lemma~\ref{ll7}(i) implies that
  \[
    \mu(C\cap (\mb A(n)\cup\mb B(n))) \in V
  \]
  for some $n\in\N$. Clearly, the sets $A' = C\cap \mb A(n)$ and $B' = (C\cap\mb B(n))\setminus A'$ have the required properties.
\end{proof} 

\begin{proposition}\label{p_Smu}
  Let $\mf A$ be a regular HTM and $\mu$ be an $\mf A$-valued $\sigma$-content. Then $\mathcal{S}_{\mf A}[\mu]$ is a $\delta$-ring.
\end{proposition}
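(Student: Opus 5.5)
The plan is to check the $\delta$-ring axioms directly from the definition~(\ref{eq:Smu}). Write $\mathcal S = \mathcal S_{\mf A}[\mu]$. The only nontrivial axiom is closure under finite unions, and that is where Lemma~\ref{l_Smuaux} and the regularity of $\mf A$ come in.

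\emph{Easy parts.} First, $\varnothing\in\mathcal S$. Indeed, $\varnothing\in\sigma_0(D_\mu)$ as the union of the empty family, and $\mu$ is trivially $\varnothing$-exhaustive: every disjoint $\mb A\colon\N\to D_\mu$ with $\mb A(n)\subset\varnothing$ is constantly $\varnothing$, and $\mu(\varnothing)=0$ by additivity. Second, directly from~(\ref{eq:Smu}), if $B\in\mathcal S$ and $B'\in\q\mu$ with $B'\subset B$, then $B'\in\mathcal S$, since the same witness $A$ works. As $\q\mu$ is a $\sigma$-ring, this gives $B\setminus B'\in\mathcal S$ for all $B,B'\in\mathcal S$. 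It also gives $\bigcap_{i\in I}\mb B(i)\in\mathcal S$ for every nonempty countable family $\mb B\colon I\to\mathcal S$, because the intersection lies in $\q\mu$ and is contained in $\mb B(i_0)$ for any $i_0\in I$. So it remains to show $B_1\cup B_2\in\mathcal S$ for $B_1,B_2\in\mathcal S$.

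\emph{Reduction for unions.} Let $B_1\subset A_1$ and $B_2\subset A_2$ with $A_1,A_2\in\sigma_0(D_\mu)$ such that $\ex{\mf A}{A_1}\mu$ and $\ex{\mf A}{A_2}\mu$. Then $A_1\cup A_2\in\sigma_0(D_\mu)$ and $B_1\cup B_2\in\q\mu$. It therefore suffices to prove that $\mu$ is $(A_1\cup A_2)$-exhaustive.

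\emph{Main step: exhaustivity on $A_1\cup A_2$.} Fix a disjoint family $\mb C\colon\N\to D_\mu$ with $\mb C(n)\subset A_1\cup A_2$, and fix a neighbourhood $W_0$ of $0$.
\begin{enumerate}
\item[(a)] By regularity, choose a closed neighbourhood $W\subset W_0$ of $0$.
\item[(b)] By continuity of addition at $(0,0)$, choose a neighbourhood $U$ of $0$ with $U+U\subset W$.
\item[(c)] For each $n$ with $\mu(\mb C(n))\notin W$, the set $V_n=\mf A\setminus W$ is an open neighbourhood of $\mu(\mb C(n))$. Lemma~\ref{l_Smuaux}, applied to $C=\mb C(n)$, $V=V_n$, $A=A_1$, $B=A_2$, gives $A'_n,B'_n\in D_\mu$ with $A'_n\subset A_1\cap\mb C(n)$, $B'_n\subset A_2\cap\mb C(n)$, and $\mu(A'_n)+\mu(B'_n)\notin W$.
\item[(d)] For all other $n$, set $A'_n=B'_n=\varnothing$.
\end{enumerate}
The families $\{A'_n\}_{n\in\N}$ and $\{B'_n\}_{n\in\N}$ are disjoint, lie in $D_\mu$, and sit inside $A_1$ and $A_2$ respectively. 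By the exhaustivity assumptions, $\mu(A'_n)\to0$ and $\mu(B'_n)\to0$. Hence for all large $n$ both values lie in $U$, so $\mu(A'_n)+\mu(B'_n)\in W$. For those $n$ the case in step~(c) cannot occur, so $\mu(\mb C(n))\in W\subset W_0$. Thus $\mu(\mb C(n))\to 0$, which completes the proof.

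The main obstacle is that Lemma~\ref{l_Smuaux} only approximates $\mu(\mb C(n))$ by $\mu(A'_n)+\mu(B'_n)$; it does not give equality. Using the complement of a closed neighbourhood as $V_n$ turns this approximation into a contradiction, and this is exactly where regularity of $\mf A$ is needed.
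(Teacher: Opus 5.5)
Your proof is correct and follows essentially the same route as the paper: heredity gives differences and countable intersections, and for unions you show $\mu$ is exhaustive on the union of the two witnesses by applying Lemma~\ref{l_Smuaux} with $V=\mf A\setminus W$ for a closed neighbourhood $W$ of $0$, with regularity entering exactly there. The only differences are presentational. You argue directly rather than by contradiction, and you handle the indices $n$ with $\mu(\mb C(n))\in W$ explicitly instead of passing to a subsequence. You also verify $\varnothing\in\mathcal S$, which the paper leaves implicit.
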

\begin{proof}
  Let $\mathcal{Q} = \mathcal{S}_{\mf A}[\mu]$. It is clear from~(\ref{eq:Smu}) that, together with each of its elements, $\mathcal{Q}$ contains all its subsets belonging to $\q\mu$. In particular, $\mathcal{Q}$ is closed under countable intersections and $A\setminus B\in \mathcal{Q}$ for every $A,B\in \mathcal{Q}$. It remains to show that $A\cup B\in \mathcal{Q}$ for every $A,B\in \mathcal{Q}$. Let $A,B\in \mathcal{Q}$. Let $E,F\in\sigma_0(D_\mu)$ be such that $A\subset E$, $B\subset F$, $\ex{\mf A}{E}\mu$, and $\ex{\mf A}{F}\mu$. Let $G = E\cup F$. Since $G\in \sigma_0(D_\mu)$, $A\cup B\in\q\mu$, and $A\cup B\subset G$, it suffices to verify that $\ex{\mf A}{G}\mu$. Suppose this is not the case. Then there are a disjoint family $\mb C\colon \N\to D_\mu$ and a closed neighbourhood of the origin $U$ in $\mf A$ such that $\mb C(n)\subset G$ and $\mu(\mb C(n))\notin U$ for every $n\in \N$. By Lemma~\ref{l_Smuaux} applied to $V = \mf A\setminus U$, there are $\mb A\colon \N\to D_\mu$ and $\mb B\colon \N\to D_\mu$ such that $\mb A(n)\subset E\cap\mb C(n)$, $\mb B(n)\subset F\cap\mb C(n)$, and
  \begin{equation}
    \label{eq:notin}
    \mu(\mb A(n)) + \mu(\mb B(n))\notin U
  \end{equation}
  for every $n\in \N$. As $\mb C$ is disjoint, the families $\mb A$ and $\mb B$ are disjoint and, therefore, $\lim_{n\to\infty}\mu(\mb A(n)) = \lim_{n\to\infty}\mu(\mb B(n)) = 0$. This implies that $\lim_{n\to\infty}(\mu(\mb A(n)) + \mu(\mb B(n))) = 0$, in contradiction to~(\ref{eq:notin}).
\end{proof}

The next theorem is the main result of this section.

\begin{theorem}\label{t_ext}
  Let $\mf A$ be a sequentially complete HTG and $\mu$ be a weakly exhaustive $\mf A$-valued $\sigma$-content. Then there is a unique $\mf A$-valued measure $\nu$ such that $D_\mu\subset D_\nu\subset \q\mu$ and $\mu = \nu|_{D_\mu}$. We have $D_\mu\subset\mathcal{S}_{\mf A}[\mu]\subset D_\nu$.
\end{theorem}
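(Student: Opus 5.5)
Uniqueness is immediate from Theorem~\ref{tt1}: if $\nu_1,\nu_2$ are measures with $D_\mu\subset D_{\nu_i}\subset\q\mu$ extending $\mu$, apply it with $\mathcal K=D_\mu$ (a ring, hence $\cap$-closed) to get $\nu_1=\nu_2$. For existence, the plan is to build a premeasure $\eta$ on the $\delta$-ring $\mathcal{S}=\mathcal{S}_{\mf A}[\mu]$ extending $\mu$, and then set $\nu=\as[\mf A]\eta$. Since $\q\eta=\q\mu$ whenever $D_\mu\subset\mathcal{S}\subset\q\mu$, we get $D_\mu\subset\mathcal{S}\subset D_\nu\subset\q\mu$ and $\nu|_{D_\mu}=\mu$. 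That is everything claimed. Note that $\mathcal{S}$ is a $\delta$-ring by Proposition~\ref{p_Smu}, since an HTG is regular, and $D_\mu\subset\mathcal{S}$ because $\mu$ is weakly exhaustive: each $A\in D_\mu$ lies in $\sigma_0(D_\mu)$ with $\ex{\mf A}{A}\mu$.

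To construct $\eta$, I would localize Proposition~\ref{p_sion}. Fix $E\in\sigma_0(D_\mu)$ with $\ex{\mf A}{E}\mu$ and put $\mathcal K_E=\set{A\cap E}{A\in D_\mu}$. Every $A\cap E$ is a countable union of elements of $D_\mu$ inside $A$, so $\mathcal K_E\subset\q\mu$, but it need not lie in $D_\mu$. To avoid this, I would instead take the ring $\mathcal{K}_E=\set{C\in D_\mu}{C\subset E}$ and the restriction $\mu_E=\mu|_{\mathcal K_E}$, which is a $\sigma$-content. It is exhaustive because every disjoint sequence in $\mathcal K_E$ lies in $E$. Proposition~\ref{p_sion} then gives a unique $\sigma$-additive $\nu_E$ on $\sigma(\mathcal K_E)$ extending $\mu_E$.

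Next one checks that $\sigma(\mathcal K_E)=\set{B\in\q\mu}{B\subset E}$. Since $E=\bigcup_n \mb E(n)$ with $\mb E(n)\in D_\mu$, a set $B\in\q\mu$ with $B\subset E$ satisfies $B=\bigcup_n (B\cap \mb E(n))$. Each $B\cap\mb E(n)$ belongs to $\sigma(D_\mu)$ and sits inside $\mb E(n)$, so it is a countable union of $D_\mu$-sets contained in $\mb E(n)\subset E$; here Lemma~\ref{l_AcapB} and Lemma~\ref{ll1}(ii) applied to $\delta(D_\mu)$ handle this. I would then define $\eta(B)=\nu_E(B)$ for $B\in\mathcal{S}$ and any admissible $E\supset B$.

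For consistency, take two admissible $E,F$. The set $E\cup F$ is also admissible, by the argument in the proof of Proposition~\ref{p_Smu}. Both $\nu_E$ and $\nu_{E\cup F}$ restricted to subsets of $E$ are $\sigma$-additive, agree on $\mathcal K_E$, and have domain $\sigma(\mathcal K_E)$, so they coincide there by Proposition~\ref{p_groupunique}. Hence $\eta$ is well defined and extends $\mu$. For $\sigma$-additivity, given $B\in\mathcal{S}$ with a countable partition into elements of $\mathcal{S}$, all pieces lie in the same $E\supset B$, so $\sigma$-additivity of $\eta$ reduces to that of $\nu_E$.

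The main obstacle is the bookkeeping around $\mathcal K_E$. One must confirm that $\mathcal K_E$ is a ring whose generated $\sigma$-ring is exactly the trace of $\q\mu$ on $E$. One must also confirm that Proposition~\ref{p_sion}'s exhaustiveness hypothesis holds in the local form, since weak exhaustiveness is only assumed on $E$. If this identification fails, the fallback is to rely only on $\mathcal{S}\subset\sigma(\mathcal K_E)$ for a suitable $E$, which is all the construction actually uses.
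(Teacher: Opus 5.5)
Your proposal is correct and is essentially the paper's proof. You localize Proposition~\ref{p_sion} to the rings $D_\mu|^E$ for admissible $E$, identify $\sigma(D_\mu|^E)$ with the trace of $\q\mu$ on $E$ (the paper's Lemma~\ref{l_sigma|}), glue via Proposition~\ref{p_groupunique} into a premeasure on $\mathcal{S}_{\mf A}[\mu]$, take the associated measure, and get uniqueness from Theorem~\ref{tt1}. The paper compares two localizations through $E\cap F$ rather than $E\cup F$, which avoids needing regularity at that step.

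One slip needs fixing. $B\cap\mb E(n)$ need not be a countable union of $D_\mu$-sets; a Cantor set against the ring generated by half-open intervals is a counterexample. The identification still holds, because Lemma~\ref{l_AcapB}(ii) with $\mathcal{K}=D_\mu$ and $\mathcal{Q}=\mathcal{R}=D_\mu|^E$, together with $E\in\sigma(D_\mu|^E)$, gives $\q\mu|^E\subset\sigma(D_\mu|^E)$ directly.
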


If $\mu$ is exhaustive, then $\mathcal{S}_{\mf A}[\mu] = \q\mu$ by Lemma~\ref{l_sigma0} and Theorem~\ref{t_ext} reduces to Proposition~\ref{p_sion}. In general, it may happen that $D_\nu \neq \mathcal{S}_{\mf A}[\mu]$ under the conditions of Theorem~\ref{t_ext} (see the example in Appendix~\ref{s_example2}). We expect that the equality $D_\nu = \mathcal{S}_{\mf A}[\mu]$ holds for metrizable~$\mf A$.

\begin{corollary}\label{c_weber}
  Under the conditions of Theorem~\ref{t_ext}, there is a unique $\mf A$-valued $\sigma$-additive function $\nu$ such that $D_\nu = \delta(D_\mu)$ and $\mu = \nu|_{D_\mu}$.
\end{corollary}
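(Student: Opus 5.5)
Given Theorem~\ref{t_ext}, the plan is to take $\nu_0$ to be the measure provided there and restrict it to $\delta(D_\mu)$. Write $\nu_0$ for the unique $\mf A$-valued measure with $D_\mu\subset D_{\nu_0}\subset\q\mu$ and $\mu = \nu_0|_{D_\mu}$.

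For existence, note that $D_{\nu_0}$ is a $\delta$-ring containing $D_\mu$, so $\delta(D_\mu)\subset D_{\nu_0}$. Set $\nu = \nu_0|_{\delta(D_\mu)}$. The restriction of a $\sigma$-additive function to any subfamily of its domain is again $\sigma$-additive: a countable partition into elements of $\delta(D_\mu)$ of a set in $\delta(D_\mu)$ is also a partition into elements of $D_{\nu_0}$, and $\nu_0$ is $\sigma$-additive. Hence $\nu$ is $\sigma$-additive, and it extends $\mu$ because $\nu_0$ does. So $\nu$ has the required properties.

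For uniqueness, suppose $\nu_1$ and $\nu_2$ are both $\mf A$-valued $\sigma$-additive functions with domain $\delta(D_\mu)$ that extend $\mu$. Apply Proposition~\ref{p_groupunique} with $\mathcal K = D_\mu$. This $\mathcal K$ is a ring, hence $\cap$-closed. The hypotheses hold: $D_{\nu_1}\cap D_{\nu_2} = \delta(D_\mu)$, so the inclusions $\delta(\mathcal K)\subset D_{\nu_1}\cap D_{\nu_2}\subset \sigma(\mathcal K)$ are satisfied, and $\nu_1$ and $\nu_2$ agree on $\mathcal K$. The proposition then gives $\nu_1 = \nu_2$ on $\delta(D_\mu)$. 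One could instead use Theorem~\ref{tt1}, since both are premeasures, but that theorem only yields similarity; Proposition~\ref{p_groupunique} gives equality on the whole common domain directly.

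There is no real obstacle in this argument. The only points that need a check are the inclusion $\delta(D_\mu)\subset D_{\nu_0}$, which holds because measures have $\delta$-ring domains, and the fact that $\sigma$-additivity survives restriction to a $\delta$-ring.
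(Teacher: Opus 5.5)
Your proposal is correct and matches the paper's proof: existence comes from Theorem~\ref{t_ext} by restricting the measure to $\delta(D_\mu)$, and uniqueness comes from Proposition~\ref{p_groupunique} with $\mathcal K = D_\mu$. One side remark is slightly off: Theorem~\ref{tt1} would also give equality here, since two similar maps with the same domain coincide on that whole domain.
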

\begin{proof}
  The existence and uniqueness of $\nu$ are ensured by Theorem~\ref{t_ext} and Proposition~\ref{p_groupunique} respectively.
\end{proof}

\begin{remark}\label{r_weber}
  Corollary~\ref{c_weber} was obtained by Weber (see Folgerung~4.6 in~\cite{Weber1976}) for complete $\mf A$. The present version follows from Weber's result by the same completion argument as in the proof of Proposition~\ref{p_sion}. The existence of $\nu$ in Theorem~\ref{t_ext} follows immediately from  Corollary~\ref{c_weber} (just extend $\mu$ to $\delta(D_\mu)$ and take the associated measure) and, hence, is an easy consequence of Weber's result. At the same time, this argument does not yield the inclusion $\mathcal{S}_{\mf A}[\mu]\subset D_\nu$. Below, we derive Theorem~\ref{t_ext} (and Weber's result) from Proposition~\ref{p_sion}.
\end{remark}

\begin{corollary}\label{c_posext}
  Let $\mu$ be a positive $\sigma$-content. Then there is a unique $\R$-valued measure $\nu$ such that $D_\mu\subset D_\nu\subset \q\mu$ and $\mu = \nu|_{D_\mu}$. We have $\im\nu\subset\R_+$.  
\end{corollary}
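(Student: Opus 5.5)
The plan is to derive Corollary~\ref{c_posext} from Theorem~\ref{t_ext} with $\mf A = \R$. Since $\R$ is a complete (hence sequentially complete) HTG, it suffices to check that a positive $\sigma$-content $\mu$ is weakly exhaustive. Then Theorem~\ref{t_ext} gives the existence and uniqueness of an $\R$-valued measure $\nu$ with $D_\mu\subset D_\nu\subset\q\mu$ and $\mu = \nu|_{D_\mu}$. The remaining claim, $\im\nu\subset\R_+$, will follow from a closed-range argument.

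\emph{Weak exhaustivity.} Let $A\in D_\mu$ and let $\mb A\colon\N\to D_\mu$ be a disjoint family with $\mb A(n)\subset A$ for all $n$. For each $N$, the set $B_N = \bigcup_{n\le N}\mb A(n)$ lies in $D_\mu$ because $D_\mu$ is a ring. By additivity and positivity,
\[
  \finsum_{n=1}^N \mu(\mb A(n)) = \mu(B_N) \le \mu(B_N) + \mu(A\setminus B_N) = \mu(A).
\]
So the nonnegative series $\sum_n\mu(\mb A(n))$ has bounded partial sums. Hence it converges, and $\mu(\mb A(n))\to 0$. This shows that $\mu$ is $A$-exhaustive for every $A\in D_\mu$.

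\emph{Positivity of $\nu$.} Here I would use Theorem~\ref{tt4} with $\mf A=\R$, $\mf B=\R_+$ (a closed submonoid) and $\mathcal K = D_\mu$. This set $\mathcal K$ is a ring, hence $\cap$-closed and a semi-ring. Its hypothesis $\delta(D_\mu)\subset D_\nu\subset\sigma(D_\mu)$ holds because $D_\nu$ is a $\delta$-ring containing $D_\mu$ and contained in $\q\mu$. The other hypothesis, $\im\nu|_{D_\mu}=\im\mu\subset\R_+$, is exactly the positivity of $\mu$. Theorem~\ref{tt4} then gives $\im\nu\subset\R_+$.

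Alternatively, Proposition~\ref{p_Im} with $X=\R_+$ gives the same conclusion directly. Either way, the only step needing a short argument is weak exhaustivity; everything else is a direct application of earlier results.
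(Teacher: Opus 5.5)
Your proposal is correct and follows the paper's proof essentially verbatim: you get weak exhaustivity from the bound of the partial sums by $\mu(A)$, existence and uniqueness from Theorem~\ref{t_ext} with $\mf A=\R$, and positivity from Proposition~\ref{p_Im} with $X=\R_+$, $\mathcal{K}=D_\mu$, using $\delta(D_\mu)\subset D_\nu$. Your alternative via Theorem~\ref{tt4} is equally valid, since a ring is a semi-ring.
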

\begin{proof}
  Let $A\in D_\mu$ and $\mb A\colon\N\to D_\mu$ be a disjoint family such that $\mb A(n)\subset A$ for every $n\in\N$. By the positivity of $\mu$, we have $\finsum_{k\in K}\mu(\mb A(k)) \leq \mu(A)$ for every $K\in\mathcal{F}_\N$ and, hence, the family $\mu\circ\mb A$ is summable in $\R$. This implies that $\lim_{n\to\infty}\mu(\mb A(n)) = 0$, i.e., $\mu$ is a weakly exhaustive $\R$-valued map. The existence and uniqueness of $\nu$ therefore follow from Theorem~\ref{t_ext}. As $D_\nu$ is a $\delta$-ring, we have $\delta(D_\mu)\subset D_\nu$ and Proposition~\ref{p_Im} applied to $\R_+$, $D_\mu$, and $\nu$ in place of $X$, $\mathcal{K}$, and $\mu$ respectively ensures that $\im \nu\subset \R_+$.
\end{proof}

Using Corollary~\ref{c_posext} and the correspondence between $\mf R$-valued measures and positive measures established in Sec.~\ref{s_pos}, it is easy to derive Halmos's extension result for $\mf R$-valued $\sigma$-contents.

\begin{proposition}[{\cite[Sec.~13, Theorem~A]{Halmos}}]
  Let $\eta$ be a $\sigma$-finite $\mf R$-valued $\sigma$-content. Then there is a unique $\mf R$-valued measure $\zeta$ such that $D_\zeta = \q\eta$ and $\eta = \zeta|_{D_\eta}$. The measure $\zeta$ is $\sigma$-finite.
\end{proposition}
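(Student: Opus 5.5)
Let $\mu = \mf F\eta$. By Proposition~\ref{p_Fmupremeas}, $\mu$ is a positive $\sigma$-content. Since $\eta$ is $\sigma$-finite, Proposition~\ref{p_barsigma} gives $\q\mu = \q\eta$ and $D_\eta\subset\sigma_0(D_\mu)$. By Corollary~\ref{c_posext}, there is a positive measure $\nu$ such that $D_\mu\subset D_\nu\subset\q\mu$ and $\mu = \nu|_{D_\mu}$. We set $\zeta = \as[\mf R]\nu$. By Theorem~\ref{t_Fass}(i), $\zeta$ is a $\sigma$-finite $\mf R$-valued measure, $\zeta$ extends $\nu$, and $D_\zeta = \q\nu = \q\mu = \q\eta$. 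This settles existence of a candidate with the right domain. It remains to check that $\zeta$ extends $\eta$.

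Let $A\in D_\eta$. Since $A\in\sigma_0(D_\mu)$ and $D_\mu$ is a ring, there is a countable partition $\mb A$ of $A$ into elements of $D_\mu$. The family $\eta\circ\mb A = \mu\circ\mb A = \zeta\circ\mb A$ is summable in $\mf R$ by Lemma~\ref{l_Rsummable}. By the $\sigma$-additivity of $\eta$ and of $\zeta$ (both in $\mf R$; note $A\in D_\zeta$ because $D_\zeta = \q\eta$), we get $\eta(A) = \sum^{\mf R}\eta\circ\mb A = \sum^{\mf R}\zeta\circ\mb A = \zeta(A)$. Hence $\eta = \zeta|_{D_\eta}$. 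Equivalently, Proposition~\ref{p_un0} could be applied with $\mathcal{K} = D_\mu$.

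For uniqueness, let $\zeta'$ be another $\mf R$-valued measure with $D_{\zeta'} = \q\eta$ extending $\eta$. Then $\zeta$ and $\zeta'$ are $\sigma$-additive and coincide on the ring $D_\mu$. By Proposition~\ref{p_un0}, they coincide on $\q\eta\cap\sigma_0(D_\mu)$. This is not yet all of $\q\eta$, and completing the uniqueness argument is the main obstacle, because Proposition~\ref{p_groupunique} is only available for groups. To handle it, take $B\in\q\eta$. By Lemma~\ref{l_sigma0}, applied with $\mathcal{K} = D_\mu$ (using $\q\eta = \sigma(D_\mu)$), $B\subset C$ for some $C\in\sigma_0(D_\mu)$. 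Then $B = \bigcup_n (B\cap \mb C(n))$ for a countable partition $\mb C$ of $C$ into $D_\mu$. Each $B\cap\mb C(n)$ lies in $\sigma(D_\mu)$ and is a subset of an element of the $\delta$-ring $\delta(D_\mu)$, so by Lemma~\ref{ll1}(iii) it lies in $\delta(D_\mu)$. In particular $B\in\sigma_0(\delta(D_\mu))$.

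So it suffices to show that $\zeta$ and $\zeta'$ agree on $\delta(D_\mu)$. On this $\delta$-ring both are finite. Indeed, each element is contained in an element $E$ of $D_\mu$, and $\zeta(E) = \zeta'(E) = \mu(E) < \infty$, so by Lemma~\ref{l_Dmuf} the values there are finite. Hence the restrictions of $\zeta$ and $\zeta'$ to $\delta(D_\mu)$ are $\R$-valued $\sigma$-additive functions agreeing on the ring $D_\mu$. Proposition~\ref{p_groupunique} (with $\mf A=\R$) then gives equality on $\delta(D_\mu)$. Finally, Proposition~\ref{p_un0} with $\mathcal{K}=\delta(D_\mu)$ yields $\zeta=\zeta'$ on $\q\eta$.
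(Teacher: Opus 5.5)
Your proof is correct. The existence half coincides with the paper's argument: finite part $\mu=\mf F\eta$, Corollary~\ref{c_posext}, the associated $\mf R$-valued measure of $\nu$ via Theorem~\ref{t_Fass}(i), and agreement with $\eta$ through countable partitions into elements of $D_\mu$, i.e.\ Proposition~\ref{p_un0}. Your uniqueness half, however, takes a different route.

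The paper passes to the finite part $\nu'=\mf F\zeta'$ of the competitor. By Proposition~\ref{p_Fmu} it is a positive measure extending $\mu$ with $D_{\nu'}\subset\q\mu$, so the uniqueness clause of Corollary~\ref{c_posext} gives $\nu'=\nu$. Hence $\zeta$ and $\zeta'$ agree on the $\delta$-ring $D_\nu$, which generates $D_\zeta=D_{\zeta'}$, and Theorem~\ref{t_unique} concludes. You instead work directly on $\delta(D_\mu)$. There, finiteness (Lemma~\ref{l_Dmuf}) lets you apply the group-valued uniqueness result Proposition~\ref{p_groupunique} in $\R$, and you then spread the agreement to $\q\eta$.

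Your route is more elementary. It avoids Proposition~\ref{p_Fmu} (and hence Proposition~\ref{p_PosSigma}) and the uniqueness part of Theorem~\ref{t_ext}, and it shows plainly where $\sigma$-finiteness enters: finite values on a $\delta$-ring generating $\q\eta$. The paper's route is shorter given the machinery of Section~\ref{s_pos}, and it exhibits uniqueness as a consequence of the correspondence between $\sigma$-finite $\mf R$-valued measures and positive measures. Both ultimately rest on the same monotone-class argument, since the uniqueness in Corollary~\ref{c_posext} comes from Theorem~\ref{tt1}.

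Two small remarks. First, the detour through Lemma~\ref{l_sigma0} and Lemma~\ref{ll1}(iii) is unnecessary: $\sigma(\delta(D_\mu))=\q\eta$, so Proposition~\ref{p_un0delta} with $\mathcal K=\delta(D_\mu)$ gives $\zeta=\zeta'$ at once. Second, when you call the restrictions to $\delta(D_\mu)$ ``$\R$-valued $\sigma$-additive'', you should cite~(\ref{eq:posR+premeas}), since summability in $\mf R$ and in $\R$ are a priori different notions.
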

\begin{proof}
  Let $\mu = \mf F\eta$. Proposition~\ref{p_Fmupremeas} implies that $\mu$ is a positive $\sigma$-content. By Corollary~\ref{c_posext}, there is a positive measure $\nu$ such that $\nu$ is an extension of $\mu$ and $\q\nu =\q\mu$. Proposition~\ref{p_barsigma} ensures that $\q\mu = \q\eta$ and, hence, $\q\nu = \q\eta$. Let $\zeta = \as[\mf R]\nu$. By Theorem~\ref{t_Fass}(i), $\zeta$ is a $\sigma$-finite $\mf R$-valued measure, $\zeta$ is an extension of $\nu$, and $D_\zeta = \q\eta$. Let $\mathcal{K} = D_\mu$. As $D_\eta\subset\sigma_0(\mathcal{K})$ by Proposition~\ref{p_barsigma} and $D_\eta\subset D_\zeta$, we have $\mathcal{K}\subset D_\eta\cap D_\zeta\subset \sigma_0(\mathcal{K})$. Since $\eta|_{\mathcal{K}} = \zeta|_{\mathcal{K}}$, Proposition~\ref{p_un0} implies that $\eta = \zeta|_{D_\eta}$. Let $\zeta'$ be another $\mf R$-valued measure such that $D_{\zeta'} = \q\eta$ and $\eta = \zeta'|_{D_\eta}$. Let $\nu' = \mf F\zeta'$. By Proposition~\ref{p_Fmu}, $\nu'$ is a positive measure. Clearly, $\nu'$ is an extension of $\mu$. As $D_{\zeta'} = \q\mu$, we have $D_{\nu'}\subset\q\mu$ and it follows from the uniqueness part of Corollary~\ref{c_posext} that $\nu = \nu'$. This means that $\zeta$ and $\zeta'$ coincide on $D_\nu$. Since $\q\nu = D_\zeta = D_{\zeta'}$, Theorem~\ref{t_unique} implies that $\zeta = \zeta'$. 
\end{proof}

The rest of this section is devoted to the proof of Theorem~\ref{t_ext}.

Given sets $\mathcal{K}$ and $A$, we put $\mathcal{K}|^A = \set{B}{B\in\mathcal{K}\mbox{ and }B\subset A}$. For every $\mathcal{K}$, $A$, and $B$, we obviously have
\begin{equation}
  \label{eq:|cap}
  \mathcal{K}|^{A\cap B} = \mathcal{K}|^A\cap\mathcal{K}|^B.
\end{equation}

\begin{lemma}\label{l_sigma|}
  Let $\mathcal{K}$ be a $\cap$-closed set and $A\in \sigma_0(\mathcal{K})$. Then $\sigma(\mathcal{K}|^A) = \sigma(\mathcal{K})|^A$.
\end{lemma}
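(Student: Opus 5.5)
We prove the two inclusions separately. Write $\mathcal{K}|^A$ for the elements of $\mathcal{K}$ contained in $A$, and $\sigma(\mathcal{K})|^A$ for the elements of $\sigma(\mathcal{K})$ contained in $A$.

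\emph{The inclusion $\sigma(\mathcal{K}|^A)\subset\sigma(\mathcal{K})|^A$.} Since $A\in\sigma_0(\mathcal{K})\subset\sigma(\mathcal{K})$, the set $\sigma(\mathcal{K})|^A$ is a $\sigma$-ring. It is closed under countable unions and differences, and all its members lie in $A$. It also contains $\mathcal{K}|^A$. Hence it contains $\sigma(\mathcal{K}|^A)$. Note that this direction does not use the $\cap$-closedness of $\mathcal{K}$.

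\emph{The inclusion $\sigma(\mathcal{K})|^A\subset\sigma(\mathcal{K}|^A)$.} Write $A=\bigcup_{n\in\N}\mb A(n)$ with $\mb A(n)\in\mathcal{K}$ (if the family is finite or empty, pad it with repetitions, or treat it directly). Let $\mathcal{R}=\sigma(\mathcal{K}|^A)$. We reduce the claim to the following statement:
\[
C\cap\mb A(n)\in\mathcal{R}\quad\mbox{for every } C\in\sigma(\mathcal{K}) \mbox{ and } n\in\N.
\]
Granting this, take $B\in\sigma(\mathcal{K})$ with $B\subset A$. Then $B=\bigcup_n (B\cap\mb A(n))$ is a countable union of elements of $\mathcal{R}$, so $B\in\mathcal{R}$.

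To prove the displayed statement, fix $n$ and consider the class $\mathcal{Q}_n$ of all $C\in\sigma(\mathcal{K})$ with $C\cap\mb A(n)\in\mathcal{R}$. This class is a $\sigma$-ring, because $\mathcal{R}$ is a $\sigma$-ring and the operation $C\mapsto C\cap\mb A(n)$ commutes with countable unions and differences. If $C\in\mathcal{K}$, then $C\cap\mb A(n)\in\mathcal{K}$ by $\cap$-closedness, and $C\cap\mb A(n)\subset A$. Hence $C\cap\mb A(n)\in\mathcal{K}|^A\subset\mathcal{R}$, so $\mathcal{K}\subset\mathcal{Q}_n$ and therefore $\mathcal{Q}_n=\sigma(\mathcal{K})$.

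Alternatively, the same conclusion follows from Lemma~\ref{l_AcapB}(i) with $\mathcal{K}$, $\{\mb A(n)\}$ and $\mathcal{K}|^A$ in the roles of $\mathcal{K}$, $\mathcal{Q}$ and $\mathcal{R}$. This gives $C\cap\mb A(n)\in\sigma(\mathcal{K}|^A)$ for $C\in\sigma(\mathcal{K})$.

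There is no serious obstacle. The only points needing care are the use of $\cap$-closedness to get $C\cap\mb A(n)\in\mathcal{K}$, and the degenerate case of an empty family. In that case $A=\varnothing$, and both sides equal $\{\varnothing\}$, since $\sigma(\varnothing)=\{\varnothing\}$.
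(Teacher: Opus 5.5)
Your proposal is correct and follows essentially the same route as the paper: the easy inclusion via the $\sigma$-ring $\sigma(\mathcal{K})|^A$, then a good-sets argument using $\cap$-closedness on generators. The only cosmetic difference is that the paper runs the good-sets argument for $B\mapsto A\cap B$ directly, decomposing $A=\bigcup_i\mb A(i)$ when checking $B\in\mathcal{K}$, whereas you run it for each $\mb A(n)$ separately and reassemble $B=\bigcup_n(B\cap\mb A(n))$ at the end.
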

\begin{proof}
  Let $\mathcal{Q} = \sigma(\mathcal{K}|^A)$ and $\mathcal{Q}' = \sigma(\mathcal{K})|^A$. Since $\mathcal{Q}'$ is a $\sigma$-ring and $\mathcal{K}|^A\subset\mathcal{Q}'$, we have $\mathcal{Q}\subset\mathcal{Q}'$. We show that
  \begin{equation}\label{eq:BAQ}
    A\cap B\in \mathcal{Q},\quad B\in \sigma(\mathcal{K}).
  \end{equation}
  Let $\mathcal{R} = \set{B}{B\in\sigma(\mathcal{K})\mbox{ and }A\cap B\in \mathcal{Q}}$. Let $B\in \mathcal{K}$ and $\mb A\colon I\to \mathcal{K}$ be a countable family such that $A = \bigcup_{i\in I}\mb A(i)$. Since $\mathcal{K}$ is $\cap$-closed, we have $\mb A(i)\cap B\in \mathcal{K}|^A$ for every $i\in I$ and the equality $A\cap B = \bigcup_{i\in I}(\mb A(i)\cap B)$ implies that $A\cap B\in \mathcal{Q}$ and, hence, $B\in \mathcal{R}$. Thus, $\mathcal{K}\subset \mathcal{R}$. As $\mathcal{R}$ is obviously a $\sigma$-ring, we conclude that $\mathcal{R} = \sigma(\mathcal{K})$ and (\ref{eq:BAQ}) is proved. Let $B\in \mathcal{Q}'$. Since $B\in \sigma(\mathcal{K})$ and $B = A\cap B$, it follows from~(\ref{eq:BAQ}) that $B\in \mathcal{Q}$. This means that $\mathcal{Q}'\subset\mathcal{Q}$ and, therefore, $\mathcal{Q} = \mathcal{Q}'$.
\end{proof}

\begin{proof}[Proof of Theorem~\ref{t_ext}]
  Let $\mathcal{S} = \mathcal{S}_{\mf A}[\mu]$ and $\Lambda = \set{\lambda\in \sigma_0(D_\mu)}{\ex{\mf A}{\lambda}{\mu}}$. Clearly, $\Lambda$ is $\cap$-closed. Let $\fm{D}{\Lambda}$ and $\fm{\Theta}{\Lambda}$ be such that $D(\lambda) = D_\mu|^\lambda$ and $\Theta(\lambda) = \q\mu|^\lambda$ for every $\lambda\in \Lambda$.
  Then
  \begin{equation}
    \label{eq:DS}
    D_\mu = \bigcup_{\lambda\in\Lambda}D(\lambda),\quad \mathcal{S} = \bigcup_{\lambda\in\Lambda}\Theta(\lambda).
  \end{equation}
  Indeed, as $\mu$ is weakly exhaustive, we have $D_\mu\subset\Lambda$. This implies the first equality in~(\ref{eq:DS}). The second equality in~(\ref{eq:DS}) follows immediately from~(\ref{eq:Smu}). By Lemma~\ref{l_sigma|}, we conclude that
  \begin{equation}
    \label{eq:sigmaD}
    \Theta(\lambda) = \sigma(D(\lambda)),\quad \lambda\in\Lambda.
  \end{equation}
  Let $\fm{\rho}{\Lambda}$ be such that $\rho(\lambda) = \mu|_{D(\lambda)}$ for every $\lambda\in \Lambda$. The definition of $\Lambda$ implies that $\rho(\lambda)$ is an exhaustive $\mf A$-valued $\sigma$-content for every $\lambda\in \Lambda$. By Proposition~\ref{p_sion} and~(\ref{eq:sigmaD}), there is $\fm{\tau}{\Lambda}$ such that $\tau(\lambda)$ is an $\mf A$-valued $\sigma$-additive function, $D_{\tau(\lambda)} = \Theta(\lambda)$, and $\tau(\lambda)|_{D(\lambda)} = \rho(\lambda)$ for every $\lambda\in \Lambda$. We show that
  \begin{equation}
    \label{eq:lsubl'}
    \tau(\lambda)|_{\Theta(\lambda')} = \tau(\lambda')\mbox{ for every }\lambda,\lambda'\in\Lambda\mbox{ such that }\lambda'\subset\lambda.
  \end{equation}
  Let $\lambda,\lambda'\in \Lambda$ and $\lambda'\subset\lambda$. Let $\mathcal{K} = D(\lambda')$ and $\varphi = \tau(\lambda)|_{\Theta(\lambda')}$. Since $\rho(\lambda)|_{\mathcal{K}} = (\mu|_{D(\lambda)})|_{\mathcal{K}} = \mu|_{\mathcal{K}} = \rho(\lambda')$, we obtain
  \[
    \varphi|_{\mathcal{K}} = \tau(\lambda)|_{\mathcal{K}} = (\tau(\lambda)|_{D(\lambda)})|_{\mathcal{K}} = \rho(\lambda)|_{\mathcal{K}} = \rho(\lambda') = \tau(\lambda')|_{\mathcal{K}}.
  \]
  By Proposition~\ref{p_groupunique} and~(\ref{eq:sigmaD}), we conclude that $\varphi = \tau(\lambda')$ and (\ref{eq:lsubl'}) is proved. Since $\Lambda$ is $\cap$-closed, it follows from~(\ref{eq:|cap}) that $\Theta(\lambda)\cap\Theta(\lambda') = \Theta(\lambda\cap\lambda')$ for every $\lambda,\lambda'\in\Lambda$. By~(\ref{eq:lsubl'}), we conclude that $\tau(\lambda)|_{\Theta(\lambda)\cap\Theta(\lambda')} = \tau(\lambda\cap\lambda') = \tau(\lambda')|_{\Theta(\lambda)\cap\Theta(\lambda')}$ for every $\lambda,\lambda'\in\Lambda$. In view of the second equality in~(\ref{eq:DS}), this implies the existence of $\fm{\eta}{\mathcal{S}}$ such that $\eta|_{\Theta(\lambda)} = \tau(\lambda)$ for every $\lambda\in \Lambda$. If $\lambda,\lambda'\in\Lambda$, $A\in \Theta(\lambda)$, and $B\in\Theta(\lambda')$, then we obviously have $A\cap B\in \Theta(\lambda)\cap\Theta(\lambda')$. By Propositions~\ref{ll6a}(iii) and~\ref{p_Smu}, we conclude that $\eta$ is an $\mf A$-valued premeasure. By~(\ref{eq:DS}), we have $D_\mu\subset\mathcal{S}$. Since
  \[
    (\eta|_{D_\mu})|_{D(\lambda)} = \eta|_{D(\lambda)} = (\eta|_{\Theta(\lambda)})|_{D(\lambda)} = \tau(\lambda)|_{D(\lambda)} = \rho(\lambda)
  \]
  for every $\lambda\in\Lambda$, the first equality in~(\ref{eq:DS}) implies that $\eta|_{D_\mu} = \mu$. Let $\nu = \as[\mf A]\eta$. Then $\nu$ is an extension of $\eta$ and it follows from~(\ref{Qmu}) that $\q\nu = \sigma(\mathcal{S})$. We therefore have $D_\mu\subset \mathcal{S}\subset D_\nu\subset\q\mu$ and $\nu|_{D_\mu} = \mu$. The uniqueness of $\nu$ is ensured by Theorem~\ref{tt1}.
\end{proof}

\appendix

\section{Summation in monoids}
\label{app_B}

The notion of summability used in Sec.~\ref{s_add} and several of the results below are standard for HTGs; see, e.g., \cite[Sec.~III.5]{BourbakiTopologie1_4}. We give a self-contained treatment because we need corresponding statements for arbitrary HTMs and, in particular, must keep track of where regularity is required.

First of all, we give the precise definition of a finite sum. Given a monoid $\mf A$, there is a unique map $\psi$ from the class of all finite $\mf A$-valued families to $\mf A$ such that 
\begin{enumerate}
\item[(a)] $\psi(\varnothing) = 0$,
\item[(b)] if $f\colon K \to \mf A$ is a finite family and $k\in K$, then $\psi(f) = f(k) + \psi\left(f|_{K\setminus\{k\}}\right)$.
\end{enumerate}
The existence and uniqueness of $\psi$ follow by induction on the cardinality of $K$. If $f$ is a finite $\mf A$-valued family, then we put $\finsum f = \psi(f)$. If $K$ is a finite set and $f$ is such that $f(k)\in\mf A$ for every $k\in K$, then we define $\finsum_{k\in K}f(k) = \finsum\{f(k)\}_{k\in K}$.

\begin{lemma}\label{l_finsumdef}
  Let $\mf A$ be a monoid and $g\colon K\to \mf A$ be a finite family. If $K = \varnothing$, then $\finsum g = 0$. If $k_0\in K$, then $\finsum g = g(k_0) + \finsum g|_{K\setminus\{k_0\}}$. 
\end{lemma}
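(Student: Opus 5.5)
This follows directly from the characterization of $\psi$ by properties (a) and (b), since $\finsum g$ is defined as $\psi(g)$; no new construction is needed.

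If $K=\varnothing$, then $g$ is the empty family, i.e., $g=\varnothing$. Property~(a) gives $\psi(\varnothing)=0$, hence $\finsum g = \psi(g) = 0$.

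If $k_0\in K$, apply property~(b) to the finite family $g\colon K\to\mf A$ and the element $k_0$. This yields $\psi(g) = g(k_0)+\psi\bigl(g|_{K\setminus\{k_0\}}\bigr)$. The restriction $g|_{K\setminus\{k_0\}}$ is again a finite $\mf A$-valued family, so $\psi$ is defined on it and its value is $\finsum g|_{K\setminus\{k_0\}}$ by definition. Substituting gives the claimed identity.

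All the real content lies in the existence and uniqueness of $\psi$, which the paper takes as given, established by induction on $\card K$. If that were to be spelled out, the main step would be showing that the recursive definition does not depend on which $k$ is removed. For $k\neq k'$ one expands twice using the inductive hypothesis on smaller families and then applies commutativity and associativity of $\mf A$. Given the stated properties of $\psi$, however, the lemma needs nothing beyond unwinding the definition of $\finsum$.
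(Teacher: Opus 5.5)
Your proposal is correct and matches the paper, which simply says the lemma follows immediately from the definition of $\finsum g$ as $\psi(g)$ via properties (a) and (b). Your side remark that the real content sits in the existence and uniqueness of $\psi$, including independence of the removed element, is accurate but not needed for the lemma itself.
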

\begin{proof}
  The statement follows immediately from the definition of $\finsum g$.
\end{proof}

\begin{proposition}\label{p_finsumdef1}
  Let $\mf A$ be a monoid, $K$ be a finite set, and $f(k)\in \mf A$ for all $k\in K$. If $K = \varnothing$, then $\finsum_{k\in K} f(k) = 0$. If $k_0\in K$, then $\finsum_{k\in K} f(k) = f(k_0) + \finsum_{k\in K\setminus\{k_0\}}f(k)$. 
\end{proposition}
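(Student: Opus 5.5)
This is immediate from Lemma~\ref{l_finsumdef}. The plan is to apply that lemma to the family $g = \{f(k)\}_{k\in K}$ and then identify the restricted family appearing there with the family defining the smaller sum.

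First I handle the case $K=\varnothing$. By definition, $\finsum_{k\in K} f(k) = \finsum g$, and Lemma~\ref{l_finsumdef} gives $\finsum g = 0$ when $K$ is empty.

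Next, let $k_0\in K$. Lemma~\ref{l_finsumdef} yields
\[
\finsum g = g(k_0) + \finsum g|_{K\setminus\{k_0\}}.
\]
Here $g(k_0) = f(k_0)$. For the second term, the restriction $g|_{K\setminus\{k_0\}}$ is the map on $K\setminus\{k_0\}$ taking each $k$ to $f(k)$, so it coincides with $\{f(k)\}_{k\in K\setminus\{k_0\}}$; both are maps with the same domain and the same values. Hence
\[
\finsum g|_{K\setminus\{k_0\}} = \finsum_{k\in K\setminus\{k_0\}} f(k),
\]
which is the claimed identity.

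There is no real obstacle. The only point that needs care is the identification of the restricted family with the family $\{f(k)\}_{k\in K\setminus\{k_0\}}$, and that follows from the paper's convention that maps are determined by their domain and values.
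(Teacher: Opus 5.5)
Your proposal is correct and follows essentially the same route as the paper: apply Lemma~\ref{l_finsumdef} to $g=\{f(k)\}_{k\in K}$, and use $g(k_0)=f(k_0)$ together with $g|_{K\setminus\{k_0\}}=\{f(k)\}_{k\in K\setminus\{k_0\}}$. The paper's proof consists of exactly these steps.
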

\begin{proof}
  Let $g = \{f(k)\}_{k\in K}$. By definition, we have $\finsum_{k\in K}f(k) = \finsum g$. If $K = \varnothing$, then $\finsum g = 0$ by Lemma~\ref{l_finsumdef} and, hence, $\finsum_{k\in K} f(k) = 0$. Let $k_0\in K$. Since $\{f(k)\}_{k\in K\setminus \{k_0\}} = g|_{K\setminus\{k_0\}}$, we have $\finsum_{k\in K\setminus\{k_0\}}f(k) = \finsum g|_{K\setminus\{k_0\}}$. As $g(k_0) = f(k_0)$, it follows from Lemma~\ref{l_finsumdef} that $\finsum_{k\in K} f(k) = f(k_0) + \finsum_{k\in K\setminus\{k_0\}}f(k)$.
\end{proof}

\begin{proposition}\label{p_finsum}
  Let $\mf A$ be a monoid, $K$ be a finite set, $f(k)\in \mf A$ for all $k\in K$, and $x = \finsum_{k\in K}f(k)$.
  \begin{enumerate}
  \item[(i)] Let $\tau\colon J\to K$ be a bijection. Then $x = \finsum_{j\in J}f(\tau(j))$.
  \item[(ii)] Let $g(k)\in \mf A$ for all $k\in K$. Then $\finsum_{k\in K}(f(k) + g(k)) = x + \finsum_{k\in K}g(k)$.
  \item[(iii)] Let $\mf B$ be a monoid and $\varphi\colon\mf A\to\mf B$ be a monoid homomorphism. Then $\finsum_{k\in K} \varphi(f(k)) = \varphi(x)$.  
  \item[(iv)] Let $K = I\cup J$, where $I\cap J = \varnothing$. Then $x = \finsum_{k\in I}f(k) + \finsum_{k\in J}f(k)$.
  \item[(v)] Let $f(k) = 0$ for all $k\in K$. Then $x = 0$.
  \end{enumerate}
\end{proposition}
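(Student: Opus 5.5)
The plan is induction on the cardinality of $K$, with the recursive characterization of finite sums in Proposition~\ref{p_finsumdef1} (equivalently Lemma~\ref{l_finsumdef}) as the only tool. The order is (v), (iii), (ii), (iv), and finally (i). The base case $K=\varnothing$ is always immediate, since all empty sums equal $0$ and $\varphi(0)=0$. For the inductive step we pick $k_0\in K$, peel off the term indexed by $k_0$ via Proposition~\ref{p_finsumdef1}, apply the induction hypothesis to $K\setminus\{k_0\}$, and use the monoid axioms.

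Concretely, the steps are as follows. For (v), the sum is $0+\finsum_{k\in K\setminus\{k_0\}}f(k)=0+0=0$. For (iii), $\varphi(x)=\varphi(f(k_0))+\varphi\bigl(\finsum_{k\in K\setminus\{k_0\}}f(k)\bigr)$, and the induction hypothesis finishes it. For (ii), we write
\[
\finsum_{k\in K}(f(k)+g(k)) = (f(k_0)+g(k_0)) + \finsum_{k\in K\setminus\{k_0\}}(f(k)+g(k)),
\]
expand the last sum by the induction hypothesis, and regroup using commutativity and associativity.

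For (iv), we induct on $|K|$ with $I,J$ varying. If $K=\varnothing$ the claim is trivial. Otherwise $k_0$ lies in $I$ or in $J$; by symmetry (commutativity of $+$) we may assume $k_0\in I$. Then $K\setminus\{k_0\}=(I\setminus\{k_0\})\cup J$ is still a disjoint union, so the induction hypothesis applies to it. Combining this with Proposition~\ref{p_finsumdef1} applied to $I$ at $k_0$ gives the result after regrouping. If $J=\varnothing$ and $I=K$ this is consistent with the formula, since $\finsum_{k\in J}f(k)=0$.

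For (i), we induct on $|K|$, where $K$ is nonempty in the inductive step, so $J$ is nonempty too. Choose $j_0\in J$ and set $k_0=\tau(j_0)$. Then $\tau$ restricts to a bijection $J\setminus\{j_0\}\to K\setminus\{k_0\}$, and Proposition~\ref{p_finsumdef1} applied on both sides, together with the induction hypothesis, gives equality.

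The main (mild) obstacle is bookkeeping rather than mathematics. In each inductive step one must make sure the induction hypothesis is stated for all data of smaller cardinality: all families $f,g$, all splittings $I,J$, and all bijections. This is needed so that it applies to the restricted objects. No topology or regularity enters, since only finite sums are involved.
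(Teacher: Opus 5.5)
Your proposal is correct and follows essentially the same route as the paper: induction on the cardinality of $K$, peeling off one index via Proposition~\ref{p_finsumdef1} and applying the induction hypothesis to $K\setminus\{k_0\}$ for each of (i)--(v) separately. The only cosmetic difference is in (i), where the paper picks $k_0\in K$ and sets $j_0=\tau^{-1}(k_0)$ instead of starting from $j_0\in J$, which is equivalent.
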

\begin{proof}
  All statements are proved by induction on the cardinality of $K$. By Proposition~\ref{p_finsumdef1}, they hold for $K = \varnothing$. Let $n\in\N$ and suppose the statements are true for all $K$ with $n-1$ elements. Let $K$ have $n$ elements. Choose $k_0\in K$ and let $K' = K\setminus\{k_0\}$. The induction step for each statement is completed as follows.
  \par\medskip\noindent
  (i) Let $j_0 = \tau^{-1}(k_0)$, $J' = J\setminus\{j_0\}$, and $\tau' = \tau|_{J'}$. Clearly, $\tau'$ is a bijection between $J'$ and $K'$. By the  induction hypothesis, we have $\finsum_{k\in K'}f(k) = \finsum_{j\in J'}f(\tau'(j)) = \finsum_{j\in J'} f(\tau(j))$. By Proposition~\ref{p_finsumdef1}, it follows that
  \[
    x = f(k_0) + \finsum_{k\in K'}f(k) = f(\tau(j_0)) + \finsum_{j\in J'}f(\tau(j)) = \finsum_{j\in J}f(\tau(j)). 
  \]  
  \par\noindent
  (ii) The induction hypothesis and Proposition~\ref{p_finsumdef1} yield 
  \begin{multline}\nonumber
    \finsum_{k\in K}(f(k) + g(k)) = f(k_0) + g(k_0) + \finsum_{k\in K'}(f(k) + g(k)) \\= f(k_0) + g(k_0) +  \finsum_{k\in K'}f(k) + \finsum_{k\in K'}g(k) = x + \finsum_{k\in K}g(k).
  \end{multline}
  \par\noindent
  (iii) The induction hypothesis and Proposition~\ref{p_finsumdef1} imply that
  \[
    \finsum_{k\in K} \varphi(f(k)) = \varphi(f(k_0)) + \finsum_{k\in K'}\varphi(f(k)) = \varphi(f(k_0)) + \varphi\left(\finsum_{k\in K'} f(k)\right) = \varphi(x).
  \]
  (iv) We have either $k_0\in I$ or $k_0\in J$. For certainty, we assume $k_0\in I$. Let $I' = I\setminus\{k_0\}$. Then $K' = I'\cup J$ and $I'\cap J = \varnothing$. By the induction hypothesis and Proposition~\ref{p_finsumdef1}, we obtain
  \[
    x = f(k_0) + \finsum_{k\in K'}f(k) = f(k_0) + \finsum_{k\in I'}f(k) + \finsum_{k\in J} f(k) = \finsum_{k\in I}f(k) + \finsum_{k\in J}f(k).
  \]
  (v) The induction hypothesis and Proposition~\ref{p_finsumdef1} yield
  $x = f(k_0) + \finsum_{k\in K'} f(k) = 0 + 0 = 0$.
\end{proof}

We now turn to unconditional summation in HTMs. Some additional results concerning finite summation will later be derived from the analogous topological results.

\begin{proposition}\label{p_sumfin}
  Let $\mf A$ be an HTM, $K$ be a finite set, and $f(k)\in \mf A$ for all $k\in K$. Then $\sum_{k\in K}f(k) = \finsum_{k\in K} f(k)$.
\end{proposition}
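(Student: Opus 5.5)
The plan is to show that the net $\bigS_{k\in K}f(k)$, indexed by the directed set $\mathcal{F}_K$, is eventually constant with value $\finsum_{k\in K}f(k)$. Its limit then exists and equals that value, because $\mf A$ is Hausdorff and limits are unique. Nothing about regularity or the group structure is needed.

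First, since $K$ is finite, $K$ itself is an element of $\mathcal{F}_K$. Moreover, $K$ is the greatest element of $\mathcal{F}_K$ with respect to inclusion, since every finite subset $J$ of $K$ satisfies $J\subset K$. So whenever $J\in\mathcal{F}_K$ and $J\supset K$, we must have $J=K$.

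Next, recall that the net is $\Phi(J)=\finsum_{k\in J}f(k)$. For $J\supset K$ this gives $\Phi(J)=\Phi(K)=\finsum_{k\in K}f(k)=x$. Take any neighbourhood $U$ of $x$. The index $K$ then witnesses convergence, because for every $J\supset K$ we get $\Phi(J)=x\in U$. Hence $\bigS_{k\in K}f(k)$ converges to $x$, so $\sm{\mf A}{k\in K}{f(k)}$ holds.

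Finally, $\mf A$ is Hausdorff, so a convergent net has exactly one limit. Therefore $\sum_{k\in K}f(k)=x$. There is no real obstacle in this argument. The only point needing care is to note explicitly that $K\in\mathcal{F}_K$ is the top element, including the degenerate case $K=\varnothing$, where $\mathcal{F}_K=\{\varnothing\}$ and both sides equal $0$ by Proposition~\ref{p_finsumdef1}.
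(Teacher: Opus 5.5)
Your proposal is correct and follows the paper's proof: both observe that $K$ is the greatest element of $\mathcal{F}_K$, so the net $\bigS_{k\in K}f(k)$ converges to its value $\finsum_{k\in K}f(k)$ at $K$. Uniqueness of limits in the Hausdorff space $\mf A$ then identifies this value with $\sum_{k\in K}f(k)$, and the case $K=\varnothing$ is already covered by the same argument.
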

\begin{proof}
  Since $K$ is the greatest element of $\mathcal{F}_K$, the net $\theta = \bigS_{k\in K} f(k)$ converges in $\mf A$ over $\mathcal{F}_K$ to $\theta(K) = \finsum_{k\in K} f(k)$.
\end{proof}

\begin{corollary}\label{c_emptysum}
  Let $\mf A$ be an HTM. For every $f$, we have $\sm{\mf A}{k\in \varnothing}{f(k)}$ and $\sum_{k\in \varnothing}f(k) = 0$.
\end{corollary}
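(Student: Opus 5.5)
This is an immediate consequence of Proposition~\ref{p_sumfin} applied with $K = \varnothing$. The plan is to observe that $\varnothing$ is a finite set, so for any $f$ the condition ``$f(k)\in\mf A$ for all $k\in\varnothing$'' holds vacuously, and Proposition~\ref{p_sumfin} is applicable.

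The proof of Proposition~\ref{p_sumfin} shows more than the equality of sums: it shows that the net $\bigS_{k\in K}f(k)$ converges over $\mathcal{F}_K$. The reason is that $\mathcal{F}_K$ has the greatest element $K$, so the net is eventually constant. For $K=\varnothing$ we have $\mathcal{F}_\varnothing = \{\varnothing\}$, and the net takes the single value $\finsum_{k\in\varnothing}f(k)$. Hence $\sm{\mf A}{k\in\varnothing}{f(k)}$ holds. Moreover, since $\mf A$ is Hausdorff, the limit is unique and equals $\sum_{k\in\varnothing}f(k)$.

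It remains to identify this value. By Proposition~\ref{p_finsumdef1} (empty case), $\finsum_{k\in\varnothing}f(k)=0$, and therefore $\sum_{k\in\varnothing}f(k)=0$.

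There is no real obstacle here. The only point to check is that summability, and not merely the value of the sum, follows from the argument in Proposition~\ref{p_sumfin}; it does, because a net over a directed set with a greatest element converges to its value at that element.
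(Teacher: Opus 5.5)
Your proposal is correct and matches the paper's proof, which simply cites Propositions~\ref{p_finsumdef1} and~\ref{p_sumfin}. Your remark that summability itself, not just the value of the sum, follows because $\mathcal{F}_\varnothing=\{\varnothing\}$ has a greatest element is exactly what makes that citation work.
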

\begin{proof}
  The statement follows from Propositions~\ref{p_finsumdef1} and~\ref{p_sumfin}.
\end{proof}

\begin{proposition}\label{p_sumchange}
  Let $\mf A$ be an HTM and $\tau\colon J\to K$ be a bijection. Then we have $\sm{\mf A}{k\in K}{f(k)}$ if and only if $\sm{\mf A}{j\in J}{f(\tau(j))}$. If $\sm{\mf A}{k\in K}{f(k)}$, then $\sum_{k\in K}f(k) = \sum_{j\in J}f(\tau(j))$.
\end{proposition}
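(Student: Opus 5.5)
The plan is to show that reindexing along $\tau$ turns the net of finite partial sums over $K$ into the net of finite partial sums over $J$, up to an order isomorphism of the index sets. Once that is established, convergence of one net is equivalent to convergence of the other, and the limits coincide.

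Let $\theta = \bigS_{k\in K} f(k)$, a net over $\mathcal{F}_K$, and $\theta' = \bigS_{j\in J} f(\tau(j))$, a net over $\mathcal{F}_J$. Define $T\colon \mathcal{F}_J\to\mathcal{F}_K$ by $T(L) = \tau(L)$. Since $\tau$ is a bijection, $T$ is a bijection whose inverse is $M\mapsto\tau^{-1}(M)$. Both $T$ and $T^{-1}$ preserve inclusion, so $T$ is an order isomorphism of directed sets.

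Next we check that $\theta' = \theta\circ T$. For $L\in\mathcal{F}_J$, the restriction $\tau|_L$ is a bijection of $L$ onto $\tau(L)$. Proposition~\ref{p_finsum}(i), applied to the finite set $\tau(L)$ and the bijection $\tau|_L$, gives
\[
\finsum_{k\in\tau(L)} f(k) = \finsum_{j\in L} f(\tau(j)),
\]
that is, $\theta(T(L)) = \theta'(L)$.

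It remains to see that convergence transfers along $T$. Suppose $\theta\to x$. Given a neighbourhood $U$ of $x$, choose $M_0\in\mathcal{F}_K$ such that $\theta(M)\in U$ for all $M\supset M_0$. Put $L_0 = T^{-1}(M_0)$. If $L\supset L_0$, then $T(L)\supset M_0$, so $\theta'(L) = \theta(T(L))\in U$; hence $\theta'\to x$.

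The converse is symmetric: use $\theta = \theta'\circ T^{-1}$, which holds because $T^{-1}$ is also an order isomorphism. Since $\mf A$ is Hausdorff, limits are unique, so the two sums coincide.

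The argument has no real obstacle. The only point needing care is the identity $\theta' = \theta\circ T$, which rests entirely on the finite reindexing result, Proposition~\ref{p_finsum}(i).
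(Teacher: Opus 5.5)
Your proof is correct and follows essentially the same route as the paper: the order isomorphism $T(L)=\tau(L)$ from $\mathcal{F}_J$ to $\mathcal{F}_K$, the identity $\theta'=\theta\circ T$ obtained from Proposition~\ref{p_finsum}(i), and transfer of convergence along $T$. The only point you leave tacit is that these nets are defined only when $f(k)\in\mf A$ for all $k\in K$; the paper handles this at the outset by noting that each summability condition already forces it, so if some $f(k)\notin\mf A$, both conditions fail.
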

\begin{proof}
    We can assume that $f(k)\in \mf A$ for all $k\in K$ because this condition is implied by both $\sm{\mf A}{k\in K}{f(k)}$ and $\sm{\mf A}{j\in J}{f(\tau(j))}$.
  Let  $\theta = \bigS_{k\in K} f(k)$ and $\rho = \bigS_{j\in J}f(\tau(j))$. Let $T\colon \mathcal{F}_J \to \mathcal{F}_K$ be such that $T(J') = \im \tau|_{J'}$ for every $J'\in \mathcal{F}_J$. Let $J'\in \mathcal{F}_J$, $\tau' = \tau|_{J'}$, and $K' = T(J')$.  Since $\tau'$ is a bijection between $J'$ and $K'$, Proposition~\ref{p_finsum}(i) implies that
  \[
    \theta(K') = \finsum_{k\in K'}f(k) = \finsum_{j\in J'}f(\tau'(j)) = \finsum_{j\in J'}f(\tau(j)) = \rho(J').
  \]
  Thus, $\rho = \theta\circ T$. As $T$ is an isomorphism of ordered sets between $\mathcal{F}_J$ and $\mathcal{F}_K$, it follows that $\theta$ converges in $\mf A$ over $\mathcal{F}_K$ if and only if $\rho$ converges in $\mf A$ over $\mathcal{F}_J$. This means that $\sm{\mf A}{k\in K}{f(k)}$ if and only if $\sm{\mf A}{j\in J}{f(\tau(j))}$. If $\sm{\mf A}{k\in K}{f(k)}$, then we have  $\sum_{k\in K} f(k) = \lim_{K'\in \mathcal{F}_K}\theta(K') = \lim_{J'\in \mathcal{F}_J}\rho(J') = \sum_{j\in J}f(\tau(j))$.
\end{proof}

\begin{proposition}\label{p_sum}
  Let $\mf A$ be an HTM, $\sm{\mf A}{k\in K}{f(k)}$, and $x = \sum_{k\in K}f(k)$.
  \begin{enumerate}
  \item[(i)] Let $\sm{\mf A}{k\in K}{g(k)}$. Then $\sum_{k\in K}(f(k) + g(k)) = x + \sum_{k\in K}g(k)$.
  \item[(ii)] Let $\mf B$ be an HTM and $\varphi\colon\mf A\to\mf B$ be a continuous homomorphism. Then $\sum_{k\in K} \varphi(f(k)) = \varphi(x)$.   
  \item [(iii)] Suppose $\mf A$ is an HTG. Then $\sum_{k\in K}(-f(k)) = -x$.
  \item [(iv)] Suppose $\mf A$ is an HTG and $\sm{\mf A}{k\in K}{g(k)}$. Then $\sum_{k\in K}(f(k) - g(k)) = x - \sum_{k\in K}g(k)$.
  \end{enumerate}
\end{proposition}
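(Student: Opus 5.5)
The plan is to work directly with the nets of finite partial sums and use only continuity of the monoid operations together with Proposition~\ref{p_finsum}. Put $\theta = \bigS_{k\in K} f(k)$, so that $\theta$ converges to $x$ over $\mathcal{F}_K$, and for every $K'\in\mathcal{F}_K$ note that $f(k)\in\mf A$ for all $k\in K$ because the family is summable.

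For (i), let $\rho = \bigS_{k\in K} g(k)$, which converges to $y = \sum_{k\in K} g(k)$, and let $\chi = \bigS_{k\in K}(f(k)+g(k))$. By Proposition~\ref{p_finsum}(ii), $\chi(K') = \theta(K') + \rho(K')$ for every $K'\in\mathcal{F}_K$. Since $\theta$ and $\rho$ are nets over the same directed set, the pair net $K'\mapsto(\theta(K'),\rho(K'))$ converges to $(x,y)$ in $\mf A\times\mf A$. Continuity of addition then gives $\chi(K')\to x+y$. Because $\mf A$ is Hausdorff, the limit is unique, so $\sum_{k\in K}(f(k)+g(k)) = x + y$ as claimed.

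For (ii), Proposition~\ref{p_finsum}(iii) gives $\finsum_{k\in K'}\varphi(f(k)) = \varphi(\theta(K'))$. Thus the net of partial sums of $\varphi\circ f$ equals $\varphi\circ\theta$, which converges to $\varphi(x)$ by continuity of $\varphi$. Hausdorffness of $\mf B$ again identifies the sum as $\varphi(x)$.

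For (iii), apply (ii) with $\mf B = \mf A$ and $\varphi = \{-z\}_{z\in\mf A}$. This map is a continuous homomorphism because $\mf A$ is an HTG and commutative. For (iv), by (iii) the family $-g(k)$ is summable with sum $-\sum_{k\in K} g(k)$, and (i) applied to $f$ and $-g$ yields the formula, since $f(k)-g(k) = f(k)+(-g(k))$.

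No step is a genuine obstacle. The only point needing care is in (i): convergence of the sum of two nets requires that they be indexed by the same directed set and uses joint continuity of addition, both of which hold here.
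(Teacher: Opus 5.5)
Your proposal is correct and follows essentially the same route as the paper: both compare the nets of partial sums via Proposition~\ref{p_finsum}(ii),(iii), pass to the limit using continuity of addition or of $\varphi$, obtain (iii) by applying (ii) to negation, and combine (i) and (iii) for (iv). Your extra remarks on joint continuity over a common directed set and on uniqueness of limits in a Hausdorff space simply make explicit what the paper's proof uses implicitly.
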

\begin{proof}
  (i) Let  $\theta = \bigS_{k\in K} f(k)$,  $\rho = \bigS_{k\in K}g(k)$ , and $\sigma = \bigS_{k\in K}(f(k)+g(k))$. By Proposition~\ref{p_finsum}(ii), we have $\sigma(J) = \theta(J) + \rho(J)$ for every $J\in \mathcal{F}_K$. Hence, 
  \[
    \sum_{k\in K}(f(k)+g(k)) = \lim_{J\in \mathcal{F}_K} \sigma(J) = \lim_{J\in \mathcal{F}_K}\theta(J)  + \lim_{J\in \mathcal{F}_K} \rho(J)  = x + \sum_{k\in K}g(k).
  \]
  (ii) Let  $\theta = \bigS_{k\in K} f(k)$ and $\rho = \bigS_{k\in K}\varphi (f(k))$. By Proposition~\ref{p_finsum}(iii), we have $\rho(J) = \varphi(\theta(J))$ for every $J\in \mathcal F_K$. Since $\varphi$ is continuous, it follows that
  \[
    \sum_{k\in K}\varphi(f(k)) = \lim_{J\in \mathcal{F}_K}\rho(J) = \varphi\left(\lim_{J\in \mathcal{F}_K}\theta(J)\right) =\varphi(x).
  \]
  (iii) It suffices to apply~(ii) to $\varphi = \{-y\}_{y\in \mf A}$.
  \par\medskip\noindent
  (iv) The statement follows immediately from~(i) and~(iii).
\end{proof}

\begin{lemma}\label{l_zeroext}
  Let $\mf A$ be an HTM, $I\subset K$, and $f(k) = 0$ for every $k\in K\setminus I$. Then the relations $\sm{\mf A}{k\in I}{f(k)}$ and $\sm{\mf A}{k\in K}{f(k)}$ are equivalent. If $\sm{\mf A}{k\in I}{f(k)}$, then $\sum_{k\in I}f(k) = \sum_{k\in K} f(k)$.  
\end{lemma}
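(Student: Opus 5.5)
The plan is to compare the nets $\theta = \bigS_{k\in I} f(k)$ over $\mathcal{F}_I$ and $\rho = \bigS_{k\in K} f(k)$ over $\mathcal{F}_K$ through the map $T\colon \mathcal{F}_K\to\mathcal{F}_I$, $T(J) = J\cap I$. As in the proof of Proposition~\ref{p_sumchange}, we may assume $f(k)\in\mf A$ for all $k\in K$, since both summability relations force this. The values on $K\setminus I$ are $0\in\mf A$ by hypothesis.

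The first step is the identity $\rho = \theta\circ T$. Let $J\in\mathcal{F}_K$. By Proposition~\ref{p_finsum}(iv) applied to the decomposition $J = (J\cap I)\cup(J\setminus I)$, we get
\[
  \rho(J) = \finsum_{k\in J\cap I}f(k) + \finsum_{k\in J\setminus I}f(k).
\]
The second sum vanishes by Proposition~\ref{p_finsum}(v), because $f$ is zero on $J\setminus I\subset K\setminus I$. Hence $\rho(J) = \theta(J\cap I)$.

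The second step transfers convergence in both directions using two properties of $T$. It is monotone, and it restricts to the identity on $\mathcal{F}_I\subset\mathcal{F}_K$; in particular it is surjective and cofinal.

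Suppose $\theta\to x$ and let $U$ be a neighbourhood of $x$. Choose $J_0\in\mathcal{F}_I$ with $\theta(J')\in U$ for all $J'\supset J_0$ in $\mathcal{F}_I$. For every $J\in\mathcal{F}_K$ with $J\supset J_0$ we have $T(J)\supset J_0$, so $\rho(J)\in U$. Thus $\rho\to x$.

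Conversely, suppose $\rho\to x$ and choose $J_0\in\mathcal{F}_K$ with $\rho(J)\in U$ for all $J\supset J_0$. Set $J_1 = J_0\cap I$. For $J'\in\mathcal{F}_I$ with $J'\supset J_1$, the set $J = J'\cup(J_0\setminus I)$ lies in $\mathcal{F}_K$ and contains $J_0$. Since $T(J) = J'$, we get $\theta(J') = \rho(J)\in U$, so $\theta\to x$.

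Because $\mf A$ is Hausdorff, limits are unique, which yields both the equivalence of the two summability relations and the equality of the two sums.

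No real obstacle is expected. The only point needing care is the converse direction, where a tail in $\mathcal{F}_I$ must be padded by the finitely many indices of $J_0$ lying outside $I$.
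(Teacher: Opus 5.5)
Your proposal is correct and is essentially the paper's argument. The paper identifies $\mathcal{F}_K$ with $\mathcal{F}_I\times\mathcal{F}_{K\setminus I}$ (product order) via $(I',J')\mapsto I'\cup J'$ and uses Proposition~\ref{p_finsum}(iv,v) to show that the $K$-net is the $I$-net composed with the first projection; that projection is exactly your map $J\mapsto J\cap I$, and your explicit two-direction tail argument spells out the convergence transfer that the paper leaves implicit.
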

\begin{proof}
  We can assume that $f(k)\in \mf A$ for all $k\in K$ because this condition is implied by both $\sm{\mf A}{k\in I}{f(k)}$ and $\sm{\mf A}{k\in K}{f(k)}$. Let  $\theta = \bigS_{k\in K} f(k)$ and $\rho = \bigS_{k\in I} f(k)$. Let $J = K\setminus I$ and $\Lambda = \mathcal{F}_I \times \mathcal{F}_J$. We endow $\Lambda$ with the product order: for $I', I''\in \mathcal{F}_I$ and $J',J''\in \mathcal{F}_J$,  we have $(I',J')\leq (I'',J'')$ if and only if $I'\subset I''$ and $J'\subset J''$.  Let $\fm{T}{\Lambda}$ be such that $T(I',J') = I'\cup J'$ for every $I'\in \mathcal{F}_I$ and $J'\in \mathcal{F}_J$. Clearly, $T$ is an isomorphism of ordered sets between $\Lambda$ and $\mathcal{F}_K$. Let $\tilde\rho\colon \Lambda\to \mf A$ be such that $\tilde\rho(I',J') = \rho(I')$ for every $I'\in \mathcal{F}_I$ and $J'\in \mathcal{F}_J$. Since $\tilde\rho = \theta\circ T$ by Proposition~\ref{p_finsum}(iv,v), we obtain
  \begin{multline}\nonumber
    \sm{\mf A}{k\in I}{f(k)} \Leftrightarrow \rho\mbox{ converges in $\mf A$ over $\mathcal{F}_I$} \Leftrightarrow \tilde\rho\mbox{ converges in $\mf A$ over $\Lambda$}\\ \Leftrightarrow \theta\mbox{ converges in $\mf A$ over $\mathcal{F}_K$}\Leftrightarrow \sm{\mf A}{k\in K}{f(k)}.
  \end{multline}
  If $\sm{\mf A}{k\in I}{f(k)}$, then we have
  \[
    \sum_{k\in I}f(k) = \lim_{I'\in \mathcal{F}_I}\rho(I') = \lim_{\lambda\in \Lambda}\tilde\rho(\lambda) = \lim_{K'\in \mathcal{F}_K}\theta(K') = \sum_{k\in K} f(k).
  \]
\end{proof}
 
\begin{proposition}
  Let $\mf A$ be an HTM, $K$ be a set, and $f(k) = 0$ for every $k\in K$. Then $\sm{\mf A}{k\in K}{f(k)}$ and $\sum_{k\in K}f(k) = 0$.
\end{proposition}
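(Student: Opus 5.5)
The plan is to reduce the statement to Lemma~\ref{l_zeroext} with $I = \varnothing$. Corollary~\ref{c_emptysum} tells us that $\sm{\mf A}{k\in \varnothing}{f(k)}$ holds and that $\sum_{k\in\varnothing}f(k) = 0$. Since $f(k) = 0$ for every $k\in K = K\setminus\varnothing$, the hypothesis of Lemma~\ref{l_zeroext} is satisfied with $I = \varnothing\subset K$.

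That lemma then gives two things. First, summability over $\varnothing$ is equivalent to summability over $K$, so $\sm{\mf A}{k\in K}{f(k)}$. Second, the two sums coincide, so $\sum_{k\in K}f(k) = \sum_{k\in\varnothing}f(k) = 0$.

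I do not expect any real obstacle here. The only point to check is that applying Lemma~\ref{l_zeroext} with empty $I$ is legitimate. It is: $\mathcal{F}_\varnothing = \{\varnothing\}$, so the product order set $\Lambda$ used in that proof is still well defined and directed.

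As an independent check, one can argue directly. By Proposition~\ref{p_finsum}(v), the net $\bigS_{k\in K}f(k)$ is constantly $0$ on $\mathcal{F}_K$, so it converges to $0$.
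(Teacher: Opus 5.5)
Your proposal is correct and is essentially the paper's own proof: the paper derives the statement directly from Corollary~\ref{c_emptysum} and Lemma~\ref{l_zeroext}, applied with $I=\varnothing$ as you do. Your independent check is also valid: by Proposition~\ref{p_finsum}(v), the net $\bigS_{k\in K}f(k)$ is constantly $0$, so it converges to $0$.
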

\begin{proof}
  The statement follows from Corollary~\ref{c_emptysum} and Lemma~\ref{l_zeroext}.
\end{proof}

\begin{proposition}\label{p_sum2union}
  Let $\mf A$ be an HTM and $K = I\cup J$, where $I\cap J = \varnothing$. Let $\sm{\mf A}{k\in I}{f(k)}$. If $\sm{\mf A}{k\in J}{f(k)}$, then $\sum_{k\in K} f(k) = \sum_{k\in I}f(k) + \sum_{k\in J}f(k)$. If $\mf A$ is an HTG and $\sm{\mf A}{k\in K}{f(k)}$, then $\sm{\mf A}{k\in J}{f(k)}$. 
\end{proposition}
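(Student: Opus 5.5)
For the first assertion I will reuse the product-index device from the proof of Lemma~\ref{l_zeroext}. Put $\Lambda = \mathcal{F}_I\times\mathcal{F}_J$ with the product order, and let $T(I',J') = I'\cup J'$; this is an isomorphism of ordered sets from $\Lambda$ onto $\mathcal{F}_K$. Write $\theta = \bigS_{k\in K}f(k)$, $\rho = \bigS_{k\in I}f(k)$, and $\rho' = \bigS_{k\in J}f(k)$. Proposition~\ref{p_finsum}(iv) gives
\[
  \theta(T(I',J')) = \rho(I') + \rho'(J').
\]
The nets $(I',J')\mapsto\rho(I')$ and $(I',J')\mapsto\rho'(J')$ converge over $\Lambda$ to $\sum_{k\in I}f(k)$ and $\sum_{k\in J}f(k)$ respectively. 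This is because the projections of $\Lambda$ onto $\mathcal{F}_I$ and $\mathcal{F}_J$ are monotone and cofinal, so composing a convergent net with them yields a convergent subnet. Addition is continuous, so $\theta\circ T$ converges to the sum of the two limits. Since $T$ is an order isomorphism, $\theta$ converges over $\mathcal{F}_K$ to the same point, which is the claimed equality.

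For the second assertion, take $\mf A$ to be an HTG with $\sm{\mf A}{k\in K}{f(k)}$ and $\sm{\mf A}{k\in I}{f(k)}$. I will show that $\rho'$ is a Cauchy net in $\mf A$ over $\mathcal{F}_J$ with respect to the group uniformity, and then identify its limit.

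\emph{Cauchy property.} Let $U$ be a neighbourhood of zero. By the continuity of subtraction, the convergent nets $\theta$ and $\rho$ are Cauchy in the two-sided uniformity, so I can choose $K_0\in\mathcal{F}_K$ and $I_0\in\mathcal{F}_I$ with $\theta(K_1)-\theta(K_2)\in V$ and $\rho(I_1)-\rho(I_2)\in V$ for all $K_1,K_2\supset K_0$ and $I_1,I_2\supset I_0$, where $V$ is a neighbourhood of zero with $V-V\subset U$. Set $J_0 = K_0\cap J$. For $J_1,J_2\supset J_0$, Proposition~\ref{p_finsum}(iv) gives
\[
  \rho'(J_1)-\rho'(J_2) = \bigl(\theta(J_1\cup I_*)-\theta(J_2\cup I_*)\bigr) - \bigl(\rho(I_*)-\rho(I_*)\bigr), \qquad I_* = I_0\cup(K_0\cap I),
\]
which lies in $V$, hence in $U$.

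\emph{Limit.} To extract a limit, I will avoid assuming completeness and instead write the net explicitly. For $J'\in\mathcal{F}_J$ put $I'(J') = I_0$-type cofinal choices; more precisely, consider the doubly indexed expression
\[
  \rho'(J') = \theta(I'\cup J') - \rho(I'), \qquad (I',J')\in\Lambda.
\]
As a net over $\Lambda$, the right-hand side converges to $\sum_{k\in K}f(k) - \sum_{k\in I}f(k)$, by the continuity of subtraction and the convergence of $\theta\circ T$ and of $\rho$ composed with the projection. The left-hand side is independent of $I'$. Hence the net $(I',J')\mapsto\rho'(J')$ converges, and since the projection onto $\mathcal{F}_J$ is cofinal and the net does not depend on $I'$, $\rho'$ itself converges over $\mathcal{F}_J$. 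This direct argument makes the Cauchy step above unnecessary, so I will present only this one.

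The main point needing care is the claim that a net over $\Lambda$ which depends only on the second coordinate converges exactly when the corresponding net over $\mathcal{F}_J$ does. This holds because, for each $J_0$, the tails $\{(I',J') : J'\supset J_0\}$ and $\{J' : J'\supset J_0\}$ have the same image. With that checked, the argument is routine.
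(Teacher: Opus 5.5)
Your proof is correct, but it is organized differently from the paper's. The paper does not work with nets here. It extends $f|_I$ and $f|_J$ by zero to families $g$ and $h$ on $K$, and uses Lemma~\ref{l_zeroext} to identify their summability and sums with those over $I$ and $J$. Both assertions then follow from the termwise rules for summable families on a common index set: Proposition~\ref{p_sum}(i) applied to $f = g + h$, and Proposition~\ref{p_sum}(iv) applied to $h = f - g$.

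You instead work on the product directed set $\mathcal{F}_I\times\mathcal{F}_J\cong\mathcal{F}_K$, with coordinate projections $\pi_1,\pi_2$. There you write $\theta\circ T = \rho\circ\pi_1 + \rho'\circ\pi_2$ (respectively $\rho'\circ\pi_2 = \theta\circ T - \rho\circ\pi_1$) and transfer convergence along the cofinal projections. This is exactly the mechanism inside the proof of Lemma~\ref{l_zeroext}, so your argument is a self-contained, inlined version of the paper's. The paper's version is shorter because it reuses those general lemmas. Neither is more general: both need only continuity of addition for the first assertion and continuity of subtraction for the second.

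When writing it up, make three changes:
\begin{itemize}
\item Drop the Cauchy paragraph. On its own it only gives a Cauchy net, which would need completeness to converge, as you noticed.
\item Remove the garbled phrase about ``$I_0$-type cofinal choices.''
\item State the final transfer precisely: if $\rho'(J')\in U$ whenever $(I',J')\geq(I_0,J_0)$, then taking $I' = I_0$ gives $\rho'(J')\in U$ for every $J'\supset J_0$.
\end{itemize}
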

\begin{proof}
  We can assume that $f(k)\in \mf A$ for all $k\in K$ (as $\sm{\mf A}{k\in I}{f(k)}$, this condition is implied by both $\sm{\mf A}{k\in J}{f(k)}$ and $\sm{\mf A}{k\in K}{f(k)}$).
  Let $g\colon K\to \mf A$ be such that $g(k) = f(k)$ for every $k\in I$ and $g(k) = 0$ for every $k\in J$. Let $h\colon K\to \mf A$ be such that $h(k) = f(k)$ for every $k\in J$ and $h(k) = 0$ for every $k\in I$. By Lemma~\ref{l_zeroext}, we have $\sum_{k\in I}f(k) = \sum_{k\in K}g(k)$. Suppose $\sm{\mf A}{k\in J}{f(k)}$. Then $\sum_{k\in J}f(k) = \sum_{k\in K}h(k)$ by Lemma~\ref{l_zeroext}. Since $f(k) = g(k) + h(k)$ for every $k\in K$, it follows from Proposition~\ref{p_sum}(i) that $\sum_{k\in K} f(k) = \sum_{k\in I}f(k) + \sum_{k\in J}f(k)$. Now suppose that $\mf A$ is an HTG and $\sm{\mf A}{k\in K}{f(k)}$. Since $h(k) = f(k) - g(k)$ for every $k\in K$, Proposition~\ref{p_sum}(iv) implies that $\sm{\mf A}{k\in K}{h(k)}$ and, hence, $\sm{\mf A}{k\in J}{f(k)}$ by Lemma~\ref{l_zeroext}.
\end{proof}

\begin{corollary}\label{c_sumelement}
  Let $\mf A$ be an HTM, $K$ be a set, $k_0\in K$, and $f(k_0)\in \mf A$. Let $\sm{\mf A}{k\in K\setminus\{k_0\}}{f(k)}$. Then $\sum_{k\in K}f(k) = f(k_0) + \sum_{k\in K\setminus\{k_0\}}f(k)$.
\end{corollary}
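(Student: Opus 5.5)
The natural approach is to reduce Corollary~\ref{c_sumelement} to Proposition~\ref{p_sum2union} with the splitting $K = \{k_0\}\cup(K\setminus\{k_0\})$. Set $I = \{k_0\}$ and $J = K\setminus\{k_0\}$; these sets are disjoint and their union is $K$, since $k_0\in K$.

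First, I will verify the hypothesis on the one-point piece. Because $\{k_0\}$ is finite and $f(k_0)\in\mf A$, Proposition~\ref{p_sumfin} gives $\sm{\mf A}{k\in \{k_0\}}{f(k)}$. It also gives
\[
\sum_{k\in\{k_0\}}f(k) = \finsum_{k\in\{k_0\}}f(k).
\]
Next, I will evaluate the finite sum. By Proposition~\ref{p_finsumdef1} applied with this $k_0$, and using that the sum over the empty set is $0$, this finite sum equals $f(k_0)+0 = f(k_0)$.

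Second, the hypothesis $\sm{\mf A}{k\in K\setminus\{k_0\}}{f(k)}$ is precisely summability over $J$. Proposition~\ref{p_sum2union}, whose summability requirement on $I$ has just been checked, then yields
\[
\sum_{k\in K}f(k) = \sum_{k\in\{k_0\}}f(k) + \sum_{k\in K\setminus\{k_0\}}f(k) = f(k_0) + \sum_{k\in K\setminus\{k_0\}}f(k).
\]
Implicit in this step is that $\sum_{k\in K}f(k)$ exists; this is guaranteed by the first assertion of Proposition~\ref{p_sum2union}, which is derived there from Lemma~\ref{l_zeroext} and Proposition~\ref{p_sum}(i).

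There is no real obstacle. The only point needing care is computing the singleton sum $\sum_{k\in\{k_0\}}f(k)$ as exactly $f(k_0)$. This relies on the greatest-element argument of Proposition~\ref{p_sumfin} together with the recursive definition of $\finsum$.
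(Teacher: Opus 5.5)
Your proposal is correct and matches the paper's proof: the singleton sum is identified as $f(k_0)$ via Propositions~\ref{p_finsumdef1} and~\ref{p_sumfin}, and Proposition~\ref{p_sum2union} is then applied with $I=\{k_0\}$ and $J=K\setminus\{k_0\}$.
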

\begin{proof}
  By Propositions~\ref{p_finsumdef1} and~\ref{p_sumfin}, we have $\sum_{k\in \{k_0\}}f(k) = f(k_0)$. It remains to apply Proposition~\ref{p_sum2union} to $I = \{k_0\}$ and $J = K\setminus\{k_0\}$. 
\end{proof}

\begin{proposition}\label{p_sumfinunion}
  Let $\mf A$ be an HTM, $K$ be a set, and $\fm{\mb J}{I}$ be a finite partition of $K$ such that $\sm{\mf A}{k\in \mb J(i)}{f(k)}$ for every $i\in I$. Then $\sum_{k\in K}f(k) = \sum_{i\in I}\sum_{k\in\mb J(i)}f(k)$.
\end{proposition}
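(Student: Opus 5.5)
We argue by induction on the number of elements of the finite index set $I$, relying on Proposition~\ref{p_sum2union} (additivity of unconditional sums over a disjoint union of two summable pieces) and Corollary~\ref{c_emptysum}. The summability of $\sum_{k\in K} f(k)$ is part of what is established along the way. First, note that we may assume $f(k)\in\mf A$ for every $k\in K$: every $k\in K$ lies in some $\mb J(i)$, and summability over $\mb J(i)$ forces $f(k)\in\mf A$.

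If $I=\varnothing$, then $K=\varnothing$. Corollary~\ref{c_emptysum} gives $\sum_{k\in K}f(k)=0$. Propositions~\ref{p_finsumdef1} and~\ref{p_sumfin} give $\sum_{i\in I}\sum_{k\in\mb J(i)}f(k)=0$ as well. Now suppose $I$ has $n\geq1$ elements and the statement holds for index sets with $n-1$ elements. We choose $i_0\in I$ and put $I'=I\setminus\{i_0\}$ and $K'=\bigcup_{i\in I'}\mb J(i)$. Then $\mb J|_{I'}$ is a finite partition of $K'$. By the induction hypothesis, $f(k)$ is summable over $k\in K'$ and
\[
  \sum_{k\in K'}f(k)=\sum_{i\in I'}\sum_{k\in\mb J(i)}f(k).
\]
Moreover, $K=\mb J(i_0)\cup K'$ with $\mb J(i_0)\cap K'=\varnothing$, since $\mb J$ is disjoint. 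Both pieces are summable, so Proposition~\ref{p_sum2union} shows that the sum over $K$ exists and equals $\sum_{k\in\mb J(i_0)}f(k)+\sum_{k\in K'}f(k)$.

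It remains to identify the right-hand side with $\sum_{i\in I}\sum_{k\in\mb J(i)}f(k)$. Since $I$ is finite, Proposition~\ref{p_sumfin} converts this unconditional sum into a finite sum $\finsum_{i\in I}$. Proposition~\ref{p_finsumdef1} then splits off the $i_0$ term, and Proposition~\ref{p_sumfin} applied to $I'$ converts back. Combining this with the previous step gives the required equality.

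The only point that requires care is that Proposition~\ref{p_sum2union} includes the convergence assertion itself, not just an identity of limits. In a general HTM we cannot deduce summability of a piece from summability of the whole. The induction is therefore arranged so that summability is always built upward from the pieces, and that direction is valid for any HTM; no regularity or group structure is needed.
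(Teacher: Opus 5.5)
Your proof is correct and follows the paper's argument: induction on the number of elements of $I$, splitting off $\mb J(i_0)$ from $K'=\bigcup_{i\in I'}\mb J(i)$, and applying Proposition~\ref{p_sum2union}, which yields both the summability over $K$ and the additivity identity. The only cosmetic difference is in the last step: the paper identifies the result with $\sum_{i\in I}\sum_{k\in\mb J(i)}f(k)$ via Corollary~\ref{c_sumelement}, whereas you pass through the finite sum using Propositions~\ref{p_sumfin} and~\ref{p_finsumdef1}, and these two routes are equivalent.
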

\begin{proof}
  The proof is by induction on the cardinality of $I$. If $I = \varnothing$, then $K = \varnothing$ and the statement is obviously true. Let $n\in\N$ and suppose the statement is true for all $I$ with $n-1$ elements. Let $I$ have $n$ elements. Let $\fm{g}{I}$ be such that $g(i) = \sum_{k\in\mb J(i)} f(k)$. Choose $i_0\in I$ and put $I' = I\setminus\{i_0\}$. Let $K' =\bigcup_{i\in I'}\mb J(i)$. As $I'$ is finite, it follows from Proposition~\ref{p_sumfin} that $\sm{\mf A}{i\in I'}{g(i)}$. By the induction hypothesis, we have $\sum_{k\in K'}f(k) = \sum_{i\in I'} g(i)$. Proposition~\ref{p_sum2union} applied to the representation $K = \mb J(i_0)\cup K'$ ensures that $\sum_{k\in K}f(k) = g(i_0) + \sum_{i\in I'}g(i)$. By Corollary~\ref{c_sumelement}, we conclude that $\sum_{k\in K}f(k) = \sum_{i\in I}g(i)$.
\end{proof}

\begin{proposition}\label{p_sumunionfin}
  Let $\mf A$ be an HTM, $K$ be a set, and $\sm{\mf A}{k\in K}{f(k)}$. Let  $\fm{\mb J}{I}$ be a partition of $K$ such that $\mb J(i)$ is finite for every $i\in I$. Then $\sum_{k\in K}f(k) = \sum_{i\in I}\sum_{k\in\mb J(i)}f(k)$.
\end{proposition}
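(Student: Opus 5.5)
The plan is to reduce the claim to a reindexing of the net $\theta = \bigS_{k\in K} f(k)$ along a cofinal subfamily of $\mathcal{F}_K$ made of finite unions of blocks $\mb J(i)$. Put $g(i) = \finsum_{k\in\mb J(i)} f(k)$ for every $i\in I$. Since each $\mb J(i)$ is finite, Proposition~\ref{p_sumfin} gives $\sum_{k\in\mb J(i)}f(k) = g(i)$. So we must show that $\sm{\mf A}{i\in I}{g(i)}$ and that $\sum_{i\in I} g(i) = x$, where $x = \sum_{k\in K} f(k)$.

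Define $T\colon\mathcal{F}_I\to\mathcal{F}_K$ by $T(I') = \bigcup_{i\in I'}\mb J(i)$. This set is finite because $I'$ is finite and each block is finite. Let $\rho = \bigS_{i\in I} g(i)$.

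\emph{Key identity.} The first step is to prove $\rho = \theta\circ T$, that is,
\[
\finsum_{i\in I'} g(i) = \finsum_{k\in T(I')} f(k).
\]
This is the finite version of the statement. It follows by induction on the cardinality of $I'$, using Proposition~\ref{p_finsumdef1} on the left and Proposition~\ref{p_finsum}(iv) on the right, since $T(I') = \mb J(i_0)\cup T(I'\setminus\{i_0\})$ with the two pieces disjoint. Alternatively, one can apply Proposition~\ref{p_sumfinunion} to the finite partition $\{\mb J(i)\}_{i\in I'}$ of $T(I')$ and then use Proposition~\ref{p_sumfin} to pass back to finite sums.

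\emph{Convergence.} The second step shows that $\theta\circ T$ converges to $x$ over $\mathcal{F}_I$. The map $T$ is monotone, so it suffices to check that $T$ is cofinal: for each $K_0\in\mathcal{F}_K$ there is $I_0\in\mathcal{F}_I$ with $K_0\subset T(I_0)$. Because $\mb J$ covers $K$, we choose for each $k\in K_0$ an index $i$ with $k\in\mb J(i)$, and let $I_0$ be the finite set of these indices. Now let $U$ be a neighbourhood of $x$. There is $K_0$ such that $\theta(K')\in U$ for all $K'\supset K_0$. Then for every $I'\supset I_0$ we have $T(I')\supset T(I_0)\supset K_0$, hence $\rho(I') = \theta(T(I'))\in U$. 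Thus $\rho\to x$, which gives both the summability of $g$ and the value $\sum_{i\in I} g(i) = x$.

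The argument uses neither regularity nor a group structure, and empty blocks cause no trouble because the sum over an empty block is $0$. The only step needing care is the finite identity $\rho=\theta\circ T$. It relies on disjointness of the blocks, which guarantees that $T(I')$ is partitioned by $\{\mb J(i)\}_{i\in I'}$. I expect this bookkeeping to be the main, though mild, obstacle; everything else is the standard subnet argument.
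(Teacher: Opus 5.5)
Your proof is correct and follows essentially the same route as the paper. The paper also sets $g(i)=\sum_{k\in\mb J(i)}f(k)$ and, given $K_0$ controlling the neighbourhood $U$, takes $I_0=\{i\in I : K_0\cap\mb J(i)\neq\varnothing\}$; for every finite $I'\supset I_0$ it uses Propositions~\ref{p_sumfinunion} and~\ref{p_sumfin} to identify $\finsum_{i\in I'}g(i)$ with $\theta(L)$, where $L=\bigcup_{i\in I'}\mb J(i)\supset K_0$. Your monotone cofinal map $T$ is just a repackaging of this same argument.
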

\begin{proof}
  Let $\theta = \bigS_{k\in K} f(k)$ and $x = \sum_{k\in K}f(k)$. Let $U$ be a neighbourhood of $x$ in $\mf A$ and $K_0\in \mathcal{F}_K$ be such that $\theta(K') \in U$ whenever $K'\in \mathcal{F}_K$ and $K_0\subset K'$. By Proposition~\ref{p_sumfin}, we have $\sm{\mf A}{k\in\mb J(i)}{f(k)}$ for every $i\in I$. Let $\fm{g}{I}$ be such that $g(i) = \sum_{k\in \mb J(i)}f(k)$ for every $i\in I$ and let $I_0 = \set{i\in I}{K_0\cap\mb J(i)\neq\varnothing}$.  Since $K_0$ is finite, we have $I_0\in \mathcal{F}_I$. Let $I'\in \mathcal{F}_I$ be such that $I_0\subset I'$. Since $\mb J(i)$ is finite for every $i\in I$, the set $L = \bigcup_{i\in I'}\mb J(i)$ belongs to $\mathcal{F}_K$. It follows from Proposition~\ref{p_sumfinunion} applied to $\mb J|_{I'}$, $I'$, and $L$ in place of $\mb J$, $I$, and $K$ respectively that $\sum_{i\in I'}g(i) = \sum_{k\in L} f(k)$. By Proposition~\ref{p_sumfin}, this implies that $\finsum_{i\in I'}g(i)$ is equal to $\theta(L)$ and, hence, belongs to $U$ because $K_0\subset L$. This means that $\sum_{i\in I} g(i) = x$.
\end{proof}

\begin{proposition}\label{p_fubiniHTM}
  Let $\mf A$ be an HTM, $I$ and $J$ be sets, and $K = I\times  J$. Let one of the following conditions be satisfied:
  \begin{enumerate}
  \item [(a)] $I$ is finite and $\sm{\mf A}{j\in J}{f(i,j)}$ for every $i\in I$,
  \item [(b)] $J$ is finite and $\sm{\mf A}{i\in  I}{f(i,j)}$ for every $j\in J$.
  \end{enumerate}
  Then
  \begin{equation}
    \label{eq:fub}
      \sum_{i\in I}\sum_{j\in J}f(i,j) = \sum_{k\in K} f(k) = \sum_{j\in J}\sum_{i\in I}f(i,j).
    \end{equation}
  \end{proposition}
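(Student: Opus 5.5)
By symmetry it suffices to treat case (a). Case (b) then follows by applying (a) to the map $(j,i)\mapsto f(i,j)$ on $J\times I$ and transporting sums along the bijection $(j,i)\mapsto(i,j)$ via Proposition~\ref{p_sumchange}.

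So assume $I$ is finite and $\sm{\mf A}{j\in J}{f(i,j)}$ for every $i\in I$. The plan is to decompose $K$ in two ways and combine the results already available.

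\textbf{First equality.} The family $\{\{i\}\times J\}_{i\in I}$ is a finite partition of $K$. For each $i\in I$, the map $j\mapsto(i,j)$ is a bijection of $J$ onto $\{i\}\times J$. By Proposition~\ref{p_sumchange}, this gives $\sm{\mf A}{k\in\{i\}\times J}{f(k)}$ with sum $\sum_{j\in J}f(i,j)$. Proposition~\ref{p_sumfinunion} then yields $\sum_{k\in K}f(k)=\sum_{i\in I}\sum_{j\in J}f(i,j)$; in particular $f$ is summable over $K$.

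\textbf{Second equality.} Partition $K$ instead into the sets $\mb J(j)=I\times\{j\}$, $j\in J$. Each of these is finite because $I$ is finite. Since $f$ is summable over $K$ by the first step, Proposition~\ref{p_sumunionfin} applies and gives $\sum_{k\in K}f(k)=\sum_{j\in J}\sum_{k\in I\times\{j\}}f(k)$. By Proposition~\ref{p_sumchange} again, with the bijection $i\mapsto(i,j)$, the inner sum equals $\sum_{i\in I}f(i,j)$. This finite sum exists by Proposition~\ref{p_sumfin}, so the right-hand side of (\ref{eq:fub}) is well defined.

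There is no real obstacle. The only point needing care is the order of the two steps: the first must come first, because Proposition~\ref{p_sumunionfin} requires summability over $K$, and in a general HTM this is supplied only by Proposition~\ref{p_sumfinunion}. No regularity of $\mf A$ is needed, since every interchange involves either a finite outer index set or finite blocks.
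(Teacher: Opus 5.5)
Your proof is correct and follows the paper's argument essentially step for step. The paper also gets the first equality from the finite partition $\{\{i\}\times J\}_{i\in I}$ together with Propositions~\ref{p_sumchange} and~\ref{p_sumfinunion}, and then the second equality from the finite-block partition $\{I\times\{j\}\}_{j\in J}$ and Proposition~\ref{p_sumunionfin}, in the same order and for the same reason. The only difference is that you derive case (b) from case (a) explicitly by transporting sums along the bijection $(j,i)\mapsto(i,j)$, whereas the paper simply says that case is analogous.
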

\begin{proof}
  Suppose (a) holds. Let $\fm{\mb J}{I}$ be such that $\mb J(i) = \{i\}\times J$ for every $i\in I$. Since $\{(i,j)\}_{j\in J}$ is a bijection between $J$ and $\mb J(i)$, Proposition~\ref{p_sumchange} ensures that $\sum_{k\in\mb J(i)}f(k) = \sum_{j\in J}f(i,j)$ for every $i\in I$. Proposition~\ref{p_sumfinunion} therefore yields the first equality in~(\ref{eq:fub}). 
  Let $\fm{\mb J'}{J}$ be such that $\mb J'(j) = I\times\{j\}$ for every $j\in J$. By Proposition~\ref{p_sumfin}, we have $\sm{\mf A}{i\in  I}{f(i,j)}$ for every $j\in J$. Since $\{(i,j)\}_{i\in I}$ is a bijection between $I$ and $\mb J'(j)$, Proposition~\ref{p_sumchange} ensures that $\sum_{k\in\mb J'(j)}f(k) = \sum_{i\in I}f(i,j)$ for every $j\in J$. The second equality in~(\ref{eq:fub}) therefore follows from Proposition~\ref{p_sumunionfin} applied to $\mb J'$ and $J$ in place of $\mb J$ and $I$ respectively. The proof for the case when (b) holds is analogous. 
\end{proof}

\begin{proposition}\label{p_disjointsum}
  Let $\mf A$ be an HTM, $\fm{\mb J}{I}$ be a family of finite sets, and $K = \bigsqcup_{i\in I}\mb J(i)$. Let $f$ be such that $\sm{\mf A}{k\in K}{f(k)}$. Then
    \[
      \sum_{k\in K} f(k) = \sum_{i\in I}\sum_{j\in\mb J(i)}f(i,j).
    \]
\end{proposition}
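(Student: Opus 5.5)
This reduces to Proposition~\ref{p_sumunionfin}, composed with the bijections handled by Proposition~\ref{p_sumchange}. The family $\fm{\mb J'}{I}$ defined by $\mb J'(i) = \{i\}\times\mb J(i)$ is a partition of $K = \bigsqcup_{i\in I}\mb J(i)$: the sets $\{i\}\times\mb J(i)$ are pairwise disjoint because their first coordinates differ, and their union is $K$ by definition of the disjoint union. Each $\mb J'(i)$ is finite since $\mb J(i)$ is finite.

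Because $\sm{\mf A}{k\in K}{f(k)}$, Proposition~\ref{p_sumunionfin} applied to $\mb J'$ gives
\[
\sum_{k\in K}f(k) = \sum_{i\in I}\sum_{k\in\mb J'(i)}f(k).
\]
Both inner and outer sums exist here. The inner sums exist because they are finite sums (Proposition~\ref{p_sumfin}), and the outer sum exists because Proposition~\ref{p_sumunionfin} asserts its existence together with the equality.

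It remains to identify the inner sums. For each $i\in I$, the map $j\mapsto (i,j)$ is a bijection from $\mb J(i)$ onto $\mb J'(i)$. Proposition~\ref{p_sumchange} therefore gives
\[
\sum_{k\in\mb J'(i)}f(k) = \sum_{j\in\mb J(i)}f(i,j).
\]
Alternatively, one can use Proposition~\ref{p_finsum}(i) for the finite sums together with Proposition~\ref{p_sumfin}. Substituting this into the outer sum yields the claim, since the two families indexed by $I$ are then literally equal.

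There is no real obstacle. The only points needing care are the bookkeeping that $\mb J'$ is indeed a partition of $K$ into finite sets, which is the hypothesis of Proposition~\ref{p_sumunionfin}, and that the notation $f(i,j)$ means $f$ evaluated at the pair $(i,j)\in K$.
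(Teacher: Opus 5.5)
Your proposal is correct and follows essentially the same route as the paper: it partitions $K$ into the finite sets $\{i\}\times\mb J(i)$, identifies each inner sum with $\sum_{j\in\mb J(i)}f(i,j)$ via Proposition~\ref{p_sumchange}, and concludes by Proposition~\ref{p_sumunionfin}. The extra bookkeeping you mention (that $\mb J'$ is a partition into finite sets, and that the outer sum exists) matches what the paper's argument implicitly relies on.
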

\begin{proof}
  Let $\fm{\mb J'}{I}$ be such that $\mb J'(i) = \{i\}\times \mb J(i)$ for every $i\in I$. Then $\mb J'$ is a partition of $K$. By Proposition~\ref{p_sumfin}, we have $\sm{\mf A}{j\in \mb J(i)}{f(i,j)}$ for every $i\in I$. As $\{(i,j)\}_{j\in \mb J(i)}$ is a bijection between $\mb J(i)$ and $\mb J'(i)$, Proposition~\ref{p_sumchange} ensures that $\sum_{k\in\mb J'(i)}f(k) = \sum_{j\in \mb J(i)}f(i,j)$ for every $i\in I$. Hence, the statement follows from Proposition~\ref{p_sumunionfin} applied to $\mb J'$ in place of~$\mb J$. 
\end{proof}

\begin{proposition}\label{p_sumunion}
  Let $\mf A$ be a regular HTM, $K$ be a set, and $\sm{\mf A}{k\in K}{f(k)}$. Let  $\fm{\mb J}{I}$ be a partition of $K$ such that $\sm{\mf A}{k\in \mb J(i)}{f(k)}$ for every $i\in I$. Then $\sum_{k\in K}f(k) = \sum_{i\in I}\sum_{k\in\mb J(i)}f(k)$.
\end{proposition}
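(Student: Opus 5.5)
Let $x = \sum_{k\in K} f(k)$ and, for every $i\in I$, let $g(i) = \sum_{k\in\mb J(i)} f(k)$. The plan is to show that the net $\bigS_{i\in I} g(i)$ converges to $x$ over $\mathcal{F}_I$. Since $\mf A$ is regular, it suffices to show the following: for every \emph{closed} neighbourhood $U$ of $x$ there is $I_0\in\mathcal{F}_I$ such that $\finsum_{i\in I'} g(i)\in U$ for all $I'\in \mathcal{F}_I$ with $I_0\subset I'$. This reduction works because closed neighbourhoods form a base at $x$.

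Fix a closed neighbourhood $U$ of $x$. Choose $K_0\in\mathcal{F}_K$ such that $\finsum_{k\in K'}f(k)\in U$ whenever $K'\in\mathcal{F}_K$ and $K_0\subset K'$. Put $I_0 = \set{i\in I}{K_0\cap \mb J(i)\neq\varnothing}$; this set is finite. Now take $I'\in\mathcal{F}_I$ with $I_0\subset I'$, and set $L = \bigcup_{i\in I'}\mb J(i)$. By Proposition~\ref{p_sumfinunion} applied to the finite partition $\mb J|_{I'}$ of $L$, we will have $\sm{\mf A}{k\in L}{f(k)}$ provided this sum exists, and then
\[
\sum_{k\in L}f(k) = \sum_{i\in I'} g(i) = \finsum_{i\in I'} g(i)
\]
(the last equality by Proposition~\ref{p_sumfin}). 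The sum over $L$ does exist: by hypothesis each sum over $\mb J(i)$ exists, and finitely many of them combine via Proposition~\ref{p_sum2union} inductively, which is exactly how Proposition~\ref{p_sumfinunion} was proved.

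The key step is to show that $\sum_{k\in L} f(k)\in U$. The net $\bigS_{k\in L}f(k)$ converges over $\mathcal{F}_L$ to this sum. Every finite $L'$ with $K_0\subset L'\subset L$ satisfies $\finsum_{k\in L'}f(k)\in U$, because $K_0\subset L$ by the choice of $I_0$. Such $L'$ form a cofinal subset of $\mathcal{F}_L$, so the limit lies in the closure of $U$, which is $U$ since $U$ is closed. Hence $\finsum_{i\in I'}g(i)\in U$, and convergence follows.

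The main obstacle is precisely this passage to the limit inside $U$. The sets $\mb J(i)$ may be infinite, so the partial sums $\finsum_{i\in I'}g(i)$ are themselves limits, not finite partial sums of $f$, and they are only approximated from inside $U$. This is where closedness, hence regularity, is indispensable, in contrast with Proposition~\ref{p_sumunionfin}, where finiteness of the $\mb J(i)$ avoided it. The remaining work is bookkeeping with Propositions~\ref{p_sumfinunion} and~\ref{p_sumfin}.
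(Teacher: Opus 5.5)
Your proof is correct and follows the paper's argument essentially step for step: you use the same $K_0$, $I_0$, and $L=\bigcup_{i\in I'}\mb J(i)$, the same appeals to Propositions~\ref{p_sumfinunion} and~\ref{p_sumfin}, and closedness of $U$ to place $\sum_{k\in L}f(k)$ in $U$, a step you justify in more detail than the paper does. The only blemish is the circular phrasing ``provided this sum exists''. Proposition~\ref{p_sumfinunion} applied to $\mb J|_{I'}$ already yields $\sm{\mf A}{k\in L}{f(k)}$, which is exactly how the paper uses it.
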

\begin{proof}
  Let $x = \sum_{k\in K}f(k)$ and $U$ be a closed neighbourhood of $x$ in $\mf A$. Let $K_0\in \mathcal{F}_K$ be such that $\finsum_{k\in K'}f(k) \in U$ whenever $K'\in \mathcal{F}_K$ and $K_0\subset K'$. Let $\fm{g}{I}$ be such that $g(i) = \sum_{k\in \mb J(i)}f(k)$ for every $i\in I$ and let $I_0 = \set{i\in I}{K_0\cap\mb J(i)\neq\varnothing}$.  Since $K_0$ is finite, we have $I_0\in \mathcal{F}_I$. Let $I'\in \mathcal{F}_I$ be such that $I_0\subset I'$. Let $L = \bigcup_{i\in I'}\mb J(i)$. It follows from Proposition~\ref{p_sumfinunion} applied to $\mb J|_{I'}$, $I'$, and $L$ in place of $\mb J$, $I$, and $K$ respectively that $\sm{\mf A}{k\in L}{f(k)}$ and $\sum_{i\in I'}g(i) = \sum_{k\in L} f(k)$. Since $K_0\subset L$ and $U$ is closed, we have $\sum_{k\in L} f(k)\in U$. By Proposition~\ref{p_sumfin}, this implies that $\finsum_{i\in I'}g(i)$ belongs to $U$. In view of the regularity of $\mf A$, this means that $\sum_{i\in I} g(i) = x$.  
\end{proof}

\begin{proposition}\label{p_fubiniRTS}
  Let $\mf A$ be a regular HTM, $I$ and $J$ be sets, and $K = I\times  J$. Let $\sm{\mf A}{k\in K}{f(k)}$ and $x = \sum_{k\in K} f(k)$.
  \begin{enumerate}
  \item [(i)] Let $\sm{\mf A}{j\in  J}{f(i,j)}$ for every $i\in I$. Then $x = \sum_{i\in I}\sum_{j\in J}f(i,j)$.
  \item [(ii)] Let $\sm{\mf A}{i\in  I}{f(i,j)}$ for every $j\in J$. Then $x = \sum_{j\in J}\sum_{i\in I}f(i,j)$.
  \end{enumerate}
\end{proposition}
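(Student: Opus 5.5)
Both parts reduce to Proposition~\ref{p_sumunion} once the product $K = I\times J$ is split along one of its coordinates. For (i), let $\fm{\mb J}{I}$ be given by $\mb J(i) = \{i\}\times J$. This family is a partition of $K$. For each $i\in I$ the map $\{(i,j)\}_{j\in J}$ is a bijection of $J$ onto $\mb J(i)$. The hypothesis $\sm{\mf A}{j\in J}{f(i,j)}$ and Proposition~\ref{p_sumchange} therefore give $\sm{\mf A}{k\in\mb J(i)}{f(k)}$ and
\[
  \sum_{k\in\mb J(i)}f(k) = \sum_{j\in J}f(i,j).
\]
We already have $\sm{\mf A}{k\in K}{f(k)}$, and $\mf A$ is regular. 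Proposition~\ref{p_sumunion} then applies to $\mb J$ and yields $x = \sum_{i\in I}\sum_{k\in\mb J(i)}f(k) = \sum_{i\in I}\sum_{j\in J}f(i,j)$. As part of its conclusion, the outer sum exists.

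Part (ii) is the same argument with the coordinates exchanged. Take $\fm{\mb J'}{J}$ with $\mb J'(j) = I\times\{j\}$, which is again a partition of $K$. The bijections $\{(i,j)\}_{i\in I}$ of $I$ onto $\mb J'(j)$, together with Proposition~\ref{p_sumchange}, give $\sm{\mf A}{k\in\mb J'(j)}{f(k)}$ and $\sum_{k\in\mb J'(j)}f(k) = \sum_{i\in I}f(i,j)$ for every $j\in J$. Proposition~\ref{p_sumunion} applied to $\mb J'$ gives $x = \sum_{j\in J}\sum_{i\in I}f(i,j)$.

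The substantive work, namely the use of regularity to pass from the finite sub-sums to the limit, is already contained in Proposition~\ref{p_sumunion}. What remains here is bookkeeping: transporting summability between $J$ and $\{i\}\times J$ via reindexing, and checking that $\mb J$ and $\mb J'$ really are partitions of $K$.
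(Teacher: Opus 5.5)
Your proposal is correct and follows the paper's proof exactly: split $K$ into the slices $\{i\}\times J$ (respectively $I\times\{j\}$), transfer summability and the value of each slice sum via Proposition~\ref{p_sumchange}, and conclude with Proposition~\ref{p_sumunion}. You are also right that the regularity of $\mf A$ is used only inside Proposition~\ref{p_sumunion}.
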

\begin{proof}
  (i) Let $\fm{\mb J}{I}$ be such that $\mb J(i) = \{i\}\times J$ for every $i\in I$. Since $\{(i,j)\}_{j\in J}$ is a bijection between $J$ and $\mb J(i)$, Proposition~\ref{p_sumchange} ensures that $\sum_{k\in\mb J(i)}f(k) = \sum_{j\in J}f(i,j)$ for every $i\in I$. The statement hence follows from Proposition~\ref{p_sumunion}. 
  \par\medskip\noindent
  (ii) Let $\fm{\mb J}{J}$ be such that $\mb J(j) = I\times\{j\}$ for every $j\in J$. Since $\{(i,j)\}_{i\in I}$ is a bijection between $I$ and $\mb J(j)$, Proposition~\ref{p_sumchange} ensures that $\sum_{k\in\mb J(j)}f(k) = \sum_{i\in I}f(i,j)$ for every $j\in J$. The statement therefore follows from Proposition~\ref{p_sumunion} applied to $J$ in place of $I$.
\end{proof}

\begin{remark}
  When $\mf A$ is a complete uniform HTM, Proposition~\ref{p_fubiniRTS} coincides with Lemma~2.5 in~\cite{FoxMorales1983}. 
\end{remark}

The next example shows that the regularity assumption cannot be dropped in Proposition~\ref{p_fubiniRTS}.  

\begin{example}\label{e_nonreg}
  Let $\mathcal{T}$ be the set of all subsets $O$ of $\R^2$ such that
  \[
    \mbox{if $(x,y)\in O$, then $(a,x]\times (b,y)\subset O$ for some $a<x$ and $b<y$.} 
  \]
  It is easy to see that $\mathcal{T}$ is a Hausdorff topology on $\R^2$ that makes the ordinary addition in $\R^2$ continuous. We let $\mf S$ denote the HTM obtained by equipping $\R^2$ with $\mathcal{T}$. Let $K = \N\times \N$ and $f\colon K\to\R^2$ be such that $f(1,1) = (1/2,1/2)$, $f(i,1) = (2^{-i},0)$ for $i>1$, $f(1,j) = (0,2^{-j})$ for $j>1$, and $f(i,j) = 0$ whenever $i>1$ and $j>1$. Then $\sum^{\mf S}_{k\in K} f(k) = (1,1)$. At the same time, the sequence $\sum^{\mf S}_{i \leq n} \sum^{\mf S}_{j\in\N} f(i,j) = (1-2^{-n},1)$ does not converge in $\mf S$ as $n\to\infty$. Thus, statement~(i) of Proposition~\ref{p_fubiniRTS} breaks down in this case. It follows from Proposition~\ref{p_fubiniRTS} (and can be easily verified directly) that $\mf S$ is not regular.
\end{example}

\begin{remark}\label{r_paratop}
  A group that is a topological monoid is called a paratopological group. The HTM $\mf S$ in Example~\ref{e_nonreg} is actually a paratopological group. It is a slight modification of a nonregular paratopological group constructed by Ravsky~\cite[Example~1.8]{Ravsky2001}.
\end{remark}

\begin{proposition}\label{p_sumsubsgr}
  Let $\mf A$ be an HTM, $\mf B$ be a topological submonoid of $\mf A$. Suppose $f(k)\in\mf B$ for all $k\in K$. If $\sm{\mf B}{k\in K}{f(k)}$, then $\sm{\mf A}{k\in K}{f(k)}$ and $\sum^{\mf A}_{k\in K} f(k) = \sum^{\mf B}_{k\in K}f(k)$. If $\sm{\mf A}{k\in K}{f(k)}$ and $\sum^{\mf A}_{k\in K} f(k)$ belongs to $\mf B$, then $\sm{\mf B}{k\in K}{f(k)}$. 
\end{proposition}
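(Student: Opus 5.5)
The statement concerns the net of partial sums $\theta = \bigS_{k\in K} f(k)$. Since every value $f(k)$ lies in $\mf B$, this net is $\mf B$-valued, and by the submonoid property its values are the same whether finite sums are computed in $\mf B$ or in $\mf A$. Indeed, $\finsum^{\mf B}_{k\in J} f(k) = \finsum^{\mf A}_{k\in J} f(k)$ for every $J\in\mathcal{F}_K$, because the recursive definition of $\finsum$ (Proposition~\ref{p_finsumdef1}) uses only the neutral element and the addition, and both are inherited by $\mf B$. I would record this first, by a one-line induction on the cardinality of $J$. After that, both claims reduce to comparing convergence of one and the same net in the space $\mf B$ and in the space $\mf A$, where $\mf B$ carries the subspace topology.

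For the first claim, suppose $\theta$ converges in $\mf B$ to some $x\in\mf B$. Let $U$ be a neighbourhood of $x$ in $\mf A$. Then $U\cap\mf B$ is a neighbourhood of $x$ in $\mf B$, so $\theta$ eventually lies in $U\cap\mf B\subset U$. Hence $\theta$ converges to $x$ in $\mf A$. Since $\mf A$ is Hausdorff, limits are unique, and therefore $\sum^{\mf A}_{k\in K} f(k) = x = \sum^{\mf B}_{k\in K} f(k)$.

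For the second claim, suppose $\theta$ converges in $\mf A$ to $x$ and $x\in\mf B$. Every neighbourhood of $x$ in $\mf B$ contains a set of the form $U\cap\mf B$ with $U$ a neighbourhood of $x$ in $\mf A$. The net $\theta$ is eventually in $U$, and it always lies in $\mf B$, so it is eventually in $U\cap\mf B$. Thus $\theta$ converges to $x$ in $\mf B$, which means $\sm{\mf B}{k\in K}{f(k)}$.

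None of these steps is a real obstacle. The only point needing care is the identification of the $\mf A$- and $\mf B$-finite sums, which is what allows us to speak of a single net.
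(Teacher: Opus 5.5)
Your proof is correct and essentially the same as the paper's. For the second claim the paper uses exactly your argument (the net is $\mf B$-valued and eventually in $U\cap\mf B$). For the first claim it cites Proposition~\ref{p_sum}(ii), applied to the inclusion $\mf B\to\mf A$ as a continuous homomorphism; that is the net argument you write out by hand, and it includes the identification of finite sums, which there comes from Proposition~\ref{p_finsum}(iii).
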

\begin{proof}
  Let $\sm{\mf B}{k\in K}{f(k)}$. Then  $\sm{\mf A}{k\in K}{f(k)}$ and $\sum^{\mf A}_{k\in K} f(k) = \sum^{\mf B}_{k\in K}f(k)$ by Proposition~\ref{p_sum}(ii) because the identity map of $\mf B$ is a continuous homomorphism from $\mf B$ to $\mf A$. Suppose now that $\sm{\mf A}{k\in K}{f(k)}$ and $x = \sum^{\mf A}_{k\in K} f(k)$ belongs to $\mf B$. Let $\theta = \bigS^{\mf A}_{k\in K}f(k)$. Then $\theta$ converges to $x$ in $\mf A$ over $\mathcal F_K$. Let $U$ be a neighbourhood of $x$ in $\mf B$. Then there is a neighbourhood $O$ of $x$ in $\mf A$ such that $U = O\cap \mf B$. Let $J\in \mathcal F_K$ be such that $\theta(K')\in O$ for every $K'\in\mathcal F_K$ such that $K'\supset J$. As $\theta(K')\in \mf B$ for every $K'\in\mathcal F_K$, it follows that $\theta(K')\in U$ for every $K'\in\mathcal F_K$ such that $K'\supset J$. Thus, $\theta$ converges in $\mf B$ over $\mathcal F_K$. Since $\theta = \bigS^{\mf B}_{k\in K}f(k)$, this means that $\sm{\mf B}{k\in K}{f(k)}$.
\end{proof}

\begin{proposition}\label{p_seqclosedsum}
  Let $\mf A$ be an HTM, $\mf B$ be a sequentially closed submonoid of $\mf A$, $K$ be a countable set, $f(k)\in \mf B$ for all $k\in K$, and $\sm{\mf A}{k\in K}{f(k)}$. Then $\sum^{\mf A}_{k\in K}f(k)$ belongs to $\mf B$. If, in addition, $\mf B$ is a topological submonoid of $\mf A$, then $\sm{\mf B}{k\in K}{f(k)}$. 
\end{proposition}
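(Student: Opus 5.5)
Since $K$ is countable, the net $\bigS^{\mf A}_{k\in K}f(k)$ over $\mathcal{F}_K$ can be replaced by a sequence. If $K$ is finite, Proposition~\ref{p_sumfin} gives $\sum^{\mf A}_{k\in K}f(k) = \finsum_{k\in K}f(k)$, which lies in $\mf B$ because $\mf B$ is a submonoid. So the real work is the infinite case, and by Proposition~\ref{p_sumchange} we may then assume $K=\N$.

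Fix the increasing sequence of finite sets $K_n = \{1,\dots,n\}$, which is cofinal in $\mathcal{F}_\N$. The sequence $n\mapsto \finsum_{k\in K_n}f(k)$ is a subnet of the convergent net $\bigS^{\mf A}_{k\in\N}f(k)$. Cofinality of $K_n$ suffices here: for every $J\in\mathcal{F}_\N$ there is $n_0$ with $J\subset K_n$ for all $n\ge n_0$. Hence this sequence converges in $\mf A$ to $x=\sum^{\mf A}_{k\in\N}f(k)$.

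Each term $\finsum_{k\in K_n}f(k)$ belongs to $\mf B$, being a finite sum of elements of the submonoid $\mf B$. One can see this either from Proposition~\ref{p_finsum} by induction, or directly from the recursive definition of $\finsum$ together with closedness of $\mf B$ under addition and $0\in\mf B$. Since $\mf B$ is sequentially closed, we get $x\in\mf B$.

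For the second assertion, suppose $\mf B$ is in addition a topological submonoid. We have $\sm{\mf A}{k\in K}{f(k)}$, and by the first part $\sum^{\mf A}_{k\in K}f(k)\in\mf B$. The second statement of Proposition~\ref{p_sumsubsgr} then yields $\sm{\mf B}{k\in K}{f(k)}$ directly.

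The only mildly delicate step is justifying that the sequence of partial sums along $K_n$ converges to the same limit as the net over $\mathcal{F}_K$. This is immediate from the cofinality of $(K_n)$ and the definition of net convergence. No regularity of $\mf A$ is needed anywhere, so I expect no real obstacle.
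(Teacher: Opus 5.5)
Your proof is correct and follows the paper's own argument. The partial sums lie in $\mf B$, and the countability of $\mathcal F_K$ lets you pass to a sequence, so sequential closedness gives $x\in\mf B$; the second claim then follows from Proposition~\ref{p_sumsubsgr}. The only difference is that you spell out the cofinal-sequence step (reindexing by $\N$ and using $K_n=\{1,\dots,n\}$), which the paper leaves implicit in the phrase ``$\mathcal F_K$ is countable.''
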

\begin{proof}
  Let $\theta = \bigS^{\mf A}_{k\in K} f(k)$ and $x = \sum^{\mf A}_{k\in K}f(k)$. Then $\im\theta\subset\mf B$ and $\theta$ converges to $x$ in $\mf A$ over $\mathcal F_K$. Since $\mf B$ is sequentially closed in $\mf A$ and $\mathcal F_K$ is countable, this implies that $x\in\mf B$. If $\mf B$ is a topological submonoid of $\mf A$, then Proposition~\ref{p_sumsubsgr} implies that $\sm{\mf B}{k\in K}{f(k)}$.
\end{proof}

\begin{proposition}\label{p_seqcompl}
  Let $\mf A$ be a sequentially complete HTG and $\sm{\mf A}{k\in K}{f(k)}$. Then $\sm{\mf A}{k\in J}{f(k)}$ for every countable $J\subset K$.
\end{proposition}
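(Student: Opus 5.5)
The plan is to use the Cauchy criterion for unconditional summation in a topological group, combined with sequential completeness. Let $J\subset K$ be countable, $x = \sum_{k\in K}f(k)$, and $\theta = \bigS_{k\in K}f(k)$. If $J$ is finite, the claim follows from Proposition~\ref{p_sumfin}. So assume $J$ is infinite and fix a bijection $\tau\colon\N\to J$. By Proposition~\ref{p_sumchange}, it suffices to show that $\sm{\mf A}{n\in\N}{f(\tau(n))}$.

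The first step is a Cauchy-type estimate. Let $U$ be a neighbourhood of $0$. Choose a symmetric neighbourhood $W$ of $0$ with $W - W\subset U$. Since $\theta$ converges to $x$, there is $K_0\in\mathcal F_K$ such that $\theta(K')\in x+W$ whenever $K'\supset K_0$. For any finite $L\subset K$ with $L\cap K_0=\varnothing$, Proposition~\ref{p_finsum}(iv) gives
\[
\finsum_{k\in L}f(k) = \theta(K_0\cup L) - \theta(K_0) \in W - W\subset U.
\]
This holds in particular for finite $L\subset J\setminus K_0$.

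Next, define partial sums $s_n = \finsum_{m\le n} f(\tau(m))$. Take $N$ larger than every index of $\tau^{-1}(K_0\cap J)$. For $n>m\ge N$, the difference $s_n - s_m$ is a finite sum over a subset of $J\setminus K_0$, so it lies in $U$. Thus $(s_n)$ is Cauchy for the group uniformity. By sequential completeness it converges to some $y$.

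The main obstacle is upgrading convergence of $(s_n)$ along $\N$ to convergence of the net over $\mathcal F_{\N}$. Here the Cauchy estimate is used again. Fix a neighbourhood $V$ of $0$ and pick $W'$ with $W'+W'\subset V$. Choose $K_0$ as above for $W'$ in place of $U$, and choose $N$ so that, in addition, $s_n - y\in W'$ for $n\ge N$. Take any finite $F\subset\N$ with $F\supset\{1,\dots,N\}$, and let $n\ge\max F$. Then
\[
\finsum_{m\in F} f(\tau(m)) - y = (s_n - y) - \finsum_{m\in\{1,\dots,n\}\setminus F} f(\tau(m)).
\]
The last sum runs over indices $>N$, hence over a subset of $J\setminus K_0$, so it lies in $W'$. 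Therefore the whole expression lies in $W'-W'$. Choosing $W'$ symmetric with $W'-W'\subset V$ finishes the argument.
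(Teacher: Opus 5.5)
Your proof is correct and follows essentially the same route as the paper. Both arguments transfer a Cauchy condition from the convergent net $\bigS_{k\in K}f(k)$ to the finite sums over $J$. The paper does this by writing $\rho(J')-\rho(J'')=\theta(J'\cup L)-\theta(J''\cup L)$ with $L=K_0\setminus J$, while you use the equivalent estimate that finite sums outside $K_0$ are small; both then apply sequential completeness using the countability of $J$. The only difference is that you explicitly pass from the cofinal sequence of initial segments to convergence of the full net over $\mathcal F_{\N}$, a step the paper compresses into the remark that $\mathcal F_J$ is countable.
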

\begin{proof}
  Let $\theta = \bigS_{k\in K} f(k)$ and $\rho = \bigS_{k\in J} f(k)$. Let $U$ be a neighbourhood of the origin in $\mf A$. Since $\theta$ is a Cauchy net in $\mf A$ over $\mathcal{F}_K$, there is $K_0\in \mathcal{F}_K$ such that $\theta(K') - \theta(K'')\in U$ for every $K',K''\in \mathcal{F}_K$ such that $K_0\subset K'\cap K''$. Let $J_0 = J\cap K_0$ and $L = K_0\setminus J$. Let $J',J''\in \mathcal{F}_J$ be such that $J_0\subset J'\cap J''$. Let $K' = J'\cup L$ and $K'' = J''\cup L$. Then $K_0\subset K'\cap K''$. As $J'\cap L = J''\cap L = \varnothing$, it follows from Proposition~\ref{p_finsum}(iv) that $\rho(J') - \rho(J'') = \theta(K') - \theta(K'')$ and, hence, $\rho(J')-\rho(J'')\in U$. Thus, $\rho$ is a Cauchy net in $\mf A$ over $\mathcal{F}_J$. Since $\mf A$ is sequentially complete and $\mathcal{F}_J$ is countable, we conclude that $\sm{\mf A}{k\in J}{f(k)}$.
\end{proof}

Given a monoid $\mf A$, we let $\mf A_d$ denote the HTM obtained by endowing $\mf A$ with the discrete topology.

\begin{lemma}\label{l_discretesum}
  Let $\mf A$ be a monoid, $K$ be a finite set, and $f(k)\in \mf A$ for all $k\in K$. Then $\sum_{k\in K}^{\mf A_d} f(k) = \finsum_{k\in K}^{\mf A}f(k)$.    
\end{lemma}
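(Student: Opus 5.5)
By Proposition~\ref{p_sumfin}, applied to the HTM $\mf A_d$, we have $\sum^{\mf A_d}_{k\in K} f(k) = \finsum^{\mf A_d}_{k\in K} f(k)$. The proof therefore reduces to a purely algebraic check: the finite sum of $f$ computed in $\mf A_d$ agrees with the finite sum computed in $\mf A$. This is plausible because $\mf A_d$ and $\mf A$ have the same underlying monoid, and finite sums depend only on the monoid structure.

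The plan has two steps.

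First, I would recall how $\finsum$ is defined. The map $\psi$ is characterized by properties (a) and (b), which involve only the addition and the neutral element. Since $\mf A_d$ has the same underlying set, addition and zero as $\mf A$, the maps $\psi_{\mf A}$ and $\psi_{\mf A_d}$ satisfy identical recursive conditions on the class of finite $\mf A$-valued families. Their uniqueness then forces $\psi_{\mf A} = \psi_{\mf A_d}$.

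Second, if one prefers to avoid appealing to uniqueness, I would argue directly by induction on the cardinality of $K$, using Proposition~\ref{p_finsumdef1} in each of $\mf A$ and $\mf A_d$. For $K=\varnothing$, both sums equal $0$. For the inductive step, pick $k_0\in K$; both sums split as $f(k_0)$ plus the sum over $K\setminus\{k_0\}$, and these agree by the induction hypothesis.

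Alternatively, Proposition~\ref{p_finsum}(iii) applied to the identity homomorphism $\mf A\to\mf A_d$ gives the same conclusion at once. There is no real obstacle here. The only point needing care is to note that $\mf A_d$ is indeed an HTM, which is immediate: a discrete space is Hausdorff and every map out of it is continuous, so Proposition~\ref{p_sumfin} is applicable.
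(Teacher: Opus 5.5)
Your proposal is correct and follows the paper's proof: apply Proposition~\ref{p_sumfin} in the HTM $\mf A_d$, and then note that $\finsum^{\mf A_d}$ and $\finsum^{\mf A}$ coincide because $\mf A$ and $\mf A_d$ are the same monoid. Your extra justifications of that last identification, via uniqueness of $\psi$, induction, or Proposition~\ref{p_finsum}(iii), are fine, though the paper simply takes it as evident.
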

\begin{proof}
  Since $\mf A$ and $\mf A_d$ coincide as monoids, we have $\finsum_{k\in K}^{\mf A}f(k) = \finsum_{k\in K}^{\mf A_d}f(k)$. The statement therefore follows from Proposition~\ref{p_sumfin}.
\end{proof}

\begin{proposition}\label{p_fubini2}
  Let $\mf A$ be a monoid, $I$ and $J$ be finite sets, $K = I\times J$, and $ f(k)\in \mf A$ for all $k\in K$. Then
  \[
    \finsum_{k\in K} f(k) = \finsum_{i\in I}\finsum_{j\in J}f(i,j) = \finsum_{j\in J}\finsum_{i\in I}f(i,j).
  \]  
\end{proposition}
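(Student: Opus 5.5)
The plan is to reduce the statement to the topological Fubini result for finite index sets, Proposition~\ref{p_fubiniHTM}, by passing to the discrete monoid $\mf A_d$. In $\mf A_d$ every finite family is trivially summable, and Lemma~\ref{l_discretesum} identifies unconditional sums over finite index sets in $\mf A_d$ with the purely algebraic finite sums $\finsum$ in $\mf A$.

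The steps are as follows. First, note that $\mf A_d$ is an HTM: the discrete topology is Hausdorff and makes addition continuous. Second, for each $i\in I$ the set $J$ is finite, so by Proposition~\ref{p_sumfin} (or Lemma~\ref{l_discretesum}) we have $\sm{\mf A_d}{j\in J}{f(i,j)}$, with $\sum^{\mf A_d}_{j\in J}f(i,j) = \finsum_{j\in J}f(i,j)$. Since $I$ is finite, condition~(a) of Proposition~\ref{p_fubiniHTM} holds in $\mf A_d$, and that proposition gives
\[
\sum^{\mf A_d}_{i\in I}\sum^{\mf A_d}_{j\in J}f(i,j) = \sum^{\mf A_d}_{k\in K}f(k) = \sum^{\mf A_d}_{j\in J}\sum^{\mf A_d}_{i\in I}f(i,j).
\]
Third, convert each unconditional sum back into a finite algebraic sum by Lemma~\ref{l_discretesum}. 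The inner sums become $\finsum_{j\in J}f(i,j)$ and $\finsum_{i\in I}f(i,j)$. The outer sums are again over the finite sets $I$ and $J$ of elements of $\mf A$, so they become $\finsum_{i\in I}\finsum_{j\in J}f(i,j)$ and $\finsum_{j\in J}\finsum_{i\in I}f(i,j)$. The middle term becomes $\finsum_{k\in K}f(k)$, because $K=I\times J$ is finite.

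No step is a real obstacle. The only point needing care is applying Lemma~\ref{l_discretesum} twice in a nested way: first replace the inner sums, so that the outer family is literally the family $\{\finsum_{j\in J}f(i,j)\}_{i\in I}$, and only then apply the lemma to the outer sum. As an alternative that avoids topology altogether, one could induct on $\card{I}$ using Proposition~\ref{p_finsum}(ii),(iv), but the discrete-topology route is shorter and matches the paper's announced strategy of deriving finite results from topological ones.
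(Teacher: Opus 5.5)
Your proposal is correct and is essentially the paper's proof: pass to the discrete HTM $\mf A_d$, apply Proposition~\ref{p_fubiniHTM}, and convert every unconditional sum back to a finite sum via Lemma~\ref{l_discretesum}. The only difference is that you verify condition~(a) of Proposition~\ref{p_fubiniHTM}, whereas the paper verifies condition~(b). Since both $I$ and $J$ are finite, this choice is immaterial.
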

\begin{proof}
  Since $I$ is finite, Proposition~\ref{p_sumfin} implies that $\sm{\mf A_d}{i\in I}{f(i,j)}$ for every $j\in J$. As $J$ is finite, Proposition~\ref{p_fubiniHTM} ensures that
  \[
    {\sum_{k\in K}}^{\mf A_d}f(k) = {\sum_{i\in I}}^{\mf A_d}{\sum_{j\in J}}^{\mf A_d}f(i,j) = {\sum_{j\in J}}^{\mf A_d}{\sum_{i\in I}}^{\mf A_d}f(i,j).
  \]
  The statement now follows from Lemma~\ref{l_discretesum}.
\end{proof}

\begin{proposition}\label{p_finsumunion}
  Let $\mf A$ be a monoid, $K$ be a finite set, and $f(k)\in \mf A$ for all $k\in K$. Let $\fm{\mb J}{I}$ be a finite partition of $K$.  Then
  \[
    \finsum_{k\in K}f(k) = \finsum_{i\in I}\finsum_{k\in\mb J(i)}f(k).
  \]
\end{proposition}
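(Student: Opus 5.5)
The plan is to use the same device as in the proof of Proposition~\ref{p_fubini2}: pass to the discrete HTM $\mf A_d$. Since $\mf A_d$ coincides with $\mf A$ as a monoid, every finite sum in $\mf A$ equals the corresponding finite sum in $\mf A_d$. By Lemma~\ref{l_discretesum}, each such finite sum also equals the unconditional sum in $\mf A_d$. So it suffices to prove the identity
\[
  {\sum_{k\in K}}^{\mf A_d}f(k) = {\sum_{i\in I}}^{\mf A_d}{\sum_{k\in\mb J(i)}}^{\mf A_d}f(k),
\]
and then translate both sides back to $\finsum$.

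For this I will apply Proposition~\ref{p_sumfinunion} in $\mf A_d$, with the given finite partition $\mb J$ of $K$.

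The hypothesis of that proposition is that $\smm{\mf A_d}{}$-summability holds over each $\mb J(i)$. It is immediate from Proposition~\ref{p_sumfin}, because each $\mb J(i)\subset K$ is finite. The proposition then yields the displayed equality.

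Finally I rewrite both sides:
\begin{itemize}
\item by Lemma~\ref{l_discretesum}, the left side is $\finsum_{k\in K}f(k)$;
\item each inner sum ${\sum}^{\mf A_d}_{k\in\mb J(i)}f(k)$ equals $\finsum_{k\in\mb J(i)}f(k)$, again by Lemma~\ref{l_discretesum};
\item the outer sum over the finite set $I$ then equals $\finsum_{i\in I}\finsum_{k\in\mb J(i)}f(k)$, once more by Lemma~\ref{l_discretesum}.
\end{itemize}

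There is essentially no obstacle. The only point to check is that Proposition~\ref{p_sumfinunion} was proved using only results on finite sums that precede Proposition~\ref{p_finsumunion}, so the argument is not circular. Its proof by induction uses Propositions~\ref{p_sum2union} and~\ref{p_sumfin} and Corollary~\ref{c_sumelement}. These in turn rest on Proposition~\ref{p_finsum} and Lemma~\ref{l_zeroext}, and none of them invokes Proposition~\ref{p_finsumunion}.

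(Proposition~\ref{p_Spremeasure} does use Proposition~\ref{p_finsumunion}, but that result lies downstream of it.)

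Alternatively, one could argue directly by induction on the cardinality of $I$ using Proposition~\ref{p_finsum}(iv). The discrete-topology route is shorter and matches the paper's style.
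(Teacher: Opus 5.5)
Your proposal is correct and follows essentially the same route as the paper's proof: pass to $\mf A_d$, get summability over each $\mb J(i)$ from Proposition~\ref{p_sumfin}, apply Proposition~\ref{p_sumfinunion}, and translate every sum back to $\finsum$ via Lemma~\ref{l_discretesum}. Your check that the chain behind Proposition~\ref{p_sumfinunion} (Propositions~\ref{p_sumfin}, \ref{p_sum2union}, Corollary~\ref{c_sumelement}, Lemma~\ref{l_zeroext}, Proposition~\ref{p_finsum}) never invokes Proposition~\ref{p_finsumunion} is accurate, so there is no circularity.
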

\begin{proof}
  By Proposition~\ref{p_sumfin}, we have  $\sm{\mf A_d}{k\in \mb J(i)}{f(k)}$ for every $i\in I$. Since
  \[
    {\sum_{k\in K}}^{\mf A_d}f(k) = {\sum_{i\in I}}^{\mf A_d}{\sum_{k\in\mb J(i)}}^{\mf A_d}f(k)
  \]
   by Proposition~\ref{p_sumfinunion}, the statement follows from Lemma~\ref{l_discretesum}.
\end{proof}

\begin{proposition}\label{p_findisjointsum}
  Let $\mf A$ be a monoid, $\fm{\mb J}{I}$ be a finite family of finite sets, $K = \bigsqcup_{i\in I}\mb J(i)$, and $f(k)\in \mf A$ for all $k\in K$. Then
    \[
      \finsum_{k\in K} f(k) = \finsum_{i\in I}\finsum_{j\in\mb J(i)}f(i,j).
    \]
\end{proposition}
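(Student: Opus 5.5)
The plan is to mirror the proof of Proposition~\ref{p_finsumunion}: pass to the discrete monoid $\mf A_d$ and use the corresponding topological statement, Proposition~\ref{p_disjointsum}. Since $K = \bigsqcup_{i\in I}\mb J(i)$ and both $I$ and every $\mb J(i)$ are finite, the set $K$ is finite.

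First, because $K$ is finite, Proposition~\ref{p_sumfin} applied in $\mf A_d$ gives $\sm{\mf A_d}{k\in K}{f(k)}$ and
\[
\sum^{\mf A_d}_{k\in K}f(k) = \finsum_{k\in K}f(k).
\]
This is exactly the hypothesis needed in Proposition~\ref{p_disjointsum} with $\mf A_d$ in place of $\mf A$. That proposition then yields
\[
{\sum_{k\in K}}^{\mf A_d}f(k) = {\sum_{i\in I}}^{\mf A_d}{\sum_{j\in\mb J(i)}}^{\mf A_d}f(i,j).
\]

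Next, I would convert each unconditional sum in $\mf A_d$ back into a finite sum, using Lemma~\ref{l_discretesum}. Since each $\mb J(i)$ is finite, the inner sums satisfy ${\sum_{j\in\mb J(i)}}^{\mf A_d}f(i,j) = \finsum_{j\in\mb J(i)}f(i,j)$ for every $i\in I$. Since $I$ is finite, the outer sum over $I$ equals the corresponding finite sum as well. The same lemma applied to the left-hand side gives the required identity. Note that $\mf A$ and $\mf A_d$ have the same addition, so all the $\finsum$ expressions agree.

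I do not expect any real obstacle. The only point to check is finiteness of $K$, which is needed to apply Proposition~\ref{p_sumfin} and Lemma~\ref{l_discretesum} to the total sum. This holds because $K$ is a finite union of the finite sets $\{i\}\times\mb J(i)$.
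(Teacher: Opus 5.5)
Your proof is correct and is essentially the paper's own argument. Like the paper, you pass to the discrete monoid $\mf A_d$, obtain summability over the finite set $K$ from Proposition~\ref{p_sumfin}, apply Proposition~\ref{p_disjointsum}, and convert every sum back to a finite sum with Lemma~\ref{l_discretesum}; your explicit check that $K$ is finite is a detail the paper leaves implicit.
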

\begin{proof}
  By Proposition~\ref{p_sumfin}, we have $\sm{\mf A_d}{k\in K}{f(k)}$. By Proposition~\ref{p_disjointsum}, we conclude that
   \[
      {\sum_{k\in K}}^{\mf A_d} f(k) = {\sum_{i\in I}}^{\mf A_d}{\sum_{j\in\mb J(i)}}^{\mf A_d}f(i,j).
    \]
    The statement now follows from Lemma~\ref{l_discretesum}.
\end{proof}

\section{Proof of Proposition~\ref{l_semi}}
\label{app_C}

\begin{lemma}\label{l_sigmaadd}
  Let $\mf A$ be an HTM, $\mathcal Q$ be a semi-ring, and $\nu$ be an $\mf A$-valued additive function such that $D_\nu=\kappa(\mathcal Q)$. Then $\nu$ is $\sigma$-additive if and only if $\nu|_{\mathcal Q}$ is $\sigma$-additive.
\end{lemma}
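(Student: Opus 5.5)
The `only if' direction needs no argument: $\mathcal Q\subset\kappa(\mathcal Q)$, and a countable partition of an element of $\mathcal Q$ into elements of $\mathcal Q$ is already such a partition in $\kappa(\mathcal Q)$. The plan for the `if' direction is to prove $\sigma$-additivity of $\nu$ with a refinement argument. We pass to the common refinement with a finite $\mathcal Q$-partition of the set being decomposed. After that we only ever combine \emph{finitely many} unconditional sums, or split an unconditional sum into blocks that are finite. These are exactly the operations the paper allows for an arbitrary HTM, namely Propositions~\ref{p_sumfinunion}, \ref{p_sumunionfin}, \ref{p_disjointsum} and~\ref{p_fubiniHTM}. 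The regular-only Proposition~\ref{p_fubiniRTS} is never used.

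Let $A\in\kappa(\mathcal Q)$ and let $\mb A\colon I\to\kappa(\mathcal Q)$ be a countable partition of $A$. By Proposition~\ref{p_semiring}, choose a finite partition $\mb B\colon J\to\mathcal Q$ of $A$. For each $i\in I$, choose a finite partition $\mb C(i)\colon L(i)\to\mathcal Q$ of $\mb A(i)$. For $j\in J$, $i\in I$ and $l\in L(i)$, put $\mb E(j,i,l)=\mb B(j)\cap\mb C(i|l)\in\mathcal Q$, using the semi-ring axiom (1).

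\textbf{Step 1.} Fix $j\in J$. The family $\{\mb E(j,k)\}_{k\in K}$ with $K=\bigsqcup_{i\in I}L(i)$ is a countable partition of $\mb B(j)$ into elements of $\mathcal Q$, by Lemma~\ref{p_disjpart}. The $\sigma$-additivity of $\nu|_{\mathcal Q}$ then gives $\nu(\mb B(j))=\sum_{k\in K}\nu(\mb E(j,k))$, and in particular this family is summable.

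\textbf{Step 2.} By Proposition~\ref{p_disjointsum}, which applies because every $L(i)$ is finite, this sum equals $\sum_{i\in I}\finsum_{l\in L(i)}\nu(\mb E(j,i,l))$. For fixed $i$, the finite family $\{\mb E(j,i,l)\}_{l\in L(i)}$ partitions $\mb A(i)\cap\mb B(j)$. Additivity of $\nu$ on the ring $\kappa(\mathcal Q)$ turns the inner finite sum into $\nu(\mb A(i)\cap\mb B(j))$. Hence $\nu(\mb B(j))=\sum_{i\in I}\nu(\mb A(i)\cap\mb B(j))$, with the family summable for each $j$.

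\textbf{Step 3.} Since $J$ is finite, Proposition~\ref{p_fubiniHTM}(b) applies to $f(i,j)=\nu(\mb A(i)\cap\mb B(j))$. Combined with additivity of $\nu$ (which gives $\finsum_{j}\nu(\mb A(i)\cap\mb B(j))=\nu(\mb A(i))$ and $\finsum_j\nu(\mb B(j))=\nu(A)$, with finite sums identified with unconditional sums by Proposition~\ref{p_sumfin}), it yields
\[
\nu(A)=\sum_{j\in J}\sum_{i\in I}f(i,j)=\sum_{i\in I}\sum_{j\in J}f(i,j)=\sum_{i\in I}\nu(\mb A(i)).
\]
This is the required $\sigma$-additivity.

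The main obstacle is bookkeeping rather than analysis. Step~2 must group a countable sum into finite blocks, which goes through for nonregular HTMs only because Proposition~\ref{p_disjointsum} handles finite blocks. Step~3 must interchange summation orders, which is legitimate only because $J$ is finite. I would take care that the summability hypotheses of those propositions are verified in exactly the order above, and that the bijection between $K$ and the index set of $\mb E(j,\cdot)$ is handled via Proposition~\ref{p_sumchange} where needed.
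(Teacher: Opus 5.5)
Your proof is correct and is essentially the paper's argument. Both use the same refinement: a finite $\mathcal Q$-partition of $A$ intersected with finite $\mathcal Q$-partitions of the $\mb A(i)$. Both also rely only on the two HTM-safe tools, Proposition~\ref{p_fubiniHTM} for an interchange over a finite index set and Proposition~\ref{p_disjointsum} for grouping into finite blocks. The only difference is the order. The paper interchanges first, to get $\nu(A)=\sum_{k\in K}\mu(\mb C(k))$, and then groups by $i$. You group within each $\mb B(j)$ first and interchange over the finite set $J$ last.
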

\begin{proof}
  Let $\mu = \nu|_{\mathcal Q}$. Obviously, if $\nu$ is $\sigma$-additive, then so is $\mu$. Let $\mu$ be $\sigma$-additive, $A\in D_\nu$, and $\mb A\colon I\to D_\nu$ be a countable partition of $A$. By Proposition~\ref{p_semiring}, there are $\fm{\mb J}{I}$ and $\fm{\mb B}{I}$ such that $\mb B(i)\colon\mb J(i)\to \mathcal Q$ is a finite partition of $\mb A(i)$ for every $i\in I$. Let $K = \bigsqcup_{i\in I}\mb J(i)$ and $\mb C\colon K\to\mathcal Q$ be such that $\mb C(i,j) = \mb B(i|j)$ for every $i\in I$ and $j\in \mb J(i)$. It follows from Lemma~\ref{p_disjpart} that $\mb C$ is a partition of $A$. By Proposition~\ref{p_semiring}, there is a finite partition $\mb E\colon \Lambda\to\mathcal Q$ of $A$. Let $\fm{\mb F}{K\times\Lambda}$ be such that $\mb F(k,\lambda) = \mb C(k)\cap\mb E(\lambda)$ for every $k\in K$ and $\lambda\in \Lambda$. The family $\{\mb F(k,\lambda)\}_{k\in K}$ is a countable partition of $\mb E(\lambda)$ for every $\lambda\in\Lambda$. Hence, $\sum_{k\in K} \mu(\mb F(k,\lambda)) = \mu(\mb E(\lambda))$ for every $\lambda\in \Lambda$ by the $\sigma$-additivity of $\mu$. Since $\Lambda$ is finite, the additivity of $\nu$ and Proposition~\ref{p_fubiniHTM} imply that
  \[
    \nu(A) = \sum_{\lambda\in\Lambda}\mu(\mb E(\lambda)) = \sum_{\lambda\in\Lambda}\sum_{k\in K} \mu(\mb F(k,\lambda)) = \sum_{k\in K}\sum_{\lambda\in\Lambda} \mu(\mb F(k,\lambda)).
  \]
  The family $\{\mb F(k,\lambda)\}_{\lambda\in\Lambda}$ is a finite partition of $\mb C(k)$ for every $k\in K$. By the additivity of $\mu$, it follows that $\nu(A) = \sum_{k\in K} \mu(\mb C(k))$.
  Since $\mb J(i)$ is finite for every $i\in I$, we have $\nu(\mb A(i)) = \sum_{j\in \mb J(i)}\mu(\mb B(i|j))$ by the additivity of $\nu$ and, hence, 
  \[
    \nu(A) = \sum_{i\in I}\sum_{j\in \mb J(i)}\mu(\mb B(i|j)) = \sum_{i\in I}\nu(\mb A(i))
  \]
  by Proposition~\ref{p_disjointsum}. This means that $\nu$ is $\sigma$-additive. 
\end{proof}

\begin{proof}[Proof of Proposition~\ref{l_semi}]
  Let $A\in \kappa(D_\mu)$ and let $\fm{\mb A}{I}$ and $\fm{\mb A'}{J}$ be finite partitions of $A$ into elements of $D_\mu$. Let $\fm{\mb B}{I\times J}$ be such that $\mb B(i,j)=\mb A(i)\cap \mb A'(j)$ for every $i\in I$ and $j\in J$. The family $\{\mb B(i,j)\}_{j\in J}$ is a finite partition of $\mb A(i)$ for every $i\in I$ and the family $\{\mb B(i,j)\}_{i\in I}$ is a finite partition of $\mb A'(j)$ for every $j\in J$. Hence, Proposition~\ref{p_fubini2} and the additivity of $\mu$ imply that
  \[
    \finsum_{i\in I} \mu(\mb A(i)) = \finsum_{i\in I} \finsum_{j\in J} \mu(\mb B(i,j))= \finsum_{j\in J}\finsum_{i\in I} \mu(\mb B(i,j))=\finsum_{j\in J}\mu(\mb A'(j)).
  \]
  Let $\nu$ be a map such that $D_\nu = \kappa(D_\mu)$ and
  $\nu(A)=\finsum_{i\in I}\mu(\mb A(i))$
  for every $A\in\kappa(D_\mu)$ and every finite partition $\fm{\mb A}{I}$ of $A$ into elements of $D_\mu$ (such partitions exist by Proposition~\ref{p_semiring}). As shown above, the sum defining $\nu(A)$ does not depend on the choice of a partition and, therefore, such $\nu$ exists. We obviously have $\nu(A) = \mu(A)$ for every $A\in D_\mu$.

  Let $A\in D_\nu$ and $\mb A\colon I\to D_\nu$ be a finite partition of $A$. By Proposition~\ref{p_semiring}, there are $\fm{\mb J}{I}$ and $\fm{\mb B}{I}$ such that $\mb B(i)\colon \mb J(i)\to D_\mu$ is a finite partition of $\mb A(i)$ for every $i\in I$. Let $K=\bigsqcup_{i\in I} \mb J(i)$ and $\mb C\colon K\to D_\mu$ be such that $\mb C(i,j)=\mb B(i|j)$ for every $i\in I$ and $j\in \mb J(i)$. Since  $\mb C$ is a partition of $A$ by Lemma~\ref{p_disjpart}, it follows from Proposition~\ref{p_findisjointsum} that
  \[
    \nu(A) = \finsum_{k\in K} \mu(\mb C(k)) = \finsum_{i\in I}\finsum_{j\in \mb J(i)}\mu(\mb B(i|j)) = \finsum_{i\in I}\nu(\mb A(i)).
  \]
  This means that $\nu$ is an $\mf A$-valued additive function, and the existence of $\nu$ is proved. The uniqueness of $\nu$ is obvious.
  If $\mf A$ is an HTM and $\mu$ is $\sigma$-additive, then $\nu$ is $\sigma$-additive by Lemma~\ref{l_sigmaadd}.
\end{proof}

\section{Some counterexamples}
\label{app_counter}

Given a finite set $A$, we let $\card A$ denote the number of its elements.

\subsection{\texorpdfstring{An $\mf A$-valued premeasure $\mu$ such that $\Sigma_{\mf A}(\mu) \neq \mathcal{L}_{\mf A}(\mu)$}{An 𝔄-valued premeasure μ such that Σ\_𝔄(μ) ≠ ℒ\_𝔄(μ)}} \label{s_example1}
Here, we construct an example showing that the regularity assumption in Theorem~\ref{ll16} cannot be dropped.

Let $I$ be an infinite countable set. We define the $\R$-vector space $\mf A$ by the equality $\mf A = \R^I$. Let $\mathcal{P}$ be the set of all subsets of $I$ and $\chi\colon \mathcal{P}\to \mf A$ be defined as follows: for every $A\in \mathcal{P}$, $\chi(A)$ is a map from $I$ to $\R$ that is equal to unity on $A$ and vanishes on $I\setminus A$. We define the map $\delta\colon I\to \mf A$ by setting $\delta(i) = \chi(\{i\})$ for every $i\in I$.

Let $J$ be an infinite set and $\fm{\mb A}{J}$ be a partition of $I$ such that $\mb A(j)$ is an infinite set for every $j\in J$. Let $S_1 = \im \delta$ and $S_2 = \im (\chi\circ\mb A)$. Clearly, $S_1$ and $S_2$ are disjoint subsets of $\mf A$. Let $S = S_1\cup S_2$.

\begin{lemma}\label{l_linind}
  $S$ is a linearly independent subset of $\mf A$.
\end{lemma}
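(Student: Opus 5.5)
To prove the lemma, I will show that every finite linear combination of distinct elements of $S$ with vanishing sum has all coefficients zero. The key tool is evaluation at points of $I$: an element $x\in\mf A=\R^I$ is zero if and only if $x(i)=0$ for every $i\in I$.

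Suppose $\finsum_{i\in F} a(i)\delta(i) + \finsum_{j\in G} b(j)\chi(\mb A(j)) = 0$, where $F\in\mathcal{F}_I$, $G\in\mathcal{F}_J$, and $a(i),b(j)\in\R$. Here we use that $\delta$ and $j\mapsto\chi(\mb A(j))$ are injective: the sets $\mb A(j)$ are nonempty and pairwise disjoint, so distinct $j$ give distinct $\chi(\mb A(j))$. Distinct elements of $S$ therefore correspond to distinct indices in $F$ and $G$. Since $S_1\cap S_2=\varnothing$, every element of $S$ falls into exactly one of the two groups.

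First I kill the coefficients $b(j)$. Fix $j\in G$. Since $\mb A(j)$ is infinite and $F$ is finite, there is a point $s\in\mb A(j)\setminus F$. At this point:
\begin{itemize}
\item every $\delta(i)$ with $i\in F$ vanishes, since $i\neq s$;
\item $\chi(\mb A(j'))(s)$ equals $1$ if $j'=j$ and $0$ otherwise, by disjointness of $\mb A$.
\end{itemize}
Evaluating the relation at $s$ therefore gives $b(j)=0$.

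With all $b(j)$ gone, the relation reads $\finsum_{i\in F}a(i)\delta(i)=0$. Evaluating at each $i\in F$ then gives $a(i)=0$.

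There is no real obstacle here. The only point needing care is the bookkeeping above: making sure that a linear combination of distinct elements of $S$ is correctly rewritten in the indexed form, which is exactly what injectivity and the disjointness of $S_1$ and $S_2$ provide.
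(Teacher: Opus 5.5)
Your proof is correct and follows essentially the same route as the paper. You kill each coefficient of $\chi(\mb A(j))$ by evaluating at a point of the infinite set $\mb A(j)$ lying outside the finite set of indices of the $\delta$-terms, and then you evaluate at those indices to kill the remaining coefficients. The only difference is cosmetic: you index the combination by finite sets $F\subset I$ and $G\subset J$, using injectivity of $\delta$ and of $j\mapsto\chi(\mb A(j))$, whereas the paper works directly with a finite subset $T\subset S$ and coefficients $\alpha\colon T\to\R$.
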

\begin{proof}
  Let $T\subset S$ be a finite set and $\alpha\colon T\to \R$. Suppose
  \begin{equation}
    \label{eq:lincomb}
    \finsum_{t\in T} \alpha(t)t = 0.  
  \end{equation}
  Let $T_1 = T\cap S_1$ and $T_2 = T\cap S_2$. Let $I_1 = \delta^{-1}(T) = \delta^{-1}(T_1)$. Then $I_1$ is a finite subset of $I$. Let $t_0\in T_2$ and let $j_0\in J$ be such that $t_0 =\chi(\mb A(j_0))$. Since $\mb A(j_0)$ is infinite, there is $i_0\in \mb A(j_0)$ such that $i_0\notin I_1$. Let $t\in T_2$ be such that $t\neq t_0$. Let $j\in J$ be such that $t =\chi(\mb A(j))$. As $j\neq j_0$, we have $\mb A(j)\cap \mb A(j_0) = \varnothing$ and, hence, $i_0\notin \mb A(j)$. It follows that $t(i_0) = 0$. Let $t\in T_1$ and let $i\in I_1$ be such that $t  = \delta(i)$. Since $i_0\notin I_1$, we have $i\neq i_0$ and, therefore, $t(i_0) = 0$. Thus, $t(i_0) = 0$ for all $t\in T$ such that $t\neq t_0$. As $t_0(i_0) = 1$, we have $\finsum_{t\in T}\alpha(t)t(i_0) = \alpha(t_0)$ and it follows from~(\ref{eq:lincomb}) that $\alpha(t_0) = 0$. This proves that $\alpha(t) = 0$ for every $t\in T_2$. By~(\ref{eq:lincomb}), we conclude that $\finsum_{t\in T_1}\alpha(t)t = 0$. Let $t_1\in T_1$ and $i_1\in I_1$ be such that $t_1 = \delta(i_1)$. Let $t\in T_1$ be such that $t\neq t_1$ and let $i\in I_1$ be such that $t = \delta(i)$. As $i\neq i_1$, we have $t(i_1) = 0$. This implies that $\finsum_{t\in T_1}\alpha(t)t(i_1) = \alpha(t_1)t_1(i_1) = \alpha(t_1)$ and, hence, $\alpha(t_1) = 0$. Thus, $\alpha(t) = 0$ for all $t\in T_1$ and, therefore, for all $t\in T$. This means that $S$ is linearly independent.     
\end{proof}

\begin{lemma}\label{l_phi}
  There is a linear functional $\phi\colon\mf A\to\R$ such that $\phi(\delta(i)) = 1$ for every $i\in I$ and $\phi(\chi(\mb A(j))) = -1$ for every $j\in J$.  
\end{lemma}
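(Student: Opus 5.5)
By Lemma~\ref{l_linind}, the set $S$ is linearly independent in the $\R$-vector space $\mf A$. The plan is to define $\phi$ on $S$ with the prescribed values, extend it linearly to the span of $S$, and then extend it to all of $\mf A$ using a Hamel basis containing $S$.

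First I define a map $\phi_0\colon S\to\R$ by $\phi_0(t)=1$ for $t\in S_1$ and $\phi_0(t)=-1$ for $t\in S_2$. This is well defined because $S_1$ and $S_2$ are disjoint. Two further facts are needed so that the prescribed values are attached consistently to $\delta(i)$ and $\chi(\mb A(j))$:
\begin{itemize}
\item $\delta$ is injective on $I$, so that $\phi(\delta(i))=1$ for each $i$;
\item $\chi\circ\mb A$ maps into $S_2$, so that $\phi(\chi(\mb A(j)))=-1$ for every $j$.
\end{itemize}
Injectivity of $\chi\circ\mb A$ is not actually needed, since the value $-1$ is constant on $S_2$. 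Disjointness of $S_1$ and $S_2$ holds because every element of $S_2$ takes the value $1$ at infinitely many points, while every element of $S_1$ takes it at exactly one point.

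Next I invoke the standard fact, which relies on Zorn's lemma, that every linearly independent subset of a vector space extends to a Hamel basis $H\supset S$. I then set $\phi_0(h)=0$ for $h\in H\setminus S$, and let $\phi$ be the unique linear functional on $\mf A$ that agrees with $\phi_0$ on $H$. Every element of $\mf A$ is a unique finite linear combination of elements of $H$, so $\phi$ is well defined and linear.

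The only genuine obstacle is the linear independence of $S$, which Lemma~\ref{l_linind} already supplies. The remainder is routine, given the non-constructive existence of a Hamel basis. I would not attempt any continuity of $\phi$, which the statement does not require.
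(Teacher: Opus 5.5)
Your proposal is correct and matches the paper's proof: both extend the linearly independent set $S$ (Lemma~\ref{l_linind}) via Zorn's lemma to a maximal linearly independent set, i.e.\ a Hamel basis, prescribe $1$ on $S_1$ and $-1$ on $S_2$, and extend linearly. One small remark: injectivity of $\delta$ is not needed either, since the value $1$ is constant on $S_1$, for the same reason you give for $\chi\circ\mb A$.
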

\begin{proof}
  In view of Lemma~\ref{l_linind}, Zorn's lemma ensures that there is a maximal linearly independent subset $S'$ of $\mf A$ such that $S\subset S'$. For every $f\colon S'\to \R$, there is a unique linear functional $\phi\colon\mf A\to \R$ such that $\phi|_{S'} = f$. If we choose $f$ so that $f(s) = 1$ for all $s\in S_1$ and $f(s) = -1$ for all $s\in S_2$, then $\phi$ has the required properties. 
\end{proof}

Let $\phi$ be a fixed linear functional satisfying the conditions of Lemma~\ref{l_phi}. Let $\fm{N}{\mf A}$ be such that $N(x) = \set{y}{y\in\mf A\mbox{ and }\phi(y-x)\geq 0}$ for every $x\in\mf A$. Clearly, $N(0)$ is a submonoid of $\mf A$ and $N(x) = x + N(0)$ for every $x\in\mf A$. We obviously have
\begin{equation}
  \label{eq:NsubsetN}
  N(y)\subset N(x)\mbox{ for every $x\in\mf A$ and $y\in N(x)$}.
\end{equation}
Let $\tau_0$ denote the product topology on $\mf A$. We define the topology $\tau$ on $\mf A$ by the equality
\[
  \tau = \set{O}{\mbox{for every $x\in O$, there is $U\in\tau_0$ such that $x\in U$ and $U\cap N(x)\subset O$}}.
\]
It follows from~\ref{eq:NsubsetN} that the set $U\cap N(x)$ belongs to $\tau$ for every $U\in\tau_0$ and $x\in\mf A$. In particular, $N(x)\in\tau$ for every $x\in\mf A$. Since $\tau$ is stronger than $\tau_0$, it is Hausdorff. 

\begin{lemma}\label{l_addcont}
  The addition on $\mf A$ is continuous with respect to $\tau$.
\end{lemma}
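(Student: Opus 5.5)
We need to show: for $x,y\in\mf A$ and $O\in\tau$ containing $x+y$, there are $\tau$-neighbourhoods $V\ni x$, $W\ni y$ with $V+W\subset O$. The plan is to exploit the fact that $\tau_0$ is a vector topology and that $N(0)$ is a submonoid with $N(x)=x+N(0)$.

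\emph{Step 1.} Since $O\in\tau$ and $x+y\in O$, the definition of $\tau$ gives $U\in\tau_0$ with $x+y\in U$ and $U\cap N(x+y)\subset O$.

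\emph{Step 2.} Addition is continuous for the product topology $\tau_0$. Hence there are $U_1,U_2\in\tau_0$ with $x\in U_1$, $y\in U_2$ and $U_1+U_2\subset U$.

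\emph{Step 3.} Put $V=U_1\cap N(x)$ and $W=U_2\cap N(y)$. As remarked after the definition of $\tau$ (using~(\ref{eq:NsubsetN})), sets of the form $U'\cap N(z)$ with $U'\in\tau_0$ belong to $\tau$. Moreover, $x\in N(x)$ because $\phi(0)=0$. Thus $V$ and $W$ are $\tau$-open neighbourhoods of $x$ and $y$.

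\emph{Step 4.} Let $v\in V$ and $w\in W$. Then $v+w\in U_1+U_2\subset U$. By linearity, $\phi(v+w-(x+y))=\phi(v-x)+\phi(w-y)\ge0$, so $v+w\in N(x+y)$. Hence $v+w\in U\cap N(x+y)\subset O$, and therefore $V+W\subset O$.

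Since $\tau\times\tau$-neighbourhoods of $(x,y)$ contain $V\times W$, addition is continuous as a map $(\mf A,\tau)\times(\mf A,\tau)\to(\mf A,\tau)$. There is no real obstacle here. The only point needing care is to use the precise form of the $\tau$-neighbourhoods $U\cap N(x)$ together with the additivity of $\phi$ in Step 4; no continuity of $\phi$ is needed.
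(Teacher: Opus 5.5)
Your proposal is correct and follows essentially the same route as the paper: you pick $U\in\tau_0$ with $U\cap N(x+y)\subset O$, use $\tau_0$-continuity of addition to get $U_1,U_2$, take the $\tau$-open sets $U_1\cap N(x)$ and $U_2\cap N(y)$, and conclude via additivity of $\phi$. Your explicit remark that $x\in N(x)$ because $\phi(0)=0$ is a small point the paper leaves implicit.
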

\begin{proof}
  Let $x_1,x_2\in \mf A$, $x = x_1 + x_2$, and $O\in \tau$ be such that $x\in O$. Let $U\in\tau_0$ be such that $x\in U$ and $U\cap N(x)\subset O$. Since the addition on $\mf A$ is continuous with respect to $\tau_0$, there are $U_1,U_2\in\tau_0$ such that $x_1\in U_1$, $x_2\in U_2$, and $y_1+y_2\in U$ for every $y_1\in U_1$ and $y_2\in U_2$. Let $O_1 = U_1\cap N(x_1)$ and $O_2 = U_2\cap N(x_2)$. Then $O_1,O_2\in\tau$, $x_1\in O_1$, and $x_2\in O_2$. Let $y_1\in O_1$, $y_2\in O_2$, and $y = y_1 + y_2$. Clearly, $y\in U$. As $\phi(y-x) = \phi(y_1-x_1) + \phi(y_2-x_2)\geq 0$, we have $y\in N(x)$ and, hence, $y\in O$. This means that the addition on $\mf A$ is continuous with respect to $\tau$.
\end{proof}

We endow $\mf A$ with the topology $\tau$. By Lemma~\ref{l_addcont}, this makes $\mf A$ into an HTM (in fact, $\mf A$ is a Hausdorff paratopological group, see Remark~\ref{r_paratop}). For every $x\in\mf A$, the sets of the form $U\cap N(x)$, where $U\in\tau_0$ and $x\in U$, constitute a basis of neighbourhoods of $x$ in $\mf A$.

\begin{lemma}\label{l_sumdelta}
  Let $I'\subset I$. Then $\sm{\mf A}{i\in I'}{\delta(i)}$ and $\sum_{i\in I'}\delta(i) = \chi(I')$.
\end{lemma}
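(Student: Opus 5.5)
We must show that the net $\theta = \bigS_{i\in I'}\delta(i)$ converges to $x=\chi(I')$ in the topology $\tau$. The key observation is that a basic neighbourhood of $x$ has the form $U\cap N(x)$, where $U\in\tau_0$ contains $x$. So the plan has two parts: verify convergence in the product topology $\tau_0$, and verify that the partial sums eventually lie in $N(x)$.

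\emph{Convergence in $\tau_0$.} For $K\in\mathcal F_{I'}$, the element $\theta(K)=\finsum_{i\in K}\delta(i)$ is simply $\chi(K)$, by pointwise evaluation of finite sums in $\R^I$. Convergence in the product topology is coordinatewise. For each coordinate $i\in I$, the value $\chi(K)(i)$ equals $\chi(I')(i)$ as soon as $K$ contains $i$ (when $i\in I'$), and it equals $0=\chi(I')(i)$ always (when $i\notin I'$). Since a basic $\tau_0$-neighbourhood constrains only finitely many coordinates, there is a finite $K_0\subset I'$ such that $\chi(K)\in U$ for all $K\supset K_0$.

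\emph{Eventually in $N(x)$.} This is the main step. We need $\phi(\chi(K)-\chi(I'))\ge 0$ for all sufficiently large finite $K$, but $\phi$ is an arbitrary Hamel-basis functional, so $\phi(\chi(I'))$ is not obviously computable. The trick is to take $\phi$ of the difference in a form already controlled. If $I'$ is finite, then $\theta(I')=x$ and the claim is trivial, since $I'$ is the greatest element of $\mathcal F_{I'}$. If $I'$ is infinite, then $\phi(\chi(K)-\chi(I'))=|K|-\phi(\chi(I'))$ by linearity and $\phi(\delta(i))=1$. The number $c=\phi(\chi(I'))$ is a fixed real number, so once $|K|\ge c$ the difference is nonnegative. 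Since $I'$ is infinite, we can enlarge $K_0$ to a finite subset of $I'$ with at least $c$ elements, which ensures $\theta(K)\in N(x)$ for every $K\supset K_0$.

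Combining the two parts, $\theta(K)\in U\cap N(x)$ for all $K\supset K_0$. Hence $\theta$ converges to $\chi(I')$ in $\mf A$, which gives $\sm{\mf A}{i\in I'}{\delta(i)}$ and $\sum_{i\in I'}\delta(i)=\chi(I')$. No regularity or summation lemmas from Appendix~\ref{app_B} are needed, only the neighbourhood basis described just before the lemma.
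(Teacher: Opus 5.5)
Your proof is correct and follows the paper's argument essentially verbatim. Like the paper, you show that the partial sums $\chi(K)$ converge to $\chi(I')$ in the product topology $\tau_0$, and you use $\phi(\chi(K)-\chi(I'))=\card K-\phi(\chi(I'))\geq 0$ once $\card K\geq\phi(\chi(I'))$, so that the partial sums eventually lie in $U\cap N(x)$. Your separate treatment of finite $I'$ makes explicit a point the paper leaves implicit: that some $K\in\mathcal F_{I'}$ with $\card K\geq\phi(x)$ exists, which holds for finite $I'$ because then $\phi(x)=\card{I'}$.
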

\begin{proof}
  Let $\mf A_0$ denote the topological vector space obtained by endowing $\mf A$ with the topology $\tau_0$. Let $x = \chi(I')$. It is straightforward to verify that $\sm{\mf A_0}{i\in I'}{\delta(i)}$ and $\sum^{\mf A_0}_{i\in I'}\delta(i) = x$. Let $O$ be a neighbourhood of $x$ in $\mf A$. Then $U\cap N(x)\subset O$ for some $U\in\tau_0$ such that $x\in U$. Let $K\in\mathcal F_{I'}$ be such that $\card K \geq \phi(x)$ and $\finsum_{i\in K'}\delta(i) \in U$ for every $K'\in\mathcal F_{I'}$ such that $K\subset K'$. Let $K'\in\mathcal F_{I'}$ be such that $K\subset K'$ and let $y = \finsum_{i\in K'}\delta(i)$. By the definition of $\phi$, we have $\phi(y) = \finsum_{i\in K'}\phi(\delta(i)) = \card{K'}$ and, hence, $\phi(y-x) = \card{K'}-\phi(x)\geq 0$. Thus, $y\in N(x)$ and, therefore, $y\in O$. This means that $\sm{\mf A}{i\in I'}{\delta(i)}$ and $\sum_{i\in I'}\delta(i) = x$.
\end{proof}

Let $\mu = \bigS^{\mf A} \delta$. By Proposition~\ref{p_Spremeasure}, $\mu$ is an $\mf A$-valued premeasure. It follows from~(\ref{eq:LAf}), Lemma~\ref{l_LL}, and Lemma~\ref{l_sumdelta} that $\mathcal{L}_{\mf A}(\mu) = \mathcal{P}$. We claim that $\Sigma_{\mf A}(\mu) \neq \mathcal{L}_{\mf A}(\mu)$. Suppose the contrary and let $\nu = \as[\mf A]\mu$. By Proposition~\ref{p_assS} and Lemma~\ref{l_sumdelta}, we have $\nu(I') = \chi(I')$ for every $I'\subset I$. Let $x = \nu(I)$. The $\sigma$-additivity of $\nu$ implies that $x = \sum_{j\in J} \chi(\mb A(j))$. Let $K\in \mathcal{F}_J$ be such that $\finsum_{j\in K'} \chi(\mb A(j))\in N(x)$ for every $K'\in \mathcal{F}_J$ such that $K\subset K'$. Since $J$ is infinite, there is $K'\in \mathcal{F}_J$ such that $K\subset K'$ and $\card{K'} + \phi(x) > 0$. Let $y = \finsum_{j\in K'} \chi(\mb A(j))$. By the definition of $\phi$, we have $\phi(y) = -\card{K'}$. This implies that $\phi(y - x) < 0$  and, therefore, $y\notin N(x)$. We thus arrive at a contradiction and our claim is proved.

\begin{remark}
  It follows from Theorem~\ref{ll16} that $\mf A$ is not regular. It can be shown directly that, for every $x\in\mf A$,  there are no disjoint neighbourhoods of $x$ and the closed set $\mf A\setminus N(x)$.
\end{remark}

\subsection{\texorpdfstring{Inclusion $\mathcal{S}_{\mf A}[\mu]\subset D_\nu$ in Theorem~\ref{t_ext} can be proper}{Inclusion 𝒮\_𝔄[μ] ⊂ D\_ν in Theorem~\ref{t_ext} can be proper}}\label{s_example2}
Let $\mathcal{B}$ denote the Borel $\sigma$-ring of
$\R$ and $X$ be a countable subset of $\R$ that is unbounded from above. Let $\mathcal{Q} = \set{A}{A\in \mathcal{B}\mbox{ and }A\setminus X\mbox{ is bounded from above in }\R}$. Clearly, $\mathcal{Q}$ is a $\delta$-ring and $\sigma(\mathcal{Q}) = \mathcal{B}$. Let
\[
  I = \set{T}{T\subset\R\mbox{ and $T\cap A$ is a finite set for every $A\in \mathcal{Q}$}}.
\]
Obviously, elements of $I$ are countable sets.

\begin{lemma}\label{l_infinite}
  Let $A\in \mathcal{B}\setminus\mathcal{Q}$. Then there is $T\in I$ such that $T\subset A$ and $T$ is infinite.
\end{lemma}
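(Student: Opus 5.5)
Since $A\in\mathcal{B}\setminus\mathcal{Q}$, the set $A\setminus X$ is not bounded from above in $\R$, and I would work with $A\setminus X$, the part of $A$ lying outside $X$. The plan is to pick a sequence $t_1<t_2<\cdots$ of points of $A\setminus X$ tending to $+\infty$ and put $T=\set{t_n}{n\in\N}$. Because $A\setminus X$ is unbounded above, such a sequence exists: choose inductively $t_{n+1}\in A\setminus X$ with $t_{n+1}>\max(t_n,n)$. Then $T\subset A$, and $T$ is infinite because the $t_n$ are strictly increasing.

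It remains to check that $T\in I$, i.e.\ that $T\cap B$ is finite for every $B\in\mathcal{Q}$. Fix $B\in\mathcal{Q}$. By the definition of $\mathcal{Q}$, $B\setminus X$ is bounded from above by some $c\in\R$. Every element of $T$ lies outside $X$, so $T\cap B = T\cap (B\setminus X)\subset \set{t\in T}{t\leq c}$. Since $t_n>n-1$ for all $n$, only finitely many $t_n$ satisfy $t_n\leq c$. Hence $T\cap B$ is finite and $T\in I$.

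There is no real obstacle here. The only point requiring care is to take the points of $T$ outside $X$. If $T$ met $X$, membership in $B$ would no longer be controlled by $B\setminus X$, since $B$ may contain all of $X$ (for example, $B=X$ itself belongs to $\mathcal{Q}$). Choosing $T\subset A\setminus X$ reduces the finiteness of $T\cap B$ to the boundedness of $B\setminus X$ from above, which the definition of $\mathcal{Q}$ provides.
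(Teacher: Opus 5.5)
Your proof is correct and follows essentially the same route as the paper, which chooses $\alpha\colon\N\to A\setminus X$ with $\alpha(n)\geq n$ and sets $T=\im\alpha$; like you, it uses that $T\cap X=\varnothing$, so $T\cap B=T\cap(B\setminus X)$ is finite because $B\setminus X$ is bounded above. One small slip: your construction only gives $t_n>n-1$ for $n\geq 2$, not for $n=1$, but this does not affect the finiteness conclusion.
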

\begin{proof}
  The definition of $\mathcal{Q}$ implies that $A\setminus X$ is unbounded from above in $\R$. Hence, there is $\alpha\colon\N\to A\setminus X$ such that $\alpha(n)\geq n$ for every $n\in\N$. Let $T=\im\alpha$. Clearly, $T\subset A$ and $T$ is infinite. If $B\in\mathcal Q$, then $B\setminus X$ is bounded from above, and the inequality $\alpha(n)\geq n$ implies that $T\cap B$ is finite. Thus $T\in I$.
\end{proof}

Let $\mf A$ denote the $\R$-vector space $\R^I$ endowed with the product topology. Then $\mf A$ is a complete Hausdorff topological vector space over $\R$ and, in particular, a complete HTG. Let $\nu\colon\mathcal{Q}\to \mf A$ be such that $\nu(A|T) = \card(T\cap A)$ for every $A\in\mathcal{Q}$ and $T\in I$. 

\begin{lemma}
  $\nu$ is an $\mf A$-valued measure.
\end{lemma}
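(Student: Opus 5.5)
The plan is to check the two ingredients of Definition~\ref{dd6} separately. I will first show that $\nu$ is an $\mf A$-valued premeasure. Then I will obtain maximality from the characterization $\Sigma_{\mf A}(\nu)=\mathcal{L}_{\mf A}(\nu)$ of Theorem~\ref{ll16}, which applies because $\mf A$, being a topological vector space, is a regular HTM. Combined with Proposition~\ref{p_measequiv}, it then suffices to prove $\mathcal{L}_{\mf A}(\nu)\subset\mathcal{Q}=D_\nu$.

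\emph{Premeasure.} $D_\nu=\mathcal{Q}$ is a $\delta$-ring, and $\nu$ is well defined because $T\cap A$ is finite for $T\in I$ and $A\in\mathcal{Q}$. Let $A\in\mathcal{Q}$ and let $\mb A\colon K\to\mathcal{Q}$ be a countable partition of $A$. Since $\mf A$ carries the product topology, a net in $\mf A$ converges if and only if each coordinate converges, so it suffices to fix $T\in I$. The set $T\cap A$ is finite, so only finitely many $\mb A(k)$ meet $T$; call their index set $K_T$. For every finite $K'\supset K_T$, the $T$-coordinate of $\finsum_{k\in K'}\nu(\mb A(k))$ equals $\card(T\cap A)$. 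Hence the partial sums are eventually constant in each coordinate, so $\nu\circ\mb A$ is summable with sum $\nu(A)$. Thus $\nu$ is $\sigma$-additive.

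\emph{Maximality.} The host $\sigma$-ring is $\q\nu=\sigma(\mathcal{Q})=\mathcal{B}$. Let $A\in\mathcal{B}\setminus\mathcal{Q}$. By Lemma~\ref{l_infinite}, there is an infinite $T\in I$ with $T\subset A$; this $T$ is countable. Consider the disjoint family $\{\{t\}\}_{t\in T}$. Each singleton $\{t\}$ lies in $\mathcal{Q}$, since it is a bounded Borel set, and is contained in $A$. Moreover, the $T$-coordinate of $\nu(\{t\})$ equals $1$ for every $t\in T$. Because projection onto the $T$-coordinate is a continuous homomorphism, summability of $\{\nu(\{t\})\}_{t\in T}$ in $\mf A$ would imply, by Proposition~\ref{p_sum}(ii), summability of an infinite family of ones in $\R$, which is impossible. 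Therefore $A\notin\mathcal{L}_{\mf A}(\nu)$.

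It follows that $\mathcal{L}_{\mf A}(\nu)\subset\mathcal{Q}$, and hence $\Sigma_{\mf A}(\nu)\subset D_\nu$ by Theorem~\ref{ll16}. By Proposition~\ref{p_measequiv}, $\nu$ is an $\mf A$-valued measure. The only nontrivial input is the construction of the infinite $T$, which is already provided by Lemma~\ref{l_infinite}, so I expect no real obstacle; the main care needed is in reducing convergence in the product topology to convergence in each coordinate.
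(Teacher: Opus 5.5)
Your proof is correct and is essentially the paper's. The premeasure property is checked coordinatewise: the paper phrases this as $\nu(A|T)=(\bigS^\R f)(T\cap A)$ and cites Proposition~\ref{p_Spremeasure}, while you verify directly that the coordinate partial sums stabilize. Maximality rests on the same witness, an infinite $T\in I$ with $T\subset A$ from Lemma~\ref{l_infinite}, whose singletons all have $T$-coordinate equal to $1$.

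The only difference is in how maximality is packaged. The paper tests Definition~\ref{dd6} directly against an arbitrary premeasure $\nu'\asymp\nu$, using Lemma~\ref{ll1}(iii) to place $T$ and its singletons in $D_{\nu'}$. You instead go through $\Sigma_{\mf A}(\nu)\subset\mathcal{L}_{\mf A}(\nu)$ and Proposition~\ref{p_measequiv}. That inclusion is Lemma~\ref{l_SigmaA} and holds for every HTM, so your appeal to regularity of $\mf A$ is correct but unnecessary.
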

\begin{proof}
  Let $A\in\mathcal{Q}$ and $\mb A\colon K\to \mathcal{Q}$ be a countable partition of $A$. We show that
  \begin{equation}
    \label{eq:sumnuAT}
    \nu(A|T) = {\sum_{k\in K}}^\R \nu(\mb A(k)|T),\quad T\in I.
  \end{equation}
  Let $T\in I$ and $f\colon T\to \R$ be such that $f(t) = 1$ for every $t\in T$. Then $\nu(A|T) = (\bigS^\R f)(T\cap A)$ for every $A\in \mathcal{Q}$. Equality~(\ref{eq:sumnuAT}) is therefore ensured by Proposition~\ref{p_Spremeasure}.
  It follows from~(\ref{eq:sumnuAT}) and the definition of product topology that $\nu(A) = \sum_{k\in K}^{\mf A} \nu(\mb A(k))$. This means that $\nu$ is $\sigma$-additive and, hence,  is an $\mf A$-valued premeasure.

  Let $\nu'$ be an $\mf A$-valued premeasure such that $\nu'\asymp \nu$. Suppose there is $A\in D_{\nu'}$ such that $A\notin\mathcal{Q}$. Since $\q{\nu'} = \q\nu = \mathcal{B}$, we have $A\in\mathcal{B}$. By Lemma~\ref{l_infinite}, there is $T\in I$ such that $T\subset A$ and $T$ is infinite. As $T$ is countable, we have $T\in\mathcal{B}$ and, hence, $T\in D_{\nu'}$ by Lemma~\ref{ll1}(iii). Since $\{t\}\in\mathcal{B}$ for every $t\in \R$, it follows from Lemma~\ref{ll1}(iii) that $\{t\}\in D_{\nu'}$ for every $t\in T$. The $\sigma$-additivity of $\nu'$ therefore implies that $\sm{\mf A}{t\in T}{\nu'(\{t\})}$. As $\{x(T)\}_{x\in\mf A}$ is a continuous homomorphism from $\mf A$ to $\R$, Proposition~\ref{p_sum}(ii) ensures that
  \begin{equation}
    \label{eq:sumnu'}
    \sm{\R}{t\in T}{\nu'(\{t\}|T)}.
  \end{equation}
  On the other hand, we have $\{t\}\in \mathcal{Q}$ for every $t\in \R$ and, hence, $\nu'(\{t\}|T) = \nu(\{t\}|T)  = \card\{t\} = 1$ for every $t\in T$. Since $T$ is infinite, it follows that (\ref{eq:sumnu'}) is false. This contradiction proves that $D_{\nu'}\subset\mathcal{Q}$ and, therefore, $\nu$ is an extension of $\nu'$. By Definition~\ref{dd6}, we conclude that $\nu$ is a measure.
\end{proof}

Let $\mathcal{R}$ be the semi-ring consisting of all semi-open intervals of the form $(a,b]$, where $a,b\in\R$ and $a\leq b$, and let $\mathcal{K} = \kappa(\mathcal{R})$. We have $\mathcal{K}\subset\mathcal{Q}$ and $\sigma(\mathcal{K}) = \mathcal{B}$. Let $\mu = \nu|_{\mathcal{K}}$. The $\sigma$-additivity of $\nu$ implies that $\mu$ is a weakly exhaustive $\mf A$-valued $\sigma$-content. Thus, $\mf A$, $\mu$, and $\nu$ satisfy the conditions of Theorem~\ref{t_ext}. We claim that $\mathcal{S}_{\mf A}[\mu]\neq D_\nu$. Since $X\in\mathcal{Q}$, it suffices to show that $X\notin \mathcal{S}_{\mf A}[\mu]$. Suppose the contrary. By~(\ref{eq:Smu}), there is $A\in \sigma_0(\mathcal{K})$ such that $X\subset A$ and $\ex{\mf A}{A}\mu$. Since $A\subset A$, it follows from~(\ref{eq:Smu}) that $A\in \mathcal{S}_{\mf A}[\mu]$ and, hence, $A\in \mathcal{Q}$ by Theorem~\ref{t_ext}. At the same time, we have $A\notin\mathcal{Q}$ by Lemma~\ref{l_notinQ} below. We thus arrive at a contradiction and our claim is proved.

\begin{lemma}\label{l_notinQ}
  Let $A\in\sigma_0(\mathcal{K})$ and let $A$ be unbounded from above in $\R$. Then $A\notin\mathcal{Q}$.
\end{lemma}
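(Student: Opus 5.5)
The approach is to show that every element of $\sigma_0(\mathcal{K})$ that lies in $\mathcal{Q}$ is bounded from above; the lemma then follows immediately.

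First fix the shape of $A$. Every element of $\mathcal{K}=\kappa(\mathcal{R})$ is a finite disjoint union of intervals $(a,b]$ by Proposition~\ref{p_semiring}. Hence every element of $\sigma_0(\mathcal{K})$ is a countable union of intervals $(a,b]$ with $a<b$; degenerate intervals are empty and may be discarded. So write $A=\bigcup_{n\in\N}(a_n,b_n]$ with $a_n<b_n$ (with finitely many intervals if the family is finite).

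Now suppose, for contradiction, that $A\in\mathcal{Q}$. Then $A\setminus X$ is bounded from above, say by $M\in\R$. Because $A$ is unbounded from above, some interval $(a_n,b_n]$ of the union has $b_n>M$. The set $(\max(a_n,M),b_n]$ is then a nondegenerate interval contained in $A$, so it is uncountable. Since $X$ is countable, this interval contains a point $t\notin X$. Such a $t$ lies in $A\setminus X$ and satisfies $t>M$, contradicting the choice of $M$. Therefore $A\notin\mathcal{Q}$.

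There is no genuine obstacle here. The only point needing care is the first step: elements of $\sigma_0(\mathcal{K})$ are unions of nondegenerate intervals, so every piece of $A$ that reaches above $M$ contains an uncountable set above $M$, while $X$ is countable. The converse property of $\mathcal{Q}$, and the cardinality argument itself, are routine.
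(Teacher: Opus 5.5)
Your proof is correct and follows essentially the same route as the paper: you find an interval from $\mathcal{R}$ contained in $A$ that reaches above an upper bound of $A\setminus X$, cut it at that bound to get a nonempty, hence uncountable, interval inside $A$, and contradict the countability of $X$. The only cosmetic difference is that the paper first picks a point of $A$ above the bound and then an interval containing it, whereas you pick an interval whose right endpoint exceeds the bound.
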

\begin{proof}
  Suppose $A\in\mathcal{Q}$. Then the set $B = A\setminus X$ is bounded from above in $\R$. Let $s$ be an upper bound of $B$ in $\R$. Since $A$ is not bounded from above, there is $n\in A$ such that $n > s$. As $A\in\sigma_0(\mathcal{K})$, it follows from Proposition~\ref{p_semiring} that $n\in C$ for some $C\in\mathcal{R}$ such that $C\subset A$. Let $C' = C\cap (s,\infty)$. Then $n\in C'$ and, hence, $C'$ is a nonempty semi-open interval in $\R$ and, in particular, is an uncountable set. On the other hand, $C'$ is countable because $C'\cap B = \varnothing$ and $C'\subset A$ and, therefore,  $C'\subset X$. We thus obtain a contradiction and the statement is proved.  
\end{proof}

\end{document}